\documentclass{article}
\usepackage[utf8]{inputenc}
\usepackage{commath}

\usepackage{authblk}

\usepackage{mathrsfs, amssymb}
\usepackage{amsthm}
\usepackage[intlimits]{empheq}
\usepackage{bbm} 
\usepackage{amsmath}
\usepackage{xcolor}
\usepackage[scientific-notation=true]{siunitx}
\usepackage{float}
\usepackage{dsfont}	
\usepackage[numbers]{natbib}
\usepackage[nottoc,numbib]{tocbibind}
\usepackage{bbm}
\usepackage{subcaption}
\usepackage[inner=2cm,outer=2cm,top=2cm,bottom=2cm]{geometry}
\usepackage[ruled,vlined]{algorithm2e}
\usepackage{enumitem}
\usepackage[colorlinks=true,linkcolor=black]{hyperref}

\usepackage[export]{adjustbox}
\usepackage{caption}
\captionsetup{format=hang}

\setlength\parindent{0pt}
\DeclareMathOperator*{\argmin}{arg\,min}

\DeclareMathOperator*{\R}{\mathbb{R}}

\DeclareMathOperator*{\sym}{\operatorname{sym}}  
\DeclareMathOperator*{\skewp}{\operatorname{skew}}

\DeclareMathOperator*{\Cov}{{Cov}}

\theoremstyle{plain}
\newtheorem{theorem}{Theorem}[section]
\newtheorem{lemma}[theorem]{Lemma}
\newtheorem{corollary}[theorem]{Corollary}
\newtheorem{proposition}[theorem]{Proposition}
\newtheorem{assumption}{Assumption}

\newtheorem*{theorem*}{Theorem}
\newtheorem*{proposition*}{Proposition}

\theoremstyle{definition}
    \newtheorem{definition}[theorem]{Definition}

    \theoremstyle{remark}
    \newtheorem{remark}[theorem]{Remark}

    \usepackage[T1]{fontenc}
    \usepackage{lipsum}
    \usepackage{tocloft}
    \setcounter{section}{0}

\newcommand{\N}{\mathbb{N}}

\newcommand{\Y}{\mathbf{Y}}

\newcommand{\YY}{\mathbb{Y}}

\makeatletter
\@tfor\next:=abcdefghijklmnopqrstuvwxyzABCDEFGHIJKLMNOPQRSTUVWXYZ\do{%
\def\command@factory#1{%
\expandafter\def\csname b#1\endcsname{\mathbf{#1}}
\expandafter\def\csname cl#1\endcsname{\mathcal{#1}}
\expandafter\def\csname bb#1\endcsname{\mathbb{#1}}
}
\expandafter\command@factory\next
}

\renewcommand{\textcolor}[2]{#2}

\title{Rough McKean-Vlasov dynamics for robust ensemble Kalman filtering}
\date{\today}

\author[1]{Michele Coghi}
\author[2]{Torstein Nilssen}
\author[3]{Nikolas N{\"u}sken}
\author[4]{Sebastian Reich}
\date{\today}

\setcounter{Maxaffil}{0}

\affil[1]{Technische Universi\"at Berlin, Straße des 17. Juni 136, 10587 Berlin, Germany,
\href{mailto:coghi@tu-berlin.de}{coghi@math.tu-berlin.de}}
\affil[2]{
Institute of Mathematics, University of Agder, 4604 Kristiansand, Norway, \href{mailto:torstein.nilssen@uia.no}{torstein.nilssen@uia.no}}
\affil[3]{Institute of Mathematics, Universit\"at Potsdam, 14476 Potsdam, Germany, \href{mailto:nuesken@uni-potsdam.de}{nuesken@uni-potsdam.de}}
\affil[4]{Institute of Mathematics, Universit\"at Potsdam, 14476 Potsdam, Germany, \href{mailto:nuesken@uni-potsdam.de}{sebastian.reich@uni-potsdam.de}}

\begin{document}

\maketitle

\abstract{

Motivated by the challenge of incorporating data into misspecified and multiscale dynamical models, we study a McKean-Vlasov equation that contains the data stream as a common driving rough path. This setting allows us to prove well-posedness as well as continuity with respect to the driver in an appropriate rough-path topology. The latter property is key in our subsequent development of a robust data assimilation methodology: We establish propagation of chaos for the associated interacting particle system, which in turn is suggestive of a numerical scheme that can be viewed as an extension of the ensemble Kalman filter to a rough-path framework. Finally, we discuss a data-driven method based on subsampling to construct suitable rough path lifts and demonstrate the robustness of our scheme in a number of numerical experiments related to parameter estimation problems in multiscale contexts.
\\\\

}
 \setcounter{tocdepth}{1}
\tableofcontents

\section{Introduction}

Combining mathematical descriptions of reality with observational data is a key task in economics, science and engineering. In typical applications (such as meteorology \cite{klein2010scale} or molecular dynamics \cite{lelievre2016partial}) there is a hierarchy of models available, ranging from the highly accurate but conceptually and computationally demanding to the approximate but readily interpretable and scalable. Naturally, real-world data is almost always complex, multiscale, and increasingly high-dimensional, whereas the corresponding mathematical abstractions are often preferred to be simple and of comparatively low resolution. The discrepancy between intricate data and reduced-order models poses a significant challenge for their simultaneous treatment, and the failure of some standard statistical approaches in scenarios of this type is well documented \cite{ait2005often,olhede2010frequency,pavliotis2007parameter,ying2019bayesian,zhang2005tale}.
\\

\emph{Robustness to model misspecification and perturbation of the data} is a central concept in the design of statistical methodology capable of bridging scales: Consider a statistical model $\mathcal{M}_0$ -- to be thought off as simple \nolinebreak-- generating (time-dependent) data $(Y^0_t)_{t \ge 0}$ and a corresponding algorithmic procedure $\Phi$ producing the output  $\Phi((Y^0_t)_{t \ge 0})$. We expect that $\Phi$ deals adequately with complex data in the case when it is continuous in an appropriate sense. Indeed, $\mathcal{M}_0$ might be a simplified description of an underlying model family $(\mathcal{M}_\varepsilon)_{\varepsilon \ge 0}$, where the formal limit $\lim_{\varepsilon \rightarrow 0} \mathcal{M}_\varepsilon = \mathcal{M}_0$ encapsulates the passage from a complex to a reduced description. The output $\Phi((Y^\varepsilon_t)_{t \ge 0})$ on `real-world' data $(Y^\varepsilon_t)_{t \ge 0}$ is then close to $\Phi((Y^0_t)_{t \ge 0})$ by continuity even though $\Phi$ has been contructed on the basis of $\mathcal{M}_0$.
\\

Unfortunately, in various contexts (for instance, in parameter estimation for diffusions \cite{diehl2016pathwise} and stochastic filtering \cite{bain2008fundamentals})  the map $\Phi$ is given in terms of stochastic integrals against $(Y^0_t)_{t \ge 0}$ which are well known to be discontinuous with respect to standard topologies \cite{friz2020course,friz2010multidimensional}. The theory of \emph{rough paths} provides a principled route towards constructing continuous modifications $\Phi'$ of $\Phi$ (employed, for instance, in \cite{crisan2013robust,diehl2016pathwise}) by replacing stochastic integrals in terms of rough integrals defined on appropriately lifted paths $(Y,\mathbb{Y})$. Although the difficulty of obtaining or imposing the additional information $\mathbb{Y}$ is well known (see, however, \cite{bailleul2015inverse,flint2016discretely,levin2013learning} and Section \ref{sec: numerical examples} of this article), recent works have demonstrated the potential of including path signatures into data-driven methods to compress information efficiently or unveil multiscale structure \cite{chevyrev2016primer,lyons2014rough}. \textcolor{red}{Moreover, the rough path perspective provides refined insights into misspecification of diffusion models: Indeed, two paths may be `similar in a classical sense' (for instance, in supremum norm) in spite of their associated lifts exhibiting a large rough path distance, in turn leading to inaccurate inferences. In Section \ref{sec: numerical examples} we discuss how those discrepancies can be compensated in a systematic and data-driven manner.}
\\

In this paper, we follow the rough path paradigm just described and develop a robust version of the \emph{Ensemble Kalman Filter} (EnKF) \cite{bishop2020mathematical,evensen2009data}, drawing on a reformulation of the Kushner-Stratonovich SPDE for stochastic filtering \cite{bain2008fundamentals} in terms of McKean-Vlasov dynamics (known as the \emph{feedback particle filter} \cite{pathiraja2020mckean,taghvaei2018kalman,yang2013feedback}). The EnKF is a versatile  approximate procedure for Bayesian inference that is observed to perform particularly well in high-dimensional settings \cite{reich2015probabilistic} and has recently been applied to problems in machine learning \cite{gottwald2020supervised,pidstrigach2021affine}. Our analysis includes the case when the model and observation noises are correlated as this is precisely the setting in which significant obstacles in the construction of robust filters are known to occur \cite{crisan2013robust}. Furthermore, as shown in \cite{nusken2019state} and \textcolor{red}{recalled in} Appendix \ref{app:MLE}, stochastic filtering with correlated noises is a natural generalisation of maximum likelihood parameter estimation for diffusions in the context of inexact measurements. In this setting, we discuss the relationship between subsampling-based approaches to multiscale parameter estimation \cite{pavliotis2007parameter} to the task of estimating the L{\'e}vy area in a rough paths approach.
\\

The construction put forward in this paper requires the formulation of McKean-Vlasov equations in a rough path setting (as studied first in \cite{cass2015evolving} and more recently in the twin papers \cite{bailleul2019propagation,bailleul2020solving}, both considering the dynamics to be driven by a random rough path). 
The approach introduced in \cite{coghi2019rough} is more suitable for our needs as it allows a clear separation between independent Brownian motions and a common deterministic noise. However, the assumptions in \cite{coghi2019rough} do not cover unbounded coefficients, while the common noise coefficient may depend on both the state of the solution and its law. In equation \eqref{eq: main system} considered below, the coefficient $P$ only depends on the law of $\widehat{X}$, which simplifies the problem to some extent, but also allows us to use a different approach, where we treat the stochastic and the rough integrals in two separated steps. As it will become clear from the proofs in Section \ref{sec: rough McKean-Vlasov}, there is no need to create a joint rough path (or rough driver).
All previous works on rough McKean-Vlasov equations deal with bounded globally Lipschitz-continuous (actually smoother) coefficients. These results cannot be applied here, as the coefficient $P$ has linear growth in the measure of the solution and is locally-Lipschitz with Lipschitz constant depending on the moments of the solution.

\subsection{Setting and main results}

In this section we specify the exact setting and present our main results.
Throughout the paper we fix a filtered probability space $(\Omega, \mathcal{F},(\mathcal{F}_t)_{t\ge 0}, \mathbb{P})$ satisfying the usual conditions
and consider the following filtering model,
\begin{subequations}
	\label{eq:signal obs}
	\begin{align}
	\label{eq:signal}
	\mathrm{d}X_t & = f(X_t) \, \mathrm{d}t + G^{1/2} \, \mathrm{d}W_t, \quad && X_0 \sim \pi_0
	\\
	\label{eq:obs}
	\mathrm{d}Y_t & = h(X_t) \, \mathrm{d}t + U \, \mathrm{d}W_t + R^{1/2} \, \mathrm{d}V_t, \quad && Y_0 = 0.
	\end{align}
\end{subequations}
In the above display, \eqref{eq:signal} represents the (hidden) signal, whereas \eqref{eq:obs} specifies the available observations. Note straight away that the signal noise  $G^{1/2} \, \mathrm{d}W_t$ and the observation noise $U \, \mathrm{d}W_t + R^{1/2} \, \mathrm{d}V_t$ may be correlated. We assume that the signal is $D$-dimensional, that is, $X_t \in \mathbb{R}^D$, and that the observations are $d$-dimensional, $Y_t \in \mathbb{R}^d$. In applications it is often the case that $D \gg d$. The maps $f:\mathbb{R}^D \rightarrow \mathbb{R}^D$ and $h: \mathbb{R}^D \rightarrow \mathbb{R}^d$ are assumed to be sufficiently regular (see below). We assume  $G \in \mathbb{R}^{D \times D}$, $R \in \mathbb{R}^{d \times d}$ to be symmetric nonnegative definite and $U \in \mathbb{R}^{d \times D}$.
Furthermore, $(W_t)_{t \ge 0}$ and $(V_t)_{t \ge 0}$ denote independent $D$- and $d$- dimensional standard Brownian motions, respectively. Lastly, we assume that the observation covariance
\begin{equation}
C = UU^T + R \in \mathbb{R}^{d \times d}
\end{equation}
is strictly positive definite.
Defining the observation $\sigma$-algebras 
\begin{equation}
\mathcal{Y}_t = \sigma \left\{(Y_s)_{0 \le s \le t} \right\} \vee \mathcal{N}, \qquad t \ge 0,
\end{equation}
where $\mathcal{N} \subset \mathcal{F}$ is the collection of $\mathbb{P}$-null sets,
our objective is to compute or approximate the filtering measures
\begin{equation}
\label{eq:filtering measures}
\pi_t[\phi] = \mathbb{E} \left[ \phi(X_t) \vert \mathcal{Y}_t \right],
\end{equation}
for bounded and measurable test functions $\phi$; we refer to \cite{bain2008fundamentals} for technical details.


As a step towards tractable numerical approximations of \eqref{eq:filtering measures}, we \textcolor{red}{follow \cite{nusken2019state} and} introduce the McKean-Vlasov equation
\begin{equation}
\label{eq: true McKean Vlasov}
\left\{ \begin{array}{rl}
\mathrm{d}\widehat{X}_t & = f(\widehat{X}_t) \, \mathrm{d}t + G^{1/2} \, \mathrm{d}\widehat{W}_t + K_t(\widehat{X}_t)C^{-1}\circ \mathrm{d}I_t + \Xi_t(\widehat{X}_t) \, \mathrm{d}t\\
\mathrm{d}I_t & = \mathrm{d}Y_t - \left( h(\widehat{X}_t)\, \mathrm{d}t + U \, \mathrm{d}\widehat{W}_t +  R^{1/2} \, \mathrm{d}\widehat{V}_t\right),
\end{array}
\right.
\end{equation}
where $(\widehat{W}_t)_{t \ge 0}$ (resp. $(\widehat{V}_t)_{t \ge 0}$) is a given $D$-dimensional (resp. $d$-dimensional) Brownian motion, both independent from $(W_t)_{t \ge 0}$ and $(V_t)_{t \ge 0}$.  Denoting by $\widehat{\pi} := \mathcal{L}(\widehat{X}\mid \mathcal{Y})$ the conditional law with respect to the common noise $Y$ of solutions $\widehat{X}$ to \eqref{eq: true McKean Vlasov},
the coefficients $K_t: \R^D \to \R^{D\times d}$ and $\Xi_t: \R^D \to \R^{D}$ are required to solve the following partial differential equations,
\begin{equation}
\label{eq:Poisson K intro}
\nabla \cdot \left(\widehat{\pi}_t \left(K_t - BC \right)) \right) = -\widehat{\pi}_t \left( h - \widehat{\pi}_t[h] \right),
 \end{equation}
and
 \begin{equation}
 \label{eq:Poisson Gamma intro}
\nabla \cdot \left( \widehat{\pi}_t \Xi_t \right) = \frac{1}{2}\widehat{\pi}_t \left( \operatorname{Trace} (K_t C^{-1} (\nabla h)^T) - \widehat{\pi}_t\left[\operatorname{Trace} (K_tC^{-1} (\nabla h)^T) \right] \right),
\end{equation}
for $t \ge 0$ and $\mathbb{P}$-almost surely, where $B = G^{1/2} U^T C^{-1}$. Moreover, the common noise $Y$ coincides with the observation process \eqref{eq:obs} and the integral in $\circ \mathrm{d}Y$ is understood in the sense of Stratonovich.

The construction of the system \eqref{eq: true McKean Vlasov}-\eqref{eq:Poisson Gamma intro} as well as existence of solutions will be detailed in Section \ref{sec:McKean formulation}. 
The filtering problem and the McKean-Vlasov equation \eqref{eq: true McKean Vlasov} are related in the following way.
\begin{proposition*}[formal, see Proposition \ref{prop:McKean}]
If $\pi_t$ admits a density and the solution to \eqref{eq: true McKean Vlasov} is unique, then $\pi_t = \widehat{\pi}_t$, $\mathbb{P}$-a.s. for every $t \ge 0$.
\end{proposition*}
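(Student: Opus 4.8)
\emph{Overview and Step 1 (the target equation).} The plan is to establish that the mean-field conditional law $\widehat\pi$ satisfies the same Kushner--Stratonovich equation as the true filter $\pi$, and then to conclude by uniqueness of that equation. As the target, recall from \cite{bain2008fundamentals} that, for bounded smooth $\phi$, the filtering measures \eqref{eq:filtering measures} solve the Kushner--Stratonovich equation: in weak form, $\mathrm{d}\pi_t[\phi]$ is the sum of the signal term $\pi_t[\mathcal{A}\phi]\,\mathrm{d}t$, with $\mathcal{A}$ the generator of \eqref{eq:signal}, and an innovation term driven by $\mathrm{d}Y_t - \pi_t[h]\,\mathrm{d}t$ whose gain is the $\pi_t$-conditional covariance of $\phi$ and $h$, corrected by a contribution encoding the signal--observation correlation through $B = G^{1/2}U^T C^{-1}$. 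The initial condition is $\pi_0$; moreover, under the standing hypothesis that $\pi_t$ admits a density, this equation has a unique solution in the relevant class \cite{bain2008fundamentals}.

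\emph{Step 2 (the mean-field evolution).} Next I would derive the stochastic Fokker--Planck equation satisfied by $\widehat\pi_t = \mathcal{L}(\widehat X_t \mid \mathcal{Y}_t)$ by applying a conditional It\^{o} (It\^{o}--Wentzell) formula to $\phi(\widehat X_t)$ along solutions of \eqref{eq: true McKean Vlasov} and taking conditional expectation with respect to $\mathcal{Y}_t$, using that this operation annihilates It\^{o} integrals against the $\mathcal{Y}$-independent Brownian motions $\widehat W, \widehat V$ while commuting with integration against the common driver $Y$. Two features require care. First, the Stratonovich term $K_t(\widehat X_t)C^{-1}\circ \mathrm{d}I_t$ must be rewritten in It\^{o} form; since both its integrand and the martingale part of its integrator $I$ are coupled to $\widehat X$, the resulting It\^{o}--Stratonovich correction is a genuine second-order term built from $K_t$, $\nabla h$ and $C$. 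Second, inside $\mathrm{d}I_t = \mathrm{d}Y_t - h(\widehat X_t)\,\mathrm{d}t - U\,\mathrm{d}\widehat W_t - R^{1/2}\,\mathrm{d}\widehat V_t$ only the $\mathcal{Y}$-measurable part $\mathrm{d}Y_t$ survives the conditioning and carries the innovation, which after collecting drift contributions takes the form $\mathrm{d}Y_t - \widehat\pi_t[h]\,\mathrm{d}t$ with gain governed by $K_t$. Rewriting the outcome in divergence form gives a closed equation for $\widehat\pi_t$ in terms of $f$, $G$, $K_t$, $\Xi_t$ and $Y$, with initial datum $\widehat\pi_0 = \mathcal{L}(\widehat X_0) = \pi_0$ (recall $\mathcal{Y}_0 = \mathcal{N}$).

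\emph{Step 3 (matching and conclusion).} I would then verify that, once $K_t$ and $\Xi_t$ are taken to solve \eqref{eq:Poisson K intro}--\eqref{eq:Poisson Gamma intro} relative to $\widehat\pi_t$, the equation from Step 2 coincides term by term with the Kushner--Stratonovich equation from Step 1 with $\pi$ replaced by $\widehat\pi$: the $f,G$ part reproduces $\mathcal{A}$; the gain equation \eqref{eq:Poisson K intro}, including the $BC$ shift, turns the $K_t$-term into the correct innovation together with the correlation correction; and \eqref{eq:Poisson Gamma intro} absorbs exactly the It\^{o}--Stratonovich correction identified in Step 2. Thus $\widehat\pi$ solves the Kushner--Stratonovich equation with initial condition $\pi_0$; by the uniqueness recalled in Step 1, and since \eqref{eq: true McKean Vlasov} is assumed well-posed (so that $\widehat\pi$ is an unambiguous functional of $Y$), this forces $\widehat\pi_t = \pi_t$ for all $t \ge 0$, $\mathbb{P}$-almost surely.

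\emph{Main obstacle.} The crux is Step 2: making the conditional It\^{o}--Wentzell computation rigorous in the presence of a common noise, of the Stratonovich integral $\circ\,\mathrm{d}I_t$ whose integrand and integrator are both driven by $\widehat X$, and of the model--observation correlation --- in particular, correctly tracking which stochastic integrals persist under conditioning on $\mathcal{Y}_t$ and verifying that the quadratic-variation terms assemble into precisely the right-hand sides of \eqref{eq:Poisson K intro} and \eqref{eq:Poisson Gamma intro}. A secondary issue is regularity: one needs $\widehat\pi_t$ well-behaved enough (a density, and moments controlling the linear-in-measure growth of $K_t$) to test against $\phi$ and to invoke uniqueness for the Kushner--Stratonovich equation, which is where the density assumption on $\pi_t$ and the moment estimates developed in Sections \ref{sec:McKean formulation} and \ref{sec: rough McKean-Vlasov} enter.
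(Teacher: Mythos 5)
Your outline is coherent and would in principle lead to the result, but it follows a genuinely different route than the paper's proof in Appendix~\ref{app:proof McKean}. You propose to compare the conditional Fokker--Planck equation for $\widehat\pi_t$ with the (normalised) Kushner--Stratonovich equation \eqref{eq:KS SPDE} and then invoke uniqueness of the latter. The paper instead works at the unnormalised level: it introduces $\widehat{\rho}_t[\phi] = \mathbb{E}[\phi(\widehat X_t)\,l_t \mid \mathcal{Y}_t]$, where $l_t$ is the likelihood \eqref{eq:likelihood} built from the \emph{signal} $X_t$, and uses the factorisation $\widehat{\rho}_t[\phi] = \widehat\pi_t[\phi]\,\rho_t[\mathbf{1}]$, valid because $X_t$ and $\widehat X_t$ are independent given $\mathcal{Y}_t$. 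This reduces the claim $\pi_t = \widehat\pi_t$ to the equivalent claim $\rho_t = \widehat\rho_t$, which is verified by deriving a closed equation for $\widehat\rho_t[\phi]$ (via It\^o calculus on $\phi(\widehat X_t)\,l_t$, conversion to Stratonovich form, and conditioning) and comparing term by term with the Stratonovich form of the Zakai equation. The PDEs \eqref{eq:Poisson K intro}--\eqref{eq:Poisson Gamma intro} then appear as the exact matching conditions, and the conclusion follows from well-posedness of the Zakai equation, which is one of the stated hypotheses of Proposition~\ref{prop:McKean}.

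The distinction is not cosmetic: your route is essentially the approach of \cite{pathiraja2020mckean} via a conditional stochastic Fokker--Planck equation for $\widehat\pi_t$, which the authors explicitly chose to avoid because it requires stringent regularity (the paper remarks that the Zakai approach ``allows us to circumvent the stringent regularity condition in \cite[Assumption 3.4]{pathiraja2020mckean}''). In your Step~2 you would have to justify a conditional It\^o--Wentzell calculation for a measure-valued process whose coefficients depend nonlocally on that same conditional measure through $K_t$ and $\Xi_t$, and you would then need a uniqueness theory for the resulting nonlinear Kushner--Stratonovich-type SPDE; you flag this as a ``secondary'' concern, but it is in fact the main difficulty and the reason the paper works on the unnormalised/signal side. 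The Zakai route replaces this by ordinary It\^o calculus on the scalar semimartingale $\phi(\widehat X_t) l_t$ (no measure-valued SPDE for $\widehat\pi$ is ever formed), uses only the independence of $(X_t, \widehat X_t)$ given $\mathcal{Y}_t$ and of $(\widehat W, \widehat V)$ from $\mathcal{Y}$, and calls on the established well-posedness of the \emph{linear} Zakai SPDE for the conclusion. You should be aware that what you list as the ``crux'' of Step~2 (tracking quadratic covariations for the Stratonovich integral with correlated noise) is indeed present in the paper's computation as well, but there it is done at the level of $\phi(\widehat X_t) l_t$ rather than at the level of a measure-valued process, which is why the regularity cost is lower.
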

Reformulations of the filtering problem in terms of McKean-Vlasov dynamics similar to \eqref{eq: true McKean Vlasov}-\eqref{eq:Poisson Gamma intro} have been introduced in \textcolor{red}{\cite{crisan2010approximate,yang2013feedback}}, further analysed in \cite{pathiraja2020mckean}, and are commonly referred to as \emph{feedback particle filters}; the formulation \eqref{eq: true McKean Vlasov}-\eqref{eq:Poisson Gamma intro} combines Stratonovich integration \textcolor{red}{(as in \cite{pathiraja2020mckean})} with a stochastic innovation term $\mathrm{d}I_t$ \textcolor{red}{(as in \cite[Section 4.2]{reich2019data})} so as to allow for a transition to \textcolor{red}{geometric} rough paths in the setting of correlated noises \textcolor{red}{(as in \cite{nusken2019state})}.

One of the practical challenges posed by the system of equations \eqref{eq: true McKean Vlasov}-\eqref{eq:Poisson Gamma intro} is solving the PDEs \eqref{eq:Poisson K intro} and \eqref{eq:Poisson Gamma intro} in $K$ and $\Xi$ for a given measure $\hat{\pi}$. As has been shown in \cite{taghvaei2018kalman} for a similar system of equations, replacing $K$ and $\Xi$ by their best constant-in-space approximations in least-square sense recovers a version of the Ensemble Kalman filter dynamics \cite{bishop2020mathematical}. We follow this approach (see Lemma \ref{lem: approximate K} below) and replace the coefficient $K_t C^{-1}$ by $P: \mathcal{P}(\R^D) \to \R^{D\times d}$, explicitly defined as
\begin{equation}
    \label{eq: approx P}
    P(\pi) := \operatorname{\Cov}_{\pi}(x,h)C^{-1} + B := \pi[x (h - \pi[h])^T ] C^{-1} + B,
    \qquad
    \pi \in \mathcal{P}({\R} ^ {D} ),
\end{equation}
where $\mathcal{P}(\mathbb{R}^D)$ refers to the set of probability measures on $\mathbb{R}^D$.
Similarly, we replace $\Xi$ by $\Gamma : \mathcal{P}(\R^D) \to \R^{D}$, defined as 
\begin{equation}
    \label{eq: approx Gamma}
    \Gamma^\gamma(\pi)  = -\frac{1}{2} \operatorname{Trace}\left(P(\pi) \pi \left[x^\gamma \left( \textcolor{red}{\nabla h^\top}   - \pi[\textcolor{red}{\nabla h^\top}] \right)\right]\right),
    \qquad
    1\leq \gamma \leq D,
    \;
    \pi \in \mathcal{P}({\R}^{D}),
\end{equation}
which can be interpreted as an It\^o-Stratonovich correction term.\footnote{\textcolor{red}{Indeed, the system \eqref{eq: main system stochastic version} can be recognised as the Stratonovich version of the EnKF, see Appendix \ref{app:Strato EnKF}.}}
Thus, we obtain the system
\begin{equation}
\label{eq: main system stochastic version}
\left\{ \begin{array}{rl}
\mathrm{d}\widehat{X}_t & = f(\widehat{X}_t) \, \mathrm{d}t + G^{1/2} \, \mathrm{d}\widehat{W}_t + P(
\widehat{\pi}_t)\, \circ \mathrm{d}I_t + \Gamma(\widehat{\pi}_t ) \, \mathrm{d}t\\
\mathrm{d}I_t & = \mathrm{d} Y_t - \left( h(\widehat{X}_t)\, \mathrm{d}t + U \, \mathrm{d}\widehat{W}_t +  R^{1/2} \, \mathrm{d}\widehat{V}_t\right),
\end{array}
\right.
\end{equation}
where $\widehat{\pi}_t = \mathcal{L}(\widehat{X}_t | \mathcal{Y}_t )$. This equation is well posed according to Lemma \ref{lem:wellposedness McKean common noise} below.\footnote{\textcolor{red}{In this paper, we assume $h$ to be bounded, circumventing difficulties with the non-Lipschitzness of $P$ in the case when $h$ is unbounded. For results addressing this challenge (although in slightly different settings), we refer the reader to \cite{de2018long,de2020analysis,del2016stability,ding2020ensemble,kelly2014well,lange2020derivation,lange2019continuous,lange2020mean,lange2021continuous}}.}


In order to construct a robust filter, we replace the common noise in equation \eqref{eq: true McKean Vlasov} by a deterministic rough path $\bY \in \mathscr{C}^{\alpha}([0,T],\R^d)$ with regularity $\frac{1}{3} < \alpha \leq \frac{1}{2}$. As explained in the introduction, the rationale is that the solutions to rough differential equations are \textcolor{red}{expected to be} continuous in the rough path driver (see \cite{friz2020course}), in contrast to the It\^o solution map associated to stochastic differential equations. Moreover,
using a deterministic path is natural since we are conditioning on the observation, that we can assume to be given and deterministic. 
\\ 

Applying the  modifications from the preceding two paragraphs to equation \eqref{eq: true McKean Vlasov} we obtain the system
\begin{equation}
\label{eq: main system}
\left\{ \begin{array}{rl}
\mathrm{d}\widehat{X}_t & = f(\widehat{X}_t) \, \mathrm{d}t + G^{1/2} \, \mathrm{d}\widehat{W}_t + P(
\widehat{\pi}_t)\, \mathrm{d}I_t + \Gamma(\widehat{\pi}_t ) \, \mathrm{d}t\\
\mathrm{d}I_t & = \mathrm{d}\bY_t - \left( h(\widehat{X}_t)\, \mathrm{d}t + U \, \mathrm{d}\widehat{W}_t +  R^{1/2} \, \mathrm{d}\widehat{V}_t\right).
\end{array}
\right.
\end{equation}
From Lemma \ref{lem: P controlled path} below, the path $P(\widehat{\pi}_{\cdot})$ is controlled by $\bY$ with Gubinelli derivative \cite[Definition 4.6]{friz2020course}
\begin{equation}
    	\label{eq: Gubinelli derivative of P intro}
	    P(\widehat{\pi}_{s})^{\prime}
	    = P(\widehat{\pi}_s)^{\top}
	    \widehat{\pi}_s[(x-\pi[x]) D h^{\top}] C^{-1} = 
	    P(\widehat{\pi}_s)^{\top}
	    {\Cov}_{\widehat{\pi}}(x, D h ) C^{-1},
\end{equation}
so that the rough integral in equation \eqref{eq: main system} will make sense (see Section \ref{sec: controlled rough-paths} below for an overview on controlled rough paths). Moreover, when $Y$ is a semimartingale with covariance $\operatorname{Cov}(Y_t, Y_s) = C (t-s)$, for $s\leq t$, the correction between the Stratonovich and It\^{o} rough path lift is given by $-\frac{1}{2}\operatorname{Trace}(P(\widehat{\pi}_t)^{\prime}C)\, \mathrm{d}t$, which corresponds to $\Gamma(\widehat{\pi})\, \mathrm{d}t$, \textcolor{red}{see Appendix \ref{app:Strato EnKF}.}
\\

Our main result is the following well-posedness and stability theorem for \eqref{eq: main system}.

\begin{theorem}
\label{thm: main}
Let $1/3 < \alpha < 1/2$ and $\widehat{X}_0 \in L^{\rho}(\Omega, \R^D)$ with $\rho > 2/(1-2\alpha)$. Assume $h \in C^2_b(\R^D, \R^d)$ and that $f:\R^D \to \R^D$ is bounded and Lipschitz-continuous. Then equation \eqref{eq: main system} admits a unique solution. Moreover, the map $\mathscr{C}^{\alpha}([0,T],\R^d) \ni \bY \mapsto \widehat{\pi}_t \in \mathcal{P}(C([0,T],\R^D))$ is continuous. 
Moreover, if $\bY$ is the Stratonovich lift of $Y$ in \eqref{eq: main system stochastic version}, then the solutions to \eqref{eq: main system stochastic version} and \eqref{eq: main system} coincide, $\mathbb{P}$-almost surely.
\end{theorem}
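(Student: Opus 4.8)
The plan is to prove the three assertions in order: existence and uniqueness for \eqref{eq: main system} with a fixed rough driver, continuity of the solution map in the rough-path topology, and the consistency statement identifying the rough solution (with the Stratonovich lift) with the solution of the stochastic equation \eqref{eq: main system stochastic version}. Throughout I would lean on the structural observation emphasised in the introduction: because the common-noise coefficient $P$ depends only on the \emph{law} $\widehat{\pi}_t$ and not on the pathwise state, the rough integral $P(\widehat{\pi}_s)\,\mathrm{d}\bY_s$ can be handled as a deterministic (for fixed $\omega$ in the observation channel, or rather for the deterministic $\bY$) controlled rough integral, decoupled from the stochastic integrals against $\widehat{W},\widehat{V}$. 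So the two layers — Brownian (independent noise) and rough (common noise) — are treated in separate steps rather than via a joint rough driver.

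\textbf{Step 1: well-posedness.} I would set up a fixed-point argument on the space of conditional laws. Given a candidate flow $t\mapsto \mu_t$ of measures on $\R^D$ (or on path space $C([0,T],\R^D)$), the coefficients $P(\mu_t), \Gamma(\mu_t)$ become \emph{deterministic} time-dependent coefficients; Lemma \ref{lem: P controlled path} tells us $s\mapsto P(\mu_s)$ is a controlled rough path with the explicit Gubinelli derivative \eqref{eq: Gubinelli derivative of P intro}, provided $\mu_s$ has enough moments uniformly in $s$ (this is where $\rho>2/(1-2\alpha)$ and the boundedness of $h$ enter — boundedness of $h$ makes $P$ and its derivative bounded in terms of low moments of $\mu$). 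One then solves the resulting \emph{linear-in-}$\bY$ equation
\begin{equation*}
\mathrm{d}\widehat{X}_t = b(\widehat{X}_t,\mu_t)\,\mathrm{d}t + \sigma\,\mathrm{d}\widehat{B}_t + P(\mu_t)\,\mathrm{d}\bY_t,
\end{equation*}
where $b$ collects $f$, $-P(\mu_t)h$, $\Gamma(\mu_t)$ and $\sigma\,\mathrm{d}\widehat{B}_t$ collects $G^{1/2}\mathrm{d}\widehat{W}-P(\mu_t)(U\mathrm{d}\widehat{W}+R^{1/2}\mathrm{d}\widehat{V})$, using standard hybrid It\^o/rough integration (as in \cite{friz2020course} or the separate-steps device referenced in the text): first integrate out the rough part against the deterministic driver $\bY$ to get a flow, then solve the SDE in the transformed variable; $f$ bounded and Lipschitz plus $P(\mu_\cdot)$ controlled gives a unique strong solution $\widehat{X}^\mu$. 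Its conditional law $\Psi(\mu) := \mathcal{L}(\widehat{X}^\mu\mid\mathcal{Y})$ (which here, since $\bY$ is deterministic, is just $\mathcal{L}(\widehat{X}^\mu)$) defines the map whose fixed point we seek. Contraction: estimate the Wasserstein-type distance $\sup_{t\le T}\mathbb{E}|\widehat{X}^{\mu}_t-\widehat{X}^{\nu}_t|^2$ in terms of $\sup_{t\le T}W_2(\mu_t,\nu_t)^2$, using Lipschitz estimates on $P,\Gamma$ (Lipschitz with constant controlled by second moments — bounded $h$ is essential, see the footnote) together with the continuity of the rough integral in its integrand (the controlled-path estimates from Section \ref{sec: controlled rough-paths}); on a short time interval this is a contraction, and one patches intervals together, with a-priori moment bounds (Gr\"onwall, using $f,h$ bounded so no moment blow-up) ensuring the constants stay uniform and the argument extends to $[0,T]$.

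\textbf{Step 2: continuity in $\bY$.} With uniqueness in hand, I would prove stability by a similar estimate, now fixing the measure-iteration machinery and perturbing the driver: for two drivers $\bY^1,\bY^2\in\mathscr{C}^\alpha$, bound $\sup_t \mathbb{E}|\widehat{X}^{\bY^1}_t-\widehat{X}^{\bY^2}_t|^2$ and $\sup_t W_2(\widehat{\pi}^1_t,\widehat{\pi}^2_t)^2$ by the rough-path distance $\varrho_\alpha(\bY^1,\bY^2)$, invoking local Lipschitz continuity of the rough integral with respect to \emph{both} the driver and the (controlled) integrand — the integrand difference $P(\widehat{\pi}^1_\cdot)-P(\widehat{\pi}^2_\cdot)$ is itself controlled by the Wasserstein difference via \eqref{eq: Gubinelli derivative of P intro}, so one gets a closed Gr\"onwall inequality after absorbing the self-referential term on a small interval and iterating. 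Passing from pathwise/$L^2$ bounds on $\widehat{X}$ to continuity of $\bY\mapsto\widehat{\pi}_\cdot\in\mathcal{P}(C([0,T],\R^D))$ is then immediate since $W_2$ on path space is dominated by the $L^2$-sup norm of the coupling given by the common Brownian inputs. I expect this to be largely parallel to Step 1.

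\textbf{Step 3: consistency.} Finally, when $\bY$ is the (geometric, Stratonovich) rough-path lift of the semimartingale $Y$ from \eqref{eq: main system stochastic version}, I would argue that the rough integral $\int P(\widehat{\pi}_s)\,\mathrm{d}\bY_s$ coincides $\mathbb{P}$-a.s. with the Stratonovich integral $\int P(\widehat{\pi}_s)\circ\mathrm{d}Y_s$. The standard route (e.g.\ \cite{friz2020course}) is that for an adapted controlled integrand whose Gubinelli derivative is the `right' one, the rough integral against the Stratonovich lift equals the Stratonovich stochastic integral; the Gubinelli derivative \eqref{eq: Gubinelli derivative of P intro} of $P(\widehat{\pi}_\cdot)$ is precisely of this form (this is exactly the content of the remark preceding the theorem about the $\Gamma\,\mathrm{d}t$ term being the It\^o–Stratonovich correction $-\tfrac12\mathrm{Trace}(P(\widehat{\pi}_t)'C)\mathrm{d}t$). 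One then observes that both \eqref{eq: main system stochastic version} and \eqref{eq: main system}-with-Stratonovich-lift are, after this identification, the \emph{same} McKean–Vlasov SDE driven by the same Brownian motions and the same $Y$; since \eqref{eq: main system stochastic version} is well-posed by Lemma \ref{lem:wellposedness McKean common noise} and \eqref{eq: main system} is well-posed by Step 1, and both conditional laws solve the same fixed-point equation, uniqueness forces them to coincide $\mathbb{P}$-a.s. A mild care point here is that Step 1 produces a solution for \emph{every} deterministic $\bY$, so to deduce the a.s.\ statement one conditions on $\mathcal{Y}$ (freezing a realisation of $Y$ and hence of its lift), solves the frozen rough equation, and checks measurability of the resulting solution in the frozen data — which follows from the continuity established in Step 2.

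\textbf{Main obstacle.} The crux is Step 1, specifically closing the fixed-point estimate despite $P$ being only \emph{locally} Lipschitz with constant depending on moments of the measure: one must interleave the contraction estimate with uniform-in-iteration moment bounds, and the moment bounds themselves require controlling the rough integral $\int P(\widehat{\pi}_s)\,\mathrm{d}\bY_s$ — whose size depends on $\|P(\widehat{\pi}_\cdot)\|_{\alpha,\text{controlled}}$, hence again on moments. Boundedness of $h$ (so that $P,\Gamma$ and the Gubinelli derivative are bounded in terms of only the first two moments of the measure) is what breaks this apparent circularity, and the regularity hypothesis $\rho>2/(1-2\alpha)$ is exactly what is needed so that the $\alpha$-H\"older rough integral estimate is compatible with $L^2$ control of the solution (the sewing-lemma remainder lives in a higher integrability class). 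Getting the bookkeeping of these interlocking estimates right — rather than any single inequality — is the real work.
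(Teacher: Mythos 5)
Your overall strategy is the paper's (two-step separation of rough and stochastic integration, small-time fixed point and patching, Lipschitz stability in $\mathscr{C}^\alpha$, rough-equals-Stratonovich for the consistency), but as written Step 1 has a gap in its choice of fixed-point space. You iterate over flows of laws $\mu_\cdot$ and claim, citing Lemma \ref{lem: P controlled path}, that $P(\mu_\cdot)\in\mathscr{D}^{2\alpha}_{\bY}$ as soon as $\mu_\cdot$ has enough uniform moments and $h$ is bounded. That is not what the lemma says: its hypothesis is that $\mu_\cdot$ is the law of the solution to \eqref{eq: sde with forcing} with a forcing that is \emph{already controlled by} $\bY$; the controlled expansion of $P(\mu_\cdot)$ comes from decomposing $\delta h(X_\cdot)_{s,t}$ into a martingale part (whose conditional increments vanish), a finite-variation part, and the $F'_s\,\delta Y_{s,t}$ part of the forcing. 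For a generic time-dependent family of measures, even with uniform moment bounds, there is no reason for $P(\mu_\cdot)$ to be controlled, and then the rough integral $\int P(\mu_s)\,\mathrm{d}\bY_s$ is simply undefined, so the map $\Psi$ you are trying to close is not well-posed on the measure space. The paper avoids this by iterating in $\mathscr{D}^{2\alpha}_{\bY}$: the map $\Gamma$ of Section \ref{sec: rough McKean-Vlasov} sends a controlled path $f$ to $F=\int f\,\mathrm{d}\bY$, then to the SDE solution $X(F,X_0)$ of \eqref{eq: sde with forcing}, then to $P(\mathcal{L}(X(F,X_0)))\in\mathscr{D}^{2\alpha}_{\bY}$ (this last step being precisely Lemma \ref{lem: P controlled path}). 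Iterating on the controlled path rather than on the law is not a cosmetic reparametrisation; it is what keeps every step well-defined.

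A second, smaller issue: you fold $\Gamma(\mu_t)$ into the drift $b$. But $\Gamma$ as in \eqref{eq: approx Gamma} has \emph{quadratic} growth in the central moments of $\mu$ (a product of two covariance-type terms, each of linear growth), so it does not satisfy the linear-growth Assumption \ref{assumptions coefficients} underlying Lemma \ref{lem: well-posedness mckean-vlasov linear growth} and Theorem \ref{thm: well-posedness rough McKean}. One could push through with extra bookkeeping (compare Assumption \ref{assumptions Gamma} in Section \ref{sec: particles}, where it visibly degrades the rates), but the paper's device in Section \ref{section: proof main theorem} is to replace $\YY_{s,t}$ by $\overline{\YY}_{s,t}=\YY_{s,t}-\tfrac12(t-s)C$, so that the sewing lemma automatically generates the drift $-\tfrac12\operatorname{Trace}(P(\widehat{\pi}_s)'C)\,\mathrm{d}s=\Gamma(\widehat{\pi}_s)\,\mathrm{d}s$ and $\Gamma$ never appears as a coefficient; the remaining drift $b(x,\pi)=f(x)-P(\pi)h(x)$ stays in the linear-growth class. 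Steps 2 and 3 of your plan essentially match Corollary \ref{cor:Wasserstein lipschitz} and Lemma \ref{lem:wellposedness McKean common noise}, and your reading of the role of $\rho>2/(1-2\alpha)$ (Kolmogorov continuity for the centred solution, Lemma \ref{lem: bounds X centered}) is correct; but the interlocking estimates you flag in the final paragraph only close once the two points above are fixed.
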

The proof is a consequence of the results in Section \ref{sec: rough McKean-Vlasov} and can be found at the end of that section.
Associated to the McKean-Vlasov equation \eqref{eq: main system} we also study the following system of mean-field interacting particles,
\begin{subequations}
\label{eq: interacting particles}
\begin{align}
\mathrm{d}\widehat{X}^i_t & = f(\widehat{X}^i_t) \, \mathrm{d}t + G^{1/2} \, \mathrm{d}\widehat{W}^i_t + P(\mu^N_t) \, \mathrm{d}I^i_t + \Gamma(\mu^N_t) \, \mathrm{d}t,     
\\
\mathrm{d}I^i_t & = \mathrm{d}\bY_t - (h(\widehat{X}^i_t)  \, \mathrm{d}t+  U \, \mathrm{d}\widehat{W}^i_t + R^{1/2} \, \mathrm{d}\widehat{V}^i_t),
\end{align}
\end{subequations}
where $\mu^N_t := \frac{1}{N}\sum_{i=1}^N \delta_{X_t^i}$ is the empirical measure of the system and $\bY \in \mathscr{C}^{\alpha}([0,T],\R^d)$ is the canonical lift of a differentiable and bounded path $Y:[0,T]\to \R^d$ with bounded cadlag derivative. We have the following well-posedness and convergence result for the interacting particles.
\begin{theorem*}[see Remark \ref{rmk: particles coefficient conversion} and Theorem \ref{thm: particles convergence}]
Under the same assumptions of Theorem \ref{thm: main}, let $(\bY^{\delta})_{\delta > 0}$ be a family of lifts of bounded differentiable paths with cadlag derivatives that approximate $\bY$ in the rough-path metric. Let $\mu^{\delta, N}$ be the empirical measure of \eqref{eq: interacting particles} driven by $\Y^{\delta}$. Then there exists a sequence $\delta_N$ such that, for every $t\in [0,T]$, $\mu^{N,\delta_N}_t \overset{N\to\infty}{\to} \pi_t$,  in $\rho$-Wasserstein distance in $L^1$.
\end{theorem*}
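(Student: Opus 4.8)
The plan is to combine Theorem~\ref{thm: main} (well-posedness and rough-path continuity of the McKean--Vlasov limit) with a propagation-of-chaos estimate for the particle system \eqref{eq: interacting particles} driven by a \emph{fixed} smooth lift, and then to choose the discretisation parameter $\delta_N$ slowly enough that both errors vanish simultaneously. Concretely, I would introduce three objects for each fixed $\delta$: the particle empirical measure $\mu^{\delta,N}_t$ solving \eqref{eq: interacting particles} with driver $\bY^\delta$; the McKean--Vlasov law $\widehat{\pi}^\delta_t$ solving \eqref{eq: main system} with the same driver $\bY^\delta$ (whose existence and uniqueness come from Theorem~\ref{thm: main}); and the target $\pi_t = \widehat{\pi}_t$ associated with the true rough driver $\bY$. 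By the triangle inequality in the $\rho$-Wasserstein distance,
\begin{equation*}
\E\bigl[\mathcal{W}_\rho(\mu^{\delta,N}_t,\pi_t)\bigr]
\le
\E\bigl[\mathcal{W}_\rho(\mu^{\delta,N}_t,\widehat{\pi}^\delta_t)\bigr]
+
\mathcal{W}_\rho(\widehat{\pi}^\delta_t,\pi_t).
\end{equation*}
The second term tends to $0$ as $\delta\to 0$ by the continuity statement in Theorem~\ref{thm: main}, since $\bY^\delta\to\bY$ in the rough-path metric; call its size $\eta(\delta)$ with $\eta(\delta)\to 0$.

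For the first term I would prove, for each fixed smooth driver $\bY^\delta$, a quantitative propagation-of-chaos bound of the form $\E[\mathcal{W}_\rho(\mu^{\delta,N}_t,\widehat{\pi}^\delta_t)] \le c(\delta)\,\varepsilon_N$, where $\varepsilon_N\to 0$ is a Monte-Carlo rate (e.g.\ the standard Wasserstein convergence rate of empirical measures of i.i.d.\ samples, or $N^{-1/2}$ in an $L^2$-coupling argument) and $c(\delta)$ collects the constants — in particular the $\alpha$-Hölder and path-variation norms of $\bY^\delta$, which may blow up as $\delta\to 0$. The natural route is a synchronous coupling: run $N$ independent copies $\widehat{X}^{i,\delta}$ of the McKean--Vlasov solution with the \emph{same} Brownian motions $(\widehat{W}^i,\widehat{V}^i)$ and the same driver as the particles, and estimate $\E\sup_{s\le t}|\widehat{X}^{i,\delta}_s - \widehat{X}^i_s|^\rho$ via a Grönwall argument. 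Here one uses that $f$ is bounded and Lipschitz, that $h\in C^2_b$, and that $P$ and $\Gamma$ are locally Lipschitz in the measure argument with constants controlled by moments of the measures — exactly the structure already exploited in Section~\ref{sec: rough McKean-Vlasov}, so uniform-in-$N$ moment bounds on the particles (Theorem~\ref{thm: particles convergence}) feed in here. The difference $P(\mu^{N,\delta})-P(\widehat{\pi}^\delta)$ splits into a ``statistical'' part $P(\widehat\mu^{N,\delta})-P(\widehat\pi^\delta)$ (handled by the empirical-measure rate $\varepsilon_N$) and a ``coupling'' part controlled by the pathwise differences; since the driver is smooth here, the $\mathrm{d}I^i$ integrals are ordinary Riemann--Stieltjes integrals and no rough-path machinery is needed at this stage.

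Combining the two bounds gives $\E[\mathcal{W}_\rho(\mu^{\delta,N}_t,\pi_t)] \le c(\delta)\varepsilon_N + \eta(\delta)$. Now I would choose $\delta_N\to 0$ slowly: pick $\delta_N$ so that $\eta(\delta_N)\to 0$ while $c(\delta_N)\varepsilon_N\to 0$ as well — possible because for each fixed $\delta$ the quantity $c(\delta)\varepsilon_N\to 0$ as $N\to\infty$, so a standard diagonal extraction (e.g.\ let $\delta_N$ decrease along a subsequence of the $\delta$'s chosen so that $\eta(\delta)\le 1/k$ and then take $N$ large enough that $c(\delta)\varepsilon_N\le 1/k$) produces the desired sequence. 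This yields $\mathcal{W}_\rho(\mu^{N,\delta_N}_t,\pi_t)\to 0$ in $L^1$, which is the claim; uniformity in $t\in[0,T]$ follows from taking suprema in the Grönwall and continuity estimates, both of which are already stated over $[0,T]$.

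The main obstacle I anticipate is the uniform-in-$N$ moment control on the particle system \eqref{eq: interacting particles}: because $P(\mu^N)$ depends on the empirical covariance, it has linear growth in the measure, and the feedback term $P(\mu^N)\,\mathrm{d}I^i$ couples all particles through a quantity that is itself fluctuating — so closing a moment estimate requires care that the covariance-type nonlinearity does not cause super-exponential growth of $\E\,|X^i_t|^\rho$. This is exactly the non-Lipschitz-in-measure difficulty flagged in the introduction; the boundedness of $h$ (hence of the innovation's drift) and of $f$ is what makes it tractable, and I expect the argument to mirror the proof of Lemma~\ref{lem:wellposedness McKean common noise} / Lemma~\ref{lem: P controlled path}, lifted to the $N$-particle level with constants independent of $N$. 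A secondary subtlety is tracking how $c(\delta)$ depends on $\|\bY^\delta\|_\alpha$: one must ensure the dependence is at worst polynomial (or at least explicit), since otherwise the diagonal choice of $\delta_N$ may fail; this is where having an \emph{a priori} bound on $\sup_\delta\|\bY^\delta\|_\alpha$ along the approximating family (which may be assumed, as the lifts converge) would simplify matters considerably.
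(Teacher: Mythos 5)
Your decomposition --- propagation of chaos for a fixed smooth driver plus rough-path continuity of the McKean--Vlasov limit, closed by a diagonal choice of $\delta_N$ --- is precisely the paper's strategy, combining Proposition \ref{pro: regular propagation of chaos} with Corollary \ref{cor:Wasserstein lipschitz}, and your synchronous-coupling / statistical-vs-coupling split mirrors \eqref{eq: wasserstein empirical measures}. Two quantitative details differ from your expectations but do not break the argument. First, the propagation-of-chaos rate is far slower than the Monte-Carlo $N^{-1/2}$: since $\Gamma$ has quadratic growth in the central moments (Assumption \ref{assumptions Gamma}), the Gronwall step must be localised via the stopping time $T_R := \inf\{t : \overline{M}^\rho(\mu^N_t) \ge R\}$, and optimising over $R$ yields only $J(N) = O(\log(N)^{-1})$. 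Second, the constant $c(\delta)$ is \emph{exponential} in $\|\dot Y^\delta\|_{\infty}$, not polynomial, and this norm necessarily diverges as $\delta\to 0$ whenever $\bY$ is a genuine rough path --- so you cannot simply assume $\sup_\delta \|\bY^\delta\|_\alpha < \infty$ buys you control over $c(\delta)$ (the rough-path H\"older norm stays bounded, but $\|\dot Y^\delta\|_\infty$, which is what enters the particle estimates, does not). Nonetheless your diagonal extraction is sound in spirit: $e^{\|\dot Y^{\delta_N}\|_\infty}J(N)\to 0$ provided $\|\dot Y^{\delta_N}\|_\infty = o(\log\log N)$, which is exactly the scaling chosen in the paper's proof of Theorem \ref{thm: particles convergence}.
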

\textcolor{red}{For results concerning similar mean-field limits in the classical setting we refer to \cite{ding2020ensemble,ding2021ensemble,lange2020mean}.}
The preceding two theorems suggest that numerical methods based on the interacting particle system \eqref{eq: interacting particles} are robust to perturbations in the data, and hence suitable for applications in multiscale contexts as described in the introduction.
Inspired by Davie's work \cite{davie2008differential}, we propose the following recursive numerical scheme,
\begin{subequations}
\label{eq:numerical scheme}
\begin{align}
X^i_{k+1} & = X^i_{k} + f(X_k^i)\Delta t  + G^{1/2} \sqrt {\Delta t}\,\xi_k^i + \widehat{P}_k\left( \Delta Y_k - (h(X^i_k)\Delta t + U \sqrt{\Delta t} \,  \xi_k^i + R^{1/2}  \sqrt{\Delta t }\,\eta_k^i) \right)
\\
\label{eq:num scheme2}
 & +  \widehat{\text{Cov}}(x,D h)\widehat{P}_k  \Delta \mathbb{Y}_k + \widehat{\Gamma}_k \Delta t,
\end{align} 
\end{subequations}
in the following referred to as the \emph{Rough-Path Ensemble Kalman Filter} (RP-EnKF). 
Here, $\Delta t > 0$ is the step size, and $(\xi_n^i)$ and $(\eta_n^i)$ denote independent zero mean Gaussian random variables of dimensions $D$ and $d$, respectively. The precise form of the estimator versions $\widehat{P}_k$, $\widehat{\Cov}$ and $\widehat{\Gamma}_k$  will be detailed in Section \ref{sec: numerical examples}.
Finally, the first term in \eqref{eq:num scheme2} is built after the Gubinelli derivative from \eqref{eq: Gubinelli derivative of P intro}, \textcolor{red}{with its precise meaning given by
\begin{equation}
    (\widehat{\text{Cov}}_k(x,Dh) \widehat{P}_k   \Delta \mathbb{Y}_k)_{\gamma}
    = \sum_{j,q=1}^d \sum_{r=1}^D\widehat{\text{Cov}}_k(x,Dh)_{\gamma,j,r} (\widehat{P}_k)_{r,q} 
    (\Delta \mathbb{Y}_k)_{q,j}
    \qquad
    \gamma = 1,\dots, D.
\end{equation}}
Crucially, the scheme \eqref{eq:numerical scheme} takes the lifted component $\Delta \mathbb{Y}_k$ (representing iterated integrals of the path $(Y_t)_{t \ge 0}$) as an input. This dependence allows our methodology to appropriately take into account multiscale structure and other information encoded in the signature \textcolor{red}{(such as discrepancies due to model misspecification)}, but also necessitates appropriate ways to estimate $\mathbb{Y}_k$ from data. We will discuss this in more depth in Sections \ref{sec:parameter estimation} and \ref{sec: numerical examples}, but note here that it is natural to decompose $\Delta \mathbb{Y}_k$ into its symmetric and skew-symmetric part \begin{equation}
\label{eq:lift decomp intro}
\Delta \mathbb{Y}_k = \Delta \mathbb{Y}^{\sym}_k +\Delta \mathbb{Y}^{\skewp}_k.
\end{equation}
The form of the symmetric part $\Delta \mathbb{Y}^{\sym}_k$ is suggested by the requirement that the lifted path is geometric,
\begin{equation}
\label{eq:sym from path}
\Delta \mathbb{Y}^{\mathrm{sym}}_k = \frac{1}{2} (y_{k+1} - y_k) \otimes (y_{k+1} - y_k);
\end{equation}
in particular this expression can readily be computed from discrete-time observations $y_k$. The difficulty thus resides in estimating the L{\'e}vy area contributions \textcolor{red}{(or corrections)} $\Delta \mathbb{Y}^{\skewp}_k$. We suggest a subsampling-based method, establishing connections to multiscale parameter estimation as investigated in \cite{pavliotis2007parameter}; see Section \ref{sec:parameter estimation}. Other approaches towards obtaining $\Delta \mathbb{Y}_k$ have been developed in \cite{bailleul2015inverse,flint2016discretely,levin2013learning}. We would like to stress that although estimating $\Delta \mathbb{Y}^{\skewp}_k$ works reasonably well in our experiments (see Sections \ref{subsec:magnetic field} and \ref{sec:Lorenz}), in many applications it might yield satisfactory results to neglect the skew-symmetric part, that is, to use the approximation $\Delta \mathbb{Y}_k \textcolor{red}{\approx} \Delta \mathbb{Y}^{\sym}_k$, either because the observation path is one-dimensional, or because the L{\'e}vy area term is comparatively small (see Section \ref{sec:two-scale potential}). In these cases, the RP-EnKF can be implemented straightforwardly without additional estimation steps, and represents a robust Stratonovich-version of the EnKF \textcolor{red}{(see Appendix \ref{app:Strato EnKF})}. 
\\

\textbf{It{\^o}, Stratonovich and rough integrals.} Before concluding this introduction, let us comment on the difference between the RP-EnKF scheme \eqref{eq:numerical scheme} and the more conventional EnKF updates

\begin{equation}
\label{eq:EnKF}
X^i_{k+1}  = X^i_{k} + f(X_k^i)\Delta t  + G^{1/2} \sqrt {\Delta t}\,\xi_k^i + \widehat{P}_k\left( \Delta Y_k - (h(X^i_k)\Delta t + U \sqrt{\Delta t} \,  \xi_k^i + R^{1/2}  \sqrt{\Delta t }\,\eta_k^i) \right),
\end{equation} 
see \cite{bishop2020mathematical,evensen2009data,reich2015probabilistic}. Clearly, \eqref{eq:numerical scheme} and \eqref{eq:EnKF} coincide up to the terms in \eqref{eq:num scheme2}. This difference can be attributed to alternative \textcolor{red}{(but equivalent)} perspectives on the underlying continuous-time dynamics, \textcolor{red}{and hence we expect the terms in \eqref{eq:num scheme2} to cancel each other in the limit as $\Delta t \rightarrow 0$, when $\Delta Y_k$ is obtained at discrete time-points from \eqref{eq:obs} and $\Delta\mathbb{Y}_k$ are the increments of the Stratonovich lift corresponding to piecewise linear approximation (that can be computed according to \eqref{eq:sym from path} and setting $\Delta \mathbb{Y}_k^{\mathrm{skew}} = 0$).} Indeed, \eqref{eq:EnKF} corresponds to the Euler-Maruyama discretisation of the McKean-Vlasov SDE 
\begin{equation}
\label{eq:Ito McKean}
\mathrm{d}\widehat{X}_t = f(\widehat{X}_t) \, \mathrm{d}t + G^{1/2} \, \mathrm{d}W_t + P(\widehat{\pi}_t) \left( \mathrm{d}Y_t - \left( h(\widehat{X}_t) \, \mathrm{d}t + U \, \mathrm{d}\widehat{W}_t + R^{1/2} \, \mathrm{d}\widehat{V}_t \right) \right),
\end{equation}
understood in the sense of It{\^o}. In contrast, the term $\widehat{\Gamma}_k \Delta t$ in \eqref{eq:num scheme2} can be seen as an It{\^o}-Stratonovich correction to \eqref{eq:Ito McKean}, \textcolor{red}{see Appendix \ref{app:Strato EnKF}}, while the first term in \eqref{eq:num scheme2} arises from a Milstein-type approximation scheme for Stratonovich SDEs \cite[Section 10.3]{kloeden1992stochastic} (or as part of the discrete approximation of rough integrals according to Davie \cite{davie2008differential}). \textcolor{red}{The equivalence of the classical and the rough path version, that is, of \eqref{eq: main system stochastic version} and \eqref{eq: main system}, is stated in Lemma \ref{lem:wellposedness McKean common noise}.} 

\textcolor{red}{The distinction between \eqref{eq:numerical scheme} and \eqref{eq:EnKF} becomes important when the data $\Delta Y_k$ and $\Delta \mathbb{Y}_k$ is described by \eqref{eq:signal obs} only in an approximate sense and hence robustness becomes a key issue.
Importantly, the passage to rough integrals is indispensable for the continuity statements in Theorem \ref{thm: main} and the numerical robustness of the RP-EnKF scheme demonstrated in Section \ref{sec: numerical examples}. The Stratonovich picture is natural in view of the Wong-Zakai theorem \cite[Section 9.2]{friz2021rough} and determines the symmetric part in \eqref{eq:lift decomp intro} as $\bY$ can then be interpreted as a geometric rough path (see \eqref{eq:sym from path} and Section \ref{sec: numerical examples}). Our approach of replacing It{\^o} by Stratonovich and subsequently rough integrals mirrors the construction in \cite{diehl2016pathwise} in the context of maximum likelihood parameter estimation for diffusions.}
\\

\textbf{Our contributions and structure of the paper.} Our main contributions are as follows:
\begin{itemize}
    \item 
    \textcolor{red}{Based on the prior works \cite{nusken2019state,pathiraja2020mckean,reich2019data},} we derive the McKean-Vlasov system \eqref{eq: true McKean Vlasov}-\eqref{eq:Poisson Gamma intro}, incorporating a stochastic innovation term as well as correlated model and observation noise. Crucially, the dynamics \eqref{eq: true McKean Vlasov}-\eqref{eq:Poisson Gamma intro} is given entirely in terms of Stratonovich integrals that allow the construction of robust filtering schemes \textcolor{red}{built on geometric rough paths}.
    \item
    We prove well-posedness of the rough McKean-Vlasov dynamics \eqref{eq: main system} as well as continuity in the rough driver $\Y$ (see Theorem \ref{thm: main}).
    \item
    We show well-posedness as well as propagation of chaos of the interacting particle approximation \eqref{eq: interacting particles}.
    \item We suggest the RP-EnKF scheme \eqref{eq:numerical scheme} and in particular devise a subsampling based method to estimate the lift components $\mathbb{Y}_k$ from data \textcolor{red}{(as corrections in a model misspecification scenario)}. The robustness of the RP-EnKF (and the nonrobustness of the EnKF) is demonstrated using numerical examples in the context of combined state-parameter estimation.
\end{itemize}

In Section \ref{sec: background in filtering} we review the most relevant results in filtering theory, we motivate the use of the McKean-Vlasov equation and explain the concept of a robust filter.
In Section \ref{sec: preliminaries} we introduce common notation and recall some background on rough paths. In Section \ref{sec: rough McKean-Vlasov} we present the analysis of the rough McKean-Vlasov equation \eqref{eq: main system}, which includes well-posedness and stability.
In Section \ref{sec: particles} we treat the interacting particles system \eqref{eq: interacting particles} and prove well-posedness and propagation of chaos. In Section \ref{sec: numerical examples} we detail the construction of the numerical scheme, including a presentation of our subsampling approaching towards estimating \textcolor{red}{the} L{\'e}vy area.  
Finally, we conclude the paper with some numerical experiments.

\section{Background in filtering, robust representations and McKean-Vlasov dynamics}

In this section we discuss essential background on robust filtering and put our work into perspective. Section \ref{sec:filtering} will summarise existing approaches towards solving the filtering problem posed by \eqref{eq:filtering measures}, both from a theoretical as well as from an algorithmic perspective. In Section \ref{sec: robust filtering} we review the challenges to these methods posed by perturbations in the observed data $(Y_s)_{0 \le  s \le t}$, leading 
to the concept of robustness. In Section \ref{sec:parameter estimation} we draw connections of the McKean-Vlasov approach considered in this paper to maximum likelihood based techniques for stochastic differential equations, in particular to the methods developed in \cite{diehl2016pathwise}. Finally in Section \ref{sec:McKean formulation} we make our McKean-Vlasov formulation as well as the ensemble Kalman approximation precise.

\label{sec: background in filtering}
\subsection{Solutions to the filtering problem and algorithms}
\label{sec:filtering}
 
It is well known that the measure $\pi_t$ defined in \eqref{eq:filtering measures} is a measurable function of the observation path $(Y_s)_{0 \le s \le t}$ and can be obtained as a solution to the \textbf{Kushner-Stratonovich SPDE} \cite[Section 3.6]{bain2008fundamentals},
\begin{equation}
\label{eq:KS SPDE}
\pi_t[\phi] = \pi_0[\phi] + \int_0^t \pi_s[\mathcal{L} \phi] \, \mathrm{d}s + \int_0^t \left( \pi_s[\phi h^\top] - \pi_s[h^\top]\pi_s[\phi] + \pi_s[(B \nabla \phi)^\top] \right) \left( C^{-1}\mathrm{d}Y_s - \pi_s[h] \, \mathrm{d}s\right),
\end{equation}
where
\begin{equation}
\mathcal{L} \phi  = f \cdot \nabla \phi + \frac{1}{2} \operatorname{Trace}(G \nabla^2\phi)
\end{equation}
denotes the infinitesimal generator associated to the signal process \eqref{eq:signal}.
From the computational viewpoint, numerically solving \eqref{eq:KS SPDE} directly (for instance, using grid-based methods) is usually infeasible, especially when the dimension $D$ is large (see \cite[Section 8.5]{bain2008fundamentals} for a discussion). Many algorithmic approaches therefore rely on the simulation of carefully constructed interacting particle systems, positing the corresponding (possibly weighted) empirical measures $\frac{1}{N}\sum_{i=1}^N \delta_{X_t^i}$ as approximations for $\pi_t$.
\\ 

\textbf{Sequential Monte Carlo methods} rely on Bayes' theorem in order to approximate the conditional expectations \eqref{eq:filtering measures}. More precisely, defining the likelihood 
\begin{equation}
\label{eq:likelihood}
l_t = \exp \left( \int_0^t h(X_s) \cdot C^{-1} \, \mathrm{d}Y_s - \frac{1}{2} \int_0^t h(X_s) \cdot C^{-1} h(X_s) \, \mathrm{d}s\right),
\end{equation}
the filtering measures admit the representation 
\begin{equation}
    \pi_t[\phi] = \frac{\mathbb{E}[\phi(X_t) l_t |\mathcal{Y}_t]}{\mathbb{E}[l_t|\mathcal{Y}_t]},
\end{equation}
according to the Kallianpur-Striebel formula \cite[Proposition 3.16]{bain2008fundamentals}. Consequently, approximations of $\pi_t$ can be obtained by sampling from the signal dynamics \eqref{eq:signal} in conjunction with appropriate weighting and/or resampling steps on the basis of \eqref{eq:likelihood}. For detailed accounts, we refer the reader to \cite{doucet2009tutorial, del2004feynman,reich2015probabilistic}. While sequential Monte Carlo methods reproduce the filtering measures exactly in the large-particle limit (see, for instance, \cite[Theorem 9.15]{bain2008fundamentals}), they tend to become unstable in high-dimensional settings due to weight collapse: the conditional and unconditional laws of $X_t$ are often so distinct as to render reweighting-based approaches infeasible due to low effective samples sizes. 
\\

\textbf{Ensemble Kalman filters (EnKFs)}
\cite[Section 7.1]{reich2015probabilistic} can be formulated in terms of interacting or mean-field (McKean-Vlasov) diffusions. In the case when $U=0$ (that is, when the signal and observation noises are uncorrelated  \cite[Section 7.1]{simon2006optimal}, \cite{berry2018correlation}) the basic EnKF due to Evensen \cite{burgers1998analysis,evensen2003ensemble,evensen2009data}  is given by

\begin{equation}
\label{eq:EnKF2}
\mathrm{d}\widehat{X}_t = f(\widehat{X}_t) \, \mathrm{d}t + G^{1/2} \, \mathrm{d}\widehat{W}_t + P(\pi_t) C^{-1} \left(\mathrm{d}Y_t -\left(h(\widehat{X}_t) \, \mathrm{d}t + R^{1/2}\,\mathrm{d}\widehat{V}_t \right) \right),  
\end{equation}
with 
\begin{equation}
\label{eq:Ph}
P(\pi) = \operatorname{Cov}_{\pi}(x,h),
\end{equation}
or a standard particle approximation thereof. The system \eqref{eq:EnKF2}-\eqref{eq:Ph} is motivated by the fact that the corresponding law reproduces $\pi_t$ exactly in the linear Gaussian case: If $\pi_0$ is Gaussian, $f(x) = Fx$ and $h(x) = Hx$ for appropriate matrices $H \in \mathbb{R}^{d \times D}$ and $F \in \mathbb{R}^{D \times D}$, then $\pi_t$ remains Gaussian for all $t \ge 0$, and $\text{Law}(\widehat{X}_t) = \pi_t$. In cases where the preceding conditions are not satisfied, the system \eqref{eq:EnKF2}-\eqref{eq:Ph} becomes an approximation, the accuracy of which is far from well understood theoretically. However, the ensemble Kalman approach has empirically proven to be both fairly reliable in nonlinear settings as well as scalable to high-dimensional scenarios, and therefore nowadays constitutes one of the workhorses in practical data assimilation tasks \cite{reich2015probabilistic}. We refer to \cite{bishop2020mathematical} for a recent review of its theoretical properties.
\\

The \textcolor{red}{more} recently proposed \textbf{feedback particle filters}
\textcolor{red}{
\cite{crisan2010approximate,taghvaei2018kalman,yang2013feedback}} rely on carefully designed McKean-Vlasov diffusions of the type \eqref{eq:EnKF2} such that the associated (conditional, nonlinear) Fokker-Planck equation coincides with the Kushner-Stratonovich SPDE \eqref{eq:KS SPDE}. By construction, such models are exact, and the conditional laws induced by the solutions to feedback particle filter dynamics provide the filtering measures \eqref{eq:filtering measures}.
As an illustration,
\begin{equation}
\label{eq:reich McKean}
    \mathrm{d}\widehat{X}_t = f(\widehat{X}_t) \, \mathrm{d}t + G^{1/2} \, \mathrm{d}\widehat{W}_t + K(\widehat{X}_t,\pi_t) C^{-1} \left(\mathrm{d}Y_t -\left(h(\widehat{X}_t) \, \mathrm{d}t + R^{1/2}\,\mathrm{d}\widehat{V}_t \right) \right) + \Xi(\widehat{X}_t,\pi_t) \, \mathrm{d}t,
\end{equation}
was suggested in \cite{reich2019data} \textcolor{red}{and extended in \cite{nusken2019state}},
where $K(\cdot,\pi) = \nabla \phi(\cdot,\pi)$ is determined from the elliptic PDE
\begin{equation}
\label{eq:simple poisson}
    \nabla \cdot(\pi \nabla \phi) = -\pi\left( h - \pi[h]\right),
\end{equation}
and $\Xi$ is an appropriate correction term (see Section \ref{sec:McKean formulation} for an in-depth discussion). Clearly, the systems \eqref{eq:EnKF2}-\eqref{eq:Ph} and \eqref{eq:reich McKean}-\eqref{eq:simple poisson} are strongly related in spirit, combining a replication of the signal dynamics \eqref{eq:signal} with a data-dependent nudging term so as to match the observations. Reiterating the discussion so far, solutions to \eqref{eq:reich McKean}-\eqref{eq:simple poisson} provide exact solutions to the filtering problem \eqref{eq:filtering measures}, while solutions to \eqref{eq:EnKF2}-\eqref{eq:Ph} lead to approximate ones (except in the linear Gaussian case). However, as $P$ is given explicitly in \eqref{eq:Ph}, the system \eqref{eq:EnKF2}-\eqref{eq:Ph} lends itself straightforwardly to efficient numerical integration, while the system \eqref{eq:reich McKean}-\eqref{eq:simple poisson} poses a formidable numerical challenge in the form of the high-dimensional PDE \eqref{eq:simple poisson}. What is more, well-posedness of systems of the type \eqref{eq:reich McKean}-\eqref{eq:simple poisson}, with coefficients that depend on the law through the solution of a PDE is currently not well understood.  Nevertheless, McKean-Vlasov formulations of the type \eqref{eq:reich McKean}-\eqref{eq:simple poisson} conveniently link between the theoretically optimal Kushner-Stratonovich SPDE \eqref{eq:KS SPDE} and the numerically tractable and practically relevant ensemble Kalman dynamics \eqref{eq:EnKF2}-\eqref{eq:Ph}. In this paper, we leverage this viewpoint in order to construct a robust version of \eqref{eq:EnKF}. 

\subsection{Robust filtering}
\label{sec: robust filtering}

In order to model and solve real-life problems it is highly desirable that the conditional law $\pi_t$ (or any numerical approximation thereof) depends continuously on the observation path $(Y_s)_{0 \le s \le t}$: This property would ensure robustness against misspecification of the underlying signal and observation dynamics (as is typical in reduced-order modeling) as well as against anomalies or artefacts in the collection of the data (such as discretisation errors or perturbation by noise), see \cite[Chapter 5]{bain2008fundamentals} for an overview,  
\cite{clark1978design,clark2005robust,davis1980multiplicative,kushner1980robust,davis1982pathwise,davis2011pathwise,davis1987pathwise}
and Section \ref{sec:parameter estimation} below. 
Unfortunately, however, the $\phi$-dependent measurable map $(Y_s(\cdot))_{s\in [0,t]} \mapsto \pi_t[\phi]$ provided by \eqref{eq:filtering measures} can be shown to be neither unique nor continuous \cite{crisan2013robust} in standard topologies. At a fundamental level, this problem is due to the appearance of stochastic integrals against $(Y_s)_{0 \le s \le t}$ in \eqref{eq:KS SPDE} and \eqref{eq:likelihood} which are well known to induce classically discontinuous maps (for instance, in the supremum norm), see \cite{friz2020course,friz2010multidimensional}. In order to address this issue and to obtain a continuous\footnote{As pointed out in \cite{crisan2013robust}, the continuity requirement restores the uniqueness of the map $(Y_s(\cdot))_{s\in [0,t]} \mapsto \pi_t[\phi]$.}  version of the process $(\pi_t)_{t \ge 0}$, Clarke suggested using (stochastic) integration by parts in \eqref{eq:likelihood} in order to eliminate the $\mathrm{d}Y$-dependence \cite{clark1978design}. Notably, this approach is restricted to the case when the signal and observation noises are independent (that is, $U=0$), see \cite{clark1978design,clark2005robust,kushner1980robust}, or the case when the observation is one-dimensional (that is, $Y_t \in \mathbb{R}$), see \cite{davis1980multiplicative,davis1982pathwise,davis2011pathwise,davis1987pathwise,florchinger1993zakai}. Addressing the situation of multidimensional  correlated observations, the authors of \cite{crisan2013robust} showed by means of a counter-example (see \cite[Example 1]{crisan2013robust}) that continuity as a function of $(Y_s)_{0 \le s \le t}$ is impossible to achieve. Instead, they
use rough path lifts $C([0,T];\mathbb{R}^d) \ni (Y_s)_{0 \le s \le T} \mapsto (\mathbf{Y}_s)_{0 \le s \le T} \in \mathscr{C}^{\alpha}([0,T];\R^{d})$ and establish continuity when $\pi_t$ is considered as a function of the augmented observation path. Similar ideas  have been pursued in \cite[Theorem 5.3]{gyongy1989approximation}, putting forward the notion of `good' approximations of the observation path. As the aforementioned works are concerned with the likelihood \eqref{eq:likelihood}, these lay the foundations for the development of robust sequential Monte Carlo methods as reviewed in Section \ref{sec:filtering}, and the recent preprint \cite{crisan2021pathwise} 
explores that direction. 
We would also like to mention the works \cite{diehl2017stochastic,hocquet2018energy} that allow treating the Zakai SPDE (governing the unnormalised filtering distribution \cite[Section 3.5]{bain2008fundamentals})  in a rough paths framework, however noticing that the numerical treatment of SPDEs is faced with enormous challenges, in particular in high-dimensional settings. Some other works addressing issues in robust or multiscale filtering include \cite{allan2020robust,allan2020pathwise} (assuming uncertainty in the coefficients) as well as a sequence of works by N. Perkowski and coworkers in the context of averaging and homogenisation \cite{beeson2018reduced,beeson2020approximation,beeson2020quantitative,imkeller2012homogenization,imkeller2013dimensional,lingala2012particle,lingala2014optimal,yeong2018dynamic,yeong2020particle}.
\\

In this paper, we instead construct a robust version of the ensemble Kalman filter \eqref{eq:EnKF2}-\eqref{eq:Ph} on the basis of its connections to feedback particle formulations as in \eqref{eq:reich McKean}-\eqref{eq:simple poisson}. Before describing our strategy, we review related work on maximum likelihood parameter estimation for stochastic differential equations.





\subsection{Parameter estimation and filtering in multiscale systems}
\label{sec:parameter estimation}

In this section we present a prototypical example that illustrates some of the challenges in robust filtering as well as the scope of the methods developed in this paper, \textcolor{red}{following \cite{nusken2019state}}. Consider the SDE
\begin{equation}
\label{eq:par est dynamics}
\mathrm{d}Z_t = F(Z_t,\theta) \, \mathrm{d}t + \mathrm{d}W_t,
\end{equation}
where $Z_t \in \mathbb{R}^d$, and $F:\mathbb{R}^d \times \Theta \rightarrow \mathbb{R}^d$ is a parameterised drift vector field, with parameter set $\Theta \subset \mathbb{R}^p$. The objective is to find the true parameter $\theta^* \in \Theta$ from a noisy realisation of $(Z_t)_{t \ge 0}$, that is, we assume that the observation process is given by 
\begin{equation}
\label{eq:par est observation}
\mathrm{d}Y_t = \mathrm{d}Z_t + R^{1/2} \, \mathrm{d}V_t.
\end{equation}
As before, $R \in \mathbb{R}^{d \times d}$ denotes the observation noise covariance, and $(V_t)_{t \ge 0}$ stands for a standard $d$-dimensional Brownian motion. The problem setting \eqref{eq:par est dynamics}-\eqref{eq:par est observation} can be brought into the form \eqref{eq:signal obs} by elevating $\theta$ to a time-dependent variable, that is, by setting $X_t = (Z_t,\theta_t) \in \mathbb{R}^{d + p}$, hence viewing \eqref{eq:par est dynamics}-\eqref{eq:par est observation} as a combined state-parameter estimation problem, see \cite{nusken2019state}. Accordingly, $f:\mathbb{R}^{d + p} \rightarrow \mathbb{R}^{d + p}$ and $h:\mathbb{R}^{d+p} \rightarrow \mathbb{R}^d$ are then given as $f(z,\theta) = (F(z,\theta), 0)$ and $h(z,\theta) = F(z,\theta)$, and the matrices $G \in \mathbb{R}^{(d+p)\times (d+p)} $ and $U \in \mathbb{R}^{d \times (d+p)}$ take the form
\begin{equation}
G = \begin{pmatrix}
I_{d \times d} & 0_{d \times p} 
\\
0_{p \times d} & 0_{p \times p}
\end{pmatrix}, \qquad \qquad U = \begin{pmatrix}
I_{d \times d} & 0_{p \times p}
\end{pmatrix}.
\end{equation}
Finally, the filtering formulation is completed by specifying a \emph{prior distribution} on the initial condition $(Z_0,\theta_0)$. The resulting filtering measures $\pi_t \in \mathcal{P}(\mathbb{R}^{d+p})$ encode the Bayesian posterior on the combined variable $(Z_t,\theta_t)$. Consequently, the $\theta$-marginals provide meam a posteriori estimates on the parameter of interest as well as corresponding bounds on Bayesian uncertainty. 
\\ 

In the particular case when the path $(Z_t)_{t \ge 0}$ is observed without contamination by noise, that is, $R=0$, and $F$ is linear\footnote{For simplicity of the presentation, we also assume here that $\theta$ is one-dimensional, that is, $\Theta \subset \mathbb{R}$.} in $\theta$, that is, $F(z,\theta) = \theta f(z)$ with $f$ satisfying appropriate nondegeneracy conditions \cite{diehl2016pathwise}, the parameter $\theta \in \Theta$ can be recovered from the maximum likelihood estimator
\begin{equation}
\label{eq:MLE}
\theta^*_T(Z) = \frac{\int_0^T f(Z_t) \,\mathrm{d}Z_t }{\int_0^T |f(Z_t)|^2\,\mathrm{d}t}    
\end{equation}
in the limit when $T \rightarrow \infty$, see \cite{kutoyants2013statistical,liptser2013statistics}. Furthermore, in this case the McKean-Vlasov dynamics suggested in this paper can be solved explicitly, and the corresponding means are directly related to \eqref{eq:MLE}, see Appendix \ref{app:MLE} and \cite{nusken2019state}. 
It is well known that the estimator \eqref{eq:MLE} can be inaccurate when evaluated on paths that only approximately satisfy \eqref{eq:par est dynamics}, for instance when \eqref{eq:par est dynamics} represents a reduced description of an underlying multiscale dynamics \cite{ait2005often,pavliotis2007parameter,zhang2005tale}. A common approach towards addressing this problem is to subsample the data, see \cite{abdulle2020drift,azencott2013sub,azencott2010adaptive,gailus2017statistical,gailus2018discrete,kalliadasis2015new,krumscheid2013semiparametric,krumscheid2015data,papavasiliou2009maximum,pavliotis2012parameter} for methodological aspects. For specific  applications see  \cite{nolen2012multiscale} (multiscale inverse problems), \cite{ait2005often,olhede2010frequency,zhang2005tale} (economics and finance), and \cite{cotter2009estimating,ying2019bayesian}  (ocean and atmospheric science). 

A different approach towards robustness of the estimator \eqref{eq:MLE} has been taken in \cite{diehl2016pathwise}, addressing the discontinuity of the It{\^o} integral $\int_0^T f(Z_t) \,\mathrm{d}Z_t$.
To resolve this issue, the authors suggest replacing It{\^o} by Stratonovich integration (motivated by the Wong-Zakai theorem \cite[Theorem 9.3]{friz2020course} and entailing a correction term involving $\nabla f$), and subsequently using rough paths integration instead of Stratonovich integration (relying on a suitable lift $C([0,T];\mathbb{R}^d) \ni (Z_s)_{0 \le s \le T} \mapsto (\mathbf{Z}_s)_{0 \le s \le T} \in \mathscr{C}^{\alpha}([0,T];\R^{d})$). 
The resulting estimator
\begin{equation}
\theta_T^{\mathrm{RP}}(\mathbf{Z}) = \frac{\int_0^T f(Z_t) \,\mathrm{d}\mathbf{Z}_t - \tfrac{1}{2} \int_0^T \operatorname{Trace}(\nabla f)(Z_t) \, \mathrm{d}t }{\int_0^T |f(Z_t)|^2\,\mathrm{d}t}
\end{equation}
can then be shown to be continuous as a map from $\mathscr{C}^{\alpha}([0,T];\R^{d})$ to $\mathbb{R}$. The construction of the RP-EnKF dynamics \eqref{eq:numerical scheme} follows a similar line of reasoning, but our approach is applicable to situations where the path $(Z_t)_{t \ge 0}$ is contaminated by noise ($R \neq 0$) and where $F(z,\theta)$ is nonlinear in $\theta$. The latter generalisation makes our method suitable to applications involving deep learning, that is, when the drift in \eqref{eq:par est dynamics} is parameterised by a neural network as in \cite{gottwald2020supervised}. 
We discuss the idea of subsampling the observed data path in the context of constructing an appropriate rough path lift in Remark \ref{rem:subsampling} below in Section \ref{sec: numerical examples}.

\subsection{From the filtering problem to the McKean-Vlasov equation}
\label{sec:McKean formulation}

In this section, we discuss the construction of the McKean-Vlasov system \eqref{eq: true McKean Vlasov}-\eqref{eq:Poisson Gamma intro} and the nature of the approximation in \eqref{eq: main system stochastic version} and \eqref{eq: main system}.
The general idea goes back to \textcolor{red}{\cite{crisan2010approximate,yang2013feedback}}, and various modifications have been proposed in \cite{nusken2019state,pathiraja2020mckean,reich2019data}. Our formulation combines Stratonovich integration (as in \cite{pathiraja2020mckean}) in order to later invoke Wong-Zakai type approximation results with a stochastic innovation term (as in \cite{nusken2019state,reich2015probabilistic}) as required for the case of correlated model and observation noise.
For the sake of clarity, we repeat the PDEs \eqref{eq:Poisson K intro} and \eqref{eq:Poisson Gamma intro} in their respective index forms (using Einstein's summation convention),
\begin{equation}
\label{eq:Poisson indices}
\partial_i \left(\widehat{\pi}_t \left(K_t^{ij} - (BC)^{ij} \right) \right) = - \widehat{\pi}_t \left(h^j - \widehat{\pi}_t[h^j] \right), \qquad j = 1, \ldots, d, 
\end{equation}
and 
\begin{equation}
\label{eq:Poisson indices2}
\partial_i (\widehat{\pi}_t\Xi_t^i) = \frac{1}{2} \widehat{\pi} \left( (K_tC^{-1})^{ij} \partial_i h^j - \widehat{\pi}_t [(K_tC^{-1})^{ij} \partial_i h^j]\right).
\end{equation}
\textcolor{red}{In the case when model and observation noise are uncorrelated ($U = 0$), testing \eqref{eq:Poisson indices} with $h^j$ reveals that the system \eqref{eq:Poisson indices}-\eqref{eq:Poisson indices2} is equivalent to the system (2.5)-(2.6) obtained
in \cite{pathiraja2020mckean}.} 
The McKean-Vlasov system \eqref{eq: true McKean Vlasov}-\eqref{eq:Poisson Gamma intro} solves the filtering problem in the following sense:
\begin{proposition}
\label{prop:McKean}
Let $T > 0$,
assume that the system \eqref{eq:signal obs} admits a unique solution $(X_t, Y_t)$ and that the Zakai equation associated to the corresponding filtering problem  is well posed. Let $\pi$ be the conditional law of $X$ given $Y$ as defined in \eqref{eq:filtering measures} and assume that $\pi_t$ admits a $C^1$-density with respect to the Lebesgue measure, $\mathbb{P}$-a.s., for all $t \in[0,T].$
Moreover, assume that the McKean-Vlasov equation \eqref{eq: true McKean Vlasov} admits a unique solution $(\widehat{X}_t)_{t \in [0,T]}$ such that its conditional law $\widehat{\pi}_t$ admits a $C^1$-density with respect to the Lebesgue measure, $\mathbb{P}$-a.s.. 
Assume that $K$ and $\Xi$ are predictable stochastic processes with values in $C^1(\mathbb{R}^D;\mathbb{R}^{D \times d})$ and $C^1(\mathbb{R}^D;\mathbb{R}^D)$, respectively, independent from $(\widehat{V}_t)_{t \ge 0}$ and $(\widehat{W}_t)_{t \ge 0}$, and such that the PDEs \eqref{eq:Poisson K intro} and \eqref{eq:Poisson Gamma intro} are satisfied, $\mathbb{P}$-a.s.. Then $\pi_t = \widehat{\pi}_t$, for all $t \in [0,T]$.
\end{proposition}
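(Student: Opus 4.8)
The plan is to show that the process of conditional laws $t \mapsto \widehat{\pi}_t$ associated to solutions of \eqref{eq: true McKean Vlasov} solves the Kushner--Stratonovich equation \eqref{eq:KS SPDE}, and then to invoke the assumed well-posedness of the Zakai equation (equivalently, of \eqref{eq:KS SPDE}) to deduce $\widehat{\pi}_t = \pi_t$ for all $t \in [0,T]$, $\mathbb{P}$-a.s.; since both sides have $C^1$-densities and the test functions used below form a measure-determining class, this gives equality of the random measures. Fixing $\phi \in C^2_b(\mathbb{R}^D)$, the first step is to rewrite the Stratonovich term in \eqref{eq: true McKean Vlasov} in It\^o form, $K_s(\widehat{X}_s) C^{-1} \circ \mathrm{d}I_s = K_s(\widehat{X}_s) C^{-1}\, \mathrm{d}I_s + \tfrac12\, \mathrm{d}\langle K_{\cdot}(\widehat{X}_{\cdot}) C^{-1}, I_{\cdot}\rangle_s$, with the cross-variation read off from the martingale parts of $\widehat{X}$ and $I$ (this is where $B = G^{1/2} U^\top C^{-1}$ and the independence of $(\widehat{W},\widehat{V})$ from $(W,V)$ enter). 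Applying It\^o's formula to $\phi(\widehat{X}_s)$ and integrating over $[0,t]$ then decomposes $\phi(\widehat{X}_t) - \phi(\widehat{X}_0)$ into: (a) Lebesgue integrals built from the drift $f$, from $\Xi_s$, from the It\^o--Stratonovich correction, from $\tfrac12 \nabla^2\phi : \mathrm{d}\langle\widehat{X}\rangle_s$ (whose quadratic-variation matrix is $G$ together with terms involving $K_s$), and from the $-h(\widehat{X}_s)\,\mathrm{d}s$ part of $\mathrm{d}I_s$; (b) an $\mathrm{d}Y_s$-integral of the form $\nabla\phi(\widehat{X}_s)^\top K_s C^{-1}\, \mathrm{d}Y_s$; and (c) stochastic integrals against $\widehat{W}$ and $\widehat{V}$.

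Next I would take conditional expectation given $\mathcal{Y}_t$. The integrals against $\widehat{W}$ and $\widehat{V}$ vanish: their integrands are adapted to the filtration $\mathcal{F}^{\widehat{W},\widehat{V},\widehat{X}_0}_{\cdot}$ enlarged by the independent $\sigma$-algebra $\mathcal{Y}_\infty$, with respect to which $\widehat{W},\widehat{V}$ remain Brownian motions, so these integrals are centred martingales that conditioning on the smaller $\sigma$-algebra $\mathcal{Y}_t$ annihilates. For the Lebesgue and $\mathrm{d}Y_s$-integrals I would pass the conditional expectation inside, using a conditional Fubini theorem (for the $\mathrm{d}Y_s$-integral via Riemann-sum approximation and $L^2$-continuity of conditional expectation, noting that increments of $Y$ up to time $t$ are $\mathcal{Y}_t$-measurable) together with the conditional-independence identity $\mathbb{E}[g(\widehat{X}_s) \mid \mathcal{Y}_t] = \mathbb{E}[g(\widehat{X}_s) \mid \mathcal{Y}_s] = \widehat{\pi}_s[g]$, which holds because $\widehat{X}_s$ is a measurable functional of $(Y_r)_{r \le s}$, $(\widehat{W}_r,\widehat{V}_r)_{r \le s}$ and $\widehat{X}_0$, whereas the future increments $(Y_r - Y_s)_{r \ge s}$ are driven entirely by quantities living on the original probability space and are therefore conditionally independent of $\widehat{X}_s$ given $\mathcal{Y}_s$. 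The result is a closed (nonlinear) equation for $t \mapsto \widehat{\pi}_t[\phi]$, driven by $\mathrm{d}s$ and $\mathrm{d}Y_s$, in which only $\widehat{\pi}_s$-averaged quantities appear.

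It remains to identify this equation with \eqref{eq:KS SPDE}. Testing the Poisson equation \eqref{eq:Poisson K intro} (equivalently \eqref{eq:Poisson indices}) against $\phi$ and integrating by parts --- legitimate because $\widehat{\pi}_s$ has a $C^1$-density --- rewrites $\widehat{\pi}_s[(\nabla\phi)^\top K_s]$ as $\widehat{\pi}_s[\phi h^\top] - \widehat{\pi}_s[h^\top]\widehat{\pi}_s[\phi] + \widehat{\pi}_s[(B\nabla\phi)^\top]$, so that the $\mathrm{d}Y_s$-term together with the $-h(\widehat{X}_s)\,\mathrm{d}s$-term assemble into the innovation structure $\bigl(\widehat{\pi}_s[\phi h^\top] - \widehat{\pi}_s[h^\top]\widehat{\pi}_s[\phi] + \widehat{\pi}_s[(B\nabla\phi)^\top]\bigr)\bigl(C^{-1}\,\mathrm{d}Y_s - \widehat{\pi}_s[h]\,\mathrm{d}s\bigr)$ of \eqref{eq:KS SPDE}. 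The leftover $\mathrm{d}s$-terms --- the It\^o--Stratonovich correction and the $K_s$-dependent part of $\tfrac12 \nabla^2\phi : \mathrm{d}\langle\widehat{X}\rangle_s$ --- are precisely cancelled by testing the second Poisson equation \eqref{eq:Poisson Gamma intro} (equivalently \eqref{eq:Poisson indices2}) against $\phi$, leaving only $\widehat{\pi}_s[\mathcal{L}\phi]\,\mathrm{d}s$ with $\mathcal{L}$ the signal generator. Hence $\widehat{\pi}_t[\phi]$ satisfies \eqref{eq:KS SPDE} for every $\phi \in C^2_b(\mathbb{R}^D)$, and uniqueness of the filtering equation concludes.

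I expect the main obstacle to be the conditional-expectation step: justifying rigorously that conditioning commutes with the $\mathrm{d}Y_s$-integral and establishing the conditional-independence identity $\mathbb{E}[g(\widehat{X}_s) \mid \mathcal{Y}_t] = \widehat{\pi}_s[g]$ with enough control in $s$ to pass to the limit in the Riemann sums; this is where the integrability of $\widehat{X}_0$, the boundedness and regularity of $f$ and $h$, and the assumed regularity and predictability of $K,\Xi$ are used. The algebraic matching in the third step is the other substantial ingredient, but it is a direct (if lengthy) computation dictated entirely by the construction of the Poisson equations \eqref{eq:Poisson K intro}--\eqref{eq:Poisson Gamma intro}, amounting to a Stratonovich/rough-path reworking of the classical feedback-particle-filter consistency argument.
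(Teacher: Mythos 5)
Your proposal is correct in spirit and shares the core mechanics with the paper's proof (It\^o expansion, conditional expectation, and matching against a filtering SPDE using the Poisson equations), but it takes a genuinely different route: you work with the \emph{normalised} conditional law $\widehat{\pi}_t$ and aim to show it satisfies the nonlinear Kushner--Stratonovich SPDE \eqref{eq:KS SPDE}, whereas the paper works with the \emph{unnormalised} quantity $\widehat{\rho}_t[\phi] := \mathbb{E}[\phi(\widehat{X}_t) l_t\,|\,\mathcal{Y}_t]$ and compares its evolution against the (linear, Stratonovich-form) Zakai equation, using the conditional-independence factorisation $\widehat{\rho}_t[\phi] = \widehat{\pi}_t[\phi]\rho_t[\mathbf{1}]$ to translate back. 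This distinction matters in two ways. First, the hypothesis of the proposition is well-posedness of the \emph{Zakai} equation, which your argument invokes only indirectly via the parenthetical ``equivalently, of \eqref{eq:KS SPDE}''; passing from Zakai uniqueness to uniqueness of the nonlinear KS SPDE is standard (exponentiate the innovation to de-normalise a putative KS solution and check it solves Zakai), but it is an additional step you should either spell out or avoid by working with $\widehat{\rho}_t$ directly as the paper does. Second, the Zakai route is linear in the measure, so the algebraic matching and the conditional-expectation manipulations are technically lighter; the paper explicitly remarks (just after Proposition \ref{prop:McKean}) that working via Zakai is what lets them circumvent the stringent regularity condition in \cite[Assumption 3.4]{pathiraja2020mckean}, which works directly with the nonlinear equation --- exactly the route you propose. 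Your conditional-independence identity $\mathbb{E}[g(\widehat{X}_s)\,|\,\mathcal{Y}_t] = \widehat{\pi}_s[g]$ and the vanishing of the $\widehat{W},\widehat{V}$-integrals upon conditioning are both correct (the paper cites \cite[Appendix B]{coghi2019stochastic} for this point), and the Poisson-equation matching is the same computation in both versions. In short: your proof is a viable alternative but targets the nonlinear normalised equation, incurring an extra uniqueness-translation step and heavier regularity requirements than the paper's deliberately chosen unnormalised route.
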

To prove Proposition \ref{prop:McKean},
we define the unnormalised conditional law associated to the McKean-Vlasov dynamics \eqref{eq: true McKean Vlasov},
\begin{equation}
\widehat{\rho}_t[\phi] := \mathbb{E}[\phi(X_t)l_t | \mathcal{Y}_t],
\end{equation}
where the likelihood $l_t$ is defined in \eqref{eq:likelihood}. Comparing the evolution of $\widehat{\rho}_t$ with the solution of the Zakai equation \cite[Section 3.5]{bain2008fundamentals} allows us to derive the PDEs \eqref{eq:Poisson K intro} and \eqref{eq:Poisson Gamma intro}. This approach allows us to circumvent the stringent regularity condition in \cite[Assumption 3.4]{pathiraja2020mckean}.
For details see Appendix \ref{app:proof McKean}.

One of the numerical challenges posed by the system of equations \eqref{eq: true McKean Vlasov}-\eqref{eq:Poisson Gamma intro} is to obtain (approximate) solutions $K$ and $\Xi$ to the PDEs \eqref{eq:Poisson K intro} and \eqref{eq:Poisson Gamma intro}. We sidestep this problem by using constant-in-space approximations leading to the system \eqref{eq: main system stochastic version} of ensemble Kalman filter type. A similar correspondence has been observed in \cite{taghvaei2018kalman} and is optimal in the following sense:

\begin{lemma}
\label{lem: approximate K}
Let $\pi \in \mathcal{P}(\R^D)$ and $K: \R^D \to \R^{D\times d}$ be a (weak) solution to \eqref{eq:Poisson K intro}. Denote by $\widetilde{K}$ the best constant approximation in least-squares sense, that is
\begin{equation}
\label{eq: least-squares}
    \widetilde{K} = \argmin_{\widetilde{K} \in \mathbb{R}^{D \times d}} \int_{\mathbb{R}^D} \Vert K(x) - \widetilde{K} \Vert_F^2 \, \mathrm{d}\pi(x),
\end{equation}
where 
$\Vert \cdot \Vert_F$ denotes the Frobenius norm. Then $\widetilde{K}$ is given by
\begin{equation}
\widetilde{K} = \int_{\mathbb{R}^D} x \left( h(x) - \pi[h] \,  \right)^{\top}\mathrm{d}\pi(x) - BC \in \mathbb{R}^{D \times d}.
\end{equation}
Moreover, let $\Xi$ be a solution to \eqref{eq:Poisson Gamma intro} with $K$ replaced by $\widetilde{K}$. Then the best constant approximation of $\Xi$ in least-squares sense (with respect to the Euclidean norm) is given by
\begin{equation*}
\widetilde{\Xi}^\gamma = -\frac{1}{2}\operatorname{Trace}(\widetilde{K}C^{-1}\pi[x^{\gamma}(Dh-\pi[Dh])), \qquad \qquad 1 \le \gamma \le D.
\end{equation*}
\end{lemma}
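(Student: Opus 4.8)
I would begin from the elementary observation that \eqref{eq: least-squares} is just the problem of best-approximating $K$ by a constant in the Hilbert space $L^2(\pi;\R^{D\times d})$, so its solution is the orthogonal projection of $K$ onto the constants, i.e.\ the $\pi$-average
\[
\widetilde{K} \;=\; \int_{\R^D} K(x)\,\mathrm{d}\pi(x) \;=:\; \pi[K].
\]
This is seen by expanding $\int\Vert K(x)-\widetilde K\Vert_F^2\,\mathrm{d}\pi(x)$ into a quadratic in $\widetilde K$ (using $\pi(\R^D)=1$) and minimising, or equivalently by differentiating in $\widetilde K$ and equating to zero. (One tacitly assumes $K\in L^2(\pi)$, which is what makes the objective in \eqref{eq: least-squares} finite in the first place; since $\pi$ is a probability measure this also gives $K\in L^1(\pi)$, so $\pi[K]$ is well defined.) Thus the real content of the lemma is to \emph{evaluate} $\pi[K]$ — and afterwards $\pi[\Xi]$ — by exploiting the Poisson equations these objects satisfy.

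To evaluate $\pi[K]$ I would test the weak form of \eqref{eq:Poisson K intro} (equivalently \eqref{eq:Poisson indices}) against the affine functions $\varphi_\ell(x)=x^\ell$, $\ell=1,\dots,D$. The structural point is that an affine test function has \emph{constant} gradient $\nabla\varphi_\ell=e_\ell$, so integrating the divergence by parts collapses the differential identity into a purely algebraic relation among $\pi[K]$, $BC$ and $\operatorname{Cov}_\pi(x,h)=\pi[x\,(h-\pi[h])^\top]$; solving this relation for $\pi[K]$ — keeping track of the sign in \eqref{eq:Poisson indices} — reproduces the asserted expression for $\widetilde K$.

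For the second assertion the same two-step recipe applies. Because $\widetilde K$ is now a \emph{constant} matrix, substituting it for $K$ in \eqref{eq:Poisson Gamma intro} (equivalently \eqref{eq:Poisson indices2}) turns the right-hand side into $\tfrac12\,\pi\,\bigl(\psi-\pi[\psi]\bigr)$ with $\psi(x):=\operatorname{Trace}\bigl(\widetilde K C^{-1}(\nabla h(x))^\top\bigr)$ a scalar function of $x$ (and note that this right-hand side integrates to zero, as a divergence must). Testing once more against $\varphi_\gamma(x)=x^\gamma$ and integrating by parts yields $\pi[\Xi^\gamma]=-\tfrac12\operatorname{Cov}_\pi(x^\gamma,\psi)$; since $\widetilde K C^{-1}$ is constant it can be pulled out of the $\pi$-expectations and out of the trace by linearity, giving $\operatorname{Cov}_\pi(x^\gamma,\psi)=\operatorname{Trace}\bigl(\widetilde K C^{-1}\,\pi[x^\gamma(Dh-\pi[Dh])]\bigr)$, which is precisely the claimed formula for $\widetilde\Xi^\gamma$ (up to the transpose convention relating $\nabla h$ and $Dh$).

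The one point requiring genuine care — and what I expect to be the only real obstacle — is that the test functions $\varphi_\ell(x)=x^\ell$ are affine, hence not compactly supported, whereas a weak solution is defined via $C^\infty_c$ test functions. I would therefore pass from $\varphi_\ell$ to the truncations $\varphi_{\ell,n}(x)=x^\ell\chi_n(x)$, with $\chi_n\in C^\infty_c$ a cutoff equal to $1$ on $B_n(0)$, and let $n\to\infty$; the boundary-type remainder terms then vanish thanks to the standing integrability — $h$ and $\nabla h$ are bounded (by the hypothesis $h\in C^2_b$), $x\in L^1(\pi)$ (needed anyway for $\operatorname{Cov}_\pi(x,h)$ and for \eqref{eq: approx P} to be meaningful), and $K,\Xi\in L^2(\pi)\subset L^1(\pi)$. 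Beyond this truncation bookkeeping, the proof is just the projection identity of the first paragraph together with the linearity of the trace and of the integral.
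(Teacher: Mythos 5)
Your plan is exactly the paper's proof: identify the minimiser in \eqref{eq: least-squares} as the $L^2(\pi)$-projection $\pi[K]$, then evaluate $\pi[K]$ and $\pi[\Xi]$ by testing the weak Poisson equations \eqref{eq:Poisson K intro} and \eqref{eq:Poisson Gamma intro} (the paper uses the weak form \eqref{eq: weak poisson gamma} for the latter) against the linear test functions $\phi(x)=x^{\ell}$, $\phi(x)=x^{\gamma}$, and then exploit linearity of the trace and the constancy of $\widetilde{K}C^{-1}$. Your truncation-by-cutoff remark is a legitimate refinement that the paper silently skips; one caveat worth flagging is that carrying out your integration by parts actually yields $\pi[K]=\operatorname{Cov}_{\pi}(x,h)+BC$ (consistent with the displayed identity $\pi[K]-BC=\operatorname{Cov}_{\pi}(x,h)$ in the paper's own proof, and with $\widetilde{K}=P(\pi)C$ from \eqref{eq: approx P}), so the ``$-BC$'' in the lemma's statement and in the final line of the paper's proof is a sign typo which you should not aim to reproduce.
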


\begin{proof}
The variance \eqref{eq: least-squares} is minimised when $\widetilde{K} = \pi[K]$. To compute the expectation of $K$, we multiply equation \eqref{eq:Poisson K intro} on both sides by $x \in \R^D$ and integrate by parts to obtain
\begin{equation*}
    \pi[K] - BC = \int_{\R^D}(K(x) - BC) \,\mathrm{d}\pi(x) = \int_{\mathbb{R}^D} x \left( h(x) - \pi[h] \,  \right)^{\top}\mathrm{d}\pi(x).
\end{equation*}
Hence, $\tilde{K} = \pi[K] = \operatorname{Cov}_{\pi}(x,h) - BC$.

We now show the corresponding statement for $\widetilde{\Xi}$ when  $K$ is replaced by $\widetilde{K}$ in \eqref{eq:Poisson Gamma intro}. Again $\widehat{\Xi} = \mathbb{E}[\Xi]$, since the expectation minimises the variance. For $1\leq \gamma \leq D$, we input the test function $\phi(x) = x^{\gamma}$ into the weak formulation \eqref{eq: weak poisson gamma} of \eqref{eq:Poisson Gamma intro} to obtain the desired expression for $\tilde{\Xi}$.
\end{proof}

Constructing numerical approximations for \eqref{eq:Poisson K intro}-\eqref{eq:Poisson Gamma intro} beyond the constant-in-space approximation is a topic of ongoing research. We mention in particular the approach developed in \cite{taghvaei2020diffusion} and analysed in \cite{pathiraja2021analysis} based on diffusion maps as well as the method from \cite{nusken2021Stein} based on the Stein geometry \cite{duncan2019geometry,nusken2021steinLD}.

\subsection{Literature on McKean-Vlasov dynamics}
McKean-Vlasov equations are stochastic differential equations whose coefficients depend on the law of the solution. They are sometimes called law-dependent equations.
McKean-Vlasov equations have been the subject of several studies starting from the seminal work of McKean \cite{mckean1966} and Dobrushin \cite{dobrusin1979vlasov}.
McKean-Vlasov equations arise as limit of mean-field interacting particle systems, when the number of particles goes to infinity. 
For a general introduction on the topic we refer the reader to Sznitman \cite{sznitman1991topics}. 

In recent years there has been an increased interest in mean-field particles with common noise, see \cite{coghi2016propagation, kurtz1999particle, kurtz2001numerical}
or \cite{carmona2018probabilisticII} in the case of mean-field games. In these type of systems the particles are subject to the same random perturbation and possibly additional independent noises. There is no averaging effect of the common perturbation when the number of particles increases. The limit object is again a law-dependent SDE, but this time the coefficient depends on the conditional law of the solution given the common noise. This is the case for equation \eqref{eq: main system}, where the coefficients $P$ and $\Gamma$ depend on the law of $X$ given $Y$.

McKean-Vlasov equations from a rough path perspective were studied for the first time in \cite{cass2015evolving} and more recently in the twin papers \cite{ bailleul2019propagation,bailleul2020solving}. 
In both of these works the equation is driven by a random rough path  that is quite general and can describe the independent noise, the common noise or both. This gives the additional difficulty of needing to keep track of the rough path as a $L^p$-valued path. 
In \cite{cass2015evolving} only the drift of the equation depends on the law of the solution, the coefficients in front of the noise depend only on the state of the solution. The more recent work \cite{bailleul2020solving} generalises that approach to include law-dependent coefficients. The authors use the approach by Gubinelli on controlled rough paths (see Section \ref{sec: background in rough paths} for a brief introduction on the topic). In order to do this, they need Lions' approach to calculus in  measure spaces endowed with the Wasserstein metric. The equation is then solved as a fixed-point in the mixed $\R^d$ and $L^p$-space.
In \cite{coghi2020pathwise} the case of pathwise McKean-Vlasov equation with additive noise is considered. The basic techniques used are similar to the ones used in the rough-path case, but the need for rough paths is removed thanks to the additive noise. See also \cite{tanaka1982limit}.

McKean-Vlasov equations with a rough common noise have been studied recently in \cite{coghi2019rough}. This is the first time that a rough McKean-Vlasov equation is studied when there is a clear separation between independent Brownian motions and a common deterministic noise. In \cite{coghi2019rough}, the common noise coefficient depends on both the state of the solution and its law. In equation \eqref{eq: main system}, the coefficient $P$ only depends on the law of $\widehat{X}$, which simplifies the problem to some extent, but also allows us to use a different approach, where we treat the stochastic and the rough integrals in two separated steps. As it will be clear from the proofs in Section \ref{sec: rough McKean-Vlasov}, there is no need to create a joint rough path (or rough driver).
All previous works on rough McKean-Vlasov equations deal with bounded coefficients, which cannot be applied here, as the ceofficient $P$ has linear growth in the measure of the solution.

Very recently the authors of \cite{friz2021rough} developed a theory of mixed rough and stochastic differential equations under Lipschitz and boundedness conditions on the coefficients and they plan to address the application to McKean-Vlasov equations with common noise in a forthcoming paper.

\section{Preliminaries}
\label{sec: preliminaries}
\subsection{Notation}
Given a metric space $(S,d_S)$, we call $\mathcal{P}(S)$ the space of probability measures on $S$. For $\pi\in \mathcal{P}(S)$ we denote by $\pi[x] = \int_{S}x\pi(\mathrm{d}x)$ the mean of $\pi$ and by $\pi[\phi] = \int_{S}\phi(x)\pi(\mathrm{d}x)$ the integral of a measurable function $\phi:S \to \R$ in $\pi$.

Let $\rho >0$, if $S$ is a normed space with norm $|\cdot|$ and $\pi \in \mathcal{P}(S)$, we denote by
\begin{equation*}
    M^{\rho}(\pi) = \int_{S} |x|^\rho \pi(\mathrm{d}x),
    \qquad
    \overline{M}^{\rho}(\pi) = \int_{S}\big|x - \pi[x]\big|^\rho \pi(\mathrm{d}x),
\end{equation*}
 the $\rho$-moment of $\pi$ and the $\rho$-central moment of $\pi$, respectively.
For $\rho \geq 1$, we call $\mathcal{P}_{\rho}(S)\subset \mathcal{P}(S)$ the space of probability measures $\pi$ on $S$ such that $M^{\rho}(\pi) < \infty$. We endow this space with the $\rho$-Wasserstein metric
\begin{equation*}
W^\rho_{\rho,S}(\mu,\nu):= \inf_{m\in \Gamma{(\mu,\nu)}} \iint_{S\times S}d^\rho_S(x,y) m(\mathrm{d} x,\mathrm{d}y),
\end{equation*}
where $\Gamma(\mu,\nu)$ is the set couplings between $\mu$ and $\nu$. 
For ease of notation, we denote by $W_{\rho}$ the Wasserstein metric on $\R^D$ and by $W_{\rho,[0,T]}$ the Wasserstein metric on $(C([0,T],\R^D), \|\cdot \|_{\infty})$.
Given a function $\varphi \in C(\R^m, \R^n)$ and a path $x \in C([0,T], \R^m)$, we define
\begin{equation*}
[\varphi]_{s,t}^{k,x} := \int_{0}^{1} (1-\theta)^{k-1} \varphi(x_s + \theta \delta x_{s,t}) \, \mathrm{d}\theta,
\end{equation*}
where $\delta x_{s,t} := x_t - x_s$.
If $\varphi \in C^1(\R^m, \R^n)$, we use the following notation for the Taylor expansion
\begin{equation*}
\delta \varphi(x_{\cdot})_{s,t} = [D\varphi]_{s,t}^{1,x} \delta x_{s,t},
\qquad
[\varphi]_{s,t}^{1,x} - \varphi(x_s) = [D \varphi]_{s,t}^{2,x} \delta x_{s,t}.
\end{equation*}
Throughout the paper we use $D\varphi$ for the usual Fr\'echet derivative and $\nabla \varphi = D\varphi^\top$.
We sometimes use the notation $L^{\rho}_{\omega}:=L^{\rho}(\Omega,\R^d)$. For a stochastic process $X$ on a filtered probability  space $(\Omega, \mathcal{F}, (\mathcal{F}_t)_{t \ge 0}, \mathbb{P})$, we denote the conditional expectation by $\mathbb{E}_s[X_t]:= \mathbb{E}[X_t \mid \mathcal{F}_s]$.

\subsection{Background in rough paths}
\label{sec: background in rough paths}

The theory of rough paths is a framework that allows well-posedness and stability properties for equations of the form 
\begin{equation} \label{differential equation}
    \dot{X}_t = \xi(X_t) \dot{Y}_t , \quad X_0 \in {\R}^{d},
\end{equation}
where $Y$ is a path of regularity lower than the regularity assumptions amenable to classical calculus. For $\alpha > 0$, we denote by $C_2^{\alpha}([0,T];\R^{d})$ the set of all continuous functions
$$
g : \{ (s,t) \in [0,T]^2 : s < t \} \rightarrow {\R}^{d}
$$
such that there exists a constant $C$ with $|g_{s,t}| \leq C |t-s|^{\alpha}$ and we denote by $[g]_{\alpha}$ the infimum over all such constants. We denote by $\|g\|_{\alpha} := [g]_{\alpha}+|g_0|$ the $\alpha$-Hölder norm. We write $C^{\alpha}([0,T];\R^d)$ for the set of paths $f : [0,T] \rightarrow \R^d$ such that $\delta f \in C_2^{\alpha}([0,T];\R^d)$, where we have defined $\delta f_{s,t} := f_t - f_s$.

A \emph{rough path} is a pair $\bY = (Y,\mathbb{Y}) \in \mathscr{C}^{\alpha}([0,T];\R^{d}) \subset C^{\alpha}([0,T];\R^{d}) \times C^{2 \alpha}_2([0,T];\R^{d \times d})$ such that 
\begin{align}
     \mathbb{Y}_{s,t} - \mathbb{Y}_{s,u} - \mathbb{Y}_{u,t} & = Y_{s,u} \otimes Y_{u,t} \label{Chen's relation}  .
\end{align}
We equip $\mathscr{C}^{\alpha}([0,T];\R^{d})$ with its subset topology which we shall call the rough path topology. Relation \eqref{Chen's relation}, commonly referred to as Chen's relation, encodes the algebraic property between a path and its \emph{iterated integral}, viz the formal equality
$$
\mathbb{Y}_{s,t} = \int_s^t Y_{s,r} \otimes \mathrm{d} Y_r.
$$
When $\alpha \in (\frac13,\frac12]$ the above integral is in general not canonically defined using functional analysis. However, in the case of $Y$ being a sample path of the Brownian motion, $B$, one can use probability theory to define iterated integrals using e.g. Itô integration or Stratonovich integration. We denote by $\bB := (B,\mathbb{B}^{Ito}) := (B,\int B \otimes \mathrm{d} B)$ and $\bB^{Strat} := (B,\mathbb{B}^{Strat}) := (B, \int B \otimes \circ \mathrm{d} B)$ these (random) rough paths, respectively. It is classical that 
\begin{equation} \label{ito to strat}
    \mathbb{B}_{s,t}^{Ito} = \mathbb{B}_{s,t}^{Strat} \textcolor{red}{-} \frac12(t-s) I_{d \times d}.
\end{equation}
The Stratonovich rough path is an example of a \emph{geometric} rough path, that is to say it is in the closure in the rough path topology of the image of the mapping 
$$
Y \mapsto (Y, \int Y \otimes \mathrm{d} Y)
$$
defined on $BV([0,T]; \R^d)$.

Given two rough paths $\bY^1, \bY^2 \in \mathscr{C}^{\alpha}([0,T],\R^d)$ we define the following distance,
\begin{equation*}
    \rho_{\alpha}(\bY^1, \bY^2) := \|Y^1-Y^2\|_{\alpha} + \|\YY^1 - \YY^2\|_{2\alpha}.
\end{equation*}
We refer the reader to \cite{friz2020course} for a more comprehensive discussion of the rough paths notations and concepts used in this paper.

\subsection{Controlled rough paths and rough differential equations}
\label{sec: controlled rough-paths}
To use rough paths for a solution theory of equations of the form \eqref{differential equation}, which we rewrite with the formal expression
\begin{equation} \label{rough differential equation}
    \mathrm{d} X_t = \xi(X_t) \,\mathrm{d} \bY_t,
\end{equation}
we start with the ansatz that the solution $X$ takes the form of a Taylor-like expansion 
\begin{equation} \label{controlled}
\delta X_{s,t} = X_s' \delta Y_{s,t} + X_{s,t}^{\sharp},    
\end{equation}
where $X^{\sharp}$ is of higher regularity than $X$, and $X'$ is the so-called Gubinelli derivative. We denote by $\mathscr{D}^{2\alpha}_Y([0,T];\R^n)$ the set of all pairs $(X,X')$ such that $X^{\sharp}$ implicitly defined via \eqref{controlled} satisfies $(X',X^{\sharp}) \in C^{\alpha}([0,T];\mathcal{L}(\R^d;\R^n)) \times C^{2 \alpha}_{2}([0,T];\R^n)$, which also induces the topology on $\mathscr{D}^{2\alpha}_Y([0,T];\R^n)$. 

The sewing lemma provides a continuous integration mapping 
$$
\begin{array}{ccc}
\mathscr{D}^{2\alpha}_Y([0,T];\mathcal{L}({\R}^n,{\R}^d)) & \longrightarrow & \mathscr{D}^{2\alpha}_Y([0,T];{\R}^n) \\
(X,X') & \longmapsto & \left( \int X_r \, \mathrm{d} \bY_r, X\right), 
\end{array}
$$
where 
$$
X_{s,t}^{\natural} := \int_s^t X_r \mathrm{d} \bY_r - X_s Y_{s,t} - X_s' \mathbb{Y}_{s,t}
$$
satisfies $|X_{s,t}^{\natural}| \leq C|t-s|^{3 \alpha}$ for some constant $C$ only depending on $\alpha$. A solution of \eqref{rough differential equation} can now be defined as a fixed point of the composition of the mappings
$$
\begin{array}{ccccc}
\mathscr{D}^{2\alpha}_Y([0,T];{\R}^n) & \longrightarrow & \mathscr{D}^{2\alpha}_Y([0,T];\mathcal{L}({\R}^n,{\R}^d)) & \longrightarrow & \mathscr{D}^{2\alpha}_Y([0,T];{\R}^n) \\
(X,X')  & \longmapsto & (\xi(X),\xi(X)') = (\xi(X),\nabla \xi(X) X') & \longmapsto & \left( \int \xi(X_r) \mathrm{d} \bY_r, \xi(X)\right) .
\end{array}
$$
From the sewing lemma and the definition of the integration mapping we see that we could equivalently define the solution of \eqref{rough differential equation} as a path $X : [0,T] \rightarrow \R^n$ such that 
$$
X^{\natural}_{s,t} : = \delta X_{s,t} - \xi(X_s) Y_{s,t} - \nabla \xi(X_s) \xi(X_s) \mathbb{Y}_{s,t}
$$
satisfies $|X^{\natural}_{s,t}| \leq C|t-s|^{3 \alpha}$. The latter formulation is usually referred to as Davie's expansion/solution. 

One of the remarkable properties of rough path equations is the continuity of the Itô-Lyons map,
$$
\begin{array}{ccc}
\mathscr{C}^{\alpha}([0,T];{\R}^d) & \longrightarrow & C^{\alpha}([0,T];{\R}^n) \\
\bY & \longmapsto & X 
\end{array}
$$
where $X= X^{\bY}$ denotes the solution of \eqref{rough differential equation}, provided $f$ is regular enough. In fact, Theorem \ref{thm: main} is an analogous result for the McKean-Vlasov dynamics of the ensemble Kalman filter  treated in this paper.

\section{Stochastic rough McKean-Vlasov equations}
\label{sec: rough mckean}

In this section, unless otherwise specified we fix $T>0$ and $\rho \geq 1$. 
Moreover, $(\Omega, \mathcal{F}, (\mathcal{F}_t)_{t \ge 0}, \mathbb{P})$ is a complete filtered probability space that supports a standard $m$-dimensional Brownian motion $W$.
Recall that for $\pi \in \mathcal{P}_{\rho}(\R^D)$, we call $M^{\rho}(\pi)$ the $\rho$-moment of $\pi$ and $\overline{M}^{\rho}(\pi)$ the $\rho$-central moment of $\pi$.

\subsection{McKean-Vlasov SDEs with linear growth in the measure}

Consider the measurable functions $b:[0,T]\times \R^D \times \mathcal{P}_{\rho}(\R^D) \to \R^D$ and $\sigma : [0,T] \times \R^D \times \mathcal{P}_{\rho}(\R^D) \to \R^{D\times m}$ satisfying the following assumptions:
\begin{assumption}
	\label{assumptions coefficients time dependent}
	Assume that there exists a constant $C > 0$ such that
	\begin{enumerate}[label=(\roman*), ref= \ref{assumptions coefficients time dependent} (\roman*)]
		\item \label{assumpitons coefficients time dependent: linear growth} 
		(linear growth)  $\forall t \in [0,T]$, $\forall x \in \R^D$,$ \forall \pi \in \mathcal{P}_{\rho}(\R^D)$,
		\begin{equation}
		\label{eq: linear growth time dependent}
		|b(t,x,\pi) | , | \sigma(t,x, \pi) | \leq C (1 + M^{\rho}(\pi)  )^{\frac{1}{\rho}} ,
		\end{equation}
		\item  \label{assumpitons coefficients time dependent: lipschitz}
		(locally Lipschitz) $\forall t \in [0,T]$, $\forall x,y \in \R^D$, $\forall \pi, \nu \in \mathcal{P}_{\rho}(\R^D)$, 
		\begin{equation}
		\label{eq: lipschitz condition time dependent}
		| b(t,x, \pi) - b(t,y, \nu) | , 
		| \sigma(t,x, \pi) - \sigma(t,y, \nu) | 
		\leq C ( 1 + M^{\rho}(\pi))^{\frac{1}{\rho}} \left(|x - y| + W_{\rho,\R^D}(\pi, \nu)\right).
		\end{equation}
	\end{enumerate}
\end{assumption}
\begin{remark}
Notice that, if $\rho \geq \rho^{\prime}$, Assumption \ref{assumptions coefficients time dependent} with $\rho^{\prime}$ implies Assumption \ref{assumptions coefficients time dependent} with $\rho$.
\end{remark}
Consider the following McKean-Vlasov equation,
	\begin{equation}
	\label{eq: mckean with linear growth}
	\mathrm{d} X_t = b(t, X_t, \mathcal{L}(X_t)) \,\mathrm{d} t + \sigma(t, X_t, \mathcal{L}(X_t)) \,\mathrm{d} W_t,
	\qquad
	 X_t|_{t=0} = X_0 \in L^2_{\omega},
\end{equation}
where $W$ is an $m$-dimensional Brownian motion.

\begin{lemma}
	\label{lem: well-posedness mckean-vlasov linear growth}
	Under Assumption \ref{assumptions coefficients time dependent} with $\rho \geq 2$, equation \eqref{eq: mckean with linear growth} admits a unique strong solution in the classical sense. Moreover, the solution has continuous sample paths.
\end{lemma}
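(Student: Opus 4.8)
The plan is to prove well-posedness by a Picard-type fixed-point argument, but on the space of \emph{laws} rather than on paths directly, which is the standard route for McKean-Vlasov equations. Concretely, fix a terminal time $T > 0$ and consider the space $\mathcal{S}_T$ of continuous flows of measures $t \mapsto \mu_t \in \mathcal{P}_{\rho}(\mathbb{R}^D)$ on $[0,T]$ with $\sup_{t \le T} M^{\rho}(\mu_t) < \infty$, equipped with a suitable (possibly time-weighted) supremum-in-time Wasserstein metric. Given $\mu \in \mathcal{S}_T$, freeze the measure argument and consider the \emph{classical} (non-McKean) SDE $\mathrm{d}X_t = b(t,X_t,\mu_t)\,\mathrm{d}t + \sigma(t,X_t,\mu_t)\,\mathrm{d}W_t$. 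By Assumption \ref{assumptions coefficients time dependent}, for a frozen $\mu$ the coefficients $x \mapsto b(t,x,\mu_t)$ and $x \mapsto \sigma(t,x,\mu_t)$ are globally Lipschitz (with constant $\sup_t C(1+M^\rho(\mu_t))^{1/\rho}$) and of linear — in fact bounded, uniformly in $x$ — growth in $x$, so classical It\^o theory (e.g. \cite{friz2020course} or any standard reference) gives a unique strong solution $X^\mu$ with continuous sample paths. Define the map $\Phi(\mu)_t := \mathcal{L}(X^\mu_t)$; a fixed point of $\Phi$ is exactly a solution of \eqref{eq: mckean with linear growth}.

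The key steps, in order, are: (1) \textbf{A priori moment bounds.} Show that if $\sup_{t \le T}M^\rho(\mu_t) \le R$ then $\sup_{t\le T} \mathbb{E}|X^\mu_t|^\rho$ is bounded by a constant depending only on $R$, $T$, $C$, $\rho$ and $\mathbb{E}|X_0|^\rho$; this uses the linear-growth bound \eqref{eq: linear growth time dependent}, the Burkholder-Davis-Gundy inequality applied to the stochastic integral term, and Gr\"onwall. Since the bound \eqref{eq: linear growth time dependent} is of the form $C(1+M^\rho(\mu))^{1/\rho}$ — i.e. the coefficient is \emph{bounded in $x$} — this step is actually rather mild: the right-hand side of the moment estimate does not even feel the state variable, only the frozen measure. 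This lets us identify an invariant ball $\mathcal{B}_R \subset \mathcal{S}_T$ for $\Phi$ once $R$ is chosen large enough (taking $T$ small if needed, though boundedness in $x$ should let us avoid that). (2) \textbf{Stability / contraction estimate.} For two flows $\mu,\nu \in \mathcal{B}_R$, couple the driving noise and estimate $\mathbb{E}\sup_{s\le t}|X^\mu_s - X^\nu_s|^\rho$. Writing the difference of the two SDEs and using the locally-Lipschitz bound \eqref{eq: lipschitz condition time dependent} — crucially, the Lipschitz constant is controlled by $(1+M^\rho(\mu))^{1/\rho} \le (1+R)^{1/\rho}$ on the ball — together with BDG and Gr\"onwall, obtain
\begin{equation*}
\mathbb{E}\sup_{s\le t}|X^\mu_s - X^\nu_s|^\rho \le C_{R,T}\int_0^t W_{\rho,\mathbb{R}^D}^\rho(\mu_s,\nu_s)\,\mathrm{d}s.
\end{equation*}
Since $W^\rho_{\rho,\mathbb{R}^D}(\Phi(\mu)_t,\Phi(\nu)_t) \le \mathbb{E}|X^\mu_t - X^\nu_t|^\rho$, this gives $d_t(\Phi(\mu),\Phi(\nu))^\rho \le C_{R,T}\int_0^t d_s(\mu,\nu)^\rho\,\mathrm{d}s$ where $d_t$ is the running-sup Wasserstein distance; iterating (Picard iteration) yields that some power $\Phi^{(k)}$ is a contraction, hence a unique fixed point by the Banach fixed-point theorem. (3) \textbf{Uniqueness and continuity of paths.} Uniqueness of the law-flow gives pathwise uniqueness by feeding the unique fixed-point measure back into the frozen SDE and invoking uniqueness there; continuity of sample paths is inherited from the frozen SDE, and one checks $\sup_{t\le T}\mathbb{E}|X_t|^\rho < \infty$ so that the solution genuinely lives in $\mathcal{P}_\rho$-valued flows, closing the loop.

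The main obstacle — and the reason this is stated as a lemma worth proving rather than a citation — is handling the \emph{measure-dependence of the Lipschitz constant} cleanly: the bound \eqref{eq: lipschitz condition time dependent} is only \emph{locally} Lipschitz in the sense that the constant grows with $M^\rho(\mu)$, so the contraction argument cannot be run on all of $\mathcal{S}_T$ but must be confined to a ball $\mathcal{B}_R$, and one must verify \emph{a posteriori} (via step (1)) that $\Phi$ maps $\mathcal{B}_R$ into itself for an appropriate $R$. One has to be slightly careful that the same $R$ works for the invariance and that $C_{R,T}$ in the contraction estimate is finite — both follow from the fact that the growth in \eqref{eq: linear growth time dependent} is $(1+M^\rho(\mu))^{1/\rho}$ with the state variable entering only through a uniformly bounded term, so no feedback blow-up occurs. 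A secondary technical point is ensuring measurability/adaptedness of $X^\mu$ and continuity of $t\mapsto \mathcal{L}(X^\mu_t)$ in $W_\rho$ (this follows from the moment bound plus continuity of paths via dominated convergence), so that $\Phi$ genuinely maps $\mathcal{S}_T$ to itself. The requirement $\rho \ge 2$ enters through the use of It\^o's formula / BDG on the $\rho$-th moment and through $X_0 \in L^2_\omega$ being upgraded — one should note the hypothesis is $X_0 \in L^2_\omega$ but the moment estimates are most naturally run at level $\rho$; if $\rho > 2$ one additionally needs $X_0 \in L^\rho_\omega$, so either this is implicitly assumed or the argument is first run at level $2$ and then bootstrapped.
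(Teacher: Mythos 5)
Your proposal matches the paper's proof in its essential structure: a Picard fixed-point map $\Phi:\mu \mapsto \mathcal{L}(X^\mu)$ built on a frozen-measure SDE, restriction to a moment-bounded ball so the locally Lipschitz coefficients become uniformly Lipschitz there, a Wasserstein contraction estimate via Gr\"onwall, and patching in time --- the only cosmetic difference being that the paper's ball $B_K$ sits in $\mathcal{P}(C([0,T],\mathbb{R}^D))$ rather than among flows of marginal laws, and the paper runs the entire estimate at second-moment level ($M^2$, $W_2$, $L^2$) with $T=1/(4C)$ taken small to secure $\Phi(B_K)\subset B_K$. Your closing remark on the $L^2_\omega$ versus $L^\rho_\omega$ tension is the right thing to flag: the paper's proof genuinely operates at $\rho=2$ (where a bound on $M^2(\pi_t)$ does control the frozen coefficients), and in light of Remark 4.2 the lemma is best read as invoked at level $2$; your alternative of running the scheme at level $\rho>2$ is equally sound but then requires $X_0 \in L^\rho_\omega$.
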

	
\begin{proof}
	To prove well-posedness we use a Picard-type argument. 
	For $T\geq 0$ and $K > 2 M^2(\pi_0) \vee 1$, we define
	$$
	B_K := \left\{\pi \in \mathcal{P}(C([0,T],{\R}^D) \mid M^{2}(\pi_t) \leq K  \,\, \forall t \in [0,T],\; M^{2}(\pi_0) \leq K/2 \right\}.
	$$
	Let $\pi \in B_K$. We define the following stochastic differential equation
	\begin{equation}
	\label{eq: standard sde}
	\mathrm{d} X_t = b(t, X_t, \pi_t) \,\mathrm{d} t + \sigma(t, X_t, \pi_t) \,\mathrm{d} W_t,
	\qquad
	\tilde X_t|_{t=0} = X_0 \sim \pi_0.
	\end{equation}
	By our choice of $\pi$ and Assumption \ref{assumptions coefficients time dependent}, the coefficients $b$ and $\sigma$ are bounded and Lipschitz. By standard theory this equation admits a unique strong solution on the interval $[0,T]$, which we denote by $X^{\pi}$. This solution has continuous sample paths, $\mathbb{P}$-a.s..
	We are ready to define the map
	\begin{equation*}
	\begin{array}{cccc}
	\Phi  : & B_K & \to & \mathcal{P}(C([0,T],\R^D))\\
	& \pi & \mapsto & \mathcal{L}(X^{\pi}).
	\end{array}
	\end{equation*}
	Using standard stochastic calculus estimates, Assumption \ref{assumptions coefficients time dependent} and Gronwall's lemma we obtain the following upper bound on the second moment,
	\begin{equation}
		\| \sup_{t\in[0,T]} |X_t| \|_{L^2_{\omega}} \leq K/2 + TC(1+K),
	\end{equation}
	where $C>0$ is a generic constant that only depends on the coefficients $b$ and $\sigma$.
	Let us now take $T = 1/(4C)$, with this choice we have $\| \sup_{t\in[0,T]} |X_t| \|_{L^2_{\omega}} \leq K$. This implies $\Phi(B_K) \subset B_K$. Using again stochastic calculus and Gronwall's lemma, we have that for each $t\in [0,T]$,
	\begin{equation}
	\label{eq: lipschitz bound linear growth}
	W_{2,C([0,t],\R^D)}(\Phi(\pi),\Phi(\pi^{\prime}))^2
	\leq \| \sup_{s\in[0,t]} |X^{\pi}_s - X^{\pi^{\prime}}_s | \; \|_{L^2_{\omega}}
	\leq C(1+K) e^{C(1+K)t} \int_{0}^{t} W_{2,\R^D}(\pi_s,\pi^{\prime}_s)^2 \mathrm{d} s,
	\qquad
	\pi,\pi^{\prime} \in B_K.
	\end{equation}
	Here $C>0$ is again a generic constant depending only on $b$ and $\sigma$, possibly different than before. Also notice that $\pi, \pi^{\prime}$ are measures on the path space $C([0,T],\R^D)$ and in the left-hand side of \eqref{eq: lipschitz bound linear growth} we are considering, with an abuse of notations, their projections on $C([0,t],\R^D)$.
	
	Iterating $n$ times the inequality \eqref{eq: lipschitz bound linear growth}, we obtain
	\begin{equation*}
	W_{2,C([0,T],\R^D)}(\Phi(\pi), \Phi(\pi^\prime)) \leq \frac{e^{(n+1)C(1+K)T}}{n!}  W_{2,C([0,T],\R^D)}(\pi,\pi^{\prime}),
	\qquad
	\pi,\pi^{\prime} \in B_K.
	\end{equation*}
	For the choice of $T=1/(4C)$, if we take $n$ large enough, we have that the map $\Phi$ is a contraction on $B_K$. By the Banach fixed point theorem, $\Phi$ has a unique fixed point on $B_K$, which is the unique solution to equation \eqref{eq: mckean with linear growth} up to time $T$.
	
	Global existence and uniqueness follow by iterating this argument on intervals of fixed length $1/(4C)$, which does not depend on the value of the initial condition, but only on the assumptions on the coefficients of the equation.
\end{proof}

\subsection{The common noise case}
Consider measurable functions $b$ and $\sigma$ as in the previous section and $\beta: [0,T] \times \mathcal{P}_{\rho}(\R^D) \to \R^{D\times m_1}$, each of which satisfying the following assumption on their respective domain.

\begin{assumption}
	\label{assumptions coefficients common noise}
		 Let $(V, |\cdot|)$ be a Banach space, for $f: [0,T] \times \R^D \times \mathcal{P}_{\rho}(\R^D) \to V$ assume that there exists a constant $C > 0$ such that
		\begin{enumerate}[label=(\roman*), ref= \ref{assumptions coefficients common noise} (\roman*)]
			\item \label{assumpitons coefficients common noise: linear growth} 
			(linear growth)  $\forall x \in \R^D, \forall \pi \in \mathcal{P}_{\rho}(\R^d)$,
			\begin{equation}
			\label{eq: linear growth common noise}
			|f(t, x,\pi) | \leq C (1 + \overline{M}^{\rho}(\pi)  )^{\frac{1}{\rho}} ,
			\end{equation}
			\item  \label{assumpitons coefficients common noise: lipschitz}
			(locally Lipschitz) $\forall x,y \in \R^D$, $\forall \pi, \nu \in \mathcal{P}_{\rho}(\R^D)$, 
			\begin{equation}
			\label{eq: lipschitz condition common noise}
			| f(t, x, \pi) - f(t, y, \nu) |
			\leq C ( 1 + \overline{M}^{\rho}(\pi))^{\frac{1}{\rho}} \left(|x - y| + W_{\rho, \R^D}(\pi, \nu)\right).
			\end{equation}
		\end{enumerate}
\end{assumption}
Notice that, since $\beta$ is independent of the space variable $x \in \R^D$, condition \ref{assumpitons coefficients common noise: lipschitz} reduces to local Lipschitz continuity in the measure variable.

Consider the following McKean-Vlasov equation,
	\begin{equation}
	\label{eq: mckean with common noise}
	\mathrm{d} X_t = b(t, X_t, \mathcal{L}(X_t \mid \mathcal{B}_t)) \,\mathrm{d} t + \sigma(t, X_t, \mathcal{L}(X_t\mid \mathcal{B}_t)) \,\mathrm{d} W_t + \beta(t, \mathcal{L}(X_t \mid \mathcal{B}_t)) \,\mathrm{d}B_t,
	\qquad
	 X_t|_{t=0} = X_0 \in L^2_{\omega},
\end{equation}
where $W$ is the $m$-dimensional Brownian motion fixed at the beginning of the section and $B$ is an $m_1$-dimensional Brownian motion adapted to $(\mathcal{F}_t)_{t\geq 0}$. Assume that $X_0, W, B$ are independent.
In \eqref{eq: mckean with common noise}, $\mathcal{L}(X \mid \mathcal{B}_t)$ is the conditional law of the solution $X$ given the filtration $\mathcal{B}_t := \sigma(B_s \mid 0\leq s\leq t)$ generated by the common noise $B$.

\begin{lemma}
\label{lem: uniqueness mckean common noise}
Assume Assumption \ref{assumptions coefficients common noise}. Let $X$ and $Z$ be any two solutions to equation \eqref{eq: mckean with common noise}. Then $X$ and $Z$ are indistinguishable.
\end{lemma}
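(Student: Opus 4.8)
The goal is pathwise uniqueness for the conditional McKean–Vlasov SDE \eqref{eq: mckean with common noise}, where the subtlety is that the coefficients depend on the \emph{conditional} law $\mathcal{L}(X_t \mid \mathcal{B}_t)$ rather than the ordinary law, and that they are only \emph{locally} Lipschitz with a Lipschitz constant that grows like $(1 + \overline{M}^\rho(\pi))^{1/\rho}$. My strategy is a Gr\"onwall argument carried out conditionally on the common noise $B$, combined with an a priori moment bound that controls the bad prefactor. First I would fix two solutions $X$ and $Z$ on the same probability space with the same $X_0, W, B$, write $\mu_t := \mathcal{L}(X_t \mid \mathcal{B}_t)$ and $\nu_t := \mathcal{L}(Z_t \mid \mathcal{B}_t)$, and establish an a priori estimate: by It\^o's formula, the Burkholder–Davis–Gundy inequality, the linear-growth assumption \ref{assumpitons coefficients common noise: linear growth}, and Gr\"onwall's lemma, $\mathbb{E}\big[\sup_{t \le T} |X_t|^2\big] < \infty$ (and likewise for $Z$), which in particular gives a deterministic bound $\sup_{t \le T}\big(\overline{M}^\rho(\mu_t) \vee \overline{M}^\rho(\nu_t)\big) \le R < \infty$, $\mathbb{P}$-a.s., since central moments are controlled by absolute moments. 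This converts the local Lipschitz bounds into genuine global Lipschitz bounds with constant $C(1+R)^{1/\rho} =: L$ along the trajectories of the two solutions.

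\medskip
Next I would control the difference. Write $\delta_t := X_t - Z_t$. Subtracting the two equations,
\begin{equation*}
\delta_t = \int_0^t \big(b(s,X_s,\mu_s) - b(s,Z_s,\nu_s)\big)\,\mathrm{d}s + \int_0^t \big(\sigma(s,X_s,\mu_s) - \sigma(s,Z_s,\nu_s)\big)\,\mathrm{d}W_s + \int_0^t \big(\beta(s,\mu_s) - \beta(s,\nu_s)\big)\,\mathrm{d}B_s.
\end{equation*}
Applying It\^o, BDG, and the Lipschitz estimates \ref{assumpitons coefficients common noise: lipschitz} (now with constant $L$), together with the elementary bound $W_{\rho,\R^D}(\mu_s,\nu_s)^2 \le \mathbb{E}\big[|X_s - Z_s|^2 \mid \mathcal{B}_s\big]$ (couple $\mu_s$ and $\nu_s$ through the joint conditional law of $(X_s,Z_s)$), I obtain, after taking expectations so that the martingale terms vanish and the conditional expectation of the Wasserstein term becomes an unconditional expectation,
\begin{equation*}
\mathbb{E}\big[\sup_{r \le t}|\delta_r|^2\big] \le C_L \int_0^t \mathbb{E}\big[\sup_{r \le s}|\delta_r|^2\big]\,\mathrm{d}s.
\end{equation*}
Here the key point is that the $\beta$-term contributes only $\int_0^t \mathbb{E}\big[W_{\rho}(\mu_s,\nu_s)^2\big]\,\mathrm{d}s \le \int_0^t \mathbb{E}[|\delta_s|^2]\,\mathrm{d}s$, with no $|X_s - Z_s|$ pointwise term, precisely because $\beta$ is measure-only; this is what makes the common-noise integral harmless. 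Gr\"onwall's lemma then forces $\mathbb{E}\big[\sup_{t \le T}|\delta_t|^2\big] = 0$, hence $X$ and $Z$ are indistinguishable on $[0,T]$, and since $T$ was arbitrary, on $[0,\infty)$.

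\medskip
\textbf{Main obstacle.} The delicate step is the passage from the \emph{conditional} Wasserstein bound to a usable estimate in the Gr\"onwall loop. One must be careful that $W_{\rho,\R^D}(\mu_s,\nu_s)$ is itself a $\mathcal{B}_s$-measurable random variable and that the coupling realizing (or nearly realizing) it can be taken to be the conditional joint law of the \emph{already-coupled} pair $(X_s,Z_s)$ — this is legitimate because $X$ and $Z$ live on the same space and are driven by the same $B$, so $\mathcal{L}((X_s,Z_s)\mid\mathcal{B}_s)$ is a genuine conditional coupling of $(\mu_s,\nu_s)$, giving $W_\rho(\mu_s,\nu_s)^\rho \le \mathbb{E}[|\delta_s|^\rho\mid\mathcal{B}_s]$ and then, for $\rho \ge 2$ via Jensen, $W_\rho(\mu_s,\nu_s)^2 \le \mathbb{E}[|\delta_s|^2\mid\mathcal{B}_s]$. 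A secondary technical point is justifying that the stochastic integrals against $W$ and $B$ are true martingales (not merely local ones) so that their expectations vanish; this is where the a priori $L^2$ moment bound from the first step is used a second time, via a standard localization-and-Fatou argument. Modulo these measure-theoretic bookkeeping issues, the proof is a routine conditional Gr\"onwall estimate.
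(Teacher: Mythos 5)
There is a genuine gap in the first step of your argument. You claim that $\mathbb{E}\big[\sup_{t\le T}|X_t|^2\big]<\infty$ ``in particular gives a deterministic bound $\sup_{t\le T}\big(\overline{M}^\rho(\mu_t)\vee\overline{M}^\rho(\nu_t)\big)\le R<\infty$, $\mathbb{P}$-a.s.'' This inference is not valid: $\overline{M}^\rho(\mu_t)=\mathbb{E}\big[|X_t-\mathbb{E}[X_t\mid\mathcal{B}_t]|^\rho\mid\mathcal{B}_t\big]$ is a $\mathcal{B}_t$-measurable \emph{random variable}, and a bound in expectation (i.e.\ in $L^1(\Omega)$) on the unconditional moments only gives that this random variable is finite a.s.\ — it does not give an almost-sure bound by a deterministic constant $R$. ``Central moments are controlled by absolute moments'' is a pointwise statement and does not upgrade an $L^2$-in-$\omega$ estimate to an a.s.\ uniform one. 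Without such a deterministic bound, the effective Lipschitz constant $L=C(1+\overline{M}^\rho(\mu_t))^{1/\rho}$ that you feed into the Gr\"onwall loop is \emph{random}, and Gr\"onwall does not close in the naive way; one would need, for example, a localization by stopping times at the (random) level sets of the central moments, which you do not perform.

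The paper's proof obtains the required \emph{deterministic} bound on the conditional central moments by a structural observation that your write-up mentions but does not exploit at the right place. Setting $\overline{X}_t:=X_t-\mathbb{E}[X_t\mid\mathcal{B}_t]$, the $\beta(s,\mu_s)\,\mathrm{d}B_s$ term cancels \emph{exactly} in the equation for $\overline{X}_t$, because $\beta$ depends only on $\mu_s$, which is $\mathcal{B}_s$-measurable, so $\mathbb{E}[\beta(s,\mu_s)\mid\mathcal{B}_s]=\beta(s,\mu_s)$. The centered process is therefore driven only by $\mathrm{d}W$ and the centered drift, both of whose linear-growth bounds involve $\overline{M}^\rho(\mu_s)$. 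Applying It\^o's formula and taking conditional expectations $\mathbb{E}[\cdot\mid\mathcal{B}_t]$ then yields a \emph{pathwise} integral inequality
\begin{equation*}
\overline{M}^2(\mu_t)=\mathbb{E}\big[|\overline{X}_t|^2\mid\mathcal{B}_t\big]\le C\int_0^t\big(1+\overline{M}^2(\mu_s)\big)\,\mathrm{d}s
\end{equation*}
to which Gr\"onwall applies $\omega$-by-$\omega$, giving $\overline{M}^2(\mu_t)\le C_T$ with $C_T$ deterministic. This is the load-bearing role of the ``measure-only $\beta$'' structure — not, as you suggest, the absence of an $|X_s-Z_s|$ term in the difference estimate (which Gr\"onwall would handle anyway). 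Once the deterministic bound on the conditional central moments is in hand, the coefficients become genuinely globally Lipschitz along the trajectories, and the remainder of your argument (the coupling bound $W_\rho(\mu_s,\nu_s)^\rho\le\mathbb{E}[|\delta_s|^\rho\mid\mathcal{B}_s]$, BDG, Gr\"onwall) is correct and matches the paper.
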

\begin{proof}
    It follows from the independence of $W$, $B$ and $X_0$ that
    \begin{equation*}
        \mathbb{E}[X_t \mid \mathcal{B}_t] = \mathbb{E}[X_0] + \int_0^t  \mathbb{E}[b(s,X_s,\mathcal{L}(X_s \mid \mathcal{B}_s)) \mid \mathcal{B}_s] \,\mathrm{d} s
        + \int_0^{t} \mathbb{E}[\beta(s,X_s,\mathcal{L}(X_s \mid \mathcal{B}_s)) \mid \mathcal{B}_s] \,\mathrm{d} B_s.
    \end{equation*}
 Let $\overline{X}_t := X_t - \mathbb{E}[X_t \mid \mathcal{B}_t]$, we have
 \begin{equation*}
        \overline{X}_t = \overline{X}_0 + \int_0^t \left(b(s,X_s,\mathcal{L}(X_s \mid \mathcal{B}_s)) - \mathbb{E}[b(s,X_s,\mathcal{L}(X_s \mid \mathcal{B}_s)) \mid \mathcal{B}_s] \right) \mathrm{d} s
         + \int_{0}^t \sigma (s, X_s, \mathcal{L}(X_s \mid \mathcal{B}_s)) \,\mathrm{d} W_s.
    \end{equation*}
    Set $N_t := \int_{0}^t \sigma(s,X_s,\mathcal{L}(X_s \mid \mathcal{B}_s)) \,\mathrm{d}W_s$, as an application of It\^o's formula we have
    \begin{align*}
        \mathbb{E}\left[\left|N_t\right|^2 \mid \mathcal{B}_t \right]
        = & 2 \mathbb{E}\left[\int_{0}^t N_s \cdot \sigma(s,X_s,\mathcal{L}(X_s \mid \mathcal{B}_s)) \mathrm{d}W_s \mid \mathcal{B}_t \right]
        + \int_{0}^t \mathbb{E}\left[ \operatorname{Trace}(\sigma\sigma^{\top})(s,X_s,\mathcal{L}(X_s \mid \mathcal{B}_s)) \mid \mathcal{B}_t \right] \mathrm{d}s \\
        \leq & C \int_{0}^{t}(1+ \overline{M}^2(\mathcal{L}(X_s \mid \mathcal{B}_s) ) \,\mathrm{d} s.
    \end{align*}
    By using Assumption \ref{assumpitons coefficients common noise: linear growth} also on the drift, we obtain
    \begin{align*}
        \overline{M}^2(\mathcal{L}(X_t \mid \mathcal{B}_t)) 
        = \mathbb{E}\left[\left |\overline{X}\right |^2 \mid \mathcal{B}_t\right]
        \leq C \int_{0}^{t}(1+ \overline{M}^2(\mathcal{L}(X_s \mid \mathcal{B}_s) ) \,\mathrm{d} s.
    \end{align*}
    Gronwall's lemma gives $\overline{M}^2(\mathcal{L}(X_t \mid \mathcal{B}_t))  \leq C_T$, where $C_T > 0$ is a constant depending on $T$. A similar bound can be obtained for $Z$.
    
    Combining the upper bounds on the central moment with Assumption \ref{assumptions coefficients common noise}, we have global Lipschitz continuity and boundedness of the coefficients $b,\sigma, \beta$. With standard estimates and Growall's lemma one can show that $\mathbb{E}[\sup_{t\leq T} |X_t - Z_t|^2] = 0$, which implies intistinguishability of the processes $(X_t)_{t\in[0,T]}$ and $(Z_t)_{t\in [0,T]}$.
\end{proof}

\begin{remark}
Weak existence follows from \cite{hammersley2021weak} and one can apply Yamada-Watanabe to obtain well-posedness of \eqref{eq: mckean with common noise}. However, well-posedness will also follow from our results on mixed rough and stochastic McKean-Vlasov equations in the special case $\beta(t,\pi) = P(\pi)$ defined in \eqref{def: P}
below.
\end{remark}

\subsection{McKean-Vlasov with continuous deterministic forcing}

Let $b: \mathcal{P}_{\rho}(\R^D) \to \R^{D}$ and $\sigma: \mathcal{P}_{\rho}(\R^D) \to \R^{D\times m}$ be measurable functions satisfying the following assumptions:
\begin{assumption}
	\label{assumptions coefficients}
		Assume that there exists a constant $C > 0$ such that
		\begin{enumerate}[label=(\roman*), ref= \ref{assumptions coefficients} (\roman*)]
			\item \label{assumpitons coefficients: linear growth} 
			(linear growth)  $\forall x \in \R^D, \forall \pi \in \mathcal{P}_{\rho}(\R^d)$,
			\begin{equation}
			\label{eq: linear growth}
			|b(x,\pi) | , | \sigma(x, \pi) | \leq C (1 + \overline{M}^{\rho}(\pi)  )^{\frac{1}{\rho}} ,
			\end{equation}
			\item  \label{assumpitons coefficients: lipschitz}
			(Lipschitz continuity) $\forall x,y \in \R^d$, $\forall \pi, \nu \in \mathcal{P}_{\rho}(\R^D)$, 
			\begin{equation}
			\label{eq: lipschitz condition}
			| b(x, \pi) - b(y, \nu) | , 
			| \sigma(x, \pi) - \sigma(y, \nu) | 
			\leq C ( 1 + \overline{M}^{\rho}(\pi))^{\frac{1}{\rho}} \left(|x - y| + W_{\rho,\mathbb{R}^D}(\pi, \nu)\right).
			\end{equation}
		\end{enumerate}
\end{assumption}
Let $F: [0,T] \to \mathbb{R}^D$ be a continuous bounded function, $X_0 \in L^\rho_{\omega}$ and consider the following stochastic differential equation,
\begin{equation}
\label{eq: sde with forcing}
\mathrm{d}X_t = b(X_t, \mathcal{L}(X_t)) \,\mathrm{d} t + \sigma(X_t, \mathcal{L}(X_t)) \,\mathrm{d} W_t + \mathrm{d} F_t,
\qquad
X_t|_{t=0} = X_0.
\end{equation}
\begin{definition}
\label{def: sol eq with forcing}
A stochastic process $(X_t)_{t\in [0,T]}$ on $\R^D$ is a solution for equation \eqref{eq: sde with forcing} with initial condition $X_0$ if $(\sigma(X_t, \mathcal{L}(X_t)))_{t\in [0,T]}$  is predictable and for every $t\in [0,T]$, the following integral equation is satisfied $\mathbb{P}$-a.s., 
\begin{equation*}
    X_t - F_t = \int_{0}^{t} b(X_s, \mathcal{L}(X_s))\, \mathrm{d} s
    + \int_{0}^{t} \sigma(X_s, \mathcal{L}(X_s))\, \mathrm{d} W_s,
    \qquad
    X_t|_{t=0} = X_0.
\end{equation*}
\end{definition}
\begin{remark}
In the following, we will construct an adapted solution $(X_t)_{t\in [0,T]}$ with continuous sample paths. Since $\sigma$ is Lipschitz continuous, we immediately have that $(\sigma(X_t, \mathcal{L}(X_t)))_{t\in [0,T]}$  is predictable and the It\^o integral is well defined.
\end{remark}

Let $X_t$ be a solution to equation \eqref{eq: sde with forcing}, we define $\overline{X}_t := X_t - \mathbb{E}[X_t]$. We start with some preliminary expansions and estimates for $\overline{X}$. We have, for $s,t \in [0,T]$,
	\begin{equation}
	\label{eq: X centered}
	\delta \overline{X}_{s,t} 
	= \int_{s}^{t} \left( b(X_r, \mathcal{L}(X_r) )- \mathbb{E}[b(X_r, \mathcal{L}(X_r))] \right) \mathrm{d} r
	+  \int_{s}^{t} \sigma(X_r, \mathcal{L}(X_r))  \,\mathrm{d} W_r.
	\end{equation}
	
\begin{lemma}
	\label{lem: bounds X centered}
Let $\rho \geq 2$ and $X_0 \in L^{\rho}_{{\omega}}$. There exists a constant $C_T$ such that $C_T \to 0$ as $T\to 0$ and

\begin{equation*}
	\| \sup_{t\in [0,T]}  |\overline{X}_t| \|_{L_{\omega}^\rho}
	\leq
	e^{C_T} [C_T + \|\overline{X}_0\|_{L_{\omega}^\rho}].
	\end{equation*}
	Moreover, for $\alpha < \frac{1}{2}$ and $\rho > \frac{2}{1-2\alpha}$, we have
	\begin{equation*}
	    \|[\overline{X}]_{\alpha}\|_{L^{\rho}_{\omega}}
	\leq 
	C_T ( 1 + \|\overline{X}_0\|_{L_{\omega}^{\rho}}).
	\end{equation*}
\end{lemma}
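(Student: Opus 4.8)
The plan is to read both bounds off the centred expansion \eqref{eq: X centered}. Write $h(T):=\|\sup_{t\in[0,T]}|\overline{X}_t|\|_{L^{\rho}_{\omega}}$, which is nondecreasing in $T$ (a routine localisation, stopping $\overline{X}$ the first time $|\overline{X}_t|$ exceeds $n$ and sending $n\to\infty$ by Fatou at the end, makes all quantities below legitimate). The one observation driving everything is that the linear-growth bound of Assumption \ref{assumpitons coefficients: linear growth} is uniform in the state variable, so that $\mathbb{P}$-a.s.\ and for every $r\in[0,T]$ one has $|b(X_r,\mathcal{L}(X_r))|,\,|\sigma(X_r,\mathcal{L}(X_r))|\le C\big(1+\overline{M}^{\rho}(\mathcal{L}(X_r))\big)^{1/\rho}\le C\big(1+h(r)\big)$, where we used subadditivity of $u\mapsto u^{1/\rho}$ together with $\overline{M}^{\rho}(\mathcal{L}(X_r))^{1/\rho}=\|\overline{X}_r\|_{L^{\rho}_{\omega}}\le h(r)$; in particular $\|b(X_r,\mathcal{L}(X_r))-\mathbb{E}[b(X_r,\mathcal{L}(X_r))]\|_{L^{\rho}_{\omega}}\le 2C(1+h(r))$.

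For the first estimate I set $s=0$ in \eqref{eq: X centered}, take the supremum over $t\in[0,T]$, then the $L^{\rho}_{\omega}$-norm, and split into the drift and martingale terms. The drift term is at most $\int_0^T 2C(1+h(r))\,\mathrm{d}r$, and for the martingale term I apply the Burkholder--Davis--Gundy inequality followed by Minkowski's integral inequality (legitimate since $\rho/2\ge 1$), obtaining $C_{\rho}\big(\int_0^T\|\sigma(X_r,\mathcal{L}(X_r))\|_{L^{\rho}_{\omega}}^{2}\,\mathrm{d}r\big)^{1/2}\le C_{\rho}C\big(\int_0^T(1+h(r))^{2}\,\mathrm{d}r\big)^{1/2}$. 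Since $h$ is nondecreasing, $\int_0^T(1+h(r))\,\mathrm{d}r\le T(1+h(T))$ and $\int_0^T(1+h(r))^{2}\,\mathrm{d}r\le T(1+h(T))^{2}$, so that
\begin{equation*}
h(T)\ \le\ \|\overline{X}_0\|_{L^{\rho}_{\omega}}+\delta(T)\big(1+h(T)\big),\qquad \delta(T):=2CT+C_{\rho}C\sqrt{T}.
\end{equation*}
As soon as $T$ is small enough that $\delta(T)\le\tfrac12$, this rearranges to $h(T)\le\big(\|\overline{X}_0\|_{L^{\rho}_{\omega}}+\delta(T)\big)/\big(1-\delta(T)\big)\le e^{2\delta(T)}\big(\|\overline{X}_0\|_{L^{\rho}_{\omega}}+\delta(T)\big)$, which is the claim with $C_T:=2\delta(T)\to 0$; for arbitrary $T$ one concatenates this over finitely many subintervals, restarting \eqref{eq: X centered} at each endpoint, at the price of a finite but no longer vanishing constant.

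For the Hölder estimate I keep $s<t$ general in \eqref{eq: X centered}. The drift increment is $\mathbb{P}$-a.s.\ bounded by $2C(1+h(T))(t-s)$, hence contributes at most $2C\,T^{1-\alpha}(1+h(T))$ to $\|[\overline{X}]_{\alpha}\|_{L^{\rho}_{\omega}}$; for the martingale increment $M_{s,t}:=\int_s^t\sigma(X_r,\mathcal{L}(X_r))\,\mathrm{d}W_r$ the same Burkholder--Davis--Gundy/Minkowski computation gives $\|M_{s,t}\|_{L^{\rho}_{\omega}}\le C_{\rho}C(1+h(T))\,|t-s|^{1/2}$. The standing hypothesis $\rho>2/(1-2\alpha)$ is precisely $\alpha<\tfrac12-\tfrac1\rho$, so the quantitative Kolmogorov continuity criterion (in the Garsia--Rodemich--Rumsey form; see \cite{friz2020course}) applied to the continuous process $t\mapsto\int_0^t\sigma(X_r,\mathcal{L}(X_r))\,\mathrm{d}W_r$ yields $\|[M]_{\alpha}\|_{L^{\rho}_{\omega}}\le C(\alpha,\rho,T)\,C_{\rho}C(1+h(T))$. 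Adding the two contributions and invoking the first estimate to bound $1+h(T)\le C_T'\big(1+\|\overline{X}_0\|_{L^{\rho}_{\omega}}\big)$ gives the stated inequality.

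Every individual step here is classical, so the only points deserving attention are (i) the self-referential shape of the inequalities, whose right-hand sides involve $h(r)$ — the very quantity being bounded — which is why monotonicity of $h$ and the absorption $\delta(T)h(T)\le\tfrac12 h(T)$ for small $T$ replace the usual Gronwall step; and (ii) the regularity budget: the stochastic increments are only $\tfrac12$-Hölder in $L^{\rho}_{\omega}$ while Kolmogorov consumes $\tfrac1\rho$ of the Hölder exponent, so $\alpha<\tfrac12-\tfrac1\rho$, equivalently $\rho>\tfrac{2}{1-2\alpha}$, is exactly what this route needs and cannot be improved by it. I expect (i) to be the only genuine, if minor, difficulty.
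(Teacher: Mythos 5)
Your proof is correct and uses the paper's own strategy in all essential steps: the centred expansion \eqref{eq: X centered}, deterministic control of $b$ and $\sigma$ via the linear-growth assumption, Burkholder--Davis--Gundy for the martingale increments, and Kolmogorov's continuity criterion for the H\"older bound. The only variation is that you close the self-referential sup-bound via monotonicity of $h$ and absorption (requiring $\delta(T)\le\tfrac12$ and then concatenation), whereas the paper raises to the $\rho$-th power and applies Gronwall to $h(T)^{\rho}\le\|\overline{X}_0\|_{L_{\omega}^{\rho}}^{\rho}+C\int_0^T\big(1+\overline{M}^{\rho}(\pi_t)\big)\,\mathrm{d}t$, avoiding the smallness/concatenation step.
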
	
\begin{proof}
 Using standard estimates and  the Burkholder-Davis-Gundy (BDG) inequality on equation \eqref{eq: X centered} we have
 \begin{equation*}
	\| \sup_{t\in [0,T]}  |\overline{X}_t| \|_{L_{\omega}^\rho}^{\rho}
	\leq
	 \|\overline{X}_0\|_{L_{\omega}^\rho}^{\rho} + C \int_{0}^{T}(1+\overline{M}^{\rho}(\pi_t)) \,\mathrm{d} t.
	\end{equation*}
	The first inequality follows from a standard application of Gronwall's lemma. For the second inequality we use again BDG and Jensen's inequality to obtain
	\begin{equation*}
	    \mathbb{E}[|\delta \overline{X}_{s,t}|^{\rho}]
	    \leq |t-s|^{\rho/2-\epsilon} C_T (1 + \| \overline{X}_0 \|_{L^{\rho}_{\omega}}).
	\end{equation*}
	The second inequality as well as the condition on $\rho$ follows from the Kolmogorov continuity theorem \cite[Theorem 3.1]{friz2020course}.
\end{proof}
	\begin{remark}
	\label{rmk: global bounds}
		The bounds in Lemma \ref{lem: bounds X centered} do not depend on the forcing term $F$. This gives us a good a-priori bound on the solution. Let $\overline{T}>0$ be fixed and arbitrarily large and assume that there exists $M_0 >0$ such that $\|\overline{X}_0\|_{L^\rho} \leq M_0$. Then there exists a global constant $M(M_0, \overline{T})$ such that
		\begin{equation}
		\label{eq: global bounds}
		   \| \sup_{t\in [0,T]}  |\overline{X}_t| \|_{L_{\omega}^\rho},
		   \;
		   \|[\overline{X}]_{\alpha}\|_{L^{\rho}_{\omega}} 
		   \leq
		   M.
		\end{equation}
	\end{remark}
We have the following a priori estimates.
\begin{lemma}
	\label{lem: a priori sde}
	Let $\rho \geq 2$. Given $X_0 \in L^{\rho}_{\omega}$ and $F \in C_b([0,T], \R^D)$, we call $X(F,X_0)$ a solution to equation \eqref{eq: sde with forcing} with forcing $F$ and initial condition $X_0$.
	 
	Let $b, \sigma$ satisfy Assumption \ref{assumptions coefficients}.
	Then there exists a constant $C_T:= C(T,\rho) >0$ such that $C_T \to 0$ as $T\to 0$ and
	\begin{equation}
	\label{eq: Lp bound sde}
	\| \sup_{t\in [0,T]}  |X_t(F,X_0)| \|_{L_{\omega}^\rho}
	\leq
	 e^{C_T} [C_T + \| F \|_{C_b} + \|X_0\|_{L_{\omega}^\rho}].
	\end{equation}
	Given $X_0, Y_0 \in L^{\rho}_{\omega}$ and $F,G \in C_b([0,T], \R^D)$, there exists a positive constant $C>0$ such that
	\begin{equation}
	\label{eq: Lp lipschitz sde}
	\| \sup_{t\in [0,T]} |X_t(F,X_0) - X_t(G,Y_0)|  \|_{L_{\omega}^{\rho}}
	\leq
	e^{C (1 + \|\overline{X}_0\|_{L_{\omega}^\rho}) }
	\left(
	\| F - G \|_{C_b} + \|X_0 - Y_0\|_{L_{\omega}^\rho}
	\right).
	\end{equation}
\end{lemma}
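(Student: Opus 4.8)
Both bounds reduce to routine stochastic calculus once the a priori control on the centered process supplied by Lemma \ref{lem: bounds X centered} is used. The decisive point is that $\overline{M}^{\rho}(\mathcal{L}(X_s)) = \mathbb{E}\big[|\overline{X}_s|^{\rho}\big]$ is dominated, uniformly in $s\in[0,T]$, by the \emph{deterministic} quantity $B_T := \big(e^{C_T}(C_T+\|\overline{X}_0\|_{L^\rho_\omega})\big)^{\rho}$ of Lemma \ref{lem: bounds X centered}, which does not depend on the forcing $F$. Hence, evaluated along any solution on $[0,T]$, Assumption \ref{assumpitons coefficients: linear growth} gives the almost sure bounds $|b(X_s,\mathcal{L}(X_s))|,\,|\sigma(X_s,\mathcal{L}(X_s))|\le C(1+B_T)^{1/\rho}$, and Assumption \ref{assumpitons coefficients: lipschitz} gives Lipschitz estimates (in $x$ and in the measure) with deterministic constant $\le C(1+B_T)^{1/\rho}$: on the fixed horizon $[0,T]$ the coefficients behave, in effect, like bounded and globally Lipschitz ones.

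For \eqref{eq: Lp bound sde} I would write $X_t = X_0 + F_t + \int_0^t b(X_s,\mathcal{L}(X_s))\,\mathrm{d}s + \int_0^t \sigma(X_s,\mathcal{L}(X_s))\,\mathrm{d}W_s$ (taking $F_0 = 0$, consistently with the notation $\mathrm{d}F_t$), take the supremum over $t\le T$, and then the $L^\rho_\omega$-norm. The drift integral is controlled pointwise by $TC(1+B_T)^{1/\rho}$, while the Burkholder--Davis--Gundy inequality and the deterministic bound on $\sigma$ control the martingale term by $C_{\mathrm{BDG}}\, \| ( \int_0^T |\sigma|^2\,\mathrm{d}s )^{1/2} \|_{L^\rho_\omega} \le C_{\mathrm{BDG}}\sqrt{T}\,C(1+B_T)^{1/\rho}$. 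This yields a bound of the form $\|\sup_{t\le T}|X_t|\|_{L^\rho_\omega}\le \|X_0\|_{L^\rho_\omega} + \|F\|_{C_b} + c(T)\,(1+B_T)^{1/\rho}$ with $c(T)\to 0$ as $T\to 0$; substituting the definition of $B_T$, using $\|\overline{X}_0\|_{L^\rho_\omega}\le 2\|X_0\|_{L^\rho_\omega}$, and absorbing the small prefactors via $1+x\le e^{x}$, one arrives at \eqref{eq: Lp bound sde} with a new constant that still vanishes as $T\to 0$. Crucially, no Gronwall step enters here, which is exactly why the exponential prefactor may be taken to vanish as $T\to 0$ (a direct Gronwall estimate on $\mathbb{E}[\sup_{s\le t}|X_s|^{\rho}]$ would not give this).

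For \eqref{eq: Lp lipschitz sde} I set $X^1 := X(F,X_0)$, $X^2 := X(G,Y_0)$, $\Delta_t := X^1_t - X^2_t$, and pass to $\widetilde{\Delta}_t := \Delta_t - (F_t - G_t)$, which, unlike $\Delta$, is a genuine It\^o process driven by the differences of the coefficients at the two solutions (the forcings being merely continuous, not of bounded variation). Using Assumption \ref{assumpitons coefficients: lipschitz}, the coupling inequality $W_{\rho,\R^D}(\mathcal{L}(X^1_s),\mathcal{L}(X^2_s))\le \|\Delta_s\|_{L^\rho_\omega}$, the pointwise bound $|\Delta_s|\le |\widetilde{\Delta}_s| + \|F-G\|_{C_b}$, and the deterministic Lipschitz bound obtained by applying Lemma \ref{lem: bounds X centered} to $X^1$ (the origin of the $\|\overline{X}_0\|_{L^\rho_\omega}$-dependence in the final estimate), one bounds $\phi(t) := \|\sup_{s\le t}|\Delta_s|\|_{L^\rho_\omega}$ by $\|X_0-Y_0\|_{L^\rho_\omega} + \|F-G\|_{C_b}$ plus drift and martingale contributions; Burkholder--Davis--Gundy together with H\"older's inequality (this is where $\rho\ge 2$ enters) bounds the latter by expressions of the type $C(1+\|\overline{X}_0\|_{L^\rho_\omega})\int_0^t \phi(s)\,\mathrm{d}s$, and Gronwall's lemma then yields \eqref{eq: Lp lipschitz sde} (the constant in the exponent being governed by the a priori centered-moment bound of Lemma \ref{lem: bounds X centered}).

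The work here is almost entirely bookkeeping; the one genuine subtlety is that $b$ and $\sigma$ are only \emph{locally} Lipschitz, with constant depending on the centered moments of the current law, so standard SDE well-posedness and stability results do not apply off the shelf -- the a priori bound of Lemma \ref{lem: bounds X centered} is what turns this local, law-dependent constant into an honest deterministic constant on $[0,T]$. A further (minor) point of care is to keep the forcing outside every stochastic or Lebesgue integral, since it is not of bounded variation; this is the reason the second estimate is performed for $\widetilde{\Delta}$ rather than for $\Delta$.
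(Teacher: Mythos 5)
Your proof is correct and takes essentially the same route as the paper: both exploit Lemma \ref{lem: bounds X centered} to turn the law-dependent, locally Lipschitz coefficients into effectively bounded and Lipschitz ones on $[0,T]$ (with deterministic, $F$-independent constants), then apply BDG to get \eqref{eq: Lp bound sde} without Gronwall — which, as you correctly note, is why the prefactor can vanish as $T\to 0$ — and BDG together with Gronwall for \eqref{eq: Lp lipschitz sde}. Your treatment is a little more explicit than the paper's (which is terse on both steps), in particular in isolating $\widetilde{\Delta}=\Delta-(F-G)$ and in tracking how the $\|\overline{X}_0\|_{L^\rho_\omega}$ dependence enters the Gronwall exponent, but there is no substantive difference in method.
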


\begin{proof}
	We write $X$ and omit here the dependence of the process on $F$ and $X_0$ as there is no possibility of confusion in the first part of the proof.
	Using Jensen's inequality, the Burkholder-Davis-Gundy inequality and Assumptions \ref{assumptions coefficients} we have the following estimate for $s,t \in [0,T]$, 
	\begin{align*}
	\mathbb{E} \sup_{t\in[0,T]} | X_{t} |^{\rho}
	\leq & \mathbb{E}|X_0|^{\rho} 
	+ C T^{\rho-1} \int_0^T \mathbb{E}|b(X_r, \mathcal{L}(X_r))|^{\rho} \,\mathrm{d} r 
	+ CT^{\rho/2-1} \int_0^T \mathbb{E}|\sigma(X_r, \mathcal{L}(X_r))|^{\rho} \,\mathrm{d} r 
	+ C\sup_{t\in[0,T]}|F_{t}|^{\rho}\\
	\leq &\mathbb{E}|X_0|^{\rho} 
	+ C\left(
	T^{\rho-1} + T^{\rho/2-1}
	\right)
	\int_0^T (1+ \mathbb{E}|\overline{X}_{r}|^{\rho}) \,\mathrm{d} r
	+ C\sup_{t\in[0,T]}|F_{T}|^{\rho},
	\end{align*}
	where the constant $C$ depends only on $\rho, b$ and $\sigma$.
	Equation \eqref{eq: Lp bound sde} follows immediately using the bounds in Lemma \ref{lem: bounds X centered}.
	
	We now prove  inequality \eqref{eq: Lp lipschitz sde}. Applying a similar reasoning as before, we obtain
	\begin{align*}
	\mathbb{E}&\sup_{t\in [0,T]} |X_t(F,X_0) - X_t(G,Y_0)|^{\rho}
	\leq C  (T^\rho + T^{\rho/2}) \int_{0}^{T} (1 + \mathbb{E}|\overline{X}_t(F,X_0)|^{\rho})\\
	& \cdot
	\left(
		\mathbb{E}\sup_{r\in [0,t]} |X_r(F,X_0) - X_r(G,Y_0)|^{\rho}
		+ W_{\rho,\R^D}(\mathcal{L}(X_t(F,X_0)), \mathcal{L}(X_t(G,Y_0)))^{\rho}
	\right)
	\mathrm{d} t
	+ \| F-G\|_{C_b}.
	\end{align*}
	The $\rho$-Wasserstein distance is controlled by the $L^{\rho}_{\omega}$ norm of the difference of the processes, so we can apply Gronwall's lemma and Lemma \ref{lem: bounds X centered} to obtain the desired inequality \eqref{eq: Lp lipschitz sde}.
\end{proof}
In addition the the previous bounds, if the forcing term is $\alpha$-H\"older continuous, we have that also the solution is $\alpha$-H\"older continuous.
\begin{lemma}
	\label{lem: a priori sde heolder}
	Let $\alpha  \in (\frac{1}{3}, \frac{1}{2})$ and $\rho > 2/(1-2\alpha)$. Assume that $F,G \in C^{\alpha}([0,T], \R^D)$ and $X_0, Y_0 \in L_{\omega}^{\rho}$. Then there exist $C_T, C:= C(\rho, T, \alpha)>0$ such that $C_T \to 0$ as $T\to 0$ and
	\begin{equation} 
	\label{eq: hoelder bound sde}
	\|[X(F,X_0)]_{\alpha}\|_{L^{\rho}_{\omega}}  
	\leq 
	C_T (
	1 + \|\overline{X}_0\|_{L_{\omega}^{\rho}} ) + C [F]_{\alpha}.
	\end{equation}
	Moreover, 
	\begin{align}
	\label{eq: contraction hoelder bound}
	|[X(F,X_0) - X(G,Y_0)]_{\alpha}\|_{L^{\rho}_{\omega}}  
	\leq & C_T
	e^{C (1  + \|\overline{X}_0\|_{L_{\omega}^\rho}) }
	\left(
	\| F-G \|_{C_b} + \|X_0 - Y_0\|_{L_{\omega}^\rho}
	\right)
	+ C [F-G]_{\alpha}.
	\end{align}
\end{lemma}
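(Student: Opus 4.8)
The plan is to reduce both estimates to increment bounds in $L^\rho_\omega$ and then upgrade these to bounds on the $\alpha$-Hölder seminorm via the quantitative Kolmogorov continuity criterion \cite[Theorem 3.1]{friz2020course}, exactly as in the proof of Lemma \ref{lem: bounds X centered}. The structural point is that, although $F$ is only $\alpha$-Hölder, the shifted process $Z := X(F,X_0)-F$ solves the forcing-free equation $\mathrm dZ_t = b(Z_t+F_t,\mathcal L(Z_t+F_t))\,\mathrm dt + \sigma(Z_t+F_t,\mathcal L(Z_t+F_t))\,\mathrm dW_t$, whose increments are of order $|t-s|^{1/2}$ in $L^\rho_\omega$; Kolmogorov then applies to $Z$, and the $\alpha$-Hölder contribution of $F$ is added back through the triangle inequality $[X(F,X_0)]_\alpha\le[Z]_\alpha+[F]_\alpha$ (recall that $X(F,X_0)$ is the continuous solution, so the modification produced by Kolmogorov coincides with it $\mathbb P$-a.s.).

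First I would fix $\beta\in(\alpha+1/\rho,\,1/2)$; such a $\beta$ exists precisely because the hypothesis $\rho>2/(1-2\alpha)$ is equivalent to $\alpha+1/\rho<1/2$. For \eqref{eq: hoelder bound sde}, write $\delta Z_{s,t}=\int_s^t b(X_r,\mathcal L(X_r))\,\mathrm dr+\int_s^t \sigma(X_r,\mathcal L(X_r))\,\mathrm dW_r$ and estimate: the drift term by Minkowski's integral inequality and the linear growth bound \ref{assumpitons coefficients: linear growth}, giving $\le C\,|t-s|\,(1+\sup_{r\le T}\|\overline X_r\|_{L^\rho_\omega})$; the stochastic term by the Burkholder--Davis--Gundy inequality together with Jensen's inequality in time and \ref{assumpitons coefficients: linear growth}, giving $\le C\,|t-s|^{1/2}(1+\sup_{r\le T}\|\overline X_r\|_{L^\rho_\omega})$. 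Using $|t-s|\le T^{1-\beta}|t-s|^\beta$ and $|t-s|^{1/2}\le T^{1/2-\beta}|t-s|^\beta$, and inserting the a priori bound on $\sup_{r\le T}\|\overline X_r\|_{L^\rho_\omega}$ from Lemma \ref{lem: bounds X centered}, yields $\|\delta Z_{s,t}\|_{L^\rho_\omega}\le C_T\,(1+\|\overline X_0\|_{L^\rho_\omega})\,|t-s|^\beta$ with $C_T\to0$; Kolmogorov's theorem then gives $\|[Z]_\alpha\|_{L^\rho_\omega}\le C_T(1+\|\overline X_0\|_{L^\rho_\omega})$, and \eqref{eq: hoelder bound sde} follows.

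For \eqref{eq: contraction hoelder bound} I would argue analogously with $\widetilde\Delta:=(X(F,X_0)-F)-(X(G,Y_0)-G)$, whose increment is $\int_s^t\!\big(b(X_r,\mathcal L(X_r))-b(Y_r,\mathcal L(Y_r))\big)\,\mathrm dr$ plus the corresponding stochastic integral. Applying the local Lipschitz bound \ref{assumpitons coefficients: lipschitz} (together with $W_{\rho,\R^D}(\mathcal L(X_r),\mathcal L(Y_r))\le\|X_r-Y_r\|_{L^\rho_\omega}$), the uniform central-moment bound from Lemma \ref{lem: bounds X centered}, the Burkholder--Davis--Gundy and Jensen inequalities as above, and finally the $L^\rho_\omega$ Lipschitz estimate \eqref{eq: Lp lipschitz sde} to control $\sup_{r\le T}\|X_r(F,X_0)-X_r(G,Y_0)\|_{L^\rho_\omega}$, I obtain $\|\delta\widetilde\Delta_{s,t}\|_{L^\rho_\omega}\le C_T\,e^{C(1+\|\overline X_0\|_{L^\rho_\omega})}\big(\|F-G\|_{C_b}+\|X_0-Y_0\|_{L^\rho_\omega}\big)|t-s|^\beta$ with $C_T\to0$. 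Kolmogorov's theorem upgrades this to the same bound for $\|[\widetilde\Delta]_\alpha\|_{L^\rho_\omega}$, and $[X(F,X_0)-X(G,Y_0)]_\alpha\le[\widetilde\Delta]_\alpha+[F-G]_\alpha$ finishes the proof.

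The only delicate point is the bookkeeping of constants: one has to check that the prefactors produced by the drift and the stochastic parts each carry a strictly positive power of $T$ (so that $C_T\to0$) — which is exactly why $\beta$ is taken strictly below $1/2$ — and that the measure-dependent factors $(1+\overline M^\rho(\mathcal L(X_r)))^{1/\rho}$ are controlled uniformly in $r\in[0,T]$ by the a priori bounds of Lemma \ref{lem: bounds X centered} (cf.\ Remark \ref{rmk: global bounds}), so that they can be absorbed into $C_T$ without affecting the $\|\overline X_0\|_{L^\rho_\omega}$-dependence in the statement. Everything else is a routine use of the Burkholder--Davis--Gundy and Jensen inequalities together with Assumption \ref{assumptions coefficients}.
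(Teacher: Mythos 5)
Your proof follows essentially the same route as the paper's: shift to $Z=X(F,X_0)-F$ (resp.\ its difference counterpart), estimate the drift and stochastic increments separately via Minkowski/Jensen and Burkholder--Davis--Gundy together with the linear-growth/Lipschitz bounds from Assumption \ref{assumptions coefficients}, control the measure-dependent factors uniformly through Lemma \ref{lem: bounds X centered} (and \eqref{eq: Lp lipschitz sde} for the contraction estimate), upgrade to the $\alpha$-Hölder seminorm via Kolmogorov's continuity criterion, and add back $[F]_\alpha$ (resp.\ $[F-G]_\alpha$) by the triangle inequality. The paper writes the exponent as $\rho/2-\epsilon$ while you pick $\beta\in(\alpha+1/\rho,1/2)$, but these are the same device; if anything your version makes the role of the hypothesis $\rho>2/(1-2\alpha)$ slightly more transparent.
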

\begin{proof}
	We write $X$ and omit here the dependence of the process on $F,X_0$ as there is no possibility of confusion in the first part of the proof.
	Using Jensen's inequality, the Burkholder-Davis-Gundy inequality and Assumption \ref{assumptions coefficients} we have the following estimate for $s,t \in [0,T]$, 
	\begin{align*}
	\mathbb{E}[ |\delta (X-F)_{s,t} |^{\rho}] 
	\leq & C |t-s|^{\rho-1} \int_s^t \mathbb{E}|b(X_r, \mathcal{L}(X_r))|^{\rho} \,\mathrm{d} r 
	+ C|t-s|^{\rho/2-1} \int_s^t \mathbb{E}|\sigma(X_r, \mathcal{L}(X_r))|^{\rho} \,\mathrm{d} r \\
	\leq & C\left(
	|t-s|^{\rho-1} + |t-s|^{\rho/2-1}
	\right)
	\int_s^t (1 + \mathbb{E}|\overline{X}_{r}|^{\rho}) \,\mathrm{d} r\\
	\leq & C\left(
	|t-s|^{\rho-1} + |t-s|^{\rho/2-1}
	\right)|t-s| (1 + \mathbb{E}|\sup_{r\in[0,T]}\overline{X}_{r}|^{\rho})\\
	\leq & C |t-s|^{\rho/2-\epsilon} \left[C_T (1+ \mathbb{E}[\sup_{r\in[0,T]}|\overline{X}_{r}|^{\rho}]) \right].
	\end{align*}
	The constant $C_T$ is such that $C_T \to 0$, as $T\to 0$. We apply Lemma \ref{lem: bounds X centered} to obtain the following estimate,
	\begin{align*}
	\mathbb{E}[ |\delta (X-F)_{s,t} |^{\rho}] 
	\leq & 	C_T|t-s|^{\rho/2 - \epsilon}
	\left( 
	1 + \|\overline{X}_0\|_{L_{\omega}^{\rho}} 
	\right).
	\end{align*}
	Using the Kolmogorov continuity theorem we obtain that $X-F$ is $\alpha$-Hölder continuous for $\rho > 2/(1-2\alpha)$ and
	\begin{equation*}
	    \|[X]_{\alpha}\|_{L^{\rho}_{\omega}} \leq \|[X-F]_{\alpha}\|_{L^{\rho}_{\omega}} + C [F]_{\alpha}
	    \leq 
	    C_T (
	    1 + \|\overline{X}_0\|_{L_{\omega}^{\rho}} ) + [F]_{\alpha}.
	\end{equation*}
	
	We now prove  inequality \eqref{eq: contraction hoelder bound}. Arguing as in the first half of the proof we obtain
	\begin{align*}
	\mathbb{E}&|\delta (X(F,X_0) - X(G,Y_0)- (F-G))_{s,t}|^{\rho} 
	\leq C
	\left(
	|t-s|^{\rho-1}  + |t-s|^{\rho/2-1}
	\right) \\
	&\cdot
	\int_{s}^{t} (1+E|\overline{X}_r|^{\rho})\left(\mathbb{E}|X_r(F,X_0) - X_r(G,Y_0))|^{\rho} + W_{\rho}(\mathcal{L}(X_r(F,X_0)), \mathcal{L}(X_r(G,Y_0))^{\rho} \right)
	\mathrm{d}r
	\end{align*}
	We apply Lemma \ref{lem: bounds X centered} and \eqref{eq: Lp lipschitz sde} and use the fact that the $L^{\rho}$-norm of the difference of the process controls the $\rho$-Wasserstein distance, to obtain
	\begin{align*}
	\mathbb{E}&|\delta (X(F,X_0) - X(G,Y_0)-F+G)_{s,t}|^{\rho} 
	\leq C_T |t-s|^{\alpha\rho}
	e^{C_T (1 + \|\overline{X}_0\|_{L_{\omega}^\rho}) }
	\left(
	\| F-G \|_{C_b} 
	+ \|X_0 - Y_0\|_{L_{\omega}^\rho} \right)^{\rho}.
	\end{align*}
	Equation \eqref{eq: contraction hoelder bound} follows as before from the Kolmogorov continuity theorem, for $\rho > 2/(1-2\alpha)$.	
\end{proof}

\begin{lemma}
	\label{lem: sdes with forcing}
	Let $F \in C_b([0,T], \R^D)$ and $X_0 \in L_{\omega}^2$. Let $b$ and $\sigma$ satisfy Assumption \ref{assumptions coefficients}. 
	Then equation \eqref{eq: sde with forcing} admits a unique strong solution.
\end{lemma}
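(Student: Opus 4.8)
The plan is a Picard--Banach fixed-point argument on a space of measure flows, in the spirit of the proof of Lemma~\ref{lem: well-posedness mckean-vlasov linear growth}; the crucial new feature is that the a priori bounds of Lemmas~\ref{lem: bounds X centered} and~\ref{lem: a priori sde} are expressed through the \emph{central} moments $\overline{M}^{\rho}$ and are therefore insensitive to the (possibly large) deterministic forcing $F$. Fix $\rho \ge 2$ with $X_0 \in L^{\rho}_{\omega}$ (so $\rho = 2$ under the stated hypotheses). Given a measure flow $\mu = (\mu_t)_{t\in[0,T]}$, arising as the time-marginals of an element of $\mathcal{P}_{\rho}(C([0,T],\R^D))$ with $\mu_0 = \mathcal{L}(X_0)$, I would freeze the law in the coefficients of~\eqref{eq: sde with forcing} and consider the classical It\^o SDE
\begin{equation*}
  \mathrm{d}X_t = b(X_t,\mu_t)\,\mathrm{d}t + \sigma(X_t,\mu_t)\,\mathrm{d}W_t + \mathrm{d}F_t, \qquad X_0 .
\end{equation*}
On the ball $B_K := \left\{ \mu : \mu_0 = \mathcal{L}(X_0),\ \overline{M}^{\rho}(\mu_t) \le K \text{ for all } t \in [0,T] \right\}$, which is closed (hence complete) in $(\mathcal{P}_{\rho}(C([0,T],\R^D)), W_{\rho,[0,T]})$, Assumption~\ref{assumptions coefficients} makes $x \mapsto b(x,\mu_t)$ and $x \mapsto \sigma(x,\mu_t)$ bounded and globally Lipschitz with constants controlled by $K$, so the frozen SDE has a unique strong solution $X^{\mu}$ with continuous paths. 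This defines $\Phi : B_K \to \mathcal{P}_{\rho}(C([0,T],\R^D))$, $\Phi(\mu) := \mathcal{L}(X^{\mu})$, whose fixed points are exactly the solutions of~\eqref{eq: sde with forcing}.

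For \emph{local} well-posedness I would first verify $\Phi(B_K) \subseteq B_K$ for a suitable $K$ and small $T$: since $\mathrm{d}F$ cancels in the increments of $\overline{X}^{\mu}_t := X^{\mu}_t - \mathbb{E}[X^{\mu}_t]$ (as in~\eqref{eq: X centered}), a Burkholder--Davis--Gundy plus Gronwall estimate exactly as in Lemma~\ref{lem: bounds X centered} gives $\sup_{t\le T}\overline{M}^{\rho}(\mathcal{L}(X^{\mu}_t)) \le \overline{M}^{\rho}(\mathcal{L}(X_0)) + C_T(1+K)$, which for $K$ chosen large and then $T = T(K)$ small is $\le K$. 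For the contraction, note that the $\mathrm{d}F$-terms also cancel in $X^{\mu} - X^{\nu}$, so the estimate underlying~\eqref{eq: Lp lipschitz sde} (together with the fact that restriction to a sub-interval decreases Wasserstein distance) yields, for $\mu, \nu \in B_K$,
\begin{equation*}
  W_{\rho,[0,t]}(\Phi(\mu),\Phi(\nu))^{\rho} \le \left\| \sup_{s\le t} |X^{\mu}_s - X^{\nu}_s| \right\|_{L^{\rho}_{\omega}}^{\rho} \le C(K)\, e^{C(K)t} \int_0^t W_{\rho,[0,s]}(\mu,\nu)^{\rho}\,\mathrm{d}s ,
\end{equation*}
and iterating this inequality $n$ times produces a prefactor of order $(C(K)T)^{n}/n!$ (up to exponential factors), so $\Phi^{n}$ is a strict contraction on $B_K$ for $n$ large. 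Banach's fixed-point theorem then gives a unique fixed point, i.e.\ a solution of~\eqref{eq: sde with forcing} on $[0,T(K)]$, unique within $B_K$; uniqueness among \emph{all} solutions follows because, by Lemma~\ref{lem: bounds X centered}, every solution has central moments bounded by a constant and hence lies in $B_K$ for $K$ large enough (alternatively, a direct Gronwall bound on $\mathbb{E}\sup_{s\le t}|X_s - X'_s|^{\rho}$ using local Lipschitz continuity and the a priori central-moment bound, as in Lemma~\ref{lem: uniqueness mckean common noise}).

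For \emph{global} existence I would concatenate these local solutions, and here the key point is Remark~\ref{rmk: global bounds}: if $\|\overline{X}_0\|_{L^{\rho}_{\omega}} \le M_0$, then \emph{any} solution on the prescribed interval $[0,T]$ (however long) satisfies $\|\sup_{t}|\overline{X}_t|\|_{L^{\rho}_{\omega}} \le M = M(M_0,T)$, a bound that is uniform in time and independent of $F$. Hence, restarting the construction at $t_1 < t_2 < \cdots$, the central moments of the restarted initial data never exceed $M^{\rho}$, so one may run the local step with the \emph{same} $K := 2M^{\rho}$ and hence the \emph{same} positive step length $T(K)$ throughout; after $\lceil T/T(K)\rceil$ steps the solution is constructed, and unique, on all of $[0,T]$, the restarted initial data lying in $L^{\rho}_{\omega}$ by~\eqref{eq: Lp bound sde}. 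I expect the main obstacle to be precisely this step-length control: it works only because the growth and Lipschitz constants in Assumption~\ref{assumptions coefficients} are governed by $\overline{M}^{\rho}$, which --- unlike $M^{\rho}$ --- does not inflate when the large drift $F_t$ is added, so that the a priori bounds of Lemma~\ref{lem: bounds X centered} and Remark~\ref{rmk: global bounds} hold uniformly and the Picard intervals do not collapse. A minor routine point is to record that the estimates of Lemmas~\ref{lem: bounds X centered} and~\ref{lem: a priori sde}, stated for solutions of~\eqref{eq: sde with forcing}, are proved by arguments that apply verbatim to the frozen SDE with $\mu \in B_K$, which is what legitimises the invariance and contraction steps above.
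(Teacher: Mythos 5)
Your proof is correct, but it takes a genuinely different and substantially longer route than the paper's. The paper's argument is a two-line change of variables: set $\tilde b(t,x,\mu) := b(x+F_t, (\tau_{F_t})_{\#}\mu)$ and $\tilde\sigma(t,x,\mu) := \sigma(x+F_t, (\tau_{F_t})_{\#}\mu)$, observe that since the central moment $\overline{M}^{\rho}$ and the Wasserstein distance are translation-invariant (and $\overline{M}^{\rho} \lesssim M^{\rho}$), the translated coefficients satisfy Assumption~\ref{assumptions coefficients time dependent}, invoke Lemma~\ref{lem: well-posedness mckean-vlasov linear growth} to obtain a unique global solution $\tilde X$ of the unforced time-dependent McKean--Vlasov equation with initial datum $X_0 - F_0$, and set $X = \tilde X + F$; uniqueness is then read off Lemma~\ref{lem: a priori sde}. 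In other words, the paper reduces the statement entirely to a result already established, with the translation trick carrying the whole weight. You instead rerun the Picard--Banach argument from scratch, freezing the law, working on a central-moment ball $B_K$, and carefully tracking that the forcing cancels in the centred process so that the invariance, contraction, and global step-length estimates are all insensitive to $F$. Your observation that the central-moment hypothesis is what keeps the Picard intervals from collapsing is precisely the mechanism that makes the paper's translation trick work (translation-invariance of $\overline{M}^{\rho}$ is exactly why $\tilde b,\tilde\sigma$ inherit the right growth bounds), so the two proofs are closely related at a conceptual level even though yours is written out in full while the paper's delegates the work. The paper's route buys brevity and reuse; yours buys a self-contained, explicit account of why the argument survives the addition of a large deterministic drift. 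One small caveat: your remark that the estimates of Lemmas~\ref{lem: bounds X centered} and~\ref{lem: a priori sde} apply "verbatim" to the frozen SDE overstates things slightly (the linear-growth constant becomes $C(1+K)^{1/\rho}$ rather than the self-referential $C(1+\overline{M}^{\rho}(\mathcal{L}(X_t)))^{1/\rho}$), but the intended estimates do carry over and the argument is sound.
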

\begin{proof}
	To prove existence we define
	\begin{equation*}
	\tilde b(t, x, \mu) := b(x + F_t, (\tau_{F_t})_{\#}\mu), \qquad t\in[0,T], x \in {\R}^D, \mu \in \mathcal{P}({\R}^D),
	\end{equation*}
	where $\tau_{z} : \R^D \to \R^D$ is the translation by $z \in \R^D$. Similarly we define $\tilde \sigma$. It is immediate to see that, if $b,\sigma$ satisfy Assumption \eqref{assumptions coefficients}, then $\tilde{b}, \tilde{\sigma}$ satisfy Assumption \eqref{assumptions coefficients time dependent}.
	
	By Lemma \ref{lem: well-posedness mckean-vlasov linear growth} the following equation admits a unique global strong solution $\tilde X$,
		\begin{equation*}
		\mathrm{d}\tilde X_t = \tilde b(t, \tilde X_t, \mathcal{L}(\tilde X_t)) \,\mathrm{d} t + \tilde \sigma(t, \tilde X_t, \mathcal{L}(\tilde X_t)) \, \mathrm{d} W_t,
		\qquad
		\tilde X_t|_{t=0} = X_0 - F_0.
		\end{equation*}
	The stochastic process $X = \tilde X + F$ solves equation \eqref{eq: sde with forcing}.
	Uniqueness follows from the a priori estimates given in Lemma \ref{lem: a priori sde}.
\end{proof}

\subsection{Rough McKean-Vlasov}
\label{sec: rough McKean-Vlasov}

\begin{assumption}
	\label{assumpion: h}
	Let $h\in C^2_b(\R^D,\R^d)$ such that $x\mapsto xh^T(x) \in C^2_b(\R^D, \R^{D\times d})$.
\end{assumption}
For a given probability measure $\pi \in \mathcal{P}(\R^D)$, we define 
\begin{equation}
\label{def: P}
P(\pi) := \operatorname{Cov}_{\pi}(x,h) A + B,
\qquad
\operatorname{Cov}_{\pi}(x,h)^{l,j} =\int_{\R^D} (x^{l}-\pi[x^{l}]) h^{j}(x) \pi(\mathrm{d} x),
\qquad
1\leq l \leq D, 
\;
1\leq j \leq d,
\end{equation}
where $A\in \R^{d\times d}$ and $B\in \R^{D\times d}$ are given matrices.

Let $\bY:= (Y,\mathbb{Y}) \in \mathscr{C}^{\alpha}(\R^d)$ and $X_0 \in L_{\omega}^2$. We study the following equation,
\begin{subequations}
\label{eq: rough McKean}
\begin{align}
\mathrm{d}X_t & = b(X_t, \mathcal{L}(X_t))\,\mathrm{d}t + \sigma(X_t, \mathcal{L}(X_t)) \,\mathrm{d} W_t + \mathrm{d} F_t,
\qquad
X_t|_{t=0} = X_0,\label{eq: rough McKean 1}\\
\mathrm{d} F_t & = P(\mathcal{L}(X_t)) \,\mathrm{d} \bY_t.
\label{eq: rough McKean 2}
\end{align}
\end{subequations}
\begin{definition}
\label{def: solution}
A couple $(X,F): [0,T] \times \Omega \to \R^D \times \R^D$ is a solution to equation \eqref{eq: rough McKean} if
\begin{itemize}
    \item $F$ is a continuous path and $X$ solves equation \eqref{eq: rough McKean 1} in the sense of Definition \ref{def: sol eq with forcing};
    \item $P(\mathcal{L}(X_t)) \in \mathscr{D}_{\bY}^{2\alpha}([0,T],\mathscr{L}({\R}^{d},{\R}^{D}))$ and $F$ is the rough integral in \eqref{eq: rough McKean 2}. 
\end{itemize}
\end{definition}
Next we prove that, if $X$ solves \eqref{eq: rough McKean 1} for a fixed controlled $F$, then $P(\mathcal{L}(X_t))$ is a controlled path, which makes the rough integral \eqref{eq: rough McKean 2} well defined.

\begin{lemma}
\label{lem: P controlled path}
    Let $\alpha  \in (\frac{1}{3}, \frac{1}{2})$ and $\rho > 2/(1-2\alpha)$.
	Let $\bY:= (Y,\mathbb{Y}) \in \mathscr{C}^{\alpha}(\R^d)$, $F\in \mathscr{D}_{\bY}^{2\alpha}([0,T],\R^D)$ and $X_0 \in L_{\omega}^{\rho}(\R^D)$.
	
	If $\pi_t$ is the law of the solution process $X_t(F,X_0)$ to equation \eqref{eq: sde with forcing} with forcing $F$ and initial condition $X_0$, then 
	$$P(\pi_{\cdot}) \in \mathscr{D}_{\bY}^{2\alpha}([0,T],\mathscr{L}({\R}^{d},{\R}^{D})),$$
	with Gubinelli derivative
	\begin{equation}
	\label{eq: Gubinelli derivative of P}
	    P(\pi_{s})^{\prime}
	    = (F^{\prime}_s)^{\top}\pi[(x-\pi[x]) D h^{\top}] A .
	\end{equation}
	Moreover, we have the bound
	\begin{equation*}
	    \| P(\pi_{\cdot})\|_{\mathscr{D}^{2\alpha}_{\bY}}
	\leq  C \|h\|_{C^2_b}
	(1 + \| \overline{X}_0\|_{L^{\rho}_{\omega}} + [F]_{\alpha}) \left(C_T( 1 + \|\overline{X}_0\|_{L^{\rho}_{\omega}}) + \| F^{\prime} \|_{C_b} [Y]_{\alpha}
	+ [R^F]_{2\alpha}
	\right),
	\end{equation*}
	where $C_T \to 0$ as $T\to0$.
\end{lemma}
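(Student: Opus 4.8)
The plan is to reduce everything to the time regularity of $t \mapsto \Cov_{\pi_t}(x,h)$ and then expand its increment, extracting the unique leading term proportional to $\delta Y_{s,t}$ while absorbing everything else into a remainder of order $|t-s|^{2\alpha}$. Writing $X = X(F,X_0)$ and $\overline{X}_t := X_t - \mathbb{E}[X_t]$, recall from \eqref{def: P} that $P(\pi_t) = \Cov_{\pi_t}(x,h)\,A + B$ with $\Cov_{\pi_t}(x,h)^{l,j} = \mathbb{E}[\overline{X}^l_t\, h^j(X_t)]$. Since $A$ is a fixed matrix and $B$ a constant (so it has vanishing Gubinelli derivative), it suffices to show that $t \mapsto \Cov_{\pi_t}(x,h)$ lies in $\mathscr{D}^{2\alpha}_{\bY}$ with Gubinelli derivative $(F'_s)^{\top}\pi_s[(x-\pi_s[x])Dh^{\top}]$ and to bound its norm; passing back to $P(\pi_\cdot)$ only rescales constants. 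Throughout I would use that $h \in C^2_b$ (Assumption \ref{assumpion: h}), that $b,\sigma$ obey the linear-growth and Lipschitz bounds of Assumption \ref{assumptions coefficients}, and that $\rho > 2/(1-2\alpha) > 6$, leaving ample room in all Hölder-type moment estimates.

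The first step is the expansion of the increment. Writing $\overline{X}_t = \overline{X}_s + \delta\overline{X}_{s,t}$, $h^j(X_t) = h^j(X_s) + \delta h^j(X_\cdot)_{s,t}$, and $\delta h^j(X_\cdot)_{s,t} = Dh^j(X_s)\,\delta X_{s,t} + ([Dh^j]^{1,X}_{s,t} - Dh^j(X_s))\,\delta X_{s,t}$ (notation as in Section \ref{sec: preliminaries}), one obtains
\begin{equation*}
\delta \Cov^{l,j}_{s,t} = \mathbb{E}\big[\overline{X}^l_s\,Dh^j(X_s)\,\delta X_{s,t}\big] + \mathbb{E}\big[\delta\overline{X}^l_{s,t}\,h^j(X_s)\big] + \mathbb{E}\Big[\overline{X}^l_s\big([Dh^j]^{1,X}_{s,t} - Dh^j(X_s)\big)\delta X_{s,t}\Big] + \mathbb{E}\big[\delta\overline{X}^l_{s,t}\,\delta h^j(X_\cdot)_{s,t}\big].
\end{equation*}
Into the first two terms I would substitute the structural decompositions already available: by \eqref{eq: X centered}, $\delta\overline{X}_{s,t} = \int_s^t (b(X_r,\mathcal{L}(X_r)) - \mathbb{E}[b(X_r,\mathcal{L}(X_r))])\,\mathrm{d}r + \int_s^t \sigma(X_r,\mathcal{L}(X_r))\,\mathrm{d}W_r$ (the deterministic forcing $F$ cancels in the centred process), and, since $F \in \mathscr{D}^{2\alpha}_{\bY}$, $\delta X_{s,t} = \int_s^t b\,\mathrm{d}r + \int_s^t \sigma\,\mathrm{d}W_r + F'_s\,\delta Y_{s,t} + R^F_{s,t}$. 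The crucial point is that whenever an Itô integral $\int_s^t \sigma\,\mathrm{d}W_r$ is paired with an $\mathcal{F}_s$-measurable factor and the expectation is taken, it vanishes by the tower property; after these cancellations the \emph{only} contribution of size $|t-s|^\alpha$ is $\mathbb{E}[\overline{X}^l_s\,Dh^j(X_s)]\,F'_s\,\delta Y_{s,t}$ (in which $F'_s\,\delta Y_{s,t}$ is deterministic), which identifies the Gubinelli derivative $\Cov'_s = (F'_s)^{\top}\pi_s[(x-\pi_s[x])Dh^{\top}]$, in agreement with \eqref{eq: Gubinelli derivative of P}.

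With $\Cov'$ fixed, the remainder $R^{\Cov}_{s,t} := \delta\Cov_{s,t} - \Cov'_s\,\delta Y_{s,t}$ collects: the two drift contributions $\mathbb{E}[(\int_s^t (b - \mathbb{E}b)\,\mathrm{d}r)h^j(X_s)]$ and $\mathbb{E}[\overline{X}^l_s Dh^j(X_s)\int_s^t b\,\mathrm{d}r]$, both $O(|t-s|)$ by linear growth (\ref{assumpitons coefficients: linear growth}) and the $L^\rho$-moment bounds of Lemma \ref{lem: bounds X centered}; the controlled-remainder term $\mathbb{E}[\overline{X}^l_s Dh^j(X_s)\,R^F_{s,t}]$, bounded by $\|Dh\|_\infty\,\mathbb{E}|\overline{X}_s|\,[R^F]_{2\alpha}|t-s|^{2\alpha}$; the second-order Taylor term $\mathbb{E}[\overline{X}^l_s([Dh^j]^{1,X}_{s,t} - Dh^j(X_s))\delta X_{s,t}]$, which is $O(|\delta X_{s,t}|^2)$ and handled via $\|D^2 h\|_\infty$, Hölder's inequality and $\|\delta X_{s,t}\|_{L^q} \lesssim |t-s|^\alpha\|[X]_\alpha\|_{L^\rho_\omega}$ for $q \le \rho$; and the cross term $\mathbb{E}[\delta\overline{X}^l_{s,t}\,\delta h^j(X_\cdot)_{s,t}]$, which is $O(|t-s|^{2\alpha})$ directly by Cauchy--Schwarz since both factors are $O(|t-s|^\alpha)$ in $L^2$ (using $h \in C^1_b$, Lemma \ref{lem: bounds X centered} for $\delta\overline{X}$, and the controlled structure of $F$ for $\delta X$). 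Feeding in $\|[X]_\alpha\|_{L^\rho_\omega} \le C_T(1 + \|\overline{X}_0\|_{L^\rho_\omega}) + C[F]_\alpha$ from Lemma \ref{lem: a priori sde heolder}, $[F]_\alpha \lesssim \|F'\|_{C_b}[Y]_\alpha + [R^F]_{2\alpha}$, and $|t-s| \le T^{1-2\alpha}|t-s|^{2\alpha}$ (which produces the vanishing prefactor $C_T$), one recovers the stated bound after collecting constants; checking $\Cov' \in C^\alpha$ (needed for membership in $\mathscr{D}^{2\alpha}_{\bY}$) is done by running the very same expansion once more with $\partial_m h^j \in C^1_b$ in place of $h^j$, together with $F' \in C^\alpha$.

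I expect the main obstacle to be organisational rather than conceptual: one must cleanly separate the single genuinely $|t-s|^\alpha$-sized term --- which survives precisely because $F'_s\,\delta Y_{s,t}$ is deterministic and $\mathcal{F}_s$-measurable --- from the host of $O(|t-s|^{2\alpha})$-terms, and in particular one must not overlook the conditional-expectation cancellations of the stochastic integrals: without them both the cross term and $\mathbb{E}[\overline{X}^l_s Dh^j(X_s)\int_s^t \sigma\,\mathrm{d}W_r]$ would produce $|t-s|^\alpha$-sized expressions that are not of the form $(\text{const})\,\delta Y_{s,t}$, so that $\Cov_{\pi_\cdot}(x,h)$ would fail to be a controlled path at all. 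The remaining work --- tracking the dependence on $\|\overline{X}_0\|_{L^\rho_\omega}$, $[F]_\alpha$, $\|F'\|_{C_b}$ and $[R^F]_{2\alpha}$ so as to match the exact form of the estimate --- is then routine.
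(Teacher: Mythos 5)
Your proposal is correct and follows essentially the same route as the paper's proof: you decompose $\delta\Cov_{s,t}$ into the three product-rule terms $\mathbb{E}[\overline{X}_s\delta h]$, $\mathbb{E}[\delta\overline{X}\,h(X_s)]$, $\mathbb{E}[\delta\overline{X}\,\delta h]$ (which are the paper's $I_3,I_1,I_2$), Taylor-expand $\delta h$ and substitute the controlled decomposition of $\delta X_{s,t}$, use the martingale cancellations to kill the $O(|t-s|^{1/2})$ stochastic-integral contributions against $\mathcal{F}_s$-measurable factors, and read off the Gubinelli derivative from the surviving $F'_s\,\delta Y_{s,t}$ term before bounding the remainder via Lemmas \ref{lem: bounds X centered} and \ref{lem: a priori sde heolder} and Cauchy--Schwarz. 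The only organisational difference is that the paper expands $\delta h(X_\cdot)_{s,t}$ once via \eqref{eq: expansion h} before pairing it with $\overline{X}_s$ and $\delta\overline{X}_{s,t}$, whereas you pre-split the Taylor expansion of $\delta h$ in the $\mathbb{E}[\overline{X}_s\,\delta h]$ term — this is cosmetic and leads to the same estimates.
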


\begin{proof}
	To simplify the notation, we write $X_t$ for $X(F,X_0)$. Let $\overline{X}_t := X_t - \mathbb{E}[X_t]$. 
		We have the following expansion for $h(X)$, $s,t \in [0,T]$,
	\begin{align}
	\label{eq: expansion h}
	\delta h(X_{\cdot})_{s,t}
	= & [Dh]_{s,t}^{1,x} \left(
	\int_{s}^{t}  b(X_r, \mathcal{L}(X_r) ) \,\mathrm{d} r
	+R^{F}_{s,t}
	\right)\\
	&+ ([D^2h]_{s,t}^{2,x} \delta X_{s,t} + Dh(X_s) ) \left(
	\int_{s}^{t}  \sigma(X_r, \mathcal{L}(X_r) ) \,\mathrm{d} W_r
	+ F^{\prime}_{s}\delta Y_{s,t}
	\right).\nonumber
	\end{align}
	Let $s,t\in[0,T]$. From the definition of $P(\pi)$, equation \eqref{def: P}, we have
	\begin{equation*}
	\delta P(\pi_{\cdot})_{s,t} 
	= \delta \mathbb{E}[\overline{X}_{\cdot}h(X_{\cdot})^{\top}]_{s,t} A
	= \mathbb{E}[\delta \overline{X}_{s,t} h(X_s)^{\top}]A
	+\mathbb{E}[\delta \overline{X}_{s,t} \delta h(X_{\cdot})_{s,t}^{\top}]A
	+\mathbb{E}[\overline{X}_{s} \delta h(X_{\cdot})_{s,t}^{\top}]A
	=:I_1+I_2+I_3.
	\end{equation*}
	We expand $I_3$ even further using \eqref{eq: expansion h}
	\begin{align*}
	I_3 
	= & \mathbb{E}[\overline{X}_s (Dh(X_s)F^{\prime}_{s}\delta Y_{s,t})^{\top}] A
	+ \mathbb{E}\left[\overline{X}_s \left(Dh(X_s)	\int_{s}^{t}  \sigma(X_r, \mathcal{L}(X_r)) \mathrm{d} W_r\right)^{\top}\right]A \\
	& + \mathbb{E}\left[ \overline{X}_s \left(
	[Dh]_{s,t}^{1,x} \left(
	\int_{s}^{t}  b(X_r, \mathcal{L}(X_r) ) \mathrm{d} r
	+ R^{F}_{s,t}
	\right)
	+ [D^2h]_{s,t}^{2,x} \delta X_{s,t} \left(
	\int_{s}^{t}  \sigma(X_r, \mathcal{L}(X_r)) \mathrm{d} W_r
	+ F^{\prime}_{s}\delta Y_{s,t}
	\right)
	\right)^{\top}
	\right] A \\
	=&: \mathbb{E} \left[ \overline{X}_s (Dh(X_s)F^{\prime}_{s}\delta Y_{s,t})^{\top} \right]A + I_4.
	\end{align*}	
	We can write
	\begin{equation*}
	\delta P(\pi_{\cdot})_{s,t} = \mathbb{E}[\overline{X}_s (Dh(X_s)F^{\prime}_{s}\delta Y_{s,t})^T] A + R^{P}_{s,t},
	\end{equation*}
	where $	R^{P}_{s,t} = I_1 + I_2 + I_4$.
	For $1 \leq l \leq D$ and $1 \leq j \leq d$ we write the $(l, j)$ entry of the matrix $P$ as
	\begin{equation*}
	    \delta P^{l,j}(\pi_{\cdot})_{s,t} =
	    (P^{l,j}(\pi_{\cdot}))^{\prime}_{s} \cdot \delta Y_{s,t} + R^{P^{l,j}}_{s,t},
	\end{equation*}
	where, for $s\in[0,T]$, $P(\pi_s)^{\prime} \in \mathscr{L}({\R}^{d}, \mathscr{L}({\R}^{d},{\R}^{D})) \cong \R^{D\times d\times d}$ is the Gubinelli derivative given as
	\begin{equation*}
	    (P^{l,j}(\pi_{s}))^{\prime}
	    := \sum_{k = 1}^{D} \mathbb{E}[\overline{X}^{l}_s (Dh(X_s)^{\top}A)^{j}](F^{\prime}_s)^{k} 
	    = \sum_{k = 1}^{D}\sum_{i=1}^{d} \mathbb{E}[\overline{X}^{l}_s \partial_k h^i(X_s)]A^{i,j}(F^{\prime}_s)^{k}\in {\R}^{d}.
	\end{equation*}
	
	We check now that $R^P$ is a remainder with regularity $2\alpha$.
	Using the martingale property of the stochastic integral in \eqref{eq: X centered}, we have
	\begin{equation*}
	I_1 
	= \mathbb{E}[\mathbb{E}_s[\delta \overline{X}_{s,t}] h(X_s)^T]A
	= \mathbb{E}\left[\int_{s}^{t} \left( b(X_r, \mathcal{L}(X_r))- \mathbb{E}[b(X_r, \mathcal{L}(X_r))] \right) \mathrm{d} r \; h(X_s)^T\right] A.
	\end{equation*}
	We apply Lemma \ref{lem: bounds X centered} to obtain
	\begin{equation*}
	| I_1 |\leq |t-s| C \| h \|_{\infty} (C_T + \| X_0\|_{L^{\rho}_{\omega}}),
	\end{equation*}
	where $C>0$ is a global constant.
	For $I_2$ we use Lemma \ref{lem: bounds X centered} and \eqref{eq: hoelder bound sde} to obtain, for any $\rho > 3$,
	\begin{align*}
	|I_2 |
	= &| \mathbb{E}[\delta \overline{X}_{s,t} ([Dh]_{s,t}^{1,x} \delta X_{s,t} )^T] |
	\leq \| Dh \|_{\infty} \|[\overline{X}]_{\alpha}\|_{L^{\rho}_{\omega}}  \|[X]_{\alpha}\|_{L^{\rho}_{\omega}} 
	\\
	\leq & \| Dh \|_{\infty} 
	C_T(1+ \|\overline{X}_0\|_{L^{\rho}_{\omega}})
	[C_T(1 + \|\overline{X}_0\|_{L^{\rho}_{\omega}}) + [F]_{\alpha}] |t-s|^{2\alpha}.
	\end{align*}
	We proceed by finding a bound for $I_4$. Notice that the term $E[\overline{X}_s \left(Dh(X_s)	\int_{s}^{t}  \sigma(X_r, \mathcal{L}(X_r) \mathrm{d} W_r\right)^T]$ vanishes thanks to the martingale property of the stochastic integral.
	Using Lemma \ref{lem: bounds X centered} and inequality \eqref{eq: hoelder bound sde}, we get
	\begin{align*}
	I_4 \leq & C |t-s|^{2\alpha} 
	\|Dh\|_{\infty}
	\| \sup_{t\in[0,T]}\overline{X}_t\|_{L^{\rho}_{\omega}} 
	\left(
	C_T (1+ \| \sup_{t\in[0,T]}\overline{X}_t\|_{L^{\rho}_{\omega}} )
	+ [R^F]_{2\alpha}
	\right)\\
	& + C |t-s|^{2\alpha} \|D^2h\|_{\infty} \|[X]_{\alpha} \|_{L^{\rho}_{\omega}} \left(C_T( 1 + \|\sup_{t\in[0,T]}\overline{X}_t\|_{L^{\rho}_{\omega}}) + \| F^{\prime} \|_{C_b} [Y]_{\alpha}
	\right)
    \\
    \leq & C |t-s|^{2\alpha} 
	\|Dh\|_{\infty}
	(C_T + 
	\| \overline{X}_0\|_{L^{\rho}_{\omega}} )
	\left(
	C_T (1+ \| \overline{X}_0\|_{L^{\rho}_{\omega}} )
	+ [R^F]_{2\alpha}
	\right)\\
	& + C |t-s|^{2\alpha} \|D^2h\|_{\infty} (1 + \| \overline{X}_0\|_{L^{\rho}_{\omega}} + [F]_{\alpha}) \left(C_T( 1 + \|\overline{X}_0\|_{L^{\rho}_{\omega}}) + \| F^{\prime} \|_{C_b} [Y]_{\alpha}
	\right)
    \\
	\leq & C |t-s|^{2\alpha} (\|Dh\|_{\infty} + \|D^2h\|_{\infty} )
	(1 + \| \overline{X}_0\|_{L^{\rho}_{\omega}} + [F]_{\alpha}) \left(C_T( 1 + \|\overline{X}_0\|_{L^{\rho}_{\omega}}) + \| F^{\prime} \|_{C_b} [Y]_{\alpha}
	+ [R^F]_{2\alpha}
	\right).
	\end{align*}
\end{proof}

We now set up a contraction argument that we will use to prove the well-posedness of equation \eqref{eq: rough McKean}. Let $f \in \mathscr{D}^{2\alpha}_{\bY}([0,T],\mathcal{L}(\R^d,\R^D))$. We define, for $t \in [0,T]$,
\begin{equation}
\label{eq: F from f}
    (F_{t}, F^{\prime}_{t}) := \left(\int_{0}^{t} f_{r} \mathrm{d} \bY_{r}, f_t\right)
    \; 
    \in \mathscr{D}^{2\alpha}_{\bY}([0,T],{\R}^D).
\end{equation}
Now we plug this as the forcing into equation \eqref{eq: sde with forcing} with initial condition $X_0 \in L_{\omega}^{\rho}({\R}^D)$ and call $X$ the solution with law $\pi$. We define the map
\begin{equation*}
    \begin{array}{cccc}
       \Gamma:  &  \mathscr{D}^{2\alpha}_{\bY}([0,T],\mathcal{L}(\R^d,\R^D))
       & \to &
       \mathscr{D}^{2\alpha}_{\bY}([0,T],\mathcal{L}(\R^d,\R^D)) \\
         & f
         & \mapsto &
         P(\pi_{\cdot}).
    \end{array}
\end{equation*}
From \cite{friz2020course} we have the following estimates,
\begin{equation*}
    \|f\|_{C_b}, \; \|f^{\prime}\|_{C^b}
    \leq
    |f_0| + \| f \|_{\mathscr{D}^{2\alpha}_{\bY}},
\end{equation*}
\begin{equation*}
    [F]_{\alpha},\; [R^{F}]_{\alpha}
    \leq C \| \bY \|_{\alpha}
    (|f_0| + \| f \|_{\mathscr{D}^{2\alpha}_{\bY}}).
\end{equation*}
It follows from Lemma \ref{lem: P controlled path} that $\Gamma$ is well defined. 
Let us prove that there exists a small time, such that the following set is invariant under $\Gamma$,
\begin{equation*}
    D(\Gamma) := \{ f\in \mathscr{D}^{2\alpha}_{\bY} 
    \mid
    |f_0| + \| f \|_{\mathscr{D}^{2\alpha}_{\bY}}
    \leq
    2\|h\|_{\infty} M
    \},
\end{equation*}
where $M$ is the global constant of Remark \ref{rmk: global bounds}.

We have the following estimates,
\begin{equation*}
|\Gamma(f)_0|
=
|\mathbb{E}[\overline{X}_0 h(X_0)]A + B|
\leq \|h\|_{\infty} M,
\end{equation*}
	\begin{align*}
	   \| \Gamma(f) \|_{\mathscr{D}^{2\alpha}_{\bY}} =
	    \| P(\pi_{\cdot})\|_{\mathscr{D}^{2\alpha}_{\bY}}
	\leq  & C \|h\|_{C^2_b}
	(1 + \| \overline{X}_0\|_{L^{\rho}_{\omega}} + \| \bY \|_{\alpha}
    (|f_0| + \| f \|_{\mathscr{D}^{2\alpha}_{\bY}})) \left(C_T( 1 + \|\overline{X}_0\|_{L^{\rho}_{\omega}}) + \| \bY \|_{\alpha}
    (|f_0| + \| f \|_{\mathscr{D}^{2\alpha}_{\bY}})
	\right)\\
	\leq & \|h\|_{C^2_b}
	C_T (1+M + 2 \|h\|_{\infty}
	(\|\bY\|_{\alpha} \vee \|\bY\|_{\bar{\alpha}}))^2\\
	\leq & \|h\|_{\infty} M,
	\end{align*}
where in the last step we chose $T$ small enough such that
\begin{equation*}
    C_T \leq 
    \frac{\|h\|_{\infty}M}{\|h\|_{C^2_b}
	(1+M + 2 M\|h\|_{\infty}
	(\|\bY\|_{\alpha} \vee \|\bY\|_{\bar{\alpha}}))^2}.
\end{equation*}
Adding the previous estimates we proved that $\Gamma(D(\Gamma)) \subset D(\Gamma)$.
Since $C_T$ only depends on global quantities, we can divide each interval $[0,\overline{T}]$ into smaller intervals of length $T$ and in each one apply the contraction argument.
We prove in the following that $\Gamma$ is a contraction on $B_T$. We start by showing that $\Gamma$ is Lipschitz continuous.

\begin{lemma}
\label{lem: contractive estimates}
Let $1/3 < \alpha < \bar{\alpha} < 1/2$. Let $\rho > 2/(1-2\alpha)$ and $X_0, Z_0 \in L_{\omega}^{\rho}$.  Let $\bY, \hat{\bY} \in \mathscr{C}^{\bar{\alpha}}$ and $F \in \mathscr{D}^{2\alpha}_{\bY}$, $G \in \mathscr{D}^{2\alpha}_{\hat{\bY}}$. Assume that there exists a universal constant $C>0$ such that
\begin{equation*}
\| F \|_{\mathscr{D}^{2\alpha}_{\bY}} \leq C \| \bY \|_{\alpha},
    \qquad
\| G \|_{\mathscr{D}^{2\alpha}_{\hat{\bY}}} \leq C \| \overline{\bY} \|_{\alpha},
\qquad
\| \overline{X}_0 \|_{L^{\rho}_{\omega}},
\;
\| \overline{Z}_0 \|_{L^{\rho}_{\omega}} \leq C.
\end{equation*}
We call $X_t(F,X_0)$ (resp. $X_t(G,Z_0)$) the solution to equation \eqref{eq: sde with forcing} with inputs $F$ and $X_0$ (resp. $G$ and $Z_0$).
We call $R$ the difference of the remainders of $P(\pi^x)$ and $P(\pi^z)$.  There exists a global constant $C_T>0$ such that $C_T\to 0$ as $T\to 0$ and
\begin{equation}
    [R]_{2\alpha}
    \leq
    C_T
    \left(
	\| F-G \|_{C_b} + \|X_0 - Z_0\|_{L_{\omega}^\rho}
	+ [F-G]_{\alpha}
	    \right)
    +
    \|F^{\prime} - G^{\prime}\|_{C_b} [\bY]_{\alpha}
    + \|G^{\prime} \|_{C_b}
     [\bY - \hat{\bY}]_{\alpha}.
\end{equation}
A similar estimate can be obtained for the difference of the Gubinelli derivatives.
\end{lemma}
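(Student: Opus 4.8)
The plan is to reproduce, at the level of \emph{differences}, the expansion used in the proof of Lemma~\ref{lem: P controlled path}. Write $X := X_t(F,X_0)$ and $Z := X_t(G,Z_0)$ with laws $\pi^x$ and $\pi^z$, and recall that there one had $\delta P(\pi^x_\cdot)_{s,t} = \mathbb{E}[\overline{X}_s(Dh(X_s)F'_s\delta Y_{s,t})^\top]A + R^{P^x}_{s,t}$ with $R^{P^x}_{s,t}=I_1^x+I_2^x+I_4^x$, and likewise for $z$ with $G$ and $\hat{\bY}$ in place of $F$ and $\bY$. By linearity, $R_{s,t} = (I_1^x-I_1^z)+(I_2^x-I_2^z)+(I_4^x-I_4^z)$, so it suffices to bound each of the three differences by $|t-s|^{2\alpha}$ times the claimed right-hand side. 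In each term I would insert cross terms so as to separate \emph{differences of processes} --- controlled by the a priori Lipschitz estimates \eqref{eq: Lp lipschitz sde} and \eqref{eq: contraction hoelder bound} together with Lemma~\ref{lem: bounds X centered} --- from \emph{differences of coefficient evaluations} --- controlled by the Lipschitz continuity of $h$, $Dh$ and $D^2h$ provided by Assumption~\ref{assumpion: h}; products of three or four random factors are distributed by H\"older's inequality, which is why $\rho>2/(1-2\alpha)$ (in particular $\rho>3$) is in force.

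For the first two terms: in $I_1^x-I_1^z$ one adds and subtracts $\mathbb{E}\big[\int_s^t(b(Z_r,\mathcal{L}(Z_r))-\mathbb{E}[b(Z_r,\mathcal{L}(Z_r))])\,\mathrm{d}r\;h(X_s)^\top\big]A$; the $b$-difference is estimated by Assumption~\ref{assumpitons coefficients: lipschitz}, the $h$-difference by $\|Dh\|_\infty$ and \eqref{eq: Lp lipschitz sde}, and the outer $\int_s^t\mathrm{d}r$ contributes $|t-s|\le T^{1-2\alpha}|t-s|^{2\alpha}$, the prefactor $T^{1-2\alpha}$ being absorbed into $C_T$. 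For $I_2^x-I_2^z = \mathbb{E}[\delta\overline{X}_{s,t}([Dh]^{1,x}_{s,t}\delta X_{s,t})^\top]A - \mathbb{E}[\delta\overline{Z}_{s,t}([Dh]^{1,z}_{s,t}\delta Z_{s,t})^\top]A$ I would split off $\delta(\overline{X}-\overline{Z})_{s,t}$ --- bounded via \eqref{eq: contraction hoelder bound}, $\|Dh\|_\infty$ and Lemma~\ref{lem: bounds X centered} --- and then decompose $[Dh]^{1,x}_{s,t}\delta X_{s,t}-[Dh]^{1,z}_{s,t}\delta Z_{s,t}$ into $([Dh]^{1,x}_{s,t}-[Dh]^{1,z}_{s,t})\delta X_{s,t}$, estimated using $\|D^2h\|_\infty$ and \eqref{eq: contraction hoelder bound}, plus $[Dh]^{1,z}_{s,t}(\delta X_{s,t}-\delta Z_{s,t})$, estimated using $\|Dh\|_\infty$; each resulting piece carries $|t-s|^{2\alpha}$ and a $C_T$ originating from the H\"older norms of the centred processes in Lemma~\ref{lem: bounds X centered}.

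The term $I_4^x-I_4^z$ requires the most bookkeeping. Recall $I_4$ gathers $\mathbb{E}\big[\overline{X}_s\big([Dh]^{1,x}_{s,t}(\int_s^t b\,\mathrm{d}r+R^F_{s,t})\big)^\top\big]A$ and $\mathbb{E}\big[\overline{X}_s\big([D^2h]^{2,x}_{s,t}\delta X_{s,t}(\int_s^t\sigma\,\mathrm{d}W_r+F'_s\delta Y_{s,t})\big)^\top\big]A$ (the contribution $\mathbb{E}[\overline{X}_s(Dh(X_s)\int_s^t\sigma\,\mathrm{d}W_r)^\top]A$ vanished by the martingale property, and its difference vanishes as well since $\overline{X}_s,\overline{Z}_s, Dh(X_s), Dh(Z_s)$ are $\mathcal{F}_s$-measurable). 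For each sub-term I would again insert cross terms: the piece containing $R^F$ versus $R^G$ produces the $[R^F-R^G]_{2\alpha}$ contribution, while the factors multiplying $R^G$ are handled using $[R^G]_{2\alpha}\le\|G\|_{\mathscr{D}^{2\alpha}_{\hat{\bY}}}$, which is bounded by the hypotheses; the piece containing $F'_s\delta Y_{s,t}$ versus $G'_s\delta\hat{Y}_{s,t}$ is treated through $F'_s\delta Y_{s,t}-G'_s\delta\hat{Y}_{s,t}=(F'_s-G'_s)\delta Y_{s,t}+G'_s(\delta Y_{s,t}-\delta\hat{Y}_{s,t})$, which is precisely what yields the two genuinely $O(1)$ terms $\|F'-G'\|_{C_b}[\bY]_\alpha$ and $\|G'\|_{C_b}[\bY-\hat{\bY}]_\alpha$; everything else carries $|t-s|^{2\alpha}$ together with a $C_T$. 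Collecting the three differences gives the asserted bound. The estimate for the difference of the Gubinelli derivatives is entirely analogous but simpler, since $(P(\pi_s))'=(F'_s)^\top\pi_s[(x-\pi_s[x])Dh^\top]A$ is evaluated at a single time and involves no increments: splitting $(P(\pi^x_s))'-(P(\pi^z_s))'$ into an $F'-G'$ part, a $Dh$-Lipschitz part controlled by $\|D^2h\|_\infty$ and \eqref{eq: Lp lipschitz sde}, and a centred-measure difference part, one obtains a bound of the same shape with supremum norms in place of H\"older seminorms.

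The main obstacle is purely organizational: no new idea is needed beyond the proof of Lemma~\ref{lem: P controlled path}, but one must check, across roughly a dozen cross-terms, that each either acquires a positive power of $|t-s|$ beyond $|t-s|^{2\alpha}$ or a factor $C_T$ from Lemma~\ref{lem: bounds X centered} --- so that it joins the $C_T(\|F-G\|_{C_b}+\|X_0-Z_0\|_{L^\rho_\omega}+[F-G]_\alpha)$ part --- or else is exactly one of the two $O(1)$ terms coming from the $F'\delta Y$ versus $G'\delta\hat{Y}$ mismatch. Two points deserve care: the a priori Lipschitz estimates \eqref{eq: Lp lipschitz sde} and \eqref{eq: contraction hoelder bound} presuppose that $X$ and $Z$ are driven by the \emph{same} Brownian motion $W$, which is the coupling used here; and the products of several random factors must be distributed consistently via H\"older's inequality, which is the reason for the standing requirement $\rho>2/(1-2\alpha)$.
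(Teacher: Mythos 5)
Your proposal is correct and follows the same underlying strategy as the paper: bound $R = R^{P^{x}} - R^{P^{z}}$ by expanding via the discrete Leibniz rule, use the martingale property to kill the $\mathrm{d}W$ contributions, insert cross terms to separate process differences from coefficient differences, and then invoke Lemma~\ref{lem: bounds X centered} together with the a~priori Lipschitz and H\"older bounds from Lemmas~\ref{lem: a priori sde} and~\ref{lem: a priori sde heolder}. The only (cosmetic) difference is the order of operations: the paper telescopes the covariance $\mathbb{E}[\overline{X}_\cdot h(X_\cdot)^\top]-\mathbb{E}[\overline{Z}_\cdot h(Z_\cdot)^\top]$ \emph{first} and then applies the Leibniz rule to the resulting difference terms (producing the groups $I_{11},\dots,I_{24}$), whereas you apply the Leibniz expansion to $P(\pi^x)$ and $P(\pi^z)$ separately and difference the resulting $I_j^x - I_j^z$, inserting cross terms afterwards. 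Both regroupings of the same algebraic identity lead to the same estimates, so this is not a distinct route; it is the same proof.

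One small point worth flagging: your description labels the contributions $\|F'-G'\|_{C_b}[\bY]_\alpha$ and $\|G'\|_{C_b}[\bY-\hat{\bY}]_\alpha$ as ``genuinely $O(1)$'' terms. They are indeed the ones that appear without an explicit $C_T$ prefactor in the lemma, but in the application within Theorem~\ref{thm: well-posedness rough McKean} they do still become small for small $T$: since $\alpha<\bar{\alpha}$, the seminorm on the small interval interpolates as $[\bY]_\alpha \lesssim T^{\bar{\alpha}-\alpha}[\bY]_{\bar{\alpha}}$. This is precisely why the lemma is stated with the gap $\alpha<\bar{\alpha}$, and it is the mechanism that lets those terms contribute to the contraction. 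Making this explicit would round off your sketch, but it does not affect the validity of your argument.
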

\begin{proof}
    To simplify the notation, we write $X_t$ for $X(F,X_0)_t$ and $Z_t$ for $X(G,Z_0)_t$. Let $\overline{X}_t := X_t - E[X_t]$, similarly we define $\overline{Z}_t$.
    We have the following expansion for $h(X)-h(Z)$, $s,t \in [0,T]$, 
    $\mathbb{P}$-a.s.,
    \begin{subequations}
    \label{eq: expansion difference h}
	\begin{align}
	\delta (h(X_{\cdot}) - h(Z_{\cdot}))_{s,t} 
	= & Dh(X_s) F^{\prime} \delta Y_{s,t}
	- Dh(X_s) G^{\prime} \delta \hat{Y}_{s,t}\label{eq: expansion difference h: gubinelli derivative}\\
	& + Dh(X_s)
	\left(
	\int_{s}^{t}  \sigma(X_r, \mathcal{L}(X_r) ) \,\mathrm{d} W_r
	\right)
	 - Dh(Z_s)
	 \left(
	\int_{s}^{t}  \sigma(Z_r, \mathcal{L}(Z_r) ) \,\mathrm{d} W_r
	\right)
	\label{eq: expansion difference h: martingale}
	\\
	& + [Dh]_{s,t}^{1,z} \left(
	\int_{s}^{t}  
	\left[
	b(X_r, \mathcal{L}(X_r) ) -  b(Z_r, \mathcal{L}(Z_r) )
	\right] \mathrm{d} r
	+ R^{F}_{s,t} - R^{G}_{s,t}
	\right)\label{eq: expansion difference h: reminder first}\\
	&+ \left(
	[Dh]_{s,t}^{1,x} - [Dh]_{s,t}^{1,z}
	\right)
	\left(
	\int_{s}^{t}  
	b(X_r, \mathcal{L}(X_r) ) \,\mathrm{d} r
	+ R^{F}_{s,t}
	\right)\\
	& + [D^2h]_{s,t}^{2,z} \delta Z_{s,t} \left(
	\int_{s}^{t}  \left[\sigma(X_r, \mathcal{L}(X_r) ) 
	- \sigma(Z_r, \mathcal{L}(Z_r) )
	\right]\mathrm{d} W_r
	\right)\\
	& + [D^2h]_{s,t}^{2,z} \delta Z_{s,t} \left(F^{\prime}_{s}\delta Y_{s,t} - G^{\prime}_{s}\delta \hat{Y}_{s,t} \right)\\
	&+\left(
	[D^2h]_{s,t}^{2,z}\delta(X-Z)_{s,t}
	+([D^2h]_{s,t}^{2,x} - [D^2h]_{s,t}^{2,z})\delta X_{s,t}
	\right)
	\left(
	\int_{s}^{t}  \sigma(X_r, \mathcal{L}(X_r) ) \,\mathrm{d} W_r
	\right).
	\label{eq: expansion difference h: reminder last}
	\end{align}
	\end{subequations}
	Let $s,t\in[0,T]$. We call $\pi^x := \mathcal{L}(X)$ and $\pi^z := \mathcal{L}(Z)$. From the definition of $P(\pi)$, equation \eqref{def: P}, we have
	\begin{align*}
	\delta (P(\pi^x_{\cdot})-P(\pi^z_{\cdot}))_{s,t} 
	= & \mathbb{E}[\delta (\overline{X}_{\cdot}-\overline{Z}_{\cdot})_{s,t}  h(Z_s)^T]A
	+\mathbb{E}[\delta (\overline{X}_{\cdot}-\overline{Z}_{\cdot})_{s,t}\delta h(Z_{\cdot})_{s,t}^T]A
	+\mathbb{E}[(\overline{X}_{s} - \overline{Z}_{s}) \delta h(Z_{\cdot})_{s,t}^T]A\\
	& + \mathbb{E}[\delta \overline{X}_{s,t} 
	(h(X_s)-h(Z_s))]A
	+\mathbb{E}[\delta \overline{X}_{s,t}
	\delta (h(X_{\cdot})-h(Z_{\cdot}))_{s,t}]A
	+\mathbb{E}[\overline{X}_{s}
	\delta (h(X_{\cdot})-h(Z_{\cdot}))_{s,t}] A\\
	=: & I_{11}+I_{12}+I_{13}
	 + I_{21}+I_{22}+I_{23}.
	\end{align*}
Using the same expansion for $h(Z)$ as in \eqref{eq: expansion h}, we obtain
\begin{align*}
    I_{13} = & \mathbb{E}[(\overline{X}_s - \overline{Z}_s)
    (Dh(Z_s)G^{\prime}\delta \hat{Y}_{s,t})^{T}]A
    + 0
    + \mathbb{E}[(\overline{X}_s - \overline{Z}_s)
    ([Dh]_{s,t}^{1,z}
    (\int_{s}^{t} b(Z_r, \mathcal{L}(Z_r)) dr 
    + R^{G}_{s,t}
    ))^T
    ] A\\
    & + \mathbb{E}[(\overline{X}_s - \overline{Z}_s) 
    ([D^2]_{s,t}^{2,z} \delta Z_{s,t}
    (\int_{s}^{t} \sigma(Z_r, \mathcal{L}(Z_r))\mathrm{d} W_r + G^{\prime}_s\delta \hat{Y}_s))^T]A\\
    = & \mathbb{E}[(\overline{X}_s - \overline{Z}_s)
    (Dh(Z_s)G^{\prime}\delta \hat{Y}_{s,t})^{T}] A + I_{14} + I_{15}.
\end{align*}
The second term in the first line vanishes thanks to the martingale property of the stochastic integral.
Similarly, we expand $I_{23}$ using \eqref{eq: expansion difference h}. Notice that the term \eqref{eq: expansion difference h: martingale} vanishes because the stochastic integral is a martingale. The term \eqref{eq: expansion difference h: gubinelli derivative} produces a term of regularity $\alpha$, the others have regularity $2\alpha$.
\begin{equation*}
    I_{23} = \mathbb{E}[\overline{X}_s(Dh(X_s) F^{\prime} \delta Y_{s,t}
	- Dh(Z_s) G^{\prime} \delta \hat{Y}_{s,t})^T] A + I_{24},
\end{equation*}
where $I_{24} := \mathbb{E}[\overline{X}_s( 
\eqref{eq: expansion difference h: reminder first} + \dots +
\eqref{eq: expansion difference h: reminder last})]A$.
We thus obtain that
\begin{align*}
    \delta (P(\pi^x_{\cdot})-P(\pi^z_{\cdot}))_{s,t} 
    = \mathbb{E}[(\overline{X}_s - \overline{Z}_s)
    (Dh(Z_s)G^{\prime}\delta \hat{Y}_{s,t})^{T}]A
    + \mathbb{E}[\overline{X}_s(Dh(X_s) F^{\prime} \delta Y_{s,t}
	- Dh(X_s) G^{\prime} \delta \hat{Y}_{s,t})] A
	+ R_{s,t}
\end{align*}
with $R_{s,t} = I_{11} + I_{12} + I_{14} + I_{15} + I_{21} + I_{22} + I_{24}$.
For $1 \leq l \leq D$ and $1 \leq j \leq d$ we write the $(l, j)$ entry of the matrix as
	\begin{align*}
	    \delta (P(\pi^x) -P(\pi^z))^{l,j}_{s,t} 
	    = &
	    \sum_{k=1}^{D}\Big(\mathbb{E}[(\overline{X}_s - \overline{Z}_s)^{l}
    (Dh(Z_s)^{\top}A)^{i,k}(G^{\prime})^{k}]
    + \mathbb{E}[(\overline{X}_s ((Dh(X_s) -
    Dh(Z_s))^{\top}A)^{i,k}(G^{\prime})^{k}]\\
    & + \mathbb{E}[\overline{X}_s (Dh(X_s)^{\top}A)^{i,k}(F^{\prime} - G^{\prime})^{k}]
    \Big)
    \cdot\delta \hat{Y}_{s,t}
    + \sum_{k=1}^{D} \mathbb{E}[\overline{X}_s^{l}(Dh(X_s)^{\top}A)^{i,k} (F^{\prime})^{k} ]
    \cdot \delta (Y - \hat{Y})_{s,t}
    + R^{P^{l,j}}_{s,t}.
	\end{align*}
We must now find  estimates for $[R]_{2\alpha}$. We start by some estimates of the processes in $L^{\rho}$.
From equation \eqref{eq: Lp lipschitz sde} we have
\begin{equation}
\label{eq: Lp lipschitz no x0}
    	\| \sup_{t\in [0,T]} |X_t - Z_t|  \|_{L_{\omega}^{\rho}}
	\leq
	C
	\left(
	\| F - G \|_{C_b} + \|X_0 - Y_0\|_{L_{\omega}^\rho}
	\right).
	\end{equation}
It follows from the equations for $\overline{X}$ and $\overline{Z}$ as well as estimate \eqref{eq: Lp lipschitz no x0} that
\begin{align*}
    \| [\overline{X} - \overline{Z}]_{\alpha} \|_{L^{\rho}_{\omega}}
    \leq & C_T (1+ \|\overline{X}_0 \|_{L^{\rho}_{\omega}})
    \| \sup_{t\in[0,T]} (X_t - Z_t ) \|_{L^{\rho}_{\omega}}
    \leq C_T
	\left(
	\| F - G \|_{C_b} + \|X_0 - Y_0\|_{L_{\omega}^\rho}
	\right).
\end{align*}
Using Lemma \ref{lem: bounds X centered} we also have
\begin{equation*}
    \| [\overline{X} - \overline{Z}]_{\alpha}\|_{L^{\rho}_{\omega}}
    \leq \| [\overline{X}]_{\alpha}\|_{L^{\rho}_{\omega}}
    +
    \| [\overline{Z}]_{\alpha}\|_{L^{\rho}_{\omega}}
    \leq C_T.
\end{equation*}
From \eqref{eq: contraction hoelder bound} we have
\begin{equation*}
    	\|[X-Z]_{\alpha}\|_{L^{\rho}_{\omega}}
	\leq  C_T
	\left(
	\| F-G \|_{C_b} + \|X_0 - Y_0\|_{L_{\omega}^\rho}
	\right)
	+ C [F-G]_{\alpha}.
	\end{equation*}
From \eqref{eq: hoelder bound sde} we obtain
\begin{equation*}
    \| [X]_{\alpha} \|_{L^{\rho}_{\omega}},
    \;
    \| [Z]_{\alpha} \|_{L^{\rho}_{\omega}}
    \leq C_T.
\end{equation*}
Using the previous estimates we obtain
\begin{equation*}
    \delta (I_{11}+I_{21})_{s,t}
    \leq C_T \| h\|_{C^1_b}
    \| \sup_{t\in[0,T]} |X_t-Z_t| \|_{L^{\rho}_{\omega}} |t-s|^{2\alpha},
\end{equation*}
\begin{equation*}
    \delta (I_{12}+I_{22})_{s,t}
    \leq C_T \| h\|_{C^1_b}
    \| [X-Z]_{\alpha}
    \|_{L^{\rho}_{\omega}} |t-s|^{2\alpha},
\end{equation*}
\begin{equation*}
    \delta (I_{14}+I_{15})_{s,t}
    \leq \| h\|_{C^2_b}
    \| [\sup_{t\in[0,T]} |X_t-Z_t|
    \|_{L^{\rho}_{\omega}}
    (
    C_T + [R^G]_{2\alpha}
    + \|G^{\prime}\|_{C_b} [\hat{Y}]_{\alpha}
    )
    |t-s|^{2\alpha}.
\end{equation*}
For $I_{24}$ we have a combination of the above and an extra term
\begin{equation*}
    [I_{24}]_{2\alpha} \leq \dots + \|F^{\prime} - G^{\prime}\|_{C_b} [\bY]_{\alpha}
    + \|G^{\prime} \|_{C_b}
     [\bY - \hat{\bY}]_{\alpha}.
\end{equation*}
Summing up, we get
\begin{equation*}
    [R]_{2\alpha} 
    \leq
    C
    \left(
	\| F-G \|_{C_b} + \|X_0 - Y_0\|_{L_{\omega}^\rho}
	+ [F-G]_{\alpha}
	\right)
	(
    C_T + [R^G]_{2\alpha}
    + \|G^{\prime}\|_{C_b} [\hat{Y}]_{\alpha}
    )
    +
    \|F^{\prime} - G^{\prime}\|_{C_b} [\bY]_{\alpha}
    + \|G^{\prime} \|_{C_b}
     [\bY - \hat{\bY}]_{\alpha},
\end{equation*}
which concludes the proof using the assumption $\| G \|_{\mathscr{D}^{2\alpha}_{\hat{\bY}}} \leq C \| \overline{\bY} \|_{\alpha}$
\end{proof}

We are now ready to prove the main well-posedness result.
\begin{theorem}
\label{thm: well-posedness rough McKean}
Let $1/3 < \alpha < \bar{\alpha} < 1/2$. Let $\rho > 2/(1-2\alpha)$ and $X_0, Z_0 \in L^{\rho}$. Let $X_0 \in L^\rho$ and $\bY \in \mathscr{C}^{\bar{\alpha}}(\R^d)$.  If $b, \sigma$ and $h$ satisfy Assumptions \ref{assumptions coefficients} and \ref{assumpion: h}, then equation \eqref{eq: rough McKean} admits a unique solution $(X,F) \in L^\rho(C([0,T],\R^D))\times \mathscr{D}^{2\alpha}_{\bY}([0,T],\R^D)$.
\end{theorem}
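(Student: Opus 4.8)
The plan is to obtain the solution of \eqref{eq: rough McKean} as a fixed point of the map $\Gamma$ constructed just above the theorem, on a short time interval, and then to patch.

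\textbf{Reduction to short time.} First I would fix $\overline{T} > 0$ and a bound $\|X_0\|_{L^\rho_\omega} \le M_0$, and observe that, by Remark \ref{rmk: global bounds} together with Lemmas \ref{lem: bounds X centered}, \ref{lem: a priori sde} and \ref{lem: a priori sde heolder}, every constant controlling a solution $X(F,X_0)$ of the forced equation \eqref{eq: sde with forcing} --- in particular the constant $M = M(M_0,\overline{T})$ and all the quantities denoted $C_T$ above --- depends only on $M_0$, $\overline{T}$, $\alpha$, $\rho$, the constants in Assumption \ref{assumptions coefficients} and $\|\bY\|_{\bar{\alpha}}$, and never on the particular forcing $F$ nor on the length $T \le \overline{T}$ of the interval. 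Hence it suffices to produce a unique solution on $[0,T]$ for one value $T > 0$ depending only on these global data, and then to concatenate the solutions on $[0,T], [T,2T], \dots$, restarting at each step from the terminal state (finite in $L^\rho_\omega$ by Lemma \ref{lem: a priori sde}); additivity of the rough integral over adjacent subintervals together with continuity of $t \mapsto \mathcal{L}(X_t)$ ensures that the glued pair again lies in $L^\rho(C([0,\overline{T}],\R^D)) \times \mathscr{D}^{2\alpha}_{\bY}([0,\overline{T}],\R^D)$.

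\textbf{Choice of the fixed-point set.} Next I would record the structural feature that makes the contraction work: $\Gamma(f)_0 = P(\pi_0) = \operatorname{Cov}_{\pi_0}(x,h)A + B$ with $\pi_0 := \mathcal{L}(X_0)$, which is \emph{independent of $f$}. Thus the image of $\Gamma$ lies in the closed affine subset $\mathcal{D} := \{ f \in D(\Gamma) : f_0 = P(\pi_0) \}$ of $\mathscr{D}^{2\alpha}_{\bY}([0,T],\mathscr{L}(\R^d,\R^D))$, which is complete, and it is enough to find a fixed point of $\Gamma|_{\mathcal{D}}$. For $f,g \in \mathcal{D}$ one has $(f-g)_0 = 0$, so expanding at $t = 0$ (i.e.\ $(f-g)_t = (f-g)'_0\,\delta Y_{0,t} + R^{f-g}_{0,t}$) gives $\|f-g\|_{C_b} \le C_T\,\|f-g\|_{\mathscr{D}^{2\alpha}_{\bY}}$ with $C_T \to 0$ as $T \to 0$, and likewise, for the controlled path $F-G := \int_0^\cdot (f-g)_r\,\mathrm{d}\bY_r$ (whose Gubinelli derivative $f-g$ vanishes at $0$), $\|F-G\|_{C_b} \le C_T\,\|f-g\|_{\mathscr{D}^{2\alpha}_{\bY}}$ and $[F-G]_\alpha \le C\,\|f-g\|_{\mathscr{D}^{2\alpha}_{\bY}}$ (the constant $C$ absorbing $\|\bY\|_{\alpha}$).

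\textbf{Contraction and conclusion.} I would then apply Lemma \ref{lem: contractive estimates} with $\hat{\bY} = \bY$ and $Z_0 = X_0$, to $f,g \in \mathcal{D}$ and the associated $F,G$ (the hypotheses $\|F\|_{\mathscr{D}^{2\alpha}_{\bY}} \le C\|\bY\|_\alpha$ etc.\ holding on $\mathcal{D}$). The terms $\|X_0 - Z_0\|_{L^\rho_\omega}$ and $[\bY - \hat{\bY}]_\alpha$ vanish, and for the difference $R$ of the remainders of $\Gamma(f)$ and $\Gamma(g)$ one is left with
\begin{equation*}
[R]_{2\alpha} \ \le\ C_T\big(\|F-G\|_{C_b} + [F-G]_\alpha\big) + \|f-g\|_{C_b}\,[\bY]_\alpha \ \le\ C_T'\,\|f-g\|_{\mathscr{D}^{2\alpha}_{\bY}},
\end{equation*}
using the estimates of the previous paragraph; the point is that the term $\|f-g\|_{C_b}[\bY]_\alpha$, which on its face carries no small prefactor, is nonetheless tamed precisely because $(f-g)_0 = 0$. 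Combined with the companion estimate for the difference of the Gubinelli derivatives (also supplied by Lemma \ref{lem: contractive estimates}), this yields $\|\Gamma(f) - \Gamma(g)\|_{\mathscr{D}^{2\alpha}_{\bY}} \le C_T'\,\|f-g\|_{\mathscr{D}^{2\alpha}_{\bY}}$; shrinking $T$ so that $C_T' \le \tfrac12$ makes $\Gamma|_{\mathcal{D}}$ a contraction on the complete metric space $\mathcal{D}$, and Banach's theorem gives a unique $f^* \in \mathcal{D}$. Setting $F^* := \int_0^\cdot f^*_r\,\mathrm{d}\bY_r$ and $X^* := X(F^*,X_0)$ with law $\pi^*$, the identity $\Gamma(f^*) = f^*$ reads $P(\pi^*_\cdot) = (F^*)'$, so $(X^*,F^*)$ solves \eqref{eq: rough McKean} on $[0,T]$ in the sense of Definition \ref{def: solution}. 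Conversely, any solution $(X,F)$ produces, via Definition \ref{def: solution}, Lemma \ref{lem: P controlled path} and the uniqueness in Lemma \ref{lem: sdes with forcing}, a fixed point $f := P(\mathcal{L}(X_\cdot))$ of $\Gamma$ with $f_0 = P(\pi_0)$; the bound of Lemma \ref{lem: P controlled path}, being self-referential in $\|f\|_{\mathscr{D}^{2\alpha}_{\bY}}$, pins $\|f\|_{\mathscr{D}^{2\alpha}_{\bY}}$ inside $D(\Gamma)$ for $T$ small, hence $f \in \mathcal{D}$ and $f = f^*$, so $(X,F) = (X^*,F^*)$. Together with the short-time reduction this proves the theorem. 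The main obstacle is this contraction step: one must verify that the Lipschitz constant of $\Gamma$ genuinely tends to $0$ as $T \to 0$ in spite of the $\|f-g\|_{C_b}[\bY]_\alpha$ contribution in Lemma \ref{lem: contractive estimates}, which rests entirely on the $f$-independence of $\Gamma(f)_0$ --- the remaining ingredients having already been isolated in the preceding lemmas.
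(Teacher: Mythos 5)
Your proof is correct and takes essentially the same route as the paper: both obtain a solution as a fixed point of the map $\Gamma$ on $D(\Gamma)$ via Banach's theorem, using Lemma \ref{lem: contractive estimates} with $\hat{\bY}=\bY$, $Z_0=X_0$, and then patch over subintervals of fixed length. Where you add genuine value is in handling the problematic contribution $\|F'-G'\|_{C_b}[\bY]_\alpha=\|f-g\|_{C_b}[\bY]_\alpha$, which has no manifest small prefactor: you observe that $\Gamma(f)_0 = P(\pi_0)$ is independent of $f$, restrict to the affine slice $\mathcal{D}=\{f_0=P(\pi_0)\}$ so that $(f-g)_0=0$, and deduce $\|f-g\|_{C_b}\le C_T\|f-g\|_{\mathscr{D}^{2\alpha}_{\bY}}$ with $C_T\to0$. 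The paper compresses this into the cryptic display $\|\Gamma(f)-\Gamma(g)\|_{\mathscr{D}^{2\alpha}_{\bY}}\le C_T(\|f-g\|_{\mathscr{D}^{2\alpha}_{\bY}}-\|f_0-g_0\|_{C_b})$ without explicitly invoking the $f$-independence of $\Gamma(f)_0$; your version makes the mechanism transparent and also supplies the converse direction (every solution yields a fixed point in $\mathcal{D}$), which the paper states as the opening equivalence but does not re-derive. Same strategy, cleaner bookkeeping.
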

\begin{proof}
A process $X: \Omega \to C([0,T],\R^D)$ is a solution to equation \eqref{eq: rough McKean} if and only if $P(\mathcal{L}(X))$ is a fixed point of $\Gamma$.

We want to prove that $\Gamma: D(\Gamma)\to D(\Gamma)$ is a contraction.
If $f,g \in D(\Gamma)$ we have that $F, G$ defined as in \eqref{eq: F from f} satisfy the assumptions of Lemma \ref{lem: contractive estimates} with a global constant $C = C(h,M)$. By taking $\bY = \hat{\bY}$ and $X_0 = Z_0$ the estimate in Lemma \ref{lem: contractive estimates} reduces to
\begin{equation*}
    \| \Gamma(f) - \Gamma(g) \|_{\mathscr{D}^{2\alpha}_{\bY}} 
    \leq C_T(M, \| \bY \|_{\bar{\alpha}})
    (\| f - g \|_{\mathscr{D}^{2\alpha}_{\bY}}
    -\| f_0 - g_0 \|_{C_b}),
    \qquad
    f,g \in D(\Gamma).
\end{equation*}
Choosing now $T_0$ small enough such that $C_{T_0} < 1$, we have that $\Gamma$ is a contraction on the closed subset $D(\Gamma) \subset \mathscr{D}^{2\alpha}_{\bY}([0,T_0],\mathcal{L}(\R^d,\R^D))$. Hence, $\Gamma$ admits a unique fixed point.

Since the small constant $C_{T_0}$ in the definition of the domain $D(\Gamma)$ of $\Gamma$ and in the contraction argument only depends on the global quantities $h,M,\bY$ and not on the initial condition $X_0$, we can construct a finite family of subsequent time intervals of size $T_0$ that covers $[0,T]$. On each of these time interval we construct the solution as a fixed point of $\Gamma$.

\end{proof}



\begin{corollary} \label{cor:Wasserstein lipschitz}
Let $1/3 < \bar{\alpha} < 1/2$ and $\rho > 2/(1-2\bar{\alpha})$. Let $X_0 \in L^{\rho}$ and $\bY^1, \bY^2 \in \mathscr{C}^{\bar{\alpha}}$. For $i=1,2$, let $X^i$ be the solution to equation \eqref{eq: rough McKean} with driver $\bY^i$ and initial condition $X_0$. Call $\pi^i := \mathcal{L}(X^i) \in \mathcal{P}_{\rho}(C([0,T], \R^D))$. There exists a positive constant $C >0$ such that
\begin{equation*}
    W_{\rho, C([0,T], \R^D)}(\pi^1,\pi^2)
    \leq
    C \rho_{\bar{\alpha}}(\bY^1, \bY^2).
\end{equation*}
\end{corollary}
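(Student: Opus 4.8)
The plan is to run a rough-path stability argument around the contraction map $\Gamma$ of Theorem \ref{thm: well-posedness rough McKean}. Fix $\alpha \in (1/3, \bar\alpha)$ with $\rho > 2/(1-2\alpha)$, which is possible since $\rho > 2/(1-2\bar\alpha)$ and $\alpha \mapsto 2/(1-2\alpha)$ is continuous. Since $X^1$ and $X^2$ are constructed on the same probability space, driven by the same Brownian motion $W$ and started from the same $X_0$, the pair $(X^1, X^2)$ is a coupling of $(\pi^1, \pi^2)$, so
\begin{equation*}
W_{\rho, C([0,T],\R^D)}(\pi^1, \pi^2) \le \big\| \sup_{t \in [0,T]} |X^1_t - X^2_t| \big\|_{L^\rho_\omega},
\end{equation*}
and it suffices to bound the right-hand side by $C\,\rho_{\bar\alpha}(\bY^1, \bY^2)$.

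First I would work on a short interval $[0,T_0]$, with $T_0$ chosen as in the proof of Theorem \ref{thm: well-posedness rough McKean} so that $\Gamma^{\bY^1},\Gamma^{\bY^2}$ are strict contractions, the relevant constants depending only on $h$, the global constant $M$ of Remark \ref{rmk: global bounds}, and $\sup_i\|\bY^i\|_{\bar\alpha}$. Let $f^i := P(\pi^i_\cdot)$ be the fixed point of $\Gamma^{\bY^i}$ and $F^i := \int_0^\cdot f^i_r\,\mathrm{d}\bY^i_r$, so that $(F^i)' = f^i$, $F^i_0 = 0$, $|f^i_0| + \|f^i\|_{\mathscr{D}^{2\alpha}_{\bY^i}} \le 2\|h\|_\infty M$, and, crucially, $f^1_0 = f^2_0$ (because $\pi^1_0 = \pi^2_0 = \mathcal{L}(X_0)$). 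Introduce the distance between controlled paths over the two different drivers, $d(f^1,f^2) := \|(f^1)' - (f^2)'\|_{C_b} + [\,R^{f^1} - R^{f^2}\,]_{2\alpha}$ with $R^{f^i}_{s,t} := \delta f^i_{s,t} - (f^i)'_s\delta Y^i_{s,t}$. Applying Lemma \ref{lem: contractive estimates} with $F = F^1$, $G = F^2$, $X_0 = Z_0$, $\bY = \bY^1$, $\hat\bY = \bY^2$ (the $f^i$ satisfying its hypotheses with global constants), and using that $f^1_0 = f^2_0$ makes $\|f^1 - f^2\|_{C_b}$ of order $T_0^\alpha\big(d(f^1,f^2) + \rho_{\bar\alpha}(\bY^1,\bY^2)\big)$ while the genuine driver-difference terms obey $[\bY^1 - \bY^2]_\alpha \le C\rho_{\bar\alpha}(\bY^1,\bY^2)$, bounds both $[\,R^{f^1} - R^{f^2}\,]_{2\alpha}$ and the Gubinelli-derivative difference by
\begin{equation*}
d(f^1, f^2) \le C_{T_0}\big( \|F^1 - F^2\|_{C_b} + [\,F^1 - F^2\,]_\alpha + d(f^1,f^2) \big) + C\,\rho_{\bar\alpha}(\bY^1, \bY^2),
\end{equation*}
with $C_{T_0} \to 0$ as $T_0 \to 0$ and $C$ global.

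Next I would close the loop via the local Lipschitz continuity of the rough integral in both the driver and the controlled integrand (cf. \cite{friz2020course}): since $F^i_0 = 0$, $(F^i)' = f^i$, $f^1_0 = f^2_0$, and $\|f^i\|_{\mathscr{D}^{2\alpha}_{\bY^i}} \le 2\|h\|_\infty M$,
\begin{equation*}
\|F^1 - F^2\|_{C_b} + [\,F^1 - F^2\,]_\alpha \le C\big( d(f^1, f^2) + \rho_{\bar\alpha}(\bY^1, \bY^2) \big).
\end{equation*}
Substituting into the previous display, choosing $T_0$ small enough that the coefficient of $d(f^1,f^2)$ is $< 1$, and absorbing, yields $d(f^1,f^2) + \|F^1 - F^2\|_{C_b} \le C\,\rho_{\bar\alpha}(\bY^1,\bY^2)$ on $[0,T_0]$. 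Feeding $\|F^1 - F^2\|_{C_b} \le C\rho_{\bar\alpha}(\bY^1,\bY^2)$ into the a priori estimate \eqref{eq: Lp lipschitz sde} of Lemma \ref{lem: a priori sde}, with equal initial data and the constant there bounded uniformly by Remark \ref{rmk: global bounds}, bounds $\big\| \sup_{t\in[0,T_0]} |X^1_t - X^2_t| \big\|_{L^\rho_\omega}$, hence $W_{\rho, C([0,T_0],\R^D)}(\pi^1,\pi^2)$, by $C\,\rho_{\bar\alpha}(\bY^1,\bY^2)$.

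Finally, to reach $[0,T]$ I would iterate this over the finitely many subintervals $[kT_0, (k+1)T_0]$ covering $[0,T]$, exactly as in Theorem \ref{thm: well-posedness rough McKean}. On the $k$-th interval the two solutions no longer share their initial datum, but $\|X^1_{kT_0} - X^2_{kT_0}\|_{L^\rho_\omega} \le C\rho_{\bar\alpha}(\bY^1,\bY^2)$ is already controlled from the previous step; this difference enters only through the $\|X_0 - Z_0\|_{L^\rho_\omega}$-terms already present in Lemma \ref{lem: contractive estimates} and \eqref{eq: Lp lipschitz sde}, while the induced discrepancy $|f^1_0 - f^2_0| \le C\,W_\rho(\mathcal{L}(X^1_{kT_0}), \mathcal{L}(X^2_{kT_0})) \le C\rho_{\bar\alpha}(\bY^1,\bY^2)$ is handled by the (local) Lipschitz continuity of $P$ in the Wasserstein distance visible from \eqref{def: P}. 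Summing the finitely many contributions gives the claim. The main obstacle is the genuine circularity $f^i \leftrightarrow F^i$ imposed by the fixed-point identity $f^i = \Gamma^{\bY^i}(f^i)$: the argument only closes because $C_{T_0}$ can be made $< 1$, and the delicate part is keeping every constant \emph{global} --- independent of $X_0$ and of the individual drivers, depending on them only through $\sup_i\|\bY^i\|_{\bar\alpha}$ --- so that the finite iteration over subintervals produces a single constant $C$, all while respecting the integrability thresholds $\rho > 2/(1-2\alpha)$ underlying the Kolmogorov-type estimates of Lemmas \ref{lem: bounds X centered}--\ref{lem: a priori sde heolder}.
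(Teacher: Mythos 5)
Your proposal is correct and follows the same route as the paper: reduce the Wasserstein distance to the $L^\rho$ distance via the trivial coupling, then obtain Lipschitz continuity of the solution map $\bY \mapsto X$ from Lemma \ref{lem: a priori sde} together with Lemma \ref{lem: contractive estimates}, closing the fixed-point loop on a short interval and iterating. The paper compresses all of this into two sentences; what you have supplied is essentially the unpacking of those two sentences, including the necessary observations that $f^1_0 = f^2_0$ on the first block, that the fixed points $f^i$ lie in the invariant sets $D(\Gamma)$ so that the hypotheses of Lemma \ref{lem: contractive estimates} hold with global constants, and that on later blocks the mismatch of initial data enters through the $\|X_0 - Z_0\|_{L^\rho_\omega}$ term already present in the lemmas.
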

\begin{proof}
 It follows form Lemma \ref{lem: a priori sde} and Lemma \ref{lem: contractive estimates} that the solution map $\bY \mapsto X$ of equation \eqref{eq: rough McKean} is a Lipschitz-continuous function between the spaces $\mathscr{C}^{\bar{\alpha}}(\R^d)$ and $L^{\rho}(C([0,T],\R^D))$.
 The corollary follows as the Wasserstein distance between $\pi^1$ and $\pi^2$ is always less or equal to the $L^{\rho}$ distance between $X^1$ and $X^2$.
\end{proof}
We now proceed to prove a Wong-Zakai type result when $\bY$ is the It\^{o} lift of a Brownian motion. We introduce the approximation $X^n$ as the solution of 
\begin{subequations}
\label{eq: WZ approximation equation}
\begin{align}
\mathrm{d}X^n_t & = b(X^n_t, \mathcal{L}(X^n_t)) \,\mathrm{d} t + \sigma(X^n_t, \mathcal{L}(X^n_t)) \,\mathrm{d} W_t + \mathrm{d} F^n_t,
\qquad
X_t|_{t=0} = X_0,\label{eq: WZ approximation equation 1}\\
\dot{F}^n_t & = P(\mathcal{L}(X^n_t)) \dot{Y}_t^n + \frac12 P(\mathcal{L}(X_t^n))^{\top}\mathbb{E}[\overline{X}_t^n Dh(X_t^n)^{\top}]A   .
\label{eq: WZ approximation equation 2}
\end{align}
\end{subequations}
where $Y^n$ is a piecewise linear approximation of $Y$. We have the following result.

\begin{theorem}
\label{thm: convergence geometric}
The solution $X^n$ converges to $X$ in the Wasserstein distance, viz $\mathbb{P}$-a.s. we have
$$
W_{\rho,C([0,T],\R^D)}(\mathcal{L}(X^n), \mathcal{L}(X)) \rightarrow 0
$$
as $n \rightarrow \infty$.
\end{theorem}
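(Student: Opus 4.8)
The plan is to realise $X^n$ as the solution of the rough McKean--Vlasov equation \eqref{eq: rough McKean} driven by an explicit (non-geometric) rough-path lift $\bY^n$ of the piecewise linear path $Y^n$, and then to combine the continuity of the solution map from Corollary \ref{cor:Wasserstein lipschitz} with the classical Wong--Zakai convergence of the canonical lift of Brownian motion.

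\emph{Step 1: identification of $X^n$ as a rough solution.} Set $\bY^n := (Y^n,\mathbb{Y}^n)$ with
\begin{equation*}
\mathbb{Y}^n_{s,t} := \int_s^t Y^n_{s,r}\otimes\mathrm{d}Y^n_r \;-\; \tfrac{1}{2}(t-s)I_{d\times d},
\end{equation*}
which, in view of \eqref{ito to strat}, is the ``It\^o-type'' lift of the smooth path $Y^n$. Since $Y^n$ is piecewise $C^1$, the rough integral of a controlled path against $\bY^n$ decomposes as the Riemann--Stieltjes integral against $Y^n$ plus the contribution of the deterministic term $-\tfrac12(t-s)I_{d\times d}$, the latter being contracted with the Gubinelli derivative. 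Applying this to the controlled path $P(\mathcal{L}(X^n_{\cdot}))$ -- whose Gubinelli derivative is, by Lemma \ref{lem: P controlled path}, $P(\mathcal{L}(X^n_s))' = P(\mathcal{L}(X^n_s))^{\top}\mathbb{E}[\overline{X}^n_s Dh(X^n_s)^{\top}]A$ -- one checks that $\int_0^{\cdot}P(\mathcal{L}(X^n_r))\,\mathrm{d}\bY^n_r$ coincides with the path $F^n$ defined by \eqref{eq: WZ approximation equation 2}; the underlying bookkeeping is exactly the It\^o--Stratonovich computation recorded after \eqref{eq: Gubinelli derivative of P intro}, now with $C=I_{d\times d}$, which matches $-\tfrac12\operatorname{Trace}(P(\mathcal{L}(X^n_t))'\,I_{d\times d})$ with the pathwise correction term in \eqref{eq: WZ approximation equation 2}. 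Hence $(X^n,F^n)$ is a solution of \eqref{eq: rough McKean} with driver $\bY^n$; since $\bY^n\in\mathscr{C}^{\bar\alpha}([0,T],\R^d)$, Theorem \ref{thm: well-posedness rough McKean} shows it is the unique one.

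\emph{Step 2: convergence of the drivers, and conclusion.} Fix exponents $\alpha<\bar\alpha<\beta<\tfrac12$ with $\rho>2/(1-2\bar\alpha)$. The canonical lifts $\big(Y^n,\int Y^n\otimes\mathrm{d}Y^n\big)$ of the piecewise linear approximations converge, $\mathbb{P}$-a.s., to the Stratonovich rough path $\bB^{Strat}$ in $\mathscr{C}^{\beta}([0,T],\R^d)$ -- this is the classical Wong--Zakai theorem for Brownian rough paths, see \cite[Chapter 3]{friz2020course} and \cite{friz2010multidimensional}. Subtracting the deterministic term $\tfrac12(t-s)I_{d\times d}$ and invoking \eqref{ito to strat}, we obtain $\rho_{\beta}(\bY^n,\bB^{Ito})\to 0$, and a fortiori $\rho_{\bar\alpha}(\bY^n,\bB^{Ito})\to 0$ (on $[0,T]$ the $\bar\alpha$-metric is dominated by the $\beta$-metric), $\mathbb{P}$-a.s.; moreover $\bB^{Ito}\in\mathscr{C}^{\bar\alpha}([0,T],\R^d)$ $\mathbb{P}$-a.s. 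Recalling that $X$ is the solution of \eqref{eq: rough McKean} driven by $\bB^{Ito}$ (the choice $\bY=\bB^{Ito}$), Corollary \ref{cor:Wasserstein lipschitz} applied to the pair of drivers $\bY^n$ and $\bB^{Ito}$ gives
\begin{equation*}
W_{\rho,C([0,T],\R^D)}\big(\mathcal{L}(X^n),\mathcal{L}(X)\big)\;\le\; C\,\rho_{\bar\alpha}(\bY^n,\bB^{Ito})\;\longrightarrow\;0 \qquad \mathbb{P}\text{-a.s.},
\end{equation*}
which is the assertion.

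\emph{Main obstacle.} The only genuinely delicate point is Step 1: verifying that the pathwise drift correction in \eqref{eq: WZ approximation equation 2} is precisely the It\^o--Stratonovich correction attached to the lift $\bY^n$, i.e. unwinding the sewing-lemma definition of $\int_0^{\cdot}P(\mathcal{L}(X^n_r))\,\mathrm{d}\bY^n_r$ and matching it term by term with the time integral of the right-hand side of \eqref{eq: WZ approximation equation 2}; this is the same index computation that underlies the correspondence between $\Gamma(\pi)$ and $-\tfrac12\operatorname{Trace}(P(\pi)'C)$ recalled in the introduction. A secondary, routine point is that the a priori estimates of Lemmas \ref{lem: bounds X centered}--\ref{lem: a priori sde heolder}, applied to the processes $X^n$, are uniform in $n$: this holds because the H\"older norms $\|\bY^n\|_{\bar\alpha}$ are $\mathbb{P}$-a.s.\ bounded in $n$, being convergent, so the constants in those lemmas may be chosen independent of $n$.
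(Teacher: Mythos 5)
Your proof follows exactly the same strategy as the paper: identify $X^n$ as the solution of \eqref{eq: rough McKean} driven by the It\^o-type lift $\bY^{n,Ito}$ of the piecewise linear path $Y^n$, invoke the classical Wong--Zakai convergence of canonical lifts to conclude $\bY^{n,Ito}\to\bB^{Ito}$ in rough path topology $\mathbb{P}$-a.s., and finish with the Lipschitz stability of Corollary \ref{cor:Wasserstein lipschitz}. You add some precision that the paper leaves implicit -- notably the observation that the constant in Corollary \ref{cor:Wasserstein lipschitz} depends on the H\"older norms of the drivers and hence one needs $\sup_n\|\bY^n\|_{\bar\alpha}<\infty$ a.s., which holds since the sequence converges -- but the route is identical.
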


\begin{proof}
 We start by noting that $Y^n$ solves \eqref{eq: WZ approximation equation} if and only if it solves the rough path equation 
\begin{align*}
\mathrm{d}X^n_t & = b(X^n_t, \mathcal{L}(X^n_t)) \,\mathrm{d} t + \sigma(X^n_t, \mathcal{L}(X^n_t)) \,\mathrm{d} W_t + \mathrm{d} F^n_t,
\qquad
X_t|_{t=0} = X_0,\\
\mathrm{d} F^n_t & = P(\mathcal{L}(X^n_t)) \,\mathrm{d} \bY_t^{n,Ito}, 
\end{align*}
 where 
 $$
 \bY^{n,Ito}_{st} = \left( \delta Y^n_{s,t} , \int_s^t \delta Y^n_{s,r} \otimes \dot{Y}_r^n \mathrm{d}r \textcolor{red}{-} \frac12 (t-s) I_{d \times d} \right).
 $$
 It is well known that the canonical lift of $Y^n$, 
  $$
 \bY^{n,Str}_{st} = \left( \delta Y^n_{s,t} , \int_s^t \delta Y^n_{s,r} \otimes \dot{Y}_r^n \mathrm{d}r \right),
 $$
 converges $\mathbb{P}$-a.s. in the rough path topology to the rough path
   $$
 \bY^{Str}_{st} = \left( \delta Y_{s,t} , \int_s^t \delta Y_{s,r} \otimes \circ \mathrm{d}Y_r \right).
 $$
 where the latter integral is the Stratonovich integral. From this it follows immediately that 
 $$
 \bY^{n,Ito}_{st} \rightarrow \left( \delta Y_{s,t} , \int_s^t \delta Y_{s,r} \otimes \circ \mathrm{d}Y_r \textcolor{red}{-} \frac12 (t-s) I_{d \times d} \right)= \left( \delta Y_{s,t} , \int_s^t \delta Y_{s,r} \otimes  \mathrm{d}Y_r \right)
 $$
 where the latter integral is the It\^{o} integral. 
 The result now follows from Corollary \ref{cor:Wasserstein lipschitz}.
\end{proof}


\subsection{Proof of Theorem \ref{thm: main}}
\label{section: proof main theorem}
We can now proceed with the proof of the main theorem.

\textbf{Well-posedness of the rough stochastic McKean-Vlasov equation \eqref{eq: main system}.}
Given the coefficients of equation \eqref{eq: main system} we transform it into equation \eqref{eq: rough McKean} by defining the following coefficients
\begin{equation*}
    b(x,\pi) := f(x) - P(\pi)h(x),
\end{equation*}
\begin{equation*}
        \sigma(x,\pi) := \left(
    \begin{array}{cc}
    G^{\frac12} -P(\pi)U, &
    -P(\pi) R^{\frac{1}{2}}
    \end{array}
    \right),
\end{equation*}
\begin{equation*}
    P(\pi) := \operatorname{Cov}_{\pi}(x,h)C^{-1} + B.
\end{equation*}
The $m$-Brownian motion $W$ in \eqref{eq: rough McKean} stands for the paired independent Brownian Motions $(\widehat{W},\widehat{V})$ of equation \eqref{eq: main system}, with $m = D + d$.
Moreover, if $\bY = (Y, \YY)$ is any given rough path, we can modify it by a bounded variation term to obtain
\begin{equation*}
    \overline{\bY}_{s,t} := (Y_{s,t}, \YY_{s,t} \textcolor{red}{-} \frac{1}{2}(t-s)\textcolor{red}{C}).
\end{equation*}
We have that equation \eqref{eq: rough McKean} driven by $\overline{\bY}$ corresponds to equation \eqref{eq: main system} driven by $\bY$ and the term $\frac{1}{2}(t-s)\textcolor{red}{C}$ generates the correction term $\Gamma(\pi)$. Hence, the proof of well-posedness and stability of equation \eqref{eq: main system} follows from Theorem \ref{thm: well-posedness rough McKean} and Corollary \ref{cor:Wasserstein lipschitz}.

\textbf{Well-posedness of the McKean-Vlasov equation with common noise \eqref{eq: main system stochastic version}.} The uniqueness of equation \eqref{eq: main system stochastic version} follows from Lemma \ref{lem: uniqueness mckean common noise}, as the coefficents of equation \eqref{eq: main system stochastic version} satisfy Assumption \ref{assumptions coefficients common noise}. The following lemma gives existence.

\begin{lemma}
\label{lem:wellposedness McKean common noise}
Let $X^{\mathbf{y}}$ denote the solution of \eqref{eq: main system} and let $\mathbf{Y}(\omega)$ denote the Stratonovich rough path lift of the Brownian motion $Y$. Then $X^{\mathbf{Y}}$ is the solution of \eqref{eq: main system stochastic version} and we have 
\begin{equation} \label{eq:rough conditional expectation}
\mathbb{E}\left[ \phi(X^{\mathbf{Y}}_t) | \mathcal{Y}_t \right] (\omega) = \mathbb{E}\left[ \phi(X_t^{\mathbf{y}}) \right] |_{\mathbf{y} = \mathbf{Y}(\omega)|_{[0,t]} } , \quad \mathbb{P}_{Y} - a.e. \omega.
\end{equation}
\end{lemma}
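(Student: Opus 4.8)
The plan is to argue in two steps: first a conditioning (``freezing'') argument that produces the identity \eqref{eq:rough conditional expectation} and pins down the conditional law of the candidate solution, and then a rough-versus-Stratonovich integral comparison showing that this candidate actually solves \eqref{eq: main system stochastic version}.

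\emph{Step 1 (freezing).} I would start from the fact that, by Theorem \ref{thm: well-posedness rough McKean}, for every deterministic $\mathbf{y}\in\mathscr{C}^{\bar\alpha}([0,T],\R^d)$ equation \eqref{eq: main system} has a unique solution $X^{\mathbf{y}}$, built on the probability space carrying only $X_0$ and $W=(\widehat W,\widehat V)$, which are independent of $Y$. Since the fixed point is constructed interval by interval, $X^{\mathbf{y}}_t$ depends only on $\mathbf{y}|_{[0,t]}$ and on $(X_0,W|_{[0,t]})$; moreover Corollary \ref{cor:Wasserstein lipschitz} together with the a priori estimates of Lemmas \ref{lem: a priori sde} and \ref{lem: a priori sde heolder} shows that $\mathbf{y}\mapsto X^{\mathbf{y}}$ is continuous into $L^{\rho}(\Omega,C([0,T],\R^D))$, so that a jointly measurable version $(\mathbf{y},\omega)\mapsto X^{\mathbf{y}}(\omega)$ exists and $\mathbf{y}\mapsto\pi^{\mathbf{y}}:=\mathcal{L}(X^{\mathbf{y}})$ is measurable. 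Setting $X^{\mathbf{Y}}(\omega):=X^{\mathbf{Y}(\omega)}(\omega)$ and using that $\mathbf{Y}$ is $\sigma(Y)$-measurable, independent of $(X_0,W)$, with $\sigma(\mathbf{Y}|_{[0,t]})=\mathcal{Y}_t$ up to $\mathbb{P}$-null sets, the classical substitution lemma for conditional expectations gives, for bounded measurable $\phi$,
\[
\mathbb{E}\big[\phi(X^{\mathbf{Y}}_t)\mid\mathcal{Y}_t\big](\omega)=\big(\mathbb{E}[\phi(X^{\mathbf{y}}_t)]\big)\big|_{\mathbf{y}=\mathbf{Y}(\omega)|_{[0,t]}}=\pi^{\mathbf{Y}(\omega)}_t[\phi],\qquad\mathbb{P}_Y\text{-a.e. }\omega,
\]
which is \eqref{eq:rough conditional expectation}; in particular $\widehat\pi_t:=\mathcal{L}(X^{\mathbf{Y}}_t\mid\mathcal{Y}_t)=\pi^{\mathbf{Y}}_t$.

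\emph{Step 2 (identification of the dynamics).} Next I would use the substitution once more: for $\mathbb{P}$-a.e. $\omega$ the process $X^{\mathbf{Y}}$ satisfies, $\omega$-wise, equation \eqref{eq: main system} with driver $\mathbf{Y}(\omega)=\mathbf{Y}^{\mathrm{Str}}(\omega)$ and with the measure argument replaced by the frozen path $\pi^{\mathbf{Y}(\omega)}_{\cdot}=\widehat\pi_{\cdot}(\omega)$. By Step 1 the integrand $P(\widehat\pi_{\cdot})$ is $\mathcal{Y}_{\cdot}$-adapted (hence $\mathcal{F}_{\cdot}$-adapted), and by Lemma \ref{lem: P controlled path} it is controlled by $\mathbf{Y}$ with the Gubinelli derivative \eqref{eq: Gubinelli derivative of P}, which is exactly the diffusion coefficient of the semimartingale $t\mapsto P(\widehat\pi_t)$ with respect to $Y$. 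Since $\mathbf{Y}^{\mathrm{Str}}$ is the Stratonovich lift of the continuous $\R^d$-valued semimartingale $Y$ (with $\langle Y\rangle_t=Ct$), the pathwise rough integral $\int_0^{\cdot}P(\widehat\pi_r)\,\mathrm d\mathbf{Y}^{\mathrm{Str}}_r$ should agree $\mathbb{P}$-a.s.\ with the Stratonovich stochastic integral $\int_0^{\cdot}P(\widehat\pi_r)\circ\mathrm dY_r$ (rough integration against the Stratonovich lift of a continuous semimartingale reproduces Stratonovich integration for adapted controlled integrands; see \cite[Ch.~5]{friz2020course} and \cite[Sec.~9.2]{friz2021rough}). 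The $\mathrm dt$, $\mathrm d\widehat W$ and $\mathrm d\widehat V$ contributions hidden in $\mathrm dI_t$ are unaffected, and --- as $P(\widehat\pi_{\cdot})$ is independent of $(\widehat W,\widehat V)$ --- their It\^o and Stratonovich versions coincide; since both \eqref{eq: main system} and \eqref{eq: main system stochastic version} carry the same drift $\Gamma(\widehat\pi_t)\,\mathrm dt$, equation \eqref{eq: main system} driven by $\mathbf{Y}^{\mathrm{Str}}$ becomes literally \eqref{eq: main system stochastic version}. As $\widehat\pi$ is by Step 1 the conditional law of $X^{\mathbf{Y}}$ entering its own coefficients, $X^{\mathbf{Y}}$ is a genuine solution of \eqref{eq: main system stochastic version}, and uniqueness (Lemma \ref{lem: uniqueness mckean common noise}) closes the identification.

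\emph{Main obstacle.} I expect the delicate point to be Step 2: one must verify that the purely pathwise (sewing-lemma) rough integral against $\mathbf{Y}^{\mathrm{Str}}(\omega)$ has a version that is adapted and equals the stochastic Stratonovich integral. This relies on the joint measurability produced in Step 1, on the compatibility of the controlled-path structure of $P(\widehat\pi_{\cdot})$ from Lemma \ref{lem: P controlled path} with the semimartingale decomposition of $t\mapsto P(\widehat\pi_t)$, and on extending the rough-versus-stochastic comparison from Brownian motion to general continuous-semimartingale lifts. An equivalent alternative would be to approximate $Y$ by piecewise-linear paths $Y^n$, for which \eqref{eq: main system} collapses to a classical random-ODE / It\^o McKean--Vlasov system, then to pass to the limit on the rough side via Theorem \ref{thm: convergence geometric} and Corollary \ref{cor:Wasserstein lipschitz} and on the stochastic side via a Wong--Zakai argument for \eqref{eq: main system stochastic version}, matching the two limits.
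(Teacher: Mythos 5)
Your proposal follows the paper's proof essentially verbatim: Step 1 reproduces the restriction-by-uniqueness observation and the independence/monotone-class (substitution) argument giving \eqref{eq:rough conditional expectation}, and Step 2 invokes the same identification of the rough integral against the Stratonovich lift with the Stratonovich stochastic integral (the paper cites \cite[Corollary 5.2]{friz2020course}) followed by uniqueness. The extra detail you add on adaptedness of $P(\widehat\pi_{\cdot})$, on the $\mathrm{d}\widehat W,\mathrm{d}\widehat V$ terms, and the alternative Wong--Zakai route are sound but not a different argument.
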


\begin{proof}
Since the Stratonovich rough path lift of $Y|_{[0,t]}$ coincides with $(Y,\mathbb{Y})|_{[0,t]}$, we notice that by uniqueness, we have 
$$
X_t^{\mathbf{Y}} = X_t^{\mathbf{Y}|_{[0,t]}} .
$$
Since $\mathbf{Y}|_{[0,t]}$ is independent of $\sigma(W_s,X_0, 0 \leq s \leq t)$, then it is also independent of $X^{\mathbf{y}}_t$ for every $\mathbf{y} \in \mathscr{C}^{\alpha}([0,t])$. Using the monotone class theorem for functions we get \eqref{eq:rough conditional expectation}.

From \cite[Corollary 5.2]{friz2020course} we get that 
$$
\int P(\hat{\pi}) \mathrm{d} \mathbf{Y} = \int P(\hat{\pi}) \circ \mathrm{d} Y,
$$
and hence $X^{\mathbf{Y}}$ satisfies \eqref{eq: true McKean Vlasov}. The result follows by uniqueness. 
\end{proof}

\section{The interacting particle system}
\label{sec: particles}

In this section we prove well-posedness and convergence of the interacting particle system \eqref{eq: interacting particles} in the case when $Y$ is a path of bounded variation.

To compress the notation and have a slightly more general result we study the following mean-field system
\begin{equation}
\label{eq: mean-field}
\mathrm{d}X^{i,N}_t 
= [b(X^{i,N}_t, \mu^N_t) 
+ P(\mu^N_t) \dot{Y}_t + \Gamma(\mu^N_t) ] \,\mathrm{d} t
+ \sigma(X^{i,N}_t ,\mu^N_t) \,\mathrm{d} W^i_t,
\quad
X_t^{i,N} |_{t=0} = X^{i}_0 ,
\qquad i = 1, \dots, N.
\end{equation}
The variable $X^{i,N}_t $ is in the state space $\R^D$. $(W^i)_{i\in\N}$ is a family of independent $m$-dimensional Brownian motions and $(X_0^i)_{i\in\N}$ is a family of independent and identically distributed initial conditions with law $\pi_0 \in \mathcal{P}(\R^D)$. Moreover, assume that $\dot{Y}: [0,T] \to \R^D$ is cadlag and bounded. We assume that the coefficients $b$, $\sigma$ and $P$ satisfy Assumption \ref{assumptions coefficients}, but notice that $P$ only depends on the measure. For the coefficient $\Gamma$ we assume the following

\begin{assumption}
	\label{assumptions Gamma}
		Let $\rho \geq 1$, assume that there exists a constant $C > 0$ such that
		\begin{enumerate}[label=(\roman*), ref= \ref{assumptions coefficients} (\roman*)]
			\item \label{assumpitons Gamma: linear growth} 
			(linear growth)  $\forall \pi \in \mathcal{P}_{\rho}(\R^d)$,
			\begin{equation*}
			|\Gamma(\pi) | \leq C (1 + \overline{M}^{\rho}(\pi)  )^{\frac{2}{\rho}} ,
			\end{equation*}
			\item  \label{assumpitons Gamma: lipschitz}
			(Lipschitz continuity) $\forall \pi, \nu \in \mathcal{P}_{\rho}(\R^D)$, 
			\begin{equation*}
			| \Gamma(\pi) - \Gamma(\nu) | \leq C ( 1 + \overline{M}^{\rho}(\pi)^2 + \overline{M}^{\rho}(\nu))^{\frac{1}{\rho}} W_{\rho}(\pi, \nu).
			\end{equation*}
		\end{enumerate}
\end{assumption}

\begin{remark}
Notice that the growth is quadratic in the central moments of the measure. Also, the best Lipschitz constant we can hope for is the square of the central moment of one measure and the central moment of the other.
\end{remark}

\begin{remark}
\label{rmk: particles coefficient conversion}
By doing the same substitution as in Section \ref{section: proof main theorem} and taking \eqref{eq: approx Gamma} for $\Gamma$ we recover the interacting particle system \eqref{eq: interacting particles}.
\end{remark}

\begin{lemma}
\label{lem: well posedness particles}
Let $T>0$ and $\rho \geq 1$. If the initial distribution $\pi_0$ has finite $\rho$-moment, then  equation \eqref{eq: mean-field} admits a pathwise unique strong solution on $[0,T]$. Moreover, there exists $C = C(\rho, \pi_0, T) > 0$ such that
\begin{equation}
\label{eq: bounds empirical measure}
\frac{1}{N}\sum_{j=1}^N\mathbb{E}[\sup_{t\in [0,T]}|\overline{X}^{j,N}_t|^{\rho} ]\leq C,
\qquad
\max_{i=1,\dots,N}\mathbb{E}[\sup_{t\in[0,T]}|X^{i,N}_t|^{\rho}] \leq C (1+\|\dot{Y}\|_{\infty}),
\end{equation}
where $\overline{X}^{i,N}= X^{i,N} - \frac{1}{N}\sum_{j=1}^N X^{j,N}$, for $i= 1,\dots, N$.
\end{lemma}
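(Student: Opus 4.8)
The plan is to treat the system \eqref{eq: mean-field} as a single stochastic differential equation on the product space $(\R^D)^N$ with coefficients that, once we have a priori moment bounds on the empirical measure, are globally Lipschitz and of linear growth. The subtlety is that $P$ and $\Gamma$ are only \emph{locally} Lipschitz, with constants controlled by the central moments of $\mu^N_t$, and $\Gamma$ even grows quadratically; so the key is to establish the bounds \eqref{eq: bounds empirical measure} first, \emph{a priori}, and then deduce well-posedness. To make the a priori estimate rigorous one works with localised (stopped) coefficients: for $R>0$ let $\tau_R := \inf\{t : \frac1N\sum_j |\overline{X}^{j,N}_t|^\rho \geq R\}$, replace $P$, $\sigma$, $\Gamma$ by versions truncated outside $\{\overline{M}^\rho(\mu^N)\leq R\}$, obtain a unique strong solution of the truncated system by the standard Lipschitz theory (the truncated coefficients are globally Lipschitz on $(\R^D)^N$), and then show the stopping time is a.s.\ infinite by the moment bound, which is uniform in $R$.

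The core computation proceeds as follows. First I would estimate the \emph{centred} particles $\overline{X}^{i,N} = X^{i,N} - \frac1N\sum_j X^{j,N}$: subtracting the empirical mean kills the common terms $P(\mu^N_t)\dot Y_t$ and $\Gamma(\mu^N_t)$ entirely, so $\mathrm{d}\overline{X}^{i,N}_t$ involves only the centred drift $b(X^{i,N}_t,\mu^N_t) - \frac1N\sum_j b(X^{j,N}_t,\mu^N_t)$ and the noise terms $\sigma(X^{i,N}_t,\mu^N_t)\mathrm{d}W^i_t$. Applying It\^o's formula to $|\overline{X}^{i,N}_t|^\rho$, using the Burkholder--Davis--Gundy inequality, Jensen, and the linear-growth part of Assumption \ref{assumptions coefficients} (which bounds $|b|,|\sigma|$ by $C(1+\overline{M}^\rho(\mu^N_t))^{1/\rho}$), then averaging over $i$ and using $\frac1N\sum_i |\overline{X}^{i,N}_t|^\rho = \overline{M}^\rho(\mu^N_t)$, I would arrive at
\begin{equation*}
\mathbb{E}\Big[\sup_{s\le t}\tfrac1N\textstyle\sum_{j}|\overline{X}^{j,N}_s|^\rho\Big] \le C\Big(1 + \int_0^t \mathbb{E}\big[\tfrac1N\textstyle\sum_j |\overline{X}^{j,N}_s|^\rho\big]\,\mathrm{d}s\Big),
\end{equation*}
and Gronwall gives the first bound in \eqref{eq: bounds empirical measure}, with a constant depending only on $\rho$, $\pi_0$, $T$. (This mirrors Lemma \ref{lem: bounds X centered} at the particle level; note the common-noise terms drop out exactly as $F$ dropped out there.) For the second bound, one writes $X^{i,N}_t = \overline{X}^{i,N}_t + \frac1N\sum_j X^{j,N}_t$ and separately controls the empirical mean $\frac1N\sum_j X^{j,N}_t$, whose dynamics does see $P(\mu^N_t)\dot Y_t + \Gamma(\mu^N_t)$; using $\|\dot Y\|_\infty < \infty$, the linear growth of $P$ (hence $|P(\mu^N_t)| \le C(1+\overline{M}^\rho(\mu^N_t))^{1/\rho}$) and the quadratic growth of $\Gamma$ from Assumption \ref{assumptions Gamma}, together with the just-obtained uniform bound on $\overline{M}^\rho(\mu^N_t)$, one gets $\max_i \mathbb{E}[\sup_{t\le T}|X^{i,N}_t|^\rho] \le C(1+\|\dot Y\|_\infty)$.

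With the a priori bounds in hand, pathwise uniqueness and existence follow from a now-standard argument: on the event where the empirical central moments stay bounded, the coefficients $b,\sigma,P,\Gamma$ are globally Lipschitz in the state variables and in the empirical measure (in $\rho$-Wasserstein), so a direct Gronwall estimate on $\mathbb{E}[\sup_{s\le t}\sum_i |X^{i,N}_s - \widetilde X^{i,N}_s|^\rho]$ for two solutions (as in Lemma \ref{lem: a priori sde}, inequality \eqref{eq: Lp lipschitz sde}) forces them to coincide, and the truncation/removal of the stopping time yields a global strong solution. The main obstacle is precisely the quadratic growth and the non-global Lipschitz character of $\Gamma$ (and the measure-dependence of the Lipschitz constants of $P$): one must be careful that the localisation is set up so that the Gronwall constants in the a priori step do \emph{not} depend on the truncation level $R$, which is what allows $\tau_R \to \infty$; the centring trick that eliminates the common-noise terms from the central-moment equation is the device that makes this work.
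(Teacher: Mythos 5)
Your proposal is correct and follows essentially the same approach as the paper: the key device in both is the centring trick — subtracting the empirical mean eliminates the common-noise terms $P(\mu^N)\dot{Y}$ and $\Gamma(\mu^N)$ from the dynamics of $\overline{X}^{i,N}$, yielding an a priori bound on $\overline{M}^\rho(\mu^N)$ independent of $\|\dot{Y}\|_\infty$ and of $N$, from which the full moment bound and non-explosion follow. The paper phrases the well-posedness step as local existence up to an explosion time $\tau$ (locally Lipschitz coefficients) and shows $\tau > T$ via these a priori bounds, whereas you set up an explicit truncation at level $R$ and let $\tau_R \to \infty$; these are two standard presentations of the same argument.
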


\begin{proof}
Under Assumptions \ref{assumptions coefficients} and \ref{assumptions Gamma}, the coefficients are locally Lipschitz and there is classically strong existence and pathwise uniqueness for the SDE \eqref{eq: mean-field} up to an explosion time $\tau$.  We want to prove that $\tau < T$. Let us call the solution $X^{(N)}_t = (X^{i,N}_t, \dots , X^{N,N}_t)$. Since the coefficents $P$ and $\Gamma$ do not depend on the state variable $x$ we have the following identity for $\overline{X}^{i,N}$, $i=1,\dots, N$,
\begin{align*}
    d \overline{X}^{i,N}_t = (b(X^{i,N}_t, \mu^{N}_t) - \frac{1}{N}\sum_{j=1}^N b(X^{j,N}, \mu^N_t)) \,\mathrm{d} t
    + \sigma(X_t^{i,N}, \mu^N_t) \, \mathrm{d} W_t^{i} - \frac{1}{N} \sum_{j=1}^N \sigma(X^{j,N}, \mu^N_t)\,\mathrm{d} W^j_t.
\end{align*}
Notice that
\begin{equation*}
    \overline{M}^{\rho}(\mu^N_t)
    = \mathbb{E}_{\mu^N_t}[|X-\mathbb{E}_{\mu^N_t}[X]|^{\rho}]
    =
    \frac{1}{N}\sum_{i=1}^N|\overline{X}^{i,N}_t|^{\rho}.
\end{equation*}
Using Assumption \ref{assumptions coefficients} we obtain that for every $\rho \geq 2$ there exists a constant $C > 0$ independent of $\dot{Y}$ and $N$ such that
\begin{equation*}
\mathbb{E}[\sup_{s\leq t}|\overline{X}^{i,N}_s|^\rho] \leq \mathbb{E}[|\overline{X}^{i,N}_0|^\rho] + C \int_{0}^t (1+ \frac{1}{N}\sum_{j=1}^N \mathbb{E}[|\overline{X}^{j,N}_s|^\rho] ) \,\mathrm{d} s.
\end{equation*}
Taking the mean over $i$ and using Gronwall's lemma we obtain the first estimate in \eqref{eq: bounds empirical measure}. We can now estimate the moments of $X^{i,N}$ as follows,
\begin{align*}
    \mathbb{E}[\sup_{t\in[0,T]}|X^{i,N}_t|^{\rho}]
    \leq C(1+\|\dot{Y}\|_{\infty}) \int_0^{T}\mathbb{E}\left[(1+ \overline{M}^{\rho}(\mu^N_s))\right]^{2} \mathrm{d} s
    \leq (1+\|\dot{Y}\|_{\infty}) C,
\end{align*}
where $C = C(\rho, \pi_0, T)$ can change from one inequality to the next. Since $X^{(N)}$ has finite $L_{\omega}^{\rho}$-norm on the interval $[0,T]$, it means that the explosion time satisfies $\tau > T$. 
\end{proof}
\begin{remark}
Notice that in Lemma \ref{lem: well posedness particles} the choice of $T>0$ was arbitrary, which implies that the system of interacting particles is well posed on intervals of any length.
\end{remark}



\subsection{Propagation of chaos}
For $i = 1, \dots, N$, we introduce the following McKean-Vlasov equation
\begin{equation}
\label{eq: regular McKean-Vlasov}
\mathrm{d}X^{i}_t 
= [b(X^{i}_t, \pi_t) 
+ P(\pi_t) \dot{Y}_t + \Gamma(\pi_t) ] \,\mathrm{d} t
+ \sigma(X^{i}_t ,\pi_t) \,\mathrm{d} W^i_t,
\quad
\pi_t = \mathcal{L}(X^i_t),
\quad
X_t^{i} |_{t=0} = X^{i}_0 \sim \pi_0.
\end{equation}
By construction the random variables $X^1, \dots, X^N$ are independent and identically distributed.

Equation \eqref{eq: regular McKean-Vlasov} is well-posed because it corresponds to equation \eqref{eq: rough McKean} driven by the \textit{rough} path 
\begin{equation*}
    \bY_{st} := (Y_{s,t}, \int_s^r Y_{s,r}\dot{Y}_r \,\mathrm{d} r - \frac{1}{2}(t-s)I_{d\times d}),
\end{equation*}
where $Y_t := \int_0^t \dot{Y}_{r} \,\mathrm{d}r$. Notice that, since $\dot{Y}$ is cadlag and bounded, the path $\bY$ is Lipschitz continuous which is much more regular than $\alpha$-Hölder with $\alpha \in (\frac{1}{3}, \frac{1}{2}]$.

Let $\overline{X}^i := X^i - \mathbb{E}[X^i]$ and notice that, by definition $\overline{M}^{\rho}(\pi_t) = \|\overline{X}^i_t\|_{L_{\omega}^\rho}^{\rho}$. Using the a priori estimate in Lemma \ref{lem: bounds X centered} and Assumptions \ref{assumptions coefficients} and \ref{assumptions Gamma} we can see that, for any $\rho \geq 2$ we have an estimate on the central moments of $\pi_t$,
\begin{equation*}
\overline{M}^{\rho}(\pi_t) \leq \|\sup_{t\in[0,T]}|\overline{X}^i_t|\|_{L_{\omega}^\rho}^{\rho}
	\leq
	C [1 + \overline{M}^{\rho}(\pi_0)]^{\rho}.
\end{equation*}
From this one can easily recover the following estimate for the moments of $\pi_t$,
\begin{equation}
    \label{eq: moments bounds}
	M^{\rho}(\pi_t)^{\frac{1}{\rho}} \leq C(1+\|\dot{Y}\|_{\infty})(M^{\rho}(\pi_0)+\overline{M}^{2\rho}(\pi_t))^{\frac{1}{\rho}}
	\leq C (M^{\rho}(\pi_0)+\|\dot{Y}\|_{\infty})(1 + \overline{M}^{2\rho}(\pi_0))^{2},
\end{equation}
where $C = C(\rho, T)$.
\begin{remark}
\label{rmk: second moments initial condition}
Notice that the $\rho$-moment of $\pi_t$ is only bounded by the $2\rho$-central moment of $\pi_0$ because of the quadratic growth of $\Gamma$ from Assumption \ref{assumptions Gamma}. 
\end{remark}
In the following we will hide the dependence on $\pi_0$ in the constant $C$ and only focus on the explicit dependence on $\|\dot{Y}\|_{\infty}$.
We define the empirical measure associated with the independent particles $X^i$ as $\overline{\mu}^N_t := \frac{1}{N}\sum_{i=1}^N \delta_{X^i_t}$. It follows from \cite[Theorem 1]{fournier2015rate} that for any $\overline{\rho} > \rho$ there exists an explicit rate of convergence $H(N) = H(N,\rho, \overline{\rho})$ such that $H(N) \to 0$ as $N\to \infty$ and
\begin{equation}
    \label{eq: rate iid particles}
    \mathbb{E}[W_{\rho,\R^D}(\overline{\mu}^N_t, \pi_t)^{\rho}] \leq   M^{\overline{\rho}}(\pi_t)^{\frac{\rho}{\overline{\rho}}}H(N)
    \leq C (1+\|\dot{Y}\|_{\infty}) H(N),
\end{equation}
where in the last inequality we used \eqref{eq: moments bounds} and $C$ depends on the moments of $\pi_0$.

Moreover, the rate is optimal and explicitly given as
\begin{equation*}
    H(N) = \left\{
    \begin{array}{ll}
      N^{-\frac{1}{2}} + N^{1-\frac{\rho}{\overline{\rho}}}  & \rho > \frac{D}{2}, \; \overline{\rho} \neq 2\rho \\
      N^{-\frac{\rho}{D}} + N^{1-\frac{\rho}{\overline{\rho}} }  & 1 \leq \rho < \frac{D}{2},\; \overline{\rho} \neq \frac{D}{D-2}.
    \end{array}
    \right.
\end{equation*}
Notice that if we have enough moments the optimal rate of convergence for independent particles is $H(N) = \frac{1}{\sqrt{N}}$.
Furthermore, we have the following estimate, because $\mu^N$ and $\overline{\mu}^N$ are both empirical measures,
\begin{equation*}
    W_{\rho,\R^D}(\mu^N_t, \overline{\mu}^N_t)^{\rho}
    \leq \frac{1}{N} \sum_{i=1}^N|X^{i,N}_t - X^{i}_t|^{\rho},
\end{equation*}
so that we have from the triangular inequality
\begin{equation}
    \label{eq: wasserstein empirical measures}
    \sup_{t\in[0,T]}\mathbb{E}[W_{\rho,\R^D}(\mu^N_t, \pi_t)^{\rho}] \leq  C (1+\|\dot{Y}\|_{\infty})H(N) + \sup_{t\in[0,T]} \frac{1}{N} \sum_{i=1}^N\mathbb{E}|X^{i,N}_t - X^{i}_t|^{\rho},
\end{equation}
where $C$ depends on the $2\overline{\rho}$-moment of $\pi_0$, for $\overline{\rho}>\rho$.
Now that we stated most of the preliminaries we can prove the following convergence result.
\begin{proposition}
    \label{pro: regular propagation of chaos}
    Let $\overline{\rho} > \rho \geq 2$. If $X_0 \in L_{\omega}^{2\overline{\rho}}$, there is a function $J(N) = O(\log(N)^{-1})$ such that
    \begin{equation*}
        \max_{i=1,\dots,N} \mathbb{E}[\sup_{t\in[0,T]}|X^{i,N}_t - X^{i}|^{\rho}] \lesssim e^{ \|\dot{Y}\|_{\infty}} J(N).
    \end{equation*}
\end{proposition}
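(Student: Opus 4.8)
The plan is to run a coupling-plus-truncation argument. I would couple the interacting system \eqref{eq: mean-field} and the limiting system \eqref{eq: regular McKean-Vlasov} by driving $X^{i,N}$ and $X^{i}$ with the same Brownian motion $W^i$ and the same initial datum $X^i_0$, and set $e^{i,N}_t := X^{i,N}_t - X^i_t$. Since the laws of $(X^{i,N})_i$ and of $(X^i)_i$ are exchangeable and the coupling is symmetric in $i$, the numbers $\mathbb{E}[\sup_{t\le T}|e^{i,N}_t|^\rho]$ do not depend on $i$, so it suffices to bound $\tfrac1N\sum_{i=1}^N\mathbb{E}[\sup_{t\le T}|e^{i,N}_t|^\rho]$. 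Subtracting the two equations in integral form, taking $\sup_{s\le t}$, raising to the power $\rho$, taking expectations, and then applying Jensen's inequality to the $\mathrm{d}s$-integrals and Burkholder--Davis--Gundy to the $\mathrm{d}W^i$-integral (as in the proofs of Lemma \ref{lem: a priori sde} and Lemma \ref{lem: well posedness particles}), the local-Lipschitz bounds of Assumptions \ref{assumptions coefficients} and \ref{assumptions Gamma}, together with the triangle inequality $W_{\rho,\R^D}(\mu^N_s,\pi_s)^\rho \lesssim \tfrac1N\sum_j|e^{j,N}_s|^\rho + W_{\rho,\R^D}(\overline\mu^N_s,\pi_s)^\rho$ (comparing $\mu^N_s$ and $\pi_s$ through the i.i.d.\ empirical measure $\overline\mu^N_s$), would lead to
\begin{equation*}
\tfrac1N\sum_i\mathbb{E}\big[\sup_{s\le t}|e^{i,N}_s|^\rho\big]
\;\lesssim\;
(1+\|\dot Y\|_\infty^\rho)\int_0^t \mathbb{E}\Big[\big(1+\overline M^\rho(\mu^N_s)^2\big)\Big(\tfrac1N\sum_j|e^{j,N}_s|^\rho + W_{\rho,\R^D}(\overline\mu^N_s,\pi_s)^\rho\Big)\Big]\,\mathrm{d}s,
\end{equation*}
where the square on $\overline M^\rho(\mu^N_s)$ comes from the quadratic growth of $\Gamma$ in Assumption \ref{assumptions Gamma} and the deterministic term $\overline M^\rho(\pi_s)$ has been absorbed into the constant via the a priori bound \eqref{eq: moments bounds}.

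The obstruction to a direct use of Grönwall's lemma is the random prefactor $1+\overline M^\rho(\mu^N_s)^2$, and this is the heart of the difficulty. I would handle it by localising: for $R>0$ set $\tau^N_R := \inf\{t\ge 0 : \overline M^\rho(\mu^N_t) > R\}$, a stopping time because $t\mapsto \overline M^\rho(\mu^N_t) = \tfrac1N\sum_j|\overline X^{j,N}_t|^\rho$ is continuous. On $[0,T\wedge\tau^N_R]$ the prefactor is $\le 1+R^2$, so Grönwall's lemma (legitimate since $\mathbb{E}[\sup_{t\le T}|e^{i,N}_t|^\rho]<\infty$ by Lemma \ref{lem: well posedness particles}) yields
\begin{equation*}
\tfrac1N\sum_i\mathbb{E}\big[\sup_{t\le T\wedge\tau^N_R}|e^{i,N}_t|^\rho\big]
\;\lesssim\;
e^{C(1+R^2)(1+\|\dot Y\|_\infty^\rho)}\,\sup_{s\le T}\mathbb{E}\big[W_{\rho,\R^D}(\overline\mu^N_s,\pi_s)^\rho\big]
\;\lesssim\;
e^{C(1+R^2)(1+\|\dot Y\|_\infty^\rho)}\,(1+\|\dot Y\|_\infty)\,H(N),
\end{equation*}
the last step being the Fournier--Guillin rate \eqref{eq: rate iid particles}; this, together with the moment bound \eqref{eq: moments bounds} needed to control $M^{\overline\rho}(\pi_t)$, is exactly where the hypothesis $X_0\in L^{2\overline\rho}_\omega$ with $\overline\rho>\rho$ is used.

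It remains to estimate the contribution of the event $\{\tau^N_R<T\}$. Here I would use Hölder's inequality with exponents $\overline\rho/\rho$ and its conjugate,
\begin{equation*}
\mathbb{E}\big[\sup_{t\le T}|e^{i,N}_t|^\rho\,\mathbbm 1_{\{\tau^N_R<T\}}\big]
\;\le\;
\big(\mathbb{E}\big[\sup_{t\le T}|e^{i,N}_t|^{\overline\rho}\big]\big)^{\rho/\overline\rho}\,\mathbb{P}(\tau^N_R<T)^{\,1-\rho/\overline\rho},
\end{equation*}
where $\mathbb{E}[\sup_t|e^{i,N}_t|^{\overline\rho}]\le 2^{\overline\rho}\big(\mathbb{E}[\sup_t|X^{i,N}_t|^{\overline\rho}]+\mathbb{E}[\sup_t|X^{i}_t|^{\overline\rho}]\big)<\infty$, again finite by $X_0\in L^{2\overline\rho}_\omega$ and \eqref{eq: moments bounds}, while Markov's inequality and the uniform bound $\sup_{t\le T}\mathbb{E}[\overline M^\rho(\mu^N_t)]\le C$ from Lemma \ref{lem: well posedness particles} give $\mathbb{P}(\tau^N_R<T)\le \mathbb{P}(\sup_{t\le T}\overline M^\rho(\mu^N_t)>R)\le C/R$.

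Combining the two contributions, I would obtain, for every $R>0$,
\begin{equation*}
\max_i\mathbb{E}\big[\sup_{t\le T}|e^{i,N}_t|^\rho\big]
\;\lesssim\;
e^{C(1+R^2)(1+\|\dot Y\|_\infty^\rho)}\,H(N)\;+\;R^{-(1-\rho/\overline\rho)}.
\end{equation*}
Since $H(N)$ decays polynomially in $1/N$, taking $R=R_N\to\infty$ slowly (e.g.\ $R_N\asymp\sqrt{\log N}$, chosen so that $e^{CR_N^2}\le H(N)^{-1/2}$ for large $N$) makes the first term $O(H(N)^{1/2})$, hence negligible, while the second term decays like a positive power of $1/\log N$; tracking the $\|\dot Y\|_\infty$-dependence through the constants exactly as in Lemma \ref{lem: well posedness particles} then gives the claimed estimate with $J(N)=O(\log(N)^{-1})$. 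The main obstacle throughout — and the reason the polynomial Fournier--Guillin rate $H(N)$ degrades to a merely logarithmic rate $J(N)$ — is the purely measure-dependent, non-globally-Lipschitz structure of $P$ and especially the quadratic growth of $\Gamma$ in the central moments, which is precisely what necessitates the truncation at level $R_N$.
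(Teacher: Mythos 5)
Your argument follows the same strategy as the paper's proof: couple the two systems pathwise with shared $W^i$ and $X_0^i$, localise via a stopping time controlling $\overline M^\rho(\mu^N_\cdot)$, run Gr\"onwall on the localised event using the Fournier--Guillin rate $H(N)$ from \eqref{eq: rate iid particles}, bound the tail by H\"older/Markov using the higher moments afforded by $X_0\in L^{2\overline\rho}_\omega$ together with Lemma \ref{lem: well posedness particles}, and let $R_N\to\infty$ slowly to balance the two contributions. The only cosmetic differences are that the paper exploits the asymmetry of Assumption \ref{assumptions Gamma}(ii) (placing the empirical measure in the linearly-appearing slot) to obtain a prefactor $1+R$ rather than your $1+R^2$ -- so it can take $R_N\asymp\log N$ instead of $\sqrt{\log N}$ -- and it uses Cauchy--Schwarz in the tail term where you use general H\"older with exponent $\overline\rho/\rho$; neither affects the logarithmic character of $J(N)$.
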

\begin{proof}
We call $\overline{X}^{i,N} := X^{i,N} - \frac{1}{N}\sum_{j=1}^N X^{j,N}$, for $i= 1,\dots, N$ and recall that the central moment for the empirical measure is $\overline{M}^{\rho}(\mu_t^N) = \frac{1}{N}\sum_{i=1}^N |\overline{X}^{i,N}_t|^{\rho}$. For $R > 0$, we define the stopping time $T_R 
:= \inf\{t\geq0:\overline{M}^{\rho}(\mu_t^N) \geq R\}$. 

We set $Z^i:=X^{i,N} - X^{i}$ and using Assumptions \ref{assumptions coefficients} and \ref{assumptions Gamma} as well as inequality \eqref{eq: wasserstein empirical measures} and Lemma \ref{lem: bounds X centered} we compute the following,
\begin{align*}
    \max_{i=1,\dots,N} \mathbb{E}[|Z^i_t|^{\rho}1_{\{T_R > t\}}]
    \leq & C \mathbb{E}\left[
    \int_{0}^{t\wedge T_R} (1 + \overline{M}^{2\rho}(\pi_s) \|\dot{Y}\|_{\infty} + \overline{M}^{\rho}(\mu^N_s) )
    \left(|Z_s^i| + W_{\rho,\R^D}(\pi_s, \mu^N_s)\right)^{\rho} \mathrm{d} s
    \right]\\
    \leq & C (1 + \|\dot{Y}\|_{\infty} + R)\left[
    \int_{0}^{t} \max_{i=1,\dots,N} \mathbb{E}[|Z^i_s|^{\rho}1_{\{T_R\geq s\}}] \mathrm{d} s + H(N)\right].
\end{align*}
Using Gronwall's inequality we obtain
\begin{equation*}
    \max_{i=1,\dots,N} \mathbb{E}[|Z^i_t|^{\rho}1_{\{T_R > t\}}]
    \leq C (1 + \|\dot{Y}\|_{\infty}) e^{C(1 + \|\dot{Y}\|_{\infty})} R e^{CR} H(N).
\end{equation*}
Now we compute the following using Cauchy-Schwarz and Markov inequalities as well as \eqref{eq: bounds empirical measure},
\begin{align*}
\mathbb{E}[|Z_t^i|^\rho 1_{\{T_R \leq t\}}]
\leq C \left( \mathbb{E}[|Z_t^i|^{2\rho}] 
\right)^{\frac{1}{2}}
\mathbb{P}(T_R \leq t)^{\frac{1}{2}}
\leq C \mathbb{P}(\sup_{s\leq t}\overline{M}^{\rho}(\mu_s^N) \geq R)^{\frac{1}{2}}
\leq \frac{C}{R^{\frac{1}{2}}} \left(\frac{1}{N}\sum_{i=1}^N \mathbb{E}[|\sup_{t\in[0,T]}\overline{X}^{i,N}_t|^{\rho}]\right)^{\frac{1}{2}}
\leq \frac{C}{R^{\frac{1}{2}}}.
\end{align*}
Notice that we used $\mathbb{E}[|Z_t^i|^{2\rho}]\leq C (\mathbb{E}[|X^{i,N}_t|^{2\rho}] + \mathbb{E}[|X^{i}_t|^{2\rho}]) \leq C(1 + \|\dot{Y}\|_{\infty})$, hence we need from Remark \ref{rmk: second moments initial condition} that the initial measure $\pi_0$ has finite $2\rho$ moments.
We can put together the estimates to obtain
\begin{equation*}
    \max_{i=1,\dots,N}\mathbb{E}[|Z_t^i|^\rho]
\leq \frac{C}{R^{\frac{1}{2}}} + C (1 + \|\dot{Y}\|_{\infty} + R) e^{C(1 + \|\dot{Y}\|_{\infty})} e^{CR} H(N)
\leq C e^{C(1 + \|\dot{Y}\|_{\infty})} (\frac{1}{R^{\frac{1}{2}}} + e^{CR} H(N)),
\end{equation*}
where we changed the constants from one line to the next. Now choose $R = R(N)$ such that 
\begin{equation*}
    R(N) \to \infty, \qquad
    e^{CR(N)} H(N) \to 0, \qquad
    \mbox{as } N \to \infty.
\end{equation*}
Remember that $H(N) \approx N^{-\gamma}$ with $\gamma < \frac{1}{2}$ so that choosing $R(N)\approx \log(N^{\overline{\gamma}})$ for some $0 <\overline{\gamma} < \frac{\gamma}{C}$ we have a rate of convergence 
\begin{equation*}
    J(N) \approx (\overline{\gamma}\log(N))^{-1} + N^{\overline{\gamma}C - \gamma}
    \approx \log(N)^{-1}.
\end{equation*}
\end{proof}

\begin{remark}
Notice that the rate of convergence is far from the optimal $\frac{1}{\sqrt{N}}$ of a sample of independent and identically distributed random variables. This is due to the non-local Lipschitz condition on $\Gamma$ in Assumption \ref{assumptions Gamma}.
\end{remark}

Finally, we can put together Corollary \ref{cor:Wasserstein lipschitz} and Proposition \ref{pro: regular propagation of chaos}.

\begin{theorem}
\label{thm: particles convergence}
    Let $\delta > 0$ and $\bY^{\delta}$ be a bounded differentiable approximation of a geometric rough path $\bY = (Y, \mathbb{Y})\in \mathscr{C}^{\overline{\alpha}}$. Let $\mu^{N,\delta}$ be the empirical measure of the system of mean-field particles \eqref{eq: mean-field} with input $\bY^{\delta}$. Moreover, let $\pi$ be the law of the solution to equation \eqref{eq: rough McKean} driven by $(Y, \mathbb{Y} - \frac{1}{2}(t-s) \mathrm{Id}) \in \mathscr{C}^{\overline{\alpha}}$.
    
    Then there exists $\rho >0$ and a sequence $\delta(N)$ such that $\delta(N) \to 0$ and
    \begin{equation*}
       \sup_{t\in[0,T]}\mathbb{E}[W_{\rho, \R^D}(\mu^{N,\delta(N)}_t, \pi_t)^{\rho}] \to 0,
       \qquad
       \mbox{as } N \to \infty.
    \end{equation*}
\end{theorem}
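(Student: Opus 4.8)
The plan is, for each smoothing level $\delta$, to insert between $\mu^{N,\delta}_t$ and the target $\pi_t$ the law $\pi^{\delta}_t := \mathcal{L}(X^{i,\delta}_t)$ of the mean-field limit \eqref{eq: regular McKean-Vlasov} with smooth driver $\dot{Y}^{\delta}$, and then to combine two facts already at our disposal: propagation of chaos at a \emph{fixed} smooth driver (Proposition \ref{pro: regular propagation of chaos} together with \eqref{eq: wasserstein empirical measures}) and Lipschitz continuity of the rough solution map in its driver (Corollary \ref{cor:Wasserstein lipschitz}). We first fix $\rho \geq 2$ large enough that $\rho > 2/(1-2\overline{\alpha})$ and $X_0 \in L^{2\overline{\rho}}_{\omega}$ for some $\overline{\rho} > \rho$, so that all of the quoted results apply. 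By the triangle inequality and $(a+b)^{\rho} \leq 2^{\rho-1}(a^{\rho}+b^{\rho})$, for every $t \in [0,T]$,
\[
\mathbb{E}\big[W_{\rho,\R^D}(\mu^{N,\delta}_t,\pi_t)^{\rho}\big]
\;\leq\; 2^{\rho-1}\,\mathbb{E}\big[W_{\rho,\R^D}(\mu^{N,\delta}_t,\pi^{\delta}_t)^{\rho}\big]
\;+\; 2^{\rho-1}\,W_{\rho,\R^D}(\pi^{\delta}_t,\pi_t)^{\rho}.
\]

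For the first summand I would apply \eqref{eq: wasserstein empirical measures} to the particle system driven by $\dot{Y}^{\delta}$ and bound the particle-to-McKean contribution by Proposition \ref{pro: regular propagation of chaos}, using that $\sup_{t}\tfrac{1}{N}\sum_{i=1}^{N}\mathbb{E}|X^{i,N,\delta}_t - X^{i,\delta}_t|^{\rho} \leq \max_{i}\mathbb{E}\,\sup_{t}|X^{i,N,\delta}_t - X^{i,\delta}_t|^{\rho}$. This yields
\[
\sup_{t\in[0,T]}\mathbb{E}\big[W_{\rho,\R^D}(\mu^{N,\delta}_t,\pi^{\delta}_t)^{\rho}\big]
\;\leq\; C\big(1+\|\dot{Y}^{\delta}\|_{\infty}\big)H(N) + C\,e^{C\|\dot{Y}^{\delta}\|_{\infty}}J(N)
\;\leq\; C\,e^{C\|\dot{Y}^{\delta}\|_{\infty}}\big(J(N)+H(N)\big),
\]
with $J(N)+H(N) \to 0$ as $N \to \infty$. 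The crucial structural point, which I would extract from the proof of Proposition \ref{pro: regular propagation of chaos}, is that the prefactor $C$, the rates $J(N)$ and $H(N)$, and the auxiliary radius $R(N)$ used there depend only on $\rho,\overline{\rho},D,T$ and the moments of $\pi_0$: the entire dependence on the smooth driver is carried by the explicit factor $e^{C\|\dot{Y}^{\delta}\|_{\infty}}$.

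For the second summand I would use that \eqref{eq: regular McKean-Vlasov} with driver $\dot{Y}^{\delta}$ is precisely equation \eqref{eq: rough McKean} driven by the canonical lift $\bY^{\delta}$ of $Y^{\delta}$ shifted by the bounded-variation term $-\tfrac{1}{2}(t-s)\mathrm{Id}$, whereas $\pi$ is the law of the solution of \eqref{eq: rough McKean} driven by $\bY = (Y,\mathbb{Y})$ shifted by the same term. Since this shift cancels in $\rho_{\overline{\alpha}}$, Corollary \ref{cor:Wasserstein lipschitz} gives
\[
\sup_{t\in[0,T]}W_{\rho,\R^D}(\pi^{\delta}_t,\pi_t) \;\leq\; W_{\rho,C([0,T],\R^D)}(\pi^{\delta},\pi) \;\leq\; C\,\rho_{\overline{\alpha}}\big(\bY^{\delta},\bY\big) \;=:\; \varepsilon(\delta),
\]
and $\varepsilon(\delta) \to 0$ as $\delta \to 0$ by the approximation hypothesis; here the constant $C$ may be taken uniform over the bounded family $\{\bY^{\delta}\}_{\delta \leq \delta_0} \cup \{\bY\} \subset \mathscr{C}^{\overline{\alpha}}$. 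Combining this with the previous display,
\[
\sup_{t\in[0,T]}\mathbb{E}\big[W_{\rho,\R^D}(\mu^{N,\delta}_t,\pi_t)^{\rho}\big] \;\leq\; C\,e^{C\|\dot{Y}^{\delta}\|_{\infty}}\big(J(N)+H(N)\big) + C\,\varepsilon(\delta)^{\rho},
\]
and I would finish with a diagonal argument: pick any sequence $\delta_k \downarrow 0$; since $\|\dot{Y}^{\delta_k}\|_{\infty} < \infty$ is fixed for each $k$ while $J(N)+H(N) \to 0$, choose strictly increasing integers $N_k$ with $C\,e^{C\|\dot{Y}^{\delta_k}\|_{\infty}}(J(N)+H(N)) < 1/k$ for all $N \geq N_k$, and set $\delta(N) := \delta_k$ for $N_k \leq N < N_{k+1}$. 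Then $\delta(N) \to 0$ and both terms on the right-hand side tend to $0$ as $N \to \infty$, which is the assertion.

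\emph{Main obstacle.} The essential tension is that any bounded differentiable approximation $\bY^{\delta}$ of a genuine $\overline{\alpha}$-Hölder rough path necessarily satisfies $\|\dot{Y}^{\delta}\|_{\infty} \to \infty$, so the propagation-of-chaos bound deteriorates \emph{exponentially} in the driver. The coupled limit $\delta(N) \to 0$ survives only because the non-local Lipschitz hypothesis on $\Gamma$ (Assumption \ref{assumptions Gamma}) has already been paid for with the very slow, merely logarithmic rate $J(N) = O((\log N)^{-1})$ of Proposition \ref{pro: regular propagation of chaos}, which leaves just enough slack. The genuine work is therefore the bookkeeping flagged above: verifying that the only source of blow-up as $\delta \to 0$ is the explicit factor $e^{C\|\dot{Y}^{\delta}\|_{\infty}}$ (in particular that $R(N)$ is chosen independently of the driver), and that the Lipschitz constant in Corollary \ref{cor:Wasserstein lipschitz} is locally uniform around $\bY$.
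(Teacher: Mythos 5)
Your proof is correct and follows essentially the same route as the paper: interpolate via $\pi^{\delta}$, bound the particle-to-McKean piece by Proposition \ref{pro: regular propagation of chaos} (and \eqref{eq: wasserstein empirical measures}), bound the McKean-to-McKean piece by Corollary \ref{cor:Wasserstein lipschitz}, and choose $\delta(N)\to 0$ slowly enough that the $e^{C\|\dot Y^{\delta}\|_{\infty}}$ blow-up is dominated by $J(N)+H(N)\to 0$. The only cosmetic differences are that you use a more careful triangle inequality with $\rho$-powers and an abstract diagonal extraction, whereas the paper simply prescribes the rate $\|\dot Y^{\delta(N)}\|_{\infty}=o(\log\log N)$; both yield the same conclusion.
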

\begin{proof}
Let $\pi^{\delta}$ be the law of a solution to \eqref{eq: regular McKean-Vlasov}. By the triangular inequality, Proposition \ref{pro: regular propagation of chaos} and Corollary \ref{cor:Wasserstein lipschitz}
\begin{align*}
\sup_{t\in[0,T]}\mathbb{E}[W_{\rho, \R^D}(\mu^{N,\delta(N)}_t, \pi_t)^{\rho}]
\leq &
\sup_{t\in[0,T]}\mathbb{E}[W_{\rho, \R^D}(\mu^{N,\delta(N)}_t, \pi_t^{\delta})^{\rho}]
+ W_{\rho, C([0,T], \R^D)}(\pi^{\delta},\pi)\\
    \lesssim &
    e^{\|\dot{Y}^{\delta}\|_{\infty}} J(N) 
    + \rho_{\overline{\alpha}}(\bY^{\delta}, \bY).
\end{align*}
Choosing $\delta(N)$ such that $\|\dot{Y}^{\delta(N)}\|_{\infty} = o(\log(J(N)^{-1}))) = o(\log(\log(N))$ we have that the right hand side vanishes as $N\to \infty$.
\end{proof}

\section{The numerical scheme, construction of the lift and examples}
\label{sec: numerical examples}

In this section we derive the RP-EnKF (rough path ensemble Kalman filter) numerical scheme alluded to in the introduction (see equation \eqref{eq:numerical scheme}), discuss the construction of appropriate rough path lifts, and provide details concerning the implementation. Furthermore, we demonstrate its effectiveness in the context of misspecified and multiscale models by means of a few examples in the context of parameter estimation. This setting provides a convenient testbed for the scenario where the model and observation noises are correlated, and we expect our conclusions regarding (non-)robustness to be relevant more generally.
\\

\noindent \textbf{Discretising in time.} A natural scheme for approximating the dynamics of the interacting particle system \eqref{eq: interacting particles} is given by
\begin{subequations}
\label{eq:ns}
\begin{align}
X^i_{k+1} & = X^i_{k} + f(X_k^i)\Delta t  + G^{1/2} \sqrt {\Delta t}\,\xi_k^i + \widehat{P}_k\left( \Delta Y_k - (h(X^i_k)\Delta t + U \sqrt{\Delta t} \,  \xi_k^i + R^{1/2}  \sqrt{\Delta t }\,\eta_k^i) \right)
\\
\label{eq:RP correction}
 & +  \widehat{\text{Cov}}_k(x, Dh) \widehat{P}_k   \Delta \mathbb{Y}_k + \widehat{\Gamma}_k \Delta t,
\end{align} 
\end{subequations}
where $\Delta t > 0$ is the step size, and $(\xi_n^i)$ and $(\eta_n^i)$ denote independent zero mean Gaussian random variables with unit variance of dimensions $D$ and $d$, respectively. In the above display, $\widehat{P}_k := \widehat{\Cov}_k(x,h) C^{-1}+ B$ and $\widehat{\Cov}_k(x, Dh)$ refer to the standard unbiased empirical estimators of the covariance, that is, 
\begin{subequations}
\label{eq:covariances}
\begin{align}
    \widehat{\Cov}_k(x,h) & = \frac{1}{N-1} \sum_{i=1}^N \left( X_k^i - \bar{X}_k \right) \otimes h(X_k^i) \in \mathbb{R}^{D \times d},
    \\
\widehat{\Cov}_k(x,Dh) & = \frac{1}{N-1} \sum_{i=1}^N (X_k^i - \bar{X}_k) \otimes D h(X_k^i) \in \mathbb{R}^{D \times d \times D},
\end{align}
\end{subequations}
where 
\begin{equation}
\bar{X}_k = \frac{1}{N} \sum_{i=1}^N X_k^i
\end{equation}
refers to the empirical mean. With \eqref{eq:covariances} in place, the standard empirical estimator for $\Gamma$ as defined in \eqref{eq: approx Gamma} is given by
\begin{equation}
\label{eq: gamma k}
    \widehat{\Gamma}_{k,\gamma} = -\frac{1}{2} \operatorname{Trace} \left(\widehat{\Cov}_{k,\gamma}(x,Dh) \widehat{P}_k \right),
\end{equation}
where $ \widehat{\Gamma}_{k,\gamma}$ denotes the $\gamma^{\text{th}}$ component of $\widehat{\Gamma}_k \in \mathbb{R}^D$, and using a similar convention for $\widehat{\Cov}_{k,\gamma}(x,Dh)$. 
The precise meaning of the first term in equation \eqref{eq:RP correction} is
\begin{equation*}
    (\widehat{\text{Cov}}_k(x,Dh) \widehat{P}_k   \Delta \mathbb{Y}_k)_{\gamma}
    = \sum_{j,q=1}^d \sum_{r=1}^D\widehat{\text{Cov}}_k(x,Dh)_{\gamma,j,r} (\widehat{P}_k)_{r,q} 
    (\Delta \mathbb{Y}_k)_{q,j}
    \qquad
    \gamma = 1,\dots, D,
\end{equation*}
and analogously in equation \eqref{eq: gamma k} with $\Delta \YY_k$ replaced by the $d$-dimensional identity matrix.

\begin{remark}[Gubinelli derivative]
The first term in \eqref{eq:RP correction} is modelled after the Gubinelli derivative $P(\widehat{\pi}_{s})^{\prime}$ in \eqref{eq: Gubinelli derivative of P intro}. We would like to stress that a standard time discretisation of the interacting particle system \eqref{eq: interacting particles} according to Davie \cite{davie2008differential} would involve further contributions accounting for correlations between the particles as well as for cross terms induced by the joint lift $(Y,W,V) \mapsto ((Y,W,V),(\mathbb{Y},\mathbb{W},\mathbb{V}))$. At least formally, these additional terms vanish in the limit as $N \rightarrow \infty$, and our numerical experiments have not shown noticeable benefits of including them.
\end{remark}

\noindent \textbf{Constructing the lift $\mathbb{Y}$.}
In order to implement the RP-EnKF scheme defined in \eqref{eq:ns}, we need to posit the discrete-time second order increments $\Delta \mathbb{Y}_k \in \mathbb{R}^{d \times d}$, given discrete-time samples $y_0, y_1, \ldots,y_n \in \mathbb{R}^d$ from $(Y_t)_{0 \le t \le T}$. In what follows we will denote the piecewise-linear interpolation of $y_0, \ldots,y_n$ by $(\widetilde{Y}_t)_{0 \le t \le T}$ and consider the decomposition of $\Delta \mathbb{Y}_k$ into symmetric and skew-symmetric parts,
\begin{equation}
\label{eq:lift decomp}
\Delta \mathbb{Y}_k = \Delta \mathbb{Y}^{\sym}_k +\Delta \mathbb{Y}^{\skewp}_k,     
\end{equation}
where 
\begin{equation}
\Delta \mathbb{Y}^{\sym}_k := \sym(\Delta Y_k) := \frac{1}{2} \left( \Delta \mathbb{Y}_k + \Delta \mathbb{Y}_k^\top \right), \qquad \Delta \mathbb{Y}^{\skewp}_k := \skewp(\Delta Y_k) := \frac{1}{2} \left( \Delta \mathbb{Y}_k - \Delta \mathbb{Y}_k^\top \right). 
\end{equation}
For the symmetric part, we set
\begin{equation}
\label{eq:sym from data}
\Delta \mathbb{Y}_k^{\sym} = \sym \left(\int_{t_k}^{t_{k+1}} \widetilde{Y}_{t_k,r} \otimes \mathrm{d}\widetilde{Y}_r \right) = \frac{1}{2} (y_{k+1} - y_k) \otimes (y_{k+1} - y_k), 
\end{equation}
maintaining structural similarities to the defining algebraic relation of weakly geometric rough paths \cite[Section 2.2]{friz2020course}. Defining the skew-symmetric part is more challenging since\footnote{This relation expresses the fact that the area enclosed by a straight line with itself is zero.} 
\begin{equation}
\skewp \left(\int_{t_k}^{t_{k+1}} \widetilde{Y}_{t_k,r} \otimes \mathrm{d}\widetilde{Y}_r \right) = 0,
\end{equation}
indicating that information on the enclosed area between two neighbouring points is inevitably lost by the discretisation. \textcolor{red}{However, Wong-Zakai type results on piecewise linear interpolations of semimartingales \cite[Proposition 2]{coutin2005semi} as well as the convergence of simplified Euler schemes \cite{deya2012milstein,friz2014convergence} suggest that the scheme \eqref{eq:ns} with $\Delta \mathbb{Y}_k^{\mathrm{skew}} = 0$ and $\Delta \mathbb{Y}_k^{\sym}$ as defined in \eqref{eq:sym from data}, based on data $(Y_t)_{0 \le t \le T}$ obtained from \eqref{eq:obs}, recovers \eqref{eq: main system} in the limit $\Delta t \rightarrow 0$, and our numerical experiments support this conjecture. We leave a detailed analysis for future work and refer to  \cite{bailleul2015inverse,flint2016discretely,levin2013learning} for specific approaches towards estimating L{\'e}vy areas. We next discuss the setting when the data is only approximately obtained from \eqref{eq:signal}, in which case the skew-symmetric contributions $\Delta \mathbb{Y}^{\mathrm{skew}}_k$ will play a crucial role:}

\textcolor{red}{\textbf{Correcting $\mathbb{Y}$ in the context of model misspecification.} Let us consider the case when instead of observations $(Y_t)_{0 \le t \le T}$ from \eqref{eq:signal obs}, we have access to perturbed or modified data $(Y_t^\varepsilon)_{0 \le t \le T}$, so that $Y^\varepsilon \rightarrow Y$ in $C^{\alpha}([0,T];\mathbb{R}^d)$ for some $\alpha \in (\frac13,\frac12]$, almost surely (say). Particular instances of this situation occur when the filtering model \eqref{eq:signal obs} arises as a simplified description of a more elaborate (possibly multiscale) model in the limit as $\varepsilon \rightarrow 0$ (with $\varepsilon$ referring to a scale separation parameter in the multiscale scenario). For specific examples we refer to Sections \ref{subsec:magnetic field}-\ref{sec:two-scale potential} below. Assuming that $(Y^\varepsilon_t)_{0 \le t \le T}$ is in $C^\gamma([0,T];\mathbb{R}^d)$ for $\gamma > \frac{1}{2}$, the canonical lift $\mathbb{Y}^{\varepsilon}_{s,t} = \int_s^t Y^{\varepsilon}_{s,r} \, \mathrm{d}Y_r^{\varepsilon}$ is well defined, and it might seem natural to use $\mathbf{Y}^\varepsilon := (Y^{\varepsilon},\mathbb{Y}^\varepsilon)$ in the rough McKean-Vlasov dynamics \eqref{eq: main system}, or a discretised version thereof in the scheme \eqref{eq:numerical scheme}. However, it is then clearly possible that $\mathbf{Y}^\varepsilon \not\to \mathbf{Y} := (Y,\mathbb{Y}^{Strat})$ in $\mathscr{C}^{\alpha}([0,T];\mathbb{R}^d)$, with $\mathbb{Y}^{Strat}$ denoting the Stratonovich lift associated to \eqref{eq:obs}. In this case, the continuity statement of Theorem \ref{thm: main} suggests that the solutions to \eqref{eq: main system} driven by $\mathbf{Y}$ and $\mathbf{Y}^\varepsilon$ may be substantially different, in general, and hence inference based on $\mathbf{Y}^\varepsilon$ may be erroneous.\footnote{\textcolor{red}{We remind the reader that the system \eqref{eq: main system} has been derived on the basis of the approximation stated in Lemma \ref{lem: approximate K} which is only exact in the case when $f$ and $h$ are affine, and $\pi_0$ is Gaussian. For the sake of discussion in this section, we assume that the incurred error is negligible and the solution to \eqref{eq: main system} driven by $\mathbf{Y}$ provides sufficiently accurate estimates.}} 
\textcolor{red}{In this case, we say that the model providing $(Y^\varepsilon_t)_{0 \le t \le T}$ is \emph{misspecified} with respect to the filtering model \eqref{eq:signal obs} on which the schemes developed in this paper are built. The degree of misspecification can be quantified using the rough path metric $\rho_\alpha (\bY, \bY^\varepsilon)$, see Section \ref{sec: background in rough paths}.}}		

\textcolor{red}{In order to devise a practical method to address the problem exposed in the preceding paragraph, let us assume that there exists $\overline{\mathbf{Y}} = (\overline{Y}, \overline{\mathbb{Y}}) \in \mathscr{C}^{\alpha}([0,T];{\R}^d)$ such that $\bY^\varepsilon \rightarrow \overline{\bY}$. The rough path $\overline{\bY}$ is then geometric and lies above the same path as $\bY$, hence $Y = \overline{Y}$ as well as $\mathrm{sym}(\mathbb{Y}^{Strat}_{s,t}) = \mathrm{sym}(\overline{\mathbb{Y}}_{s,t})$, for all $s,t \in [0,T]$. We conclude that the (model misspecification) discrepancy
\begin{equation}
\label{eq:discrepancy}
\mathbb{A}_{s,t} := \mathbb{Y}^{Strat}_{s,t} - \overline{\mathbb{Y}}_{s,t} 
\end{equation}
is skew-symmetric, and indeed setting $\Delta \mathbb{Y}_k^{\mathrm{skew}} = \mathbb{A}_{t_k,t_{k+1}}$ is expected to correct the model misspecification error (see also \cite[Section 8.2]{diehl2016pathwise} and \cite{reich2022frequentist}). For later use, we note that Chen's relation together with $Y = \overline{Y}$ implies 
\begin{equation}
\label{eq:Chen for A}
\mathbb{A}_{s,t} = \mathbb{A}_{0,t} - \mathbb{A}_{0,s}, \qquad s,t \in [0,T]. 
\end{equation}
In practical scenarios where only $(Y^\varepsilon_t)_{0 \le t \le T}$ is available, we are left with the challenge of estimating the correction term $\mathbb{A}_{s,t}$. Fixing a sequence of (determinstic, equidistant) partitions $(p_\delta)$ with $\mathrm{mesh}(p_\delta) \rightarrow 0$ as $\delta \rightarrow 0$, we denote the piecewise linear interpolations associated to $p_\delta$ by $ Y^{(\delta)}$ and $Y^{\varepsilon,(\delta)}$, respectively, and similarly the corresponding (canonical) second-order integrals by $\mathbb{Y}^{(\delta)}$ and $\mathbb{Y}^{\varepsilon,(\delta)}$. In $C^{2\alpha}_2([0,T];\mathbb{R}^{d \times d})$, we see that
\begin{subequations}
\label{eq:eps delta limits}
	\begin{align}
	&\mathbb{Y}^{\varepsilon,(\delta)} \xrightarrow{\varepsilon \rightarrow 0} \mathbb{Y}^{(\delta)} \xrightarrow{\delta \rightarrow 0} \mathbb{Y}^{Strat}  
	\\
	&\mathbb{Y}^{\varepsilon,(\delta)} \xrightarrow{\delta \rightarrow 0} \mathbb{Y}^{\varepsilon} \xrightarrow{\varepsilon \rightarrow 0} \overline{\mathbb{Y}} = \mathbb{Y}^{Strat} - \mathbb{A},
	\end{align}   
\end{subequations}
where the limits as $\delta \rightarrow 0$ follow from \cite[Proposition 2]{coutin2005semi} and \cite[Theorem 5.33]{friz2010multidimensional}. As a consequence, $\mathbb{A}$ can be obtained from $(Y^\varepsilon_t)_{0 \le t \le T}$ as the `commutator'
\begin{equation}
\label{eq:commutator}
\mathbb{A}_{s,t} = \lim_{\varepsilon \rightarrow 0} \lim_{\delta \rightarrow 0} \mathbb{Y}_{s,t}^{\varepsilon,(\delta)} - \lim_{\delta \rightarrow 0} \lim_{\varepsilon \rightarrow 0} \mathbb{Y}_{s,t}^{\varepsilon,(\delta)}, \qquad s,t \in [0,T].
\end{equation}
In practice, a realisation of $(Y_t^\varepsilon)_{0 \le t \le T}$ will only be available for one specific (small) value of $\varepsilon$. To mimic \eqref{eq:commutator}, we may however compute the difference between $\mathbb{Y}^{\varepsilon,(\delta_1)}$ and $\mathbb{Y}^{\varepsilon,(\delta_2)}$ for $\delta_1 \ll \delta_2$, so that $\delta_1$ is small and $\delta_2$ is large in comparison with $\varepsilon$. 
}

\textcolor{red}{Recalling that $(Y_t)_{0 \le t \le T}$ is typically available in the form of a discrete time series $y_0,\ldots,y_n$ (to which we can associate a grid $p_{\delta_1}$ with mesh size $\delta_1 = \Delta t$ coinciding with the grid for the numerical scheme \eqref{eq:ns} and with piecewise linear interpolation $(\widetilde{Y}_t)_{0 \le t \le T}$), we are naturally led to the idea of subsampling the data in order to obtain a coarser grid $p_{\delta_2}$ with mesh size $\delta_2 = \tau \delta_1 = \tau \Delta t$. More precisely, }for a specific time-lag $\tau \in \mathbb{N}_{\ge 1}$, consider the subsampled sequence $y_0, y_\tau, y_{2\tau}, \ldots$, as well as the associated piecewise linear interpolation $(\widetilde{Y}^\tau_t)_{0 \le t \le T}$. The time-lag $\tau$ shall be chosen in such a way that the corresponding area paths
\begin{equation}
    t \mapsto \skewp \left(\int_{0}^{t} \widetilde{Y}_{0,r} \otimes \mathrm{d}\widetilde{Y}_r \right) =: \widetilde{\mathbb{Y}}^{\skewp}_{0,t} \qquad \text{and} \qquad t \mapsto \skewp \left(\int_{0}^{t} \widetilde{Y}^\tau_{0,r} \otimes \mathrm{d}\widetilde{Y}^\tau_r \right) =: \widetilde{\mathbb{Y}}^{\skewp, \tau}_{0,t}
\end{equation}
are `as distinct as possible' \textcolor{red}{(attempting to realise the limiting regimes in \eqref{eq:commutator})}, while maintaining $(\widetilde{Y}_t)_{0 \le t \le T} \approx (\widetilde{Y}^\tau_t)_{0 \le t \le T}$. \textcolor{red}{The latter desideratum is motivated by the fact that the limits in \eqref{eq:eps delta limits} are only valid provided that $\bY$ and $\overline{\bY}$ are over the space path $Y$.}  \textcolor{red}{The comparisons between $\widetilde{\mathbb{Y}}^{\skewp, \tau}$ and $\widetilde{\mathbb{Y}}^{\skewp}$ as well as between $\widetilde{Y}^\tau$ and $\widetilde{Y}$} can be made in supremum norm, for instance. We then set
\textcolor{red}{
\begin{equation}
\label{eq:discrete lift}
\Delta \mathbb{Y}^{\skewp}_k =
(\widetilde{\mathbb{Y}}^{\skewp}_{0,t_{k+1}} - \widetilde{\mathbb{Y}}^{\skewp}_{0,t_{k}}) - 
(\widetilde{\mathbb{Y}}^{\skewp,\tau}_{0,t_{k+1}} - \widetilde{\mathbb{Y}}^{\skewp,\tau}_{0,t_{k}}) \approx \mathbb{A}_{t_k,t_{k+1}}, 
\end{equation}
for the correction in the numerical scheme \eqref{eq:ns}, relying on \eqref{eq:Chen for A} and \eqref{eq:commutator}.}
Equation \eqref{eq:discrete lift} compares the area contributions associated to the original interpolation $\widetilde{Y}$ and the subsampled interpolation $\widetilde{Y}^\tau$. The requirement $(\widetilde{Y}_t)_{0 \le t \le T} \approx (\widetilde{Y}^\tau_t)_{0 \le t \le T}$ is meant to ensure that $\widetilde{Y}$ and $\widetilde{Y}^\tau$ mainly differ at the second-order level $\mathbb{Y}$; visually, the subsampling operation may be understood as `straightening out' $\widetilde{Y}$ and measuring the area \textcolor{red}{difference} accumulated thereby.
\begin{remark}[Relationship to subsampling in multiscale parameter estimation]
As mentioned in Section \ref{sec:parameter estimation}, ideas related to subsampling 
have been considered extensively
in the context of multiscale parameter estimation (without observational noise), see, for instance, 
\cite{abdulle2020drift,azencott2013sub,azencott2010adaptive,gailus2017statistical,gailus2018discrete,kalliadasis2015new,krumscheid2013semiparametric,krumscheid2015data,papavasiliou2009maximum,pavliotis2012parameter}. The method proposed in this section is different in that we use the subsampled paths in order to estimate the L{\'e}vy area \textcolor{red}{correction}, but otherwise input the original data path into the RP-EnKF dynamics \eqref{eq:numerical scheme}.  Moreover, the motivations are distinct: While subsampling in the aforementioned works is used in order to eliminate small-scale fluctuations, our method is specifically designed to estimate L{\'e}vy area \textcolor{red}{correction} terms. \textcolor{red}{Our method may constitute a convenient alternative in complex settings where multiple effects would require competing subsampling frequencies, posing a challenge to tradional subsampling strategies. As our approach ultimately uses the resolution of the original data, we would expect to be able to include effects on multiple scales rather seamlessly into the procedure put forward in this section.} We leave a \textcolor{red}{detailed} exploration of the connection between both methods for future work \textcolor{red}{and refer the reader to the follow-up paper \cite{reich2022frequentist}.}
\label{rem:subsampling}
\end{remark}

\subsection{Physical Brownian motion in a magnetic field}
\label{subsec:magnetic field}

In a first example, we consider a parameter estimation problem where the dynamics of interest is driven by a physical Brownian motion subject to a magnetic field. More precisely, physical Brownian motion $(W^\varepsilon_t)_{t \ge 0}$ is defined in terms of the unique strong solution to the following system of SDEs,
\begin{subequations}
\begin{align}
\mathrm{d}W^\varepsilon_t & = \frac{1}{\varepsilon}  M P^\varepsilon_t \, \mathrm{d}t, \qquad \qquad \qquad  && W^{\varepsilon}_0 = 0, \\
\mathrm{d}P^\varepsilon_t & = - \frac{1}{\varepsilon} MP^\varepsilon_t \, \mathrm{d}t + \mathrm{d}W^0_t, \qquad && P^\varepsilon_0  = 0,
\end{align}
\end{subequations}
where $W_t^\varepsilon, P^\varepsilon_t \in \mathbb{R}^2$, $(W^0_t)_{t \ge 0}$ is a standard (mathematical) two-dimensional Brownian motion, and $\varepsilon \ll 1$ is a small parameter, the limit $\varepsilon \rightarrow 0$ corresponding to the regime of negligible particle mass. Furthermore, the matrix $M$ is given by
\begin{equation}
M = \begin{pmatrix}
1 & \gamma \\
-\gamma & 1
\end{pmatrix},
\end{equation}
with $\gamma \in \mathbb{R}$ being a real-valued parameter associated to the strength of the magnetic field. For fixed $\alpha \in (1/3,1/2)$ and $T>0$, it is known that  $(W^\varepsilon, \mathbb{W}^\varepsilon) \rightarrow (W^0,\mathbb{W}^{\mathrm{phys}}(\gamma ))$ in $\mathscr{C}^\alpha([0,T],\mathbb{R}^d)$ and $L^1$ as $\varepsilon \rightarrow 0$,
where 
\begin{equation}
\mathbb{W}^\varepsilon_{s,t} = \int_s^t W^\varepsilon_{s,r} \otimes \mathrm{d}W^\varepsilon_r 
\end{equation}
denotes the canonical lift, and 
\begin{equation}
\label{eq:pBM RP limit}
    \mathbb{W}^{\mathrm{phys}}_{s,t}(\gamma) = \int_s^t W^{0}_{s,r} \otimes \circ \mathrm{d}W^0_r + (t-s)D,
\end{equation}
with area correction 
\begin{equation}
\label{eq:area correction pBM}
    D = \frac{1}{2} 
    \begin{pmatrix}
    0 & \gamma 
    \\
    - \gamma & 0
    \end{pmatrix},
\end{equation}
see 
\cite{friz2015physical} and \cite[Section 3.4]{friz2020course}. The setting is reminiscent of the passage between underdamped and overdamped Langevin dynamics, see \cite[Section 6.5.1]{pavliotis2014stochastic}
and \cite[Section 2.2]{pavliotis2012parameter}.
Similarly to \cite[Section 8.2]{diehl2016pathwise},
we consider the problem of  estimating the parameter  $\theta \in \mathbb{R}$ in 
\begin{equation}
\label{eq:pBM Z}
\mathrm{d}Z^\varepsilon_t=  \theta f(Z^\varepsilon_t) \, \mathrm{d}t +  \mathrm{d}W^\varepsilon_t, \qquad \qquad Z^\varepsilon_0 = 0,
\end{equation}
given noisy observations of the path $(Z_t^\varepsilon)_{0 \le t \le T}$, that is, given a path $(Y^\varepsilon_t)_{0 \le t \le T}$ of the solution to
\begin{equation}
\label{eq:pBM Y}
\mathrm{d}Y^\varepsilon_t =  \mathrm{d}Z_t^\varepsilon + R^{1/2} \, \mathrm{d}V_t, \qquad Y_0 = 0,
\end{equation}
see Section \ref{sec:parameter estimation}. In \eqref{eq:pBM Z} and \eqref{eq:pBM Y}, we allow for both $\varepsilon = 0$ and $\varepsilon > 0$, that is, we consider the dynamics driven by both mathematical and physical Brownian motion. Note that in the noiseless case $R = 0$ our setting coincides with the one discussed in \cite{diehl2016pathwise,reich2022frequentist}, see also Appendix \ref{app:MLE}. Standard arguments show that both $(Z^\varepsilon, \mathbb{Z}^\varepsilon)$ and $(Y^\varepsilon, \mathbb{Y}^\varepsilon)$ converge in 
$\mathscr{C}^\alpha([0,T],\mathbb{R}^d)$ with a nontrivial area correction akin to \eqref{eq:pBM RP limit}, where in the latter case, $\mathbb{Y}^\varepsilon$ refers to the Stratonovich iterated integrals. As an illustration, we plot sample paths of $t \mapsto Z_t^\epsilon$ and an off-diagonal component of $t \mapsto \int_0^t Z^\varepsilon_s \otimes \mathrm{d}Z_s^\varepsilon$ in Figure \ref{fig:pBM paths}, comparing the cases $\varepsilon = 0$ and $\varepsilon = 10^{-2}$, for the same realisation of (a discretised version of) $(W^0_t)_{0 \le t \le T}$. Crucially, the convergence towards a nontrivially lifted path as expressed in \eqref{eq:pBM RP limit} manifests itself in the offset between the paths in Figure (\ref{fig:area pBM}), \textcolor{red}{corresponding to a component of $\mathbb{A}$ in \eqref{eq:discrepancy}}. Throughout this section, we choose a fine time step of $\Delta t = 10^{-4}$ for all the involved approximations, $\theta_{\mathrm{true}} = 1/2$ for the parameter to be recovered, $\gamma = -2.0$ for the strength of the magnetic field, $R = 0.1$ for the variance of the observation noise, and $f(z_1,z_2) = \textcolor{red}{-}(z_1 - z_2, z_1 + z_2)^\top$ for the drift in \eqref{eq:pBM Z}. 

\begin{figure}
\caption{\small $t \mapsto (Z^\varepsilon_t)_{0 \le t \le T}$ and $t \mapsto \int_0^t Z_s^\varepsilon \otimes \mathrm{d}Z_s^\varepsilon$ for mathematical and physical Brownian motion.}
\label{fig:pBM paths}
     \centering
     \begin{subfigure}[t]{0.45\textwidth}
        \includegraphics[width=\textwidth,valign=t]{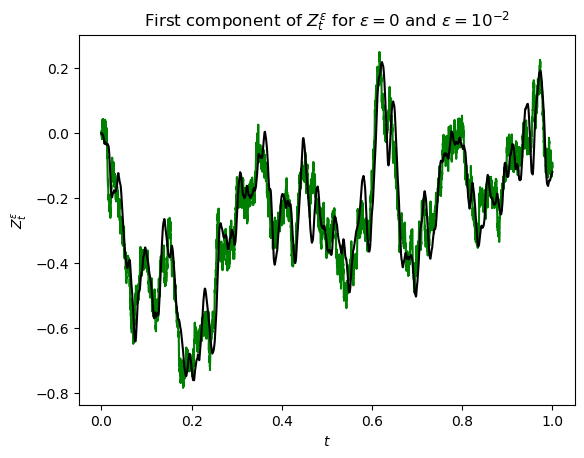}
        \caption{\footnotesize Comparison of the paths $t \mapsto (Z_t^\varepsilon)_{0 \le t \le T}$, for mathematical Brownian motion ($\varepsilon = 0$, green) and physical Brownian motion ($\varepsilon = 10^{-2}$, black). }
     \label{fig:paths pBM}
     \end{subfigure}
     \begin{subfigure}[t]{0.42\textwidth}
    \includegraphics[width=\textwidth,valign=t]{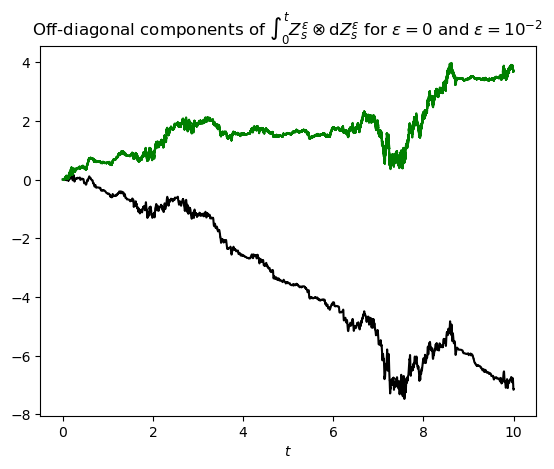}
     \caption{\footnotesize Comparison of the iterated integrals (area processes) $t \mapsto \int_0^t Z_s^\varepsilon \otimes \mathrm{d}Z_s^\varepsilon$.}
    \label{fig:area pBM}
    \end{subfigure}
\end{figure}

To test the robustness of the EnKF scheme \eqref{eq:EnKF} and the RP-EnKF scheme \eqref{eq:numerical scheme}, we generate data according to \eqref{eq:pBM Z} and \eqref{eq:pBM Y} for both $\varepsilon = 0$ (mathematical Brownian motion) and $\varepsilon = 10^{-2}$ (physical Brownian motion). We would like to stress that the filtering methodology (expressed in terms of the schemes \eqref{eq:EnKF} and  \eqref{eq:numerical scheme}) is however based on the model \eqref{eq:signal obs} and therefore tailored to the case $\varepsilon = 0$. 
In Figure \ref{fig:pBM EnKF}, we show the empirical mean of the $\widehat{\theta}$-components for the output of the EnKF-dynamics \eqref{eq:EnKF}, considering both mathematical and physical Brownian motion as drivers in \eqref{eq:pBM Z}. Evidently, the EnKF is not robust, in the sense that it fails to recover the true parameter $\theta_{\mathrm{true}}$ in the case when $\varepsilon = 10^{-2}$, see Figure (\ref{fig:EnKF pBM}).

\begin{figure}
\captionsetup{format=plain}
\caption{\small Output (empirical mean of the $\widehat{\theta}$-components) of the EnKF-scheme \eqref{eq:EnKF}, for data obtained from the dynamics \eqref{eq:pBM Z}  driven by either mathematical of physical Brownian motion. The blue line indicates the true value $\theta_{\mathrm{true}}=\frac{1}{2}$. To suppress sampling error, we plot averaged results computed from $5$ independent runs.} 
\label{fig:pBM EnKF}
     \centering
     \begin{subfigure}[t]{0.45\textwidth}
         \centering \includegraphics[width=\textwidth]{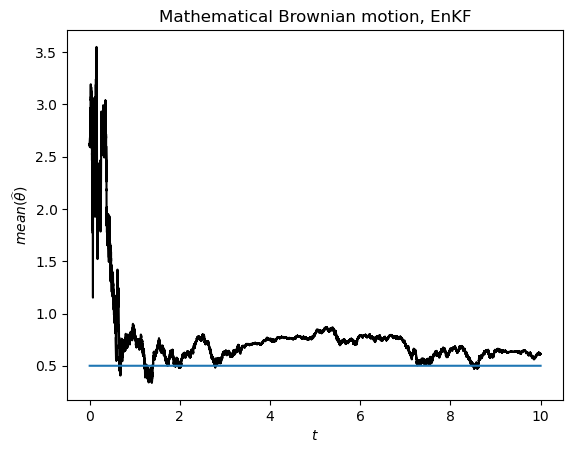}
     \caption{mathematical Brownian motion, $\varepsilon = 0$.}
     \label{fig:EnKF mPM}
     \end{subfigure}
     \begin{subfigure}[t]{0.45\textwidth}
         \centering
    \includegraphics[width=\textwidth]{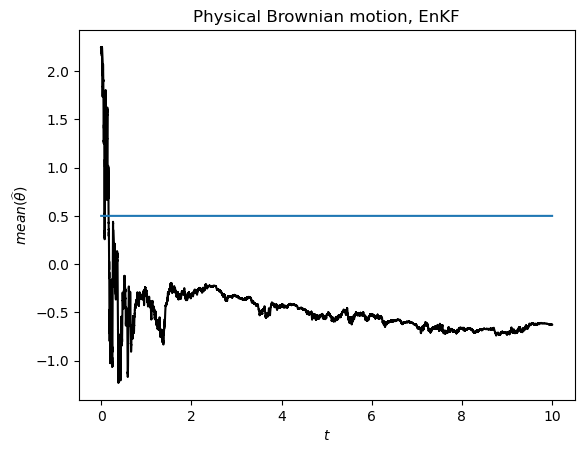}
    \caption{physical Brownian motion, $\varepsilon = 10^{-2}$.}
    \label{fig:EnKF pBM}
    \end{subfigure}
\end{figure}
We proceed by showing the corresponding results for the RP-EnKF scheme defined by \eqref{eq:ns} in Figure \ref{fig:RP-EnKF}, demonstrating the robustness promised by Theorem \ref{thm: convergence geometric}. For the required (discrete-time) lift $\Delta \mathbb{Y}$, we use the construction detailed in Section \ref{sec: numerical examples}. More precisely, in the case of physical Brownian motion, we set the time-lag to $\tau = 700$. In the case of mathematical Brownian motion, we set the skew-symmetric part in \eqref{eq:lift decomp} to zero, $\Delta \mathbb{Y}_k^{\mathrm{skew}} = 0$, corresponding to the choice $\tau = 1$. These choices have been made on the basis of the  subsampled area processes $t \mapsto \mathrm{skew}\left(\int_0^t \widetilde{Y}^{\varepsilon,\tau}_s \otimes \mathrm{d}\widetilde{Y}^{\varepsilon,\tau}_s \right)$ depicted in Figure \ref{fig:pBM subsampling}. More precisely, Figures  (\ref{fig:subsampling pBM area}) and (\ref{fig:mBM area subsampling}) show the dependence $t \mapsto \mathrm{skew}\left(\int_0^t \widetilde{Y}^{\varepsilon,\tau}_s \otimes \mathrm{d}\widetilde{Y}^{\varepsilon,\tau}_s \right)$ with different values of the time-lag $\tau$, for physical Brownian motion ($\varepsilon = 10^{-2}$, Figure (\ref{fig:subsampling pBM area})) and mathematical Brownian motion ($\varepsilon = 0$, Figure (\ref{fig:mBM area subsampling})). While the subsampling only minimally effects the area process associated to the process driven by mathematical Brownian motion (Figure \ref{fig:mBM area subsampling}), we observe a systematic shift in the case of physical Brownian motion (Figure (\ref{fig:mBM area subsampling})), revealing the latent multiscale structure. The difference between the original and subsampled area processes for physical Brownian motion is shown in Figure (\ref{fig:area differences pBM}). As the time-lag $\tau$ increases, said difference approaches the theoretically expected area correction implied by \eqref{eq:pBM RP limit}-\eqref{eq:area correction pBM}. The fact that the difference between the original and the subsampled area process reaches a plateau at around $\tau = 700$ is illustrated in Figure (\ref{fig:L2area}), as opposed to the difference between the original and the subsampled paths, see Figure (\ref{fig:L2path}). Consequently, the choice $\tau = 700$ strikes a balance between separating the original and subsampled area processes as much as possible while maintaining similarity between the original and subsampled paths (as suggested by the discussion motivating \eqref{eq:discrete lift}).   
\begin{figure}
\captionsetup{format=plain}
\caption{\small Output (empirical mean of the $\widehat{\theta}$-components) of the RP-EnKF-scheme \eqref{eq:numerical scheme}, for data obtained from the dynamics \eqref{eq:pBM Z}  driven by either mathematical of physical Brownian motion. The blue line indicates the true value $\theta_{\mathrm{true}}=\frac{1}{2}$. To suppress sampling error, we plot averaged results computed from $5$ independent runs.}
     \centering
     \begin{subfigure}[b]{0.45\textwidth}
         \centering \includegraphics[width=\textwidth]{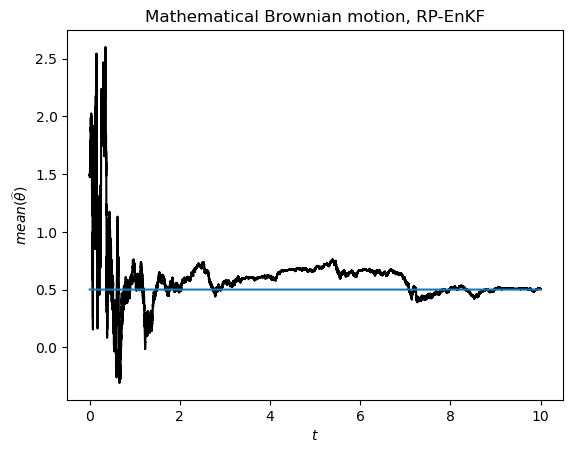}
      \caption{\footnotesize mathematical Brownian motion, $\varepsilon =0$.}
     \label{fig:RP EnKF mBM}
     \end{subfigure}
     \begin{subfigure}[b]{0.45\textwidth}
         \centering
    \includegraphics[width=\textwidth]{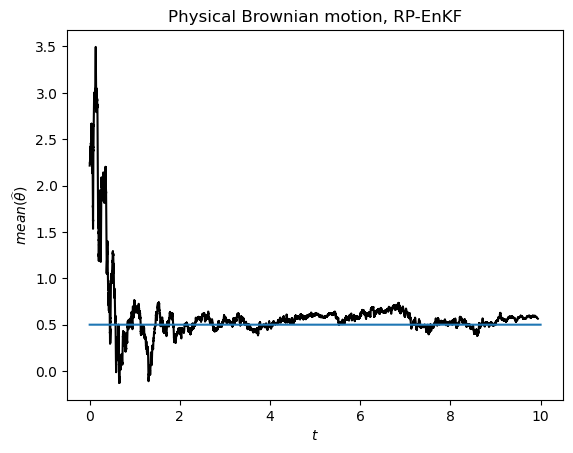} 
    \caption{\footnotesize physical Brownian motion, $\varepsilon = 10^{-2}$.}
     \label{fig:RP EnKF pBM}
     \end{subfigure}
\label{fig:RP-EnKF}
\end{figure}

\begin{figure}
\caption{\small Illustration of the subsampling procedure from Section \ref{sec: numerical examples} for constructing the discrete-time lift $\Delta \mathbb{Y}$}
\label{fig:pBM subsampling}
     \centering
     \begin{subfigure}[t]{0.3\textwidth}
         \centering \includegraphics[width=\textwidth]{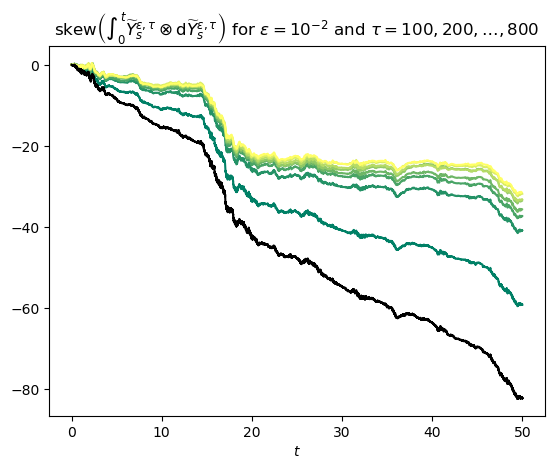}
      \caption{\footnotesize Area processes  for physical Brownian motion ($\varepsilon = 10^{-2}$).}
     \label{fig:subsampling pBM area}
     \end{subfigure}
     \begin{subfigure}[t]{0.3\textwidth}
         \centering
    \includegraphics[width=\textwidth]{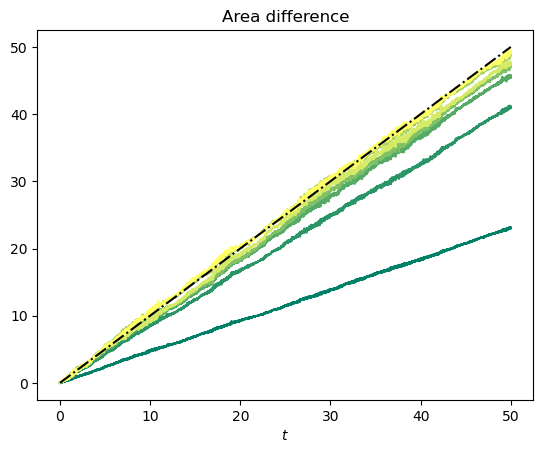} 
    \caption{\footnotesize{Differences of subsampled and original area processes ($\varepsilon=10^{-2}$).}}
     \label{fig:area differences pBM}
     \end{subfigure}
     \begin{subfigure}[t]{0.3\textwidth}
         \centering
    \includegraphics[width=\textwidth]{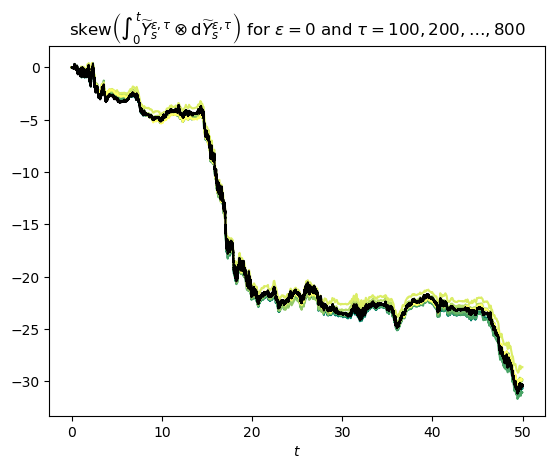} 
    \caption{\footnotesize Area processes for mathematical Brownian motion ($\varepsilon = 0)$.}
     \label{fig:mBM area subsampling}
     \end{subfigure}
     
    \begin{subfigure}[b]{0.35\textwidth}
    \centering
    \includegraphics[width=\textwidth]{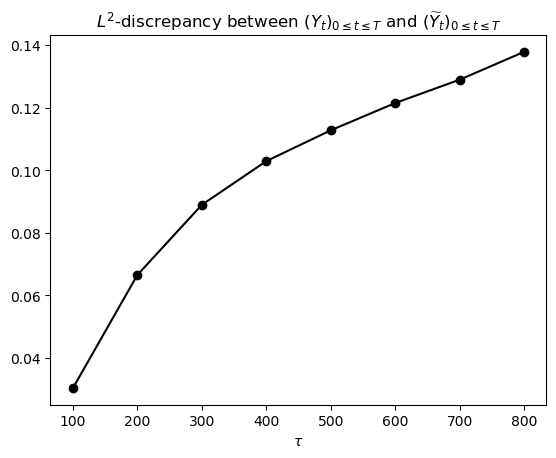} 
    \caption{\footnotesize $L^2$-discrepancy between original and subsampled path.}
    \label{fig:L2path}
    \end{subfigure}
    \begin{subfigure}[b]{0.4\textwidth}
    \centering
    \includegraphics[width=\textwidth]{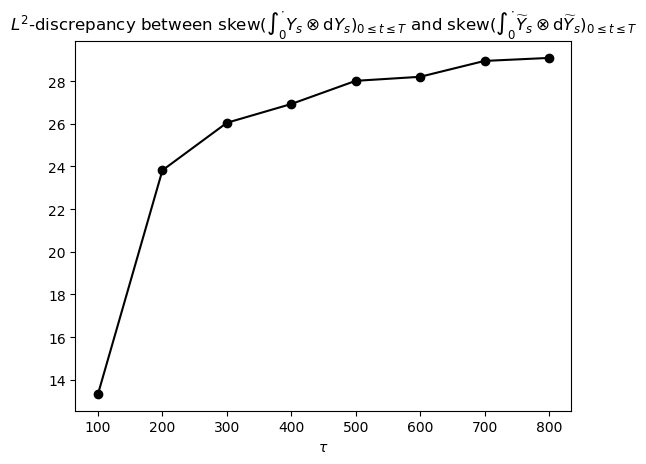} 
    \caption{\footnotesize $L^2$-discrepancy between original and subsampled area process.}
    \label{fig:L2area}
    \end{subfigure}
    \caption*{\footnotesize{\textbf{(\ref{fig:subsampling pBM area})}: Area processes $t \mapsto \skewp \left(\int_0^t \widetilde{Y}^{\varepsilon,\tau}_s \otimes \mathrm{d}\widetilde{Y}^{\varepsilon,\tau}_s\right)$ (off-diagonal component) for physical Brownian motion ($\varepsilon = 10^{-2}$) and time-lags $\tau = 100 \text{(green), \ldots, 800 \text{(yellow)}}$. The area processes associated to the path without subsampling is plotted in black.
    \\
    \textbf{(\ref{fig:area differences pBM})}:
    Differences of original and subsampled area processes in the case of physical Brownian motion ($\varepsilon = 10^{-2}$), that is $t \mapsto \int_0^t \left(\widetilde{Y}^{\varepsilon,\tau}_s \otimes \mathrm{d}Y^{\varepsilon,\tau}_s - Y^\varepsilon_s \otimes \mathrm{d}Y^\varepsilon_s\right)$, in the spirit of \eqref{eq:discrete lift}. The dashed line represents the theoretically expected area correction according to \eqref{eq:pBM RP limit}.
    \\
    \textbf{(\ref{fig:mBM area subsampling})}: Area processes associated to mathematical Brownian  motion ($\varepsilon = 0$), same colour scheme as for Figure (\ref{fig:subsampling pBM area}).}
    \\
    \textbf{(\ref{fig:L2path})}: $L^2$-discrepancy between the original path $(Y_t)_{0 \le t \le T}$ and the subsampled path $(\widetilde{Y}^\tau_t)_{0 \le t \le T}$ as a function of the time-lag $\tau$ for physical Brownian motion ($\varepsilon = 10^{-2}$).
    \\
    \textbf{(\ref{fig:L2area})}: $L^2$-discrepancy between the original area process $\skewp\left( \int_0^\cdot Y_s \otimes \mathrm{d}Y_s \right)_{0 \le t \le T}$ and the subsample area process $\skewp\left( \int_0^\cdot \widetilde{Y}^\tau_s \otimes \mathrm{d}\widetilde{Y}^\tau_s \right)_{0 \le t \le T}$ as a function of the time-lag $\tau$ for physical Brownian motion ($\varepsilon = 10^{-2}$).
    }
\end{figure}

\subsection{Fast chaotic dynamics -- Lorenz-63}
\label{sec:Lorenz}
In this example, we consider the rescaled Lorenz ordinary differential equations \cite{sparrow1982lorenz} for $L^\varepsilon_t = (L_t^{(1),\varepsilon}, L_t^{(2),\varepsilon},L_t^{(3),\varepsilon}) \in \mathbb{R}^3$,
\begin{subequations}
\label{eq:lorenz}
\begin{align}
  \dot{L}_t^{(1),\varepsilon} & = \frac{\sigma}{\varepsilon^2} \left( L^{(2),\varepsilon} - L^{(1),\varepsilon}\right), && L_0^{(1),\varepsilon} = l_0^{(1)}, 
  \\
  \dot{L}^{(2),\varepsilon}_t & = \frac{1}{\varepsilon^2}\left(\rho L^{(1),\varepsilon} - L_t^{(2),\varepsilon} - L_t^{(1),\varepsilon} L_t^{(3),\varepsilon}\right), && L_0^{(2),\varepsilon} = l_0^{(2)},
  \\
  \dot{L}^{(3),\varepsilon}_t & = \frac{1}{\varepsilon^2}\left( L_t^{(1),\varepsilon}L_t^{(2),\varepsilon} - \beta L_t^{(3),\varepsilon} \right),&& L_0^{(3),\varepsilon} = l_0^{(3)},
\end{align}
\end{subequations}
with the standard parameters
$\sigma = 10$, $\rho = 28$ and  $\beta = \frac{8}{3}$ as an example of fast chaotic dynamics approximating Brownian noise (with nontrivial area correction). We would like to stress that the phenomena described in this section can be considered generic across a wide range of fast chaotic deterministic dynamical systems, and refer the reader to \cite{chevyrev2016multiscale} for an overview.

With $\varepsilon = 1$, the system \eqref{eq:lorenz} has originally been proposed as a simplified model for atmospheric convection \cite{lorenz1963deterministic}, and can serve as a prototype for the study of chaotic ODEs. As is well known, \eqref{eq:lorenz} possesses a `strange' chaotic attractor $\Lambda$ equipped with a unique SRB  (Sinai-Ruelle-Bowen) measure\footnote{The notion of SRB measures provides a suitable generalisation of ergodic measures.} $\mu$, see \cite{young2002srb}.
For $\varepsilon \ll 1$, a random intial condition $L^\varepsilon_0 \in \mathbb{R}^3$ and after appropriate centering and rescaling, the solution $(L_t)_{t \ge 0}$ is well approximated by a Brownian motion with a nontrivial area correction in the sense of rough paths:

For H{\"o}lder-continuous observables $v:\mathbb{R}^3 \rightarrow \mathbb{R}^m$ that are $\mu$-centred (that is, $\int_{\mathbb{R}^3} v \, \mathrm{d}\mu = 0$), a functional CLT (or \emph{weak invariance principle}) holds for
\begin{equation}
W^\varepsilon_t := \frac{1}{\varepsilon} \int_0^t v(L_s^\varepsilon) \, \mathrm{d}s,
\end{equation}
assuming that $L_0^\varepsilon$ is initialised randomly according to $\mu$.
More precisely, there exists a Brownian motion $W$ (possibly on an extended probability space) with appropriate covariance $\Sigma \in \mathbb{R}^{m \times m}$ such that $W^\varepsilon \rightarrow W$ weakly in $C([0,T],\mathbb{R}^m)$ as $\varepsilon \rightarrow 0$, see \cite[Theorem 1.5]{holland2007central}\footnote{In fact, the convergence takes place $\mu$-almost surely, see \cite[Theorem 1.1]{holland2007central}. The covariance $\Sigma$ is given in terms of suitable long-time ergodic averages or (under certain conditions) Green-Kubo formulae, see \cite{chevyrev2016multiscale,pavliotis2008multiscale}. }. Moreover, it was shown in  \cite[Theorem 1.6]{balint2018statistical} that a so-called \emph{iterated weak invariance principle} holds for the iterated integrals
\begin{equation}
\mathbb{W}_t^\varepsilon = \int_0^t W_s^\varepsilon \otimes \mathrm{d}W_s^\varepsilon,
\end{equation}
that is $(W^\varepsilon,\mathbb{W}^\varepsilon) \rightarrow (W,\mathbb{W})$ weakly in $C([0,T],\mathbb{R}^m\times\mathbb{R}^{m \times m})$, where
\begin{equation}
\mathbb{W}_{t} = \int_0^t W_{s} \otimes \circ \mathrm{d}W_s + Dt,
\end{equation}
with area correction $D \in \mathbb{R}_{\mathrm{skew}}^{m \times m}$. We refer to \cite[Theorem 4.4]{chevyrev2016multiscale} for the corresponding statement in $p$-variation rough path topology. 

In what follows, we consider $(Z_t^\varepsilon)_{t \ge 0}$ to be driven by $(L^{(1:2),\varepsilon}_t)_{t \ge 0} := (L^{(1),\varepsilon}_t,L^{(2),\varepsilon}_t)_{t \ge 0}$,
\begin{equation}
\label{eq:chaotic pert}
    \mathrm{d}Z_t^\varepsilon = \theta f(Z_t^\varepsilon)\, \mathrm{d}t + \frac{\lambda}{\varepsilon}\mathrm{d}L_t^{(1:2),\varepsilon},
\end{equation}
with the parameter $\theta \in \mathbb{R}$ to be inferred from noisy observations
\begin{equation}
\label{eq:chaotic obs}
    \mathrm{d}Y^\varepsilon_t = \mathrm{d}Z_t^\varepsilon + R^{1/2} \, \mathrm{d}V_t, 
\end{equation}
and $\lambda > 0$ mediating the strength of the chaotic perturbation. Since $\int_{\mathbb{R}^d}(L^{(1)},L^{(2)})^\top \, \mathrm{d}\mu = 0$ according to \cite[Section Section 11.7.2]{pavliotis2008multiscale}, we expect \eqref{eq:chaotic pert} to be well approximated by
\begin{equation}
\label{eq:chaotic reduced}
\mathrm{d}Z_t^0 = \theta f(Z_t^0) \, \mathrm{d}t + G^{1/2}\mathrm{d}W_t,
\end{equation}
in the regime $\varepsilon \ll 1$,
with standard two-dimensional Brownian motion $(W_t)_{t \ge 0}$ and appropriate covariance $G \in \mathbb{R}^{2 \times 2}$. Replacing \eqref{eq:chaotic pert} by \eqref{eq:chaotic reduced} is often a desirable simplification both computationally and conceptually, see, for instance, 
\cite{givon2004extracting},
\cite[Section 11.7.2]{pavliotis2008multiscale}, and \cite[Section 3.1]{wouters2019stochastic}.
To apply our methodology, we need to presuppose $G$; estimates can be obtained using the approaches suggested in \cite[Example 6.2]{givon2004extracting} or \cite{krumscheid2013semiparametric}, for instance. Here we use the value $G^{1/2}_{11} = 0.13$ reported in \cite[Section 3.2.6]{krumscheid2013semiparametric} for $\lambda = \frac{2}{45}$ and note that $W^1 = W^2$ almost surely by a short calculation using \eqref{eq:lorenz}. To test the RP-EnKF, we simulate data according to \eqref{eq:lorenz}, \eqref{eq:chaotic pert} and \eqref{eq:chaotic obs} using an Euler-Maruyama discretisation with time step $\Delta t= 10^{-5}$, an observation noise level of $R = 0.01$, a  parameter value of $\theta_{\mathrm{true}} = 0.5$ to be recovered. Furthermore, we set $\varepsilon = 0.05$ and $f(z_1,z_2) = (z_1- z_2, z_1 + z_2)^\top$. The RP-EnKF is set up according to the model \eqref{eq:chaotic reduced} with observations \eqref{eq:chaotic obs}, using the scheme \eqref{eq:numerical scheme}. As an illustration for the challenges that are posed by the attempt to incorporate data from \eqref{eq:chaotic pert}-\eqref{eq:chaotic obs} into a model of the form \eqref{eq:chaotic reduced}, we plot the output of the EnKF scheme \eqref{eq:EnKF} in Figure (\ref{fig:EnKF Lorenz}), noting that it fails to recover the correct parameter value $\theta_{\mathrm{true}}$. Figures (\ref{fig:Lorenz_RP_1}) and (\ref{fig:Lorenz_RP_500}) show the output of the RP-EnKF scheme \eqref{eq:numerical scheme}, with $\tau = 1$ (implying $\Delta \mathbb{Y}_k^{\skewp} = 0$) and $\tau = 500$, respectively. We see that the area correction obtained through the subsampling procedure is necessary to obtain satisfactory numerical results. The time-lag $\tau = 500$ has been determined in the same way as in Section \ref{subsec:magnetic field}; in particular, plots for the subsampled area processes are qualitatively similar to Figures \ref{fig:pBM subsampling} and \ref{fig:area differences pBM} but are omitted here for the sake of brevity.
\begin{figure}
\captionsetup{format=plain}
\caption{\small Output (empirical mean of the $\widehat{\theta}$-components) EnKF and RP-EnKF (with and without area correction), for data obtained from the dynamics \eqref{eq:chaotic pert} perturbed by fast chaotic noise (Lorenz-63). The blue line indicates the true value of $\theta$. To suppress sampling error, average results are plotted computed from $5$ independent runs.} 
\label{fig:Lorenz}
     \centering
     \begin{subfigure}[b]{0.3\textwidth}
         \centering \includegraphics[width=\textwidth]{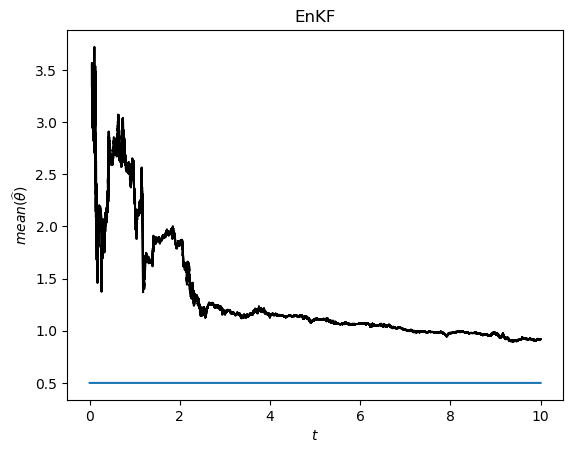}
         \caption{\footnotesize EnKF according to \eqref{eq:EnKF}.}
     \label{fig:EnKF Lorenz}
     \end{subfigure}
     \begin{subfigure}[b]{0.3\textwidth}
         \centering
    \includegraphics[width=\textwidth]{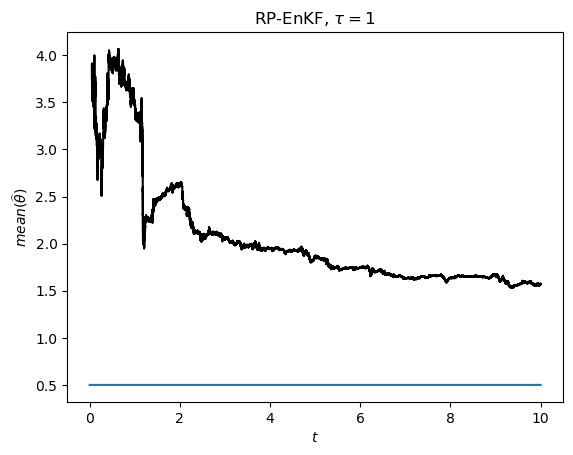} 
    \caption{\footnotesize RP-EnKF without area correction.}
    \label{fig:Lorenz_RP_1}
    \end{subfigure}
    \begin{subfigure}[b]{0.3\textwidth}
         \centering
    \includegraphics[width=\textwidth]{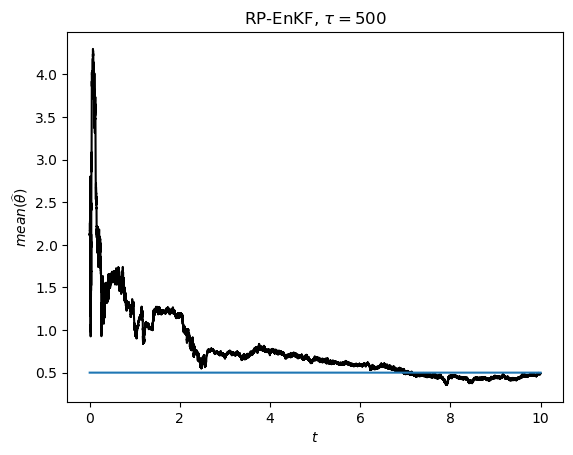} 
    \caption{RP-EnKF with area correction.}
    \label{fig:Lorenz_RP_500}
    \end{subfigure}
\end{figure}

\subsection{Homogenisation in a two-scale potential}
\label{sec:two-scale potential}

In our final example, we consider the motion of a Brownian particle in a (rugged) two-scale potential,
\begin{equation}
\label{eq:twoscale}
\mathrm{d}Z_t^\varepsilon = - \theta \nabla V(Z_t^\varepsilon) \, \mathrm{d}t - \frac{1}{\varepsilon} \nabla p \left(\frac{Z_t^\varepsilon}{\varepsilon} \right) \mathrm{d}t + \sqrt{2 \sigma} \, \mathrm{d}W_t,
\end{equation}
where \textcolor{red}{$p \in C^{\infty}(\mathbb{R}^2;\mathbb{R})$, $p(x_1,x_2) = p_1(x_1) + p_2(x_2)$}, is an $L$-periodic function in both directions, that is, $p_i(x + L) = p_i(x)$, for all $x \in \mathbb{R}$ and $i=1,2$, modelling small-scale fluctuations around the potential $V$. It is well known that for $T>0$, the law of the solution $(Z_t^\varepsilon)_{t \ge 0}$ converges weakly in $C([0,T];\mathbb{R}^d)$ to the law associated to
\begin{equation}
\label{eq:homogenised}
\mathrm{d}Z_t = - \theta \mathcal{K} \nabla V(Z_t,\theta) \, \mathrm{d}t + \sqrt{2 \sigma \mathcal{K}} \, \mathrm{d}W_t,
\end{equation}
where $\mathcal{K} = \text{diag}(L^2/(C_1 \widehat{C}_1),L^2/(C_2 \widehat{C}_2))$, with 
\begin{equation}
\label{eq:Z integrals}
C_i = \int_0^L e^{-\frac{p_i(y)}{\sigma}} \, \mathrm{d}y, \qquad \widehat{C}_i = \int_0^L e^{\frac{p_i(y)}{\sigma}}\, \mathrm{d}y,
\end{equation}
see \cite{pavliotis2007parameter} and \cite[Chapter 11]{pavliotis2008multiscale}. Similar results in a rough-path context can be found in \cite{lejay2003importance}. The homogenised dynamics \eqref{eq:homogenised} encapsulates the rugged landscape described by $p$ in the diffusion-mass matrix $\mathcal{K}$. Like in the previous experiments, we consider the task of estimating the parameter $\theta$ from noisy observations of \eqref{eq:twoscale}, using the EnKF and RP-EnKF based on \eqref{eq:homogenised}. We choose $V(z) = \frac{1}{2}|z|^2$, $p_1(x) = \cos(x)$, $p_2(x) = \frac{1}{2} \cos(x)$ and $\sigma = 1$. Data from the two-scale dynamics \eqref{eq:twoscale} is simulated for $\varepsilon = 10^{-2}$, $\theta_{\mathrm{true}} = 1$ using an Euler-Maruyama discretisation with time step $\Delta t = 10^{-4}$. With the same values for $\theta_{\mathrm{true}}$, $\Delta t$ and $\sigma$, we simulate data from the reduced model \eqref{eq:homogenised}, where $\mathcal{K} \approx \text{diag}(0.62386,0.884176)$ has been obtained by numerical integration in \eqref{eq:Z integrals}.
Both \eqref{eq:twoscale} and \eqref{eq:homogenised} are perturbed by noise according to  \eqref{eq:chaotic obs} with $R = 10^{-2}$ and, as in Sections \ref{subsec:magnetic field} and \ref{sec:Lorenz}, the resulting observation paths are used in the EnKF- and RP-EnKF schemes (see equations \eqref{eq:EnKF} and \eqref{eq:numerical scheme}) to estimate $\theta_{\mathrm{true}}$. We display the results for the means of $\widehat{\theta}$ over time using the EnKF and the RP-EnKF in Figures \ref{fig:EnKF-homo} and \ref{fig:RP-EnKF-homo}, respectively.

Clearly, the RP-EnKF deals adequately with multiscale data, while the EnKF fails to recover the true parameter $\theta_{\mathrm{true}}$ in this setting. We would like to stress that the RP-EnKF scheme has been implemented without area correction, that is imposing $\mathbb{Y}_k = \mathbb{Y}_k^{\sym}$ as defined in \eqref{eq:sym from data}. The choice $\mathbb{Y}_k^{\skewp} = 0$ is motivated by a plot analogous and qualitatively similar to Figure \ref{fig:mBM area subsampling} (omitted due to space considerations), showing that subsampling does not indicate substantial L{\'e}vy area \textcolor{red}{correction} terms (intuitively, the dynamics \eqref{eq:twoscale} does not contain significant `rotational' contributions in the regime $\varepsilon \rightarrow 0$).
\begin{figure}
\captionsetup{format=plain}
\caption{\small Output (empirical mean of the $\widehat{\theta}$-components) of the EnKF-scheme \eqref{eq:EnKF}, for data obtained from the reduced model \eqref{eq:homogenised} (left) and the multiscale model \eqref{eq:twoscale} (right). The blue line indicates the true value $\theta_{\mathrm{true}}=1$. To suppress sampling error, we plot averaged results computed from $5$ independent runs.}
\label{fig:EnKF-homo}
     \centering
     \begin{subfigure}[b]{0.45\textwidth}
         \centering \includegraphics[width=\textwidth]{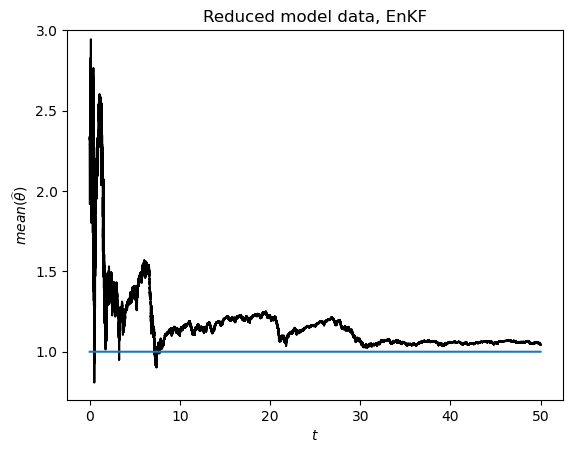}
      \caption{\footnotesize Data obtained from the reduced model \eqref{eq:homogenised}.}
     \label{fig:reducedEnKF}
     \end{subfigure}
     \begin{subfigure}[b]{0.45\textwidth}
         \centering
    \includegraphics[width=\textwidth]{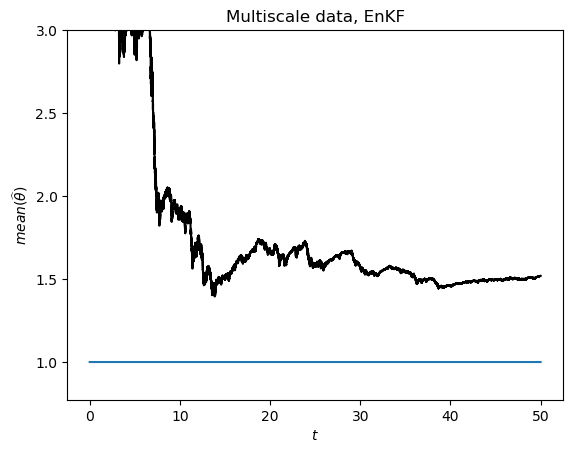} 
    \caption{\footnotesize Data obtained from the multiscale model \eqref{eq:twoscale}.}
     \label{fig:MultiEnKF}
     \end{subfigure}
\end{figure}

\begin{figure}
\captionsetup{format=plain}
\caption{\small Output (empirical mean of the $\widehat{\theta}$-components) of the RP-EnKF-scheme \eqref{eq:numerical scheme}, for data obtained from the reduced model \eqref{eq:homogenised} (left) and the multiscale model \eqref{eq:twoscale} (right). No area correction is used, that is, $\mathbb{Y}_k^{\skewp} = 0$. The blue line indicates the true value $\theta_{\mathrm{true}}=1$. To suppress sampling error, we plot averaged results computed from $5$ independent runs.}
\label{fig:RP-EnKF-homo}
     \centering
     \begin{subfigure}[b]{0.45\textwidth}
         \centering \includegraphics[width=\textwidth]{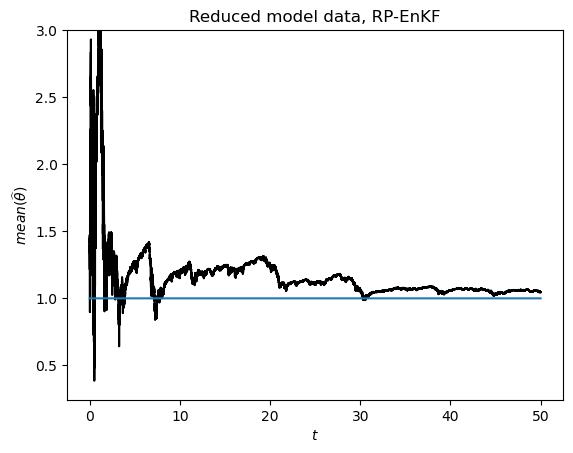}
      \caption{\footnotesize Data obtained from the reduced model \eqref{eq:homogenised}.}
     \label{fig:reducedRP}
     \end{subfigure}
     \begin{subfigure}[b]{0.45\textwidth}
         \centering
    \includegraphics[width=\textwidth]{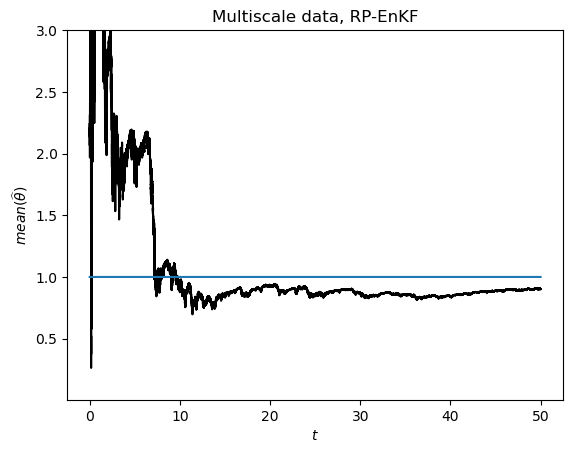} 
    \caption{\footnotesize Data obtained from the multiscale model \eqref{eq:twoscale}.}
     \label{fig:MultiRP}
     \end{subfigure}
\end{figure}

\vspace{0.5cm}

\textbf{Acknowledgements.}
This research has been partially funded by 
 Deutsche Forschungsgemeinschaft (DFG) through the grant 
 CRC 1114 \lq Scaling Cascades in Complex Systems\rq \,(project A02, project number 235221301). 
 This work was commenced while T.N. was at TU Berlin, supported by DFG Research Unit FOR2402 ``Rough paths and SPDEs'' and M.C. was at WIAS Berlin, supported by the Einstein Center Berlin, ECMath Project “Stochastic methods for the analysis of lithium-ion batteries”.
 We would like to thank R. Chew, A. Djurdjevac,  G. Hastermann and R. Klein for very stimulating discussions.  N.N. would like to thank M. Engel for organising a seminar on homogenisation at Imperial College London in 2016 leading to Section \ref{sec:Lorenz}, as well as for sharing his insight into the topic.
\appendix

\section{From the filter to the McKean-Vlasov}
\label{app:proof McKean}
In this section we prove the formal connection between the filtering problem and the McKean-Vlasov equation.
In order to prove Proposition \ref{prop:McKean}, we need a few preparations. Let us define
\begin{equation*}
M_t = \int_0^t h(X_s) \cdot C^{-1} \, \mathrm{d}Y_s,
\end{equation*}
and the likelihood
\begin{equation*}
l_t = \exp \left( M_t - \frac{1}{2} \langle M \rangle_t \right) = \exp \left( \int_0^t h(X_s) \cdot C^{-1} \, \mathrm{d}Y_s - \frac{1}{2} \int_0^t h(X_s) \cdot C^{-1} h(X_s) \, \mathrm{d}s\right).
\end{equation*}
We can now introduce the unnormalised filtering measures
\begin{equation*}
\rho_t[\phi] = \mathbb{E}\left[ \phi(X_t) l_t \vert \mathcal{Y}_t\right], \qquad t \ge 0.
\end{equation*}
The measures $\rho_t$ satisfy the Zakai equation:
\begin{proposition}[Zakai equation]
	\label{prop:Zakai}
	The evolution of $(\rho_t)_{t \ge 0}$ is given by
	\begin{equation}
	\label{eq:Zakai}
	\rho_t[\phi] = \rho_0[\phi] + \int_0^t \rho_s [\mathcal{L}\phi] \, \mathrm{d}s + \int_0^t \rho_s [\phi h ] \cdot C^{-1} \, \mathrm{d}Y_s + \int_0^t \rho_s[\nabla \phi] \cdot B \, \mathrm{d}Y_s,
	\qquad
	\mathbb{P}-a.s,
	\qquad \forall \phi\in C^2_b(\mathbb{R}^D),
	\end{equation}
	where 
	\begin{equation*}
	\mathcal{L}\varphi = f \cdot \nabla\phi + \frac{1}{2} \operatorname{Trace}(G\nabla^2\phi)
	\end{equation*}
	is the generator associated to \eqref{eq:signal}. The filtering measures $(\pi_t)_{t \ge 0}$ can be recovered from the Kalliampur-Striebel formula (or Bayes theorem)
	\begin{equation}
	\label{eq:KS}
	\pi_t [\phi] = \frac{\rho_t [\phi]}{\rho_t[\mathbf{1}]}.
	\end{equation}
\end{proposition}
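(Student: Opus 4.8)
The plan is to run the classical reference-measure derivation of the Zakai equation (cf.\ \cite[Chapter 3]{bain2008fundamentals}), tracking carefully the extra term produced by the correlation $U\neq 0$. First I would introduce the reference measure $\widetilde{\mathbb{P}}$ on $\mathcal{F}_T$ by $\mathrm{d}\widetilde{\mathbb{P}}/\mathrm{d}\mathbb{P} = l_T^{-1}$; since $h$ is bounded, Novikov's condition guarantees that $l$ is a martingale, so this is a genuine equivalent change of measure. By Girsanov's theorem, under $\widetilde{\mathbb{P}}$ the rescaled observation $C^{-1/2}Y$ is a standard $d$-dimensional Brownian motion adapted to $(\mathcal{F}_t)$ (in particular $Y$ is a $\widetilde{\mathbb{P}}$-martingale with $\mathrm{d}\langle Y\rangle_t = C\,\mathrm{d}t$, independent of $X_0$), while the signal Brownian motion $W$ acquires the drift $-U^\top C^{-1}h(X_t)$, so that $\mathrm{d}X_t = (f(X_t)-Bh(X_t))\,\mathrm{d}t + G^{1/2}\,\mathrm{d}\widetilde W_t$ for a $\widetilde{\mathbb{P}}$-Brownian motion $\widetilde W$ with $\mathrm{d}\langle\widetilde W, C^{-1/2}Y\rangle_t = U^\top C^{-1/2}\,\mathrm{d}t$, and $\mathrm{d}l_t = l_t\,(C^{-1}h(X_t))\cdot\mathrm{d}Y_t$. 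The unnormalised measures are then $\rho_t[\phi] = \widetilde{\mathbb{E}}[\phi(X_t)l_t\mid\mathcal{Y}_t]$, with the conditional expectation taken under $\widetilde{\mathbb{P}}$, and since $\mathrm{d}\mathbb{P}/\mathrm{d}\widetilde{\mathbb{P}}|_{\mathcal{F}_t} = l_t$, the Kallianpur--Striebel identity \eqref{eq:KS} is precisely the abstract Bayes formula applied to $\phi(X_t)$, with $\rho_t[\mathbf{1}] = \widetilde{\mathbb{E}}[l_t\mid\mathcal{Y}_t]$.

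Second, I would apply Itô's formula under $\widetilde{\mathbb{P}}$ to $t\mapsto\phi(X_t)l_t$ for $\phi\in C^2_b$. The crucial cancellation is that the drift $-l_t\,\nabla\phi(X_t)\cdot Bh(X_t)$ coming from the modified signal drift is exactly compensated by the cross-variation $\mathrm{d}\langle\phi(X),l\rangle_t = l_t\,\nabla\phi(X_t)^\top G^{1/2}U^\top C^{-1}h(X_t)\,\mathrm{d}t = l_t\,\nabla\phi(X_t)\cdot Bh(X_t)\,\mathrm{d}t$ (using $B = G^{1/2}U^\top C^{-1}$), leaving drift $l_t\,\mathcal{L}\phi(X_t)\,\mathrm{d}t$. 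Since $\widetilde W$ is still correlated with $C^{-1/2}Y$, I would split $\mathrm{d}\widetilde W_t = U^\top C^{-1}\,\mathrm{d}Y_t + \mathrm{d}W^\perp_t$, where $W^\perp$ is a Brownian motion orthogonal to, and (being jointly Gaussian with zero cross-bracket) independent of, $Y$; this turns the $\widetilde W$-martingale into $l_t\,\nabla\phi(X_t)\cdot B\,\mathrm{d}Y_t$ plus the $Y$-independent martingale $l_t\,\nabla\phi(X_t)\cdot G^{1/2}\,\mathrm{d}W^\perp_t$. Together with $\mathrm{d}l_t = l_t(C^{-1}h)\cdot\mathrm{d}Y_t$ this yields the integrated identity
\begin{align*}
\phi(X_t)l_t &= \phi(X_0) + \int_0^t l_s\,\mathcal{L}\phi(X_s)\,\mathrm{d}s \\
&\quad + \int_0^t l_s\big(\phi(X_s)C^{-1}h(X_s) + B^\top\nabla\phi(X_s)\big)\cdot\mathrm{d}Y_s + \int_0^t l_s\,\nabla\phi(X_s)\cdot G^{1/2}\,\mathrm{d}W^\perp_s.
\end{align*}

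Third, I would take $\widetilde{\mathbb{E}}[\,\cdot\mid\mathcal{Y}_t]$ and push it through the integrals. For the $\mathrm{d}s$- and $\mathrm{d}Y_s$-integrals this is the standard conditional-Fubini / optional-projection step of nonlinear filtering (see \cite[Lemma 3.21 and Sections 3.3--3.5]{bain2008fundamentals}): one may replace the inner conditioning on $\mathcal{Y}_t$ by conditioning on $\mathcal{Y}_s$ because $(Y_r-Y_s)_{r\ge s}$ is a $\widetilde{\mathbb{P}}$-Brownian increment independent of $\mathcal{F}_s$, which produces $\int_0^t\rho_s[\mathcal{L}\phi]\,\mathrm{d}s$, $\int_0^t\rho_s[\phi h]\cdot C^{-1}\,\mathrm{d}Y_s$ and $\int_0^t\rho_s[\nabla\phi]\cdot B\,\mathrm{d}Y_s$; the $\mathrm{d}W^\perp$-integral has vanishing $\mathcal{Y}_t$-conditional expectation because $W^\perp$ is independent of $\sigma(Y)$ and remains a Brownian motion for the enlarged filtration $\sigma(W^\perp_r:r\le s)\vee\mathcal{Y}_t$; and $\widetilde{\mathbb{E}}[\phi(X_0)\mid\mathcal{Y}_t] = \widetilde{\mathbb{E}}[\phi(X_0)] = \pi_0[\phi] = \rho_0[\phi]$ by independence of $X_0$ and $Y$ together with $l_0=1$. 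This gives \eqref{eq:Zakai}. I expect the real work to lie entirely in this last step: it requires the a priori integrability bounds (e.g.\ $\widetilde{\mathbb{E}}[\sup_{s\le T}l_s^2]<\infty$, again using boundedness of $h$, $\phi$ and $\nabla\phi$) together with the measurability and orthogonality bookkeeping that legitimises interchanging the conditional expectation with the time- and stochastic integrals and discarding the $W^\perp$-term; the preceding Itô computation is routine once the cancellation via $B$ is spotted, and \eqref{eq:KS} follows immediately from the Bayes formula.
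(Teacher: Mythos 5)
The paper's own proof of Proposition~\ref{prop:Zakai} is a one-line citation to \cite[Section 3.8]{bain2008fundamentals}, whose treatment of the correlated-noise Zakai equation rests on precisely the reference-measure argument you have carried out: change of measure via $l_T^{-1}$ (justified here because $h$ is bounded), It{\^o}'s formula on $\phi(X_t)l_t$ with the $B$-cancellation between the modified signal drift and the cross-variation $\langle \phi(X),l\rangle$, decomposition of $\widetilde W$ into a $Y$-adapted part $U^\top C^{-1}Y$ and a $Y$-orthogonal part $W^\perp$, and projection onto $\mathcal{Y}_t$. Your derivation is correct and simply fills in the details the paper delegates to the reference; the only small imprecision is calling $W^\perp$ a Brownian motion -- it is a continuous Gaussian $\widetilde{\mathbb{P}}$-martingale with deterministic quadratic variation $(I-U^\top C^{-1}U)\,t$ rather than $I\,t$ -- but since the step only requires $W^\perp$ to be independent of $Y$ (which follows from joint Gaussianity and zero cross-bracket, as you note), this does not affect the argument.
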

\begin{proof}
	This follows from the results in \cite[Section 3.8]{bain2008fundamentals}.
\end{proof}
Next, we write the Zakai equation in its Stratonovich form:
\begin{lemma}
The Stratonovich-version of the Zakai equation is given by
    \begin{subequations}
		\label{eq:Zakai Strato}
		\begin{align*}
		\rho_t[\phi] & = \rho_0[\phi] + \int_0^t \rho_s [\mathcal{L}\phi] \, \mathrm{d}s +  \int_0^t \rho_s [\phi h ]  C^{-1} \circ \mathrm{d}Y_s + \int_0^t \rho_s[\nabla \phi] \cdot B \circ \mathrm{d}Y_s
		\\
		& - \frac{1}{2} \int_0^t  \left(\rho_s [(\phi h)^T C^{-1} h] + \rho_s[\operatorname{Trace}(\nabla(\phi h)\cdot B) + \nabla \phi \cdot B h] \right) \mathrm{d}s 
		\\
		 & - \frac{1}{2} \int_0^t   \rho_s [ \operatorname{Trace}(D^2\phi \ {\underbrace{G^{1/2} U^T C^{-1} U G^{1/2}}_{BCB^T}}) ]\mathrm{d}s.
		\end{align*}
	\end{subequations}
\end{lemma}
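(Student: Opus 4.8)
The plan is to derive the Stratonovich form by applying, term by term, the It\^o--Stratonovich conversion $\int_0^t Z_s\,\mathrm{d}Y^k_s = \int_0^t Z_s \circ \mathrm{d}Y^k_s - \tfrac12\langle Z, Y^k\rangle_t$ to the two stochastic integrals $\int_0^t \rho_s[\phi h]^\top C^{-1}\,\mathrm{d}Y_s$ and $\int_0^t \rho_s[\nabla\phi]^\top B\,\mathrm{d}Y_s$ appearing in the Zakai equation \eqref{eq:Zakai}. First I would record that, by \eqref{eq:obs} and the independence of $W$ and $V$, the observation process has $\mathrm{d}\langle Y^k, Y^l\rangle_s = (UU^\top + R)^{kl}\,\mathrm{d}s = C^{kl}\,\mathrm{d}s$, so that the whole computation reduces to evaluating the cross-variations of the two integrands against $Y$.

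To compute these cross-variations I would apply Proposition \ref{prop:Zakai} to the test functions $\phi\, h^j$ ($1\le j\le d$) and $\partial_i\phi$ ($1\le i\le D$) and read off the martingale parts of the resulting semimartingales; the drift ($\mathrm{d}s$) parts play no role here. By \eqref{eq:Zakai}, the $\mathrm{d}Y^k$-coefficient of $\rho_s[\psi]$ is $\sum_m\rho_s[\psi h^m](C^{-1})^{mk} + \sum_i\rho_s[\partial_i\psi]B^{ik}$, which together with $\mathrm{d}\langle Y^{\,\cdot}, Y^k\rangle_s = C^{\,\cdot\,k}\,\mathrm{d}s$ yields explicit expressions for $\mathrm{d}\langle\rho_\cdot[\phi h^j], Y^k\rangle_s$ and $\mathrm{d}\langle\rho_\cdot[\partial_i\phi], Y^k\rangle_s$. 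Substituting back, the correction coming from the first stochastic integral carries a factor $\sum_k (C^{-1})^{jk}C^{kl} = \delta^{jl}$ and collapses to $-\tfrac12\int_0^t\big(\rho_s[(\phi h)^\top C^{-1}h] + \rho_s[\operatorname{Trace}(\nabla(\phi h)\cdot B)]\big)\,\mathrm{d}s$; the correction coming from the second stochastic integral carries the factor $\sum_k B^{ik}C^{kl} = (BC)^{il}$ and, after using $BC = G^{1/2}U^\top$ together with $\sum_l(G^{1/2}U^\top)^{il}B^{nl} = (BCB^\top)^{in}$ and $\sum_l(G^{1/2}U^\top)^{il}(C^{-1})^{ml} = B^{im}$ (all immediate from $B = G^{1/2}U^\top C^{-1}$ and the symmetry of $C$ and $G^{1/2}$), collapses to $-\tfrac12\int_0^t\big(\rho_s[\nabla\phi\cdot Bh] + \rho_s[\operatorname{Trace}(D^2\phi\cdot BCB^\top)]\big)\,\mathrm{d}s$. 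Adding the two reproduces exactly the correction terms in the claimed identity, while the leading Stratonovich integrals are, by definition, the It\^o integrals plus these corrections.

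The only point requiring a little care is regularity: Proposition \ref{prop:Zakai} applies to test functions in $C^2_b$, so invoking it for $\partial_i\phi$ presupposes $\phi \in C^3_b$, whereas the Stratonovich Zakai equation itself only involves derivatives of $\phi$ up to order two. I would therefore first prove the identity for $\phi \in C^3_b$ (or $C^\infty_c$) and then extend to $\phi \in C^2_b$ by approximation, using that every term depends continuously on $\phi$ in a suitable topology and that $\rho_s$ is $\mathbb{P}$-almost surely a finite measure with the required moments. Apart from this, the argument is routine bookkeeping; the main thing to watch is keeping track of which correction term originates from which of the two stochastic integrals, and of the repeated cancellations between $C$ and $C^{-1}$.
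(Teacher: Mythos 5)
Your proposal is correct and follows essentially the same route as the paper's own proof: apply the It\^o Zakai equation to the test functions $\phi h^j$ and $\partial_i\phi$ to read off the martingale parts of $\rho_\cdot[\phi h^j]$ and $\rho_\cdot[\partial_i\phi]$, use $\mathrm{d}\langle Y^k,Y^l\rangle_s=C^{kl}\,\mathrm{d}s$ to compute the resulting cross-variations, and convert each of the two stochastic integrals to Stratonovich form. Your observation that applying Proposition \ref{prop:Zakai} to $\partial_i\phi$ formally requires $\phi\in C^3_b$ (and that one then extends to $\phi\in C^2_b$ by approximation) is a genuine regularity subtlety that the paper's argument leaves implicit, so it is worth keeping as a remark; otherwise the index bookkeeping matches the paper's computation, including the identities $BC=G^{1/2}U^\top$ and $\sum_l B^{il}(BC)^{nl}=(BCB^\top)^{in}$ that produce the $\rho_s[\operatorname{Trace}(D^2\phi\,BCB^\top)]$ term.
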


\begin{proof}
Using \eqref{eq:Zakai}, we see that for $i=1,\dots, d$
\begin{subequations}
	\begin{align*}
	\rho_t[\phi h_i] = \sum_{j,k=1}^d\int_0^t \rho_s [\phi h_i h_j] (C^{-1})^{jk} \, \mathrm{d}Y_s^k + \sum_{r=1}^D \sum_{k=1}^d\int_0^t \rho_s [\partial_r(\phi h_i)] B^{rk} \, \mathrm{d}Y_s^k + FV,
	\end{align*}
\end{subequations}
where $FV$ stands for a contribution of finite variation. Recalling $\langle Y^{k},Y^{\bar{k}} \rangle_t = C^{k\bar{k}} t$, for $k,\bar{k} = 1,\dots,d$, we obtain
\begin{equation*}
 \int_0^t \rho_s [\phi h ]  C^{-1}  \mathrm{d}Y_s = \int_0^t \rho_s [\phi h ]  C^{-1}  \circ \mathrm{d}Y_s - \frac{1}{2} \int_0^t 
 \sum_{i=1}^d\left( 
 \sum_{j=1}^d\rho_s[\phi h_i h_j] (C^{-1})^{ij} + \sum_{\textcolor{red}{j}=1}^D\rho_s[\partial_j(\phi h_i) B^{ji}  ]\right) \mathrm{d}s. 
\end{equation*}	
Similarly, for $r=1,\dots,D$,
\begin{equation*}
\rho_t[\partial_r \phi] = \sum_{j,k=1}^d\int_0^t \rho_s[(\partial_r \phi)h_j ] (C^{-1})^{jk} \, \mathrm{d}Y_s^k + \sum_{p=1}^D\sum_{k=1}^d\int_0^t \rho_s[\partial_r \partial_p \phi] B^{pk} \, \mathrm{d}Y_s^k + FV, 
\end{equation*}
implying
\begin{equation*}
\int_0^t \rho_s[\nabla \phi]  B  \,  \mathrm{d}Y_s = \int_0^t \rho_s[\nabla \phi]  B \circ \mathrm{d}Y_s - \frac{1}{2} \int_0^t \sum_{r=1}^D \left(\sum_{i=1}^d\rho_s[(\partial_r \phi)h_i] B^{ri}  + \sum_{p=1}^D\rho_s [\partial_{\textcolor{red}{r}} \partial_p \phi] (BCB^T)^{rp} \right) \mathrm{d}s.
\end{equation*}
\end{proof}

We now proceed to the proof of Proposition \ref{prop:McKean}:
\begin{proof}[Proof of Proposition \ref{prop:McKean}]
Let us define 
\begin{equation*}
\widehat{\rho}_t[\phi] = \mathbb{E} \left[\phi(\widehat{X}_t) l_t \vert \mathcal{Y}_t \right], \qquad t \ge 0,
\end{equation*}
which is the unnormalised filtering measure associated to \eqref{eq: true McKean Vlasov}.
Since $X_t$ and $\widehat{X}_t$ are independent given $\mathcal{Y}_t$, we have that 
\begin{equation*}
\widehat{\rho}_t[\phi] = \mathbb{E} \left[\phi(\widehat{X}_t)  \vert \mathcal{Y}_t \right] \mathbb{E} \left[l_t \vert \mathcal{Y}_t\right] = \widehat{\pi}_t[\phi] \rho_t[\mathbf{1}].
\end{equation*}
From \eqref{eq:KS} it thus follows that $\pi_t =  \widehat{\pi}_t$ is equivalent to $\rho_t = \widehat{\rho}_t$. In the following we therefore compute the evolution of $\widehat{\rho}_t$.
Notice first that 
\begin{equation*}
M_t = \int_0^t h(X_s) \cdot C^{-1} \, \mathrm{d}Y_s = \int_0^t h(X_s)\cdot C^{-1} \circ \mathrm{d}Y_s - \frac{1}{2} \langle h(X) C^{-1},Y\rangle_t, 
\end{equation*}
where, using $B= G^{\frac{1}{2}} U^T C^{-1}$,
\begin{equation*}
\langle h(X) C^{-1},Y\rangle_t = \sum_{i,j = 1}^d\langle h^i(X) (C^{-1})_{ij},Y^j\rangle_t = \sum_{r=1}^D\sum_{i=1}^d\int_0^t \partial_r h^i(X_s) B^{ri} \, \mathrm{d}s
= \int_0^t \operatorname{Trace}(\nabla h(X_s) \cdot B) \, \mathrm{d}s.
\end{equation*}
From $\mathrm{d}l_t = l_t \, \mathrm{d}M_t$ we have
\begin{subequations}
	\begin{align*}
	l_t & = 1 + \int_0^t l_s \circ \mathrm{d}M_s - \frac{1}{2} \int_0^t l_s \, \mathrm{d}\langle M \rangle_s
	\\
	& = 1+ \int_0^t l_s h(X_s) \circ C^{-1} \, \mathrm{d}Y_s - \frac{1}{2}\int_0^t l_s \operatorname{Trace}(\nabla h(X_s)\cdot B) \, \mathrm{d}s -  \frac{1}{2} \int_0^t l_s h(X_s) \cdot C^{-1} h(X_s) \, \mathrm{d}s, 
	\end{align*}
\end{subequations}
implying
\begin{subequations}
	\label{eq:phi Xhat}
	\begin{align}
	\phi(\widehat{X}_t) l_t =& \phi(\widehat{X}_0)  + \int_0^t \phi(\widehat{X}_s) l_s h(X_s)\cdot  C^{-1} \circ \mathrm{d}Y_s - \frac{1}{2} \int_0^t \phi(\widehat{X}_s) l_s h(X_s) \cdot C^{-1} h(X_s) \, \mathrm{d}s \\
	& - \frac{1}{2}\int_0^t \phi(\widehat{X}_s) l_s \operatorname{Trace}(\nabla h(X_s) \cdot B) \, \mathrm{d}s + \int_0^t l_s \nabla \phi(\widehat{X}_s) \circ \mathrm{d}\widehat{X}_s.
	\end{align}
\end{subequations}
The last term satisfies
\begin{subequations}
	\begin{align*}
	\int_0^t l_s \nabla \phi(\widehat{X}_s) \circ \mathrm{d}\widehat{X}_s & = \int_0^t l_s \nabla \phi(\widehat{X}_s) \cdot f(\widehat{X}_s) \, \mathrm{d}s + \int_0^t l_s \nabla \phi(\widehat{X}_s) \circ G^{1/2} \mathrm{d}\widehat{W}_s 
	\\
	&  + \int_0^t l_s \nabla \phi(\widehat{X}_s) K_s(\widehat{X}_s) C^{-1} \circ \mathrm{d}Y_s - \int_0^t l_s \nabla \phi(\widehat{X}_s) \cdot K_s(\widehat{X}_s) C^{-1}h(\widehat{X}_s) \, \mathrm{d}s
	\\
	& - \int_0^t l_s \nabla \phi(\widehat{X}_s) \cdot K_s(\widehat{X}_s) C^{-1} \circ \left(U \, \mathrm{d}\widehat{W}_s + R^{1/2} \, \mathrm{d}\widehat{V}_s\right) 
	 + \int_0^t l_s \nabla \phi(\widehat{X}_s) \cdot \Xi_s(\widehat{X}_s) \, \mathrm{d}s.
	\end{align*}
\end{subequations}
Next, we convert the terms involving $\widehat{V}$ and $\widehat{W}$ back to their It{\^o}-form,
\begin{subequations}
\begin{align*}
\int_0^t l_s \nabla \phi(\widehat{X}_s) \circ G^{1/2} \mathrm{d}\widehat{W}_s  & = \int_0^t l_s \nabla \phi(\widehat{X}_s) G^{1/2} \mathrm{d}\widehat{W}_s
\\
& + \frac{1}{2} \int_0^t l_s \operatorname{Trace}(D^2\phi(\widehat{X}_s) G) \, \mathrm{d}s - \frac{1}{2} \int_0^t l_s \operatorname{Trace}\left(D^2\phi(\widehat{X}_s) (K_s(\widehat{X}_s)B^T)\right) \mathrm{d}s.
\end{align*}
\end{subequations}
Here we used the fact that, for every $i,k = 1,\dots,D$ we have $\left\langle l, (G^{1/2})^{ik} \widehat{W}^k \right\rangle_t = 0$ and
\begin{equation*}
\left\langle \widehat{X}^j, (G^{1/2})^{ik} \widehat{W}^k \right\rangle_t = G^{ij} t - \int_0^t \left(K_s(\widehat{X}_s)C^{-1}UG^{1/2} \right)^{ji} \, \mathrm{d}s,
\end{equation*}
where $G^{1/2}U^TC^{-1}= B$. Similarly,
\begin{subequations}
	\begin{align*}
& \int_0^t l_s\nabla \phi(\widehat{X}_s) \cdot K_s(\widehat{X}_s) C^{-1} \circ \left(U \, \mathrm{d}\widehat{W}_s + R^{1/2} \, \mathrm{d}\widehat{V}_s\right) 
= \int_0^t l_s \nabla \phi(\widehat{X}_s) \cdot K_s(\widehat{X}_s) C^{-1} \left(U \, \mathrm{d}\widehat{W}_s + R^{1/2} \, \mathrm{d}\widehat{V}_s\right)
 \\
 & + \frac{1}{2} \int_0^t l_s \operatorname{Trace}[ \nabla (\nabla \phi \cdot K_s) (\widehat{X}_s)\cdot C^{-1}  (G^{1/2}U^T  -  K_s(\widehat{X}_s) )] \, \mathrm{d}s,  
\end{align*}
\end{subequations}
using $C = UU^T + R$ and, for $k=1,\dots,D$ and $l=1,\dots,d$,
\begin{equation*}
\langle \widehat{X}^k, (U\widehat{W} + R^{1/2} \widehat{V})^l \rangle_t = (G^{1/2} U^T)^{kl} t - \int_0^t K_s^{kl} (\widehat{X}_s) \, \mathrm{d}s.
\end{equation*}
We now take the conditional expectation in \eqref{eq:phi Xhat}. Note that the conditional expectation w.r.t. $\mathcal{Y}$ commutes with the integration in $\mathrm{d}Y$ and that the conditional expectation of the integrals in $\mathrm{d}W$ and $\mathrm{d}V$ vanishes because $(W,V)$ are independent from $\mathcal{Y}$, see \cite[Appendix B]{coghi2019stochastic}. We obtain
\begin{subequations}
\label{eq:Zakai feedback}
\begin{align}
\widehat{\rho}_t [\phi]  & = \widehat{\rho}_0[\phi] + \int_0^t \widehat{\rho}_s[\phi] \pi_s [h] \cdot  C^{-1} \circ \mathrm{d}Y_s - \frac{1}{2} \int_0^t \widehat{\rho}_s[\phi] \pi_s[h \cdot C^{-1} h] \, \mathrm{d}s
\\ 
\label{eq:short Strato correction}
& + \int_0^t \widehat{\rho}_s [\mathcal{L} \phi] \, \mathrm{d}s 
- \frac{1}{2} \int_0^t \widehat{\rho}_s[\operatorname{Trace}(D^2 \phi K_s B^T)] \mathrm{d}s 
+ \int_0^t \widehat{\rho}_s[\nabla \phi \cdot K_s] C^{-1} \circ \mathrm{d}Y_s
\\
\label{eq:long Strato correction}
& - \int_0^t \widehat{\rho}_s [\nabla \phi \cdot K_s C^{-1}h] \, \mathrm{d}s
- \frac{1}{2} \int_0^t \widehat{\rho}_s[ \operatorname{Trace}(\nabla(\nabla\phi \cdot K_s) \cdot C^{-1} (G^{1/2}U^T  -  K_s ))] \, \mathrm{d}s.
\\
& - \frac{1}{2} \int_0^t \widehat{\rho}_s[\phi] \pi_s[\operatorname{Trace}(Dh B)] \, \mathrm{d}s + \int_0^t \widehat{\rho}_s[\nabla\phi \cdot \Xi]\,\mathrm{d}s
\end{align}
\end{subequations}

Importantly, we have used the fact that $X_t$ and $\widehat{X}_t$ are independent given $\mathcal{Y}_t$, and also that $\widehat{W}_t$ and $\widehat{V}_t$ are independent from $\mathcal{Y}_t$.
The next step is to compare \eqref{eq:Zakai Strato} and \eqref{eq:Zakai feedback}. The $\mathrm{d}Y$-contributions agree if and only if
\begin{equation*}
\rho_s[\phi h] + \rho_s [\nabla \phi] \cdot BC= \widehat{\rho}_s[\phi] \pi_s[h] + \widehat{\rho}_s[\nabla \phi \cdot K_s],
\end{equation*}
which we recognise (after identifying $\widehat{\pi} = \pi$ and $\widehat{\rho} = \rho$) to be a weak version of \eqref{eq:Poisson K intro}.

To compare the $\mathrm{d}s$-contributions, let us first manipulate the second term in \eqref{eq:long Strato correction} using \eqref{eq:Poisson K intro}, 
\begin{subequations}
	\begin{align*} 
	& \widehat{\rho}_s[ \partial_m (\partial_i \phi K_s^{ij}) (C^{-1})^{jk} (G^{1/2}U^T  -  K_s )^{mk}] 
	 = - \int (\partial_i \phi K_s^{ij}) (C^{-1})^{jk} \partial_m \left( \widehat{\rho}_s (G^{1/2}U^T  -  K_s )^{mk} \right) \mathrm{d}x
	\\
	 & = - \int (\partial_i \phi K_s^{ij}) (C^{-1})^{jk} \left( h^k - \widehat{\pi}_s[h^k]  \right) \mathrm{d}\widehat{\rho}_s
	 = -\widehat{\rho}_s[ \nabla \phi \cdot K_s C^{-1} \left( h - \widehat{\pi}_s[h]  \right)].
	\end{align*}
\end{subequations}
Similarly, the middle term in \eqref{eq:short Strato correction} satisfies
\begin{subequations}
	\begin{align*}
	& \widehat{\rho}_s[(\partial_i \partial_j \phi)K_s^{il}] B^{jl} = - B^{jl} \int (\partial_j \phi) \partial_i \left( \widehat{\rho}_s K_s^{il}\right) \mathrm{d}x 
 = B^{jl} \widehat{\rho}_s \left[(\partial_j \phi) \left(h^l - \widehat{\pi}_is s[h^j]\right) \right] - B^{jl} \int (\partial_j \phi) \partial_i \left( \widehat{\rho}_s (BC)^{il}\right) \mathrm{d}x
	 \\
	 & =   B^{jl} \widehat{\rho}_s \left[(\partial_j \phi) \left(h^l - \widehat{\pi}_s[h^l]\right) \right] + \widehat{\rho}_s [\partial_i \partial_j \phi] (BCB^T)^{ij}
	 =    \widehat{\rho}_s \left[\nabla \phi \cdot B \left(h - \widehat{\pi}_s[h]\right) \right] + \widehat{\rho}_s [
	 \operatorname{Trace}(D^2\phi (BCB^T))].
	\end{align*}
\end{subequations}
We now collect terms and compare the $\mathrm{d}s$-contributions in \eqref{eq:Zakai Strato} and \eqref{eq:Zakai feedback}, arriving at
\begin{subequations}
	\label{eq:ds condition}
\begin{align}
\label{eq: ds conditions line 1}
&-\frac{1}{2}\left(\rho_s [\phi h\cdot C^{-1} h] + \rho_s[\operatorname{Trace}(D(\phi h)B) + \nabla \phi \cdot B h] \right)
\\
\label{eq: ds conditions line 2}
& = - \frac{1}{2}\widehat{\rho}_s[\phi] \pi_s[h \cdot C^{-1} h] - \frac{1}{2}  \widehat{\rho}_s \left[\nabla\phi \cdot B \left(h - \widehat{\pi}_s[h]\right) \right]
\\
\label{eq:partial phi K}
 & - \frac{1}{2} \widehat{\rho}_s \left[
\nabla \phi \cdot K_s C^{-1}\left( h + \widehat{\pi}_s[h]  \right) \right] + \widehat{\rho}_s [\nabla \phi \cdot \Xi_s] - \frac{1}{2} \widehat{\rho}_s[\phi]\pi_s[\operatorname{Trace}(DhB)].
\end{align}
\end{subequations}

Next, we work on the first term in \eqref{eq:partial phi K},
\begin{subequations}
\nonumber
	\begin{align*}
&\widehat{\rho}_s \left[
(\partial_i \phi K_s^{ij}) (C^{-1})^{jk} \left( h^k + \widehat{\pi}_s[h^k]  \right) \right] = - \int \phi \partial_i \left(\widehat{\rho}_s K_s^{ij}\right) (C^{-1})^{jk} \left( h^k + \widehat{\pi}_s[h^k]  \right) \mathrm{d}x
 -  \widehat{\rho}_s \left[
\phi K_s^{ij} (C^{-1})^{jk} \partial_i h^k    \right]
\\
& = \int \phi (h^j - \pi[h^j]) (C^{-1})^{jk} (h^k + \pi[h^k])\, \mathrm{d}\widehat{\rho}_s - \int \phi \partial_i \left(\widehat{\rho}_s (BC)^{ij}\right) (C^{-1})^{jk} \left( h^k + \widehat{\pi}_s[h^k]  \right) \mathrm{d}x  -  \widehat{\rho}_s \left[
\phi K_s^{ij} (C^{-1})^{jk} \partial_i h^k    \right]
\\
& = \widehat{\rho}_s [\phi (h^j - \pi[h^j])  (C^{-1})^{jk} ( h^k + \widehat{\pi}_s[h^k]  )]  + \widehat{\rho}_s [ \partial_i \phi (h^k + \widehat{\pi}_s[h^k]) ] B^{jk}
 + \widehat{\rho}_s[\phi B^{ik} \partial_i h^k] -  \widehat{\rho}_s \left[
\phi K_s^{ij} (C^{-1})^{jk} \partial_i h^k    \right]
\\
& = \widehat{\rho}_s [ \phi h^j h^k] (C^{-1})^{jk} - \widehat{\rho}_s [ \phi] \pi_s[ h^j]\pi_s [h^k] (C^{-1})^{jk}  + \widehat{\rho}_s [ \partial_i \phi (h^k + \widehat{\pi}_s[h^k]) ] B^{jk}
+ \widehat{\rho}_s[\phi B^{ik} \partial_i h^k] -  \widehat{\rho}_s \left[
\phi K_s^{ij} (C^{-1})^{jk} \partial_i h^k    \right]
\\
& = \widehat{\rho}_s [ \phi h C^{-1} h]
- \widehat{\rho}_s [ \phi] \pi_s[ h] C^{-1}\pi_s [h] + \widehat{\rho}_s [ \nabla \phi \cdot B (h + \widehat{\pi}_s[h]) ]
+ \widehat{\rho}_s[\phi \operatorname{Trace}(Dh B)] -  \widehat{\rho}_s \left[\phi \operatorname{Trace}(K_s C^{-1}Dh^T) \right]
\end{align*}
\end{subequations}
Plugging this into \eqref{eq:ds condition}  we see that after a great number of cancellations that \eqref{eq:ds condition} reduces to
\begin{equation*}
    \rho_s[\nabla \phi \cdot \Xi_s] = \frac{1}{2}\rho_s[\phi]\left(
    \pi[\operatorname{Trace}(D h \cdot B) - hC^{-1}h] 
    -\pi[h]C^{-1}\pi[h]
    \right)
    -\frac{1}{2} \rho_s[\phi \operatorname{Trace}(K_sC^{-1}D h].
\end{equation*}
Using \eqref{eq:Poisson K intro} in its weak formulation tested against $h$ on the first term in the right-hand side we obtain
\begin{equation}
    \label{eq: weak poisson gamma}
    \pi[\nabla \phi \cdot \Xi_s] = \frac{1}{2} \left( \pi[\phi]\pi[\operatorname{Trace}(KC^{-1}D h)] - \pi[\phi \operatorname{Trace}(KC^{-1}D h)])\right),
\end{equation}
which is a weak version of \eqref{eq:Poisson Gamma intro}.

We have thus obtained constraints on the coefficients of $K$ and $\Xi$ of the McKean-Vlasov equation \eqref{eq: true McKean Vlasov}
such that $\hat{\rho}$ solves the Zakai equation satisfied by the conditional density $\rho$ of the signal of the filtering problem. We can conclude because the solution to the Zakai equation is assumed to be unique.
\end{proof}


\section{On the connection between McKean-Vlasov filtering and maximum likelihood estimation}
\label{app:MLE}

Here we discuss the relationship between parameter estimation based on the ensemble Kalman filter and the maximum likelihood approach considered in \cite{diehl2016pathwise}. In a nutshell, the two methods essentially agree in the noiseless case $R=0$ with Gaussian initial condition, as relevant McKean-Vlasov dynamics can be solved explicitly in this case. This connection has already been pointed out in \cite{nusken2019state}.

We are given a $d$-dimensional process $Z$, which depends on an $m$-dimensional parameter $\theta$. We observe $Z$ through the observation $Y$, which is possibly noisy.
\begin{subequations}
    \label{eq: mle system}
    \begin{align}
         \mathrm{d}Z_t & =  g(Z_t)\theta \ \mathrm{d}t + \tilde{G}^{\frac{1}{2}} \mathrm{d}W_t,  \\
         \mathrm{d}\theta_t & = 0, \\
         \mathrm{d}Y_t & = \mathrm{d}Z_t + R^{\frac{1}{2}} \mathrm{d}V_t.
    \end{align}
\end{subequations}
Here $W$ and $V$ are independent $d$-dimensional Brownian Motions, $g: \mathbb{R}^d \to \mathbb{R}^{d\times m}$ is a given $C^2_b$ function and $\tilde{G}\in \R^{d\times d}$ is a given positive semidefinite deterministic matrix.

Let $D=d+m$. We set the $D$-dimensional signal process as $X := (Z,\theta)$ and replace the equation for $\mathrm{d}Z$ into the equation for the observation $\mathrm{d}Y$. System \eqref{eq: mle system} becomes equivalent to system \eqref{eq:signal obs} with the following drift coefficients
\begin{equation*}
    f(x) = f(z,\theta) = ( g(z) \theta , 0),
    \qquad
    h(x) = h(z,\theta) =   g(z) \theta.
\end{equation*}
Moreover, the coeffiecents of the noise are
\begin{equation*}
    G = \begin{pmatrix}
    \tilde{G} & 0\\ 0 & 0
    \end{pmatrix},
    \qquad
    U = \begin{pmatrix}
    \tilde{G}^{\frac{1}{2}} & 0
    \end{pmatrix},
    \qquad
    G = \tilde{G}.
\end{equation*}
Remember $\pi_t[\phi]:= \mathbb{E}[\phi(X_t) \mid \mathcal{Y}_t]$ is the conditional law of $X$ given $Y$.

First we consider noiseless observations, i.e. we set $R = 0$. In this case we have that
\begin{equation*}
    C := U U^{\top} + R = \tilde{G},
    \qquad
    B := G^{\frac{1}{2}}U^{\top}C^{-1}
    = \begin{pmatrix}
    \mathrm{Id}_{d\times d}\\ 0_{m\times m}
    \end{pmatrix}.
\end{equation*}
To convert system \eqref{eq: mle system} to the McKean-Vlasov equation we need to compute $P$ as defined in \eqref{eq: approx P}, which can be decomposed as follows
\begin{equation*}
    P(\pi) = \begin{pmatrix}
    \operatorname{Cov}_\pi(z,h)\\
    \operatorname{Cov}_\pi(\theta,h)
    \end{pmatrix} C^{-1} + B
    := \begin{pmatrix}
    \pi \left[ z (h-\pi[h])^{\top}\right]\\
    \pi \left[ \theta (h-\pi[h])^{\top}\right]
    \end{pmatrix} C^{-1} + B.
\end{equation*}
In this case with $R = 0$, we have $Y = Z$, which implies 
\begin{equation*}
    \pi_t(\mathrm{d}z,\mathrm{d}\theta) = \delta(z-Z_t)\pi^{\theta}(\mathrm{d}\theta),
    \qquad
    \pi_t[h] = \int_{\R^D} g(z) \theta \pi_t(\mathrm{d}z,\mathrm{d}\theta)
    =  g(Z_t) \int_{\R^m}\theta \pi_t^{\theta}(\mathrm{d}\theta)
    =: g(Z_t)\bar{\theta}_t
\end{equation*}
We have the following
\begin{equation*}
    \operatorname{Cov}_{\pi_t}(z,h) = \int_{\R^{d+m}} z\left( g(z) \theta  - g(z)\bar{\theta}_t \right)^{\top} \pi_t(\mathrm{d}z,\mathrm{d}\theta)
    = Z_t  \left(\int_{\R^m}(\theta -\bar{\theta})\pi^{\theta}_t(\mathrm{d}\theta)\right)^{\top} g(Z_t)^{\top} 
    = 0_{d\times d}.
\end{equation*}
\begin{equation*}
    \operatorname{Cov}_{\pi_t}(\theta,h) = \int_{\R^{m}} \theta \left(\theta - \bar{\theta}_t \right)^{\top} \pi^{\theta}_t(\mathrm{d}\theta)
    g(Z_t)^{\top} 
    = \operatorname{Var}(\theta_t) g(Z_t)^{\top}.
\end{equation*}
We have thus obtained $P$ explicitly
\begin{equation*}
    P(\pi_t) = \begin{pmatrix}
    \operatorname{Cov}_\pi(z,h)\\
    \operatorname{Cov}_\pi(\theta,h)
    \end{pmatrix} C^{-1} + B
    = \begin{pmatrix}
    0_{d\times d} \\
    \operatorname{Var}(\theta_t) g(Z_t)^{\top}
    \end{pmatrix} \tilde{G}^{-1}
    + \begin{pmatrix}
    \mathrm{Id}_{d\times d}\\ 0_{m\times m}
    \end{pmatrix}
    = \begin{pmatrix}
    \mathrm{Id}_{d\times d}\\
    \operatorname{Var}(\theta_t) g(Z_t)^{\top} \tilde{G}^{-1}
    \end{pmatrix}
\end{equation*}
We now compute $\widehat{\Gamma}$, which is defined in \eqref{eq: approx Gamma}. We start with the $z$ contribution, for $\gamma =1, \dots , d$, we have
\begin{equation*}
    \pi[z^{\gamma} (D h - \pi[D h])^{\top}]
    = Z^{\gamma}_t \pi[D h - \pi[D h]]^\top = 0_{D\times d}.
\end{equation*}
Now compute the contribution in $\theta$. For $\gamma = 1,\dots , m$ and $i=1,\dots, d$, we have
\begin{equation*}
     \pi_t\left[\bar{\theta}^{\gamma} D h^{\top}\right]_{j,i}
    =
    \sum_{k=1}^m \partial_{z^j} g^{i,k}(Z_t) \operatorname{Cov}(\theta^k_t,\theta^\gamma_t)1_{j\leq d}
    = (\partial_{z^j}g(Z_t) \operatorname{Var}(\theta_t))^{i,\gamma}1_{j\leq d}
    \qquad
    j = 1,\dots, d+m.
\end{equation*}
So that we have
\begin{equation*}
    P(\pi_t)^{\top}\pi_t\left[\theta^{\gamma} \left(D h - \pi_t[D h] \right)^{\top}\right]
    =
    \begin{pmatrix}
    \mathrm{Id}_{d\times d} &
    (\operatorname{Var}(\theta_t) g(Z_t)^{\top} \tilde{G}^{-1})^{\top}
    \end{pmatrix}
    \begin{pmatrix}
    Dg(Z_t)^{\top}\operatorname{Var}(\theta_t)^{\cdot\gamma}\\ 0_{m\times d}
    \end{pmatrix}
    = Dg(Z_t)^{\top}\operatorname{Var}(\theta_t)^{\cdot\gamma}
\end{equation*}
and 
\begin{equation*}
\Gamma^{\gamma}(\pi_t) = 
-\frac{1}{2} \operatorname{Trace}
\left(Dg(Z_t)\operatorname{Var}(\theta_t)^{\cdot\gamma}
\right).
\end{equation*}
If $m = 1$, we have
\begin{equation*}
    \Gamma(\pi_t) = \begin{pmatrix}
    0_{d}\\
    -\frac{1}{2} \operatorname{Var}(\theta_t) \operatorname{Trace}Dg(Z_t)
    \end{pmatrix}.
\end{equation*}
We can now write the McKean-Vlasov dynamics \eqref{eq: main system},
\begin{subequations}
    \label{eq: mckean in mle R = 0}
    \begin{align}
        \mathrm{d}Z_t & = \mathrm{d} \bY_t, 
        \\
        \mathrm{d}\widehat{\theta}_t & = \operatorname{Var}(\widehat{\theta}_t) g(Z_t)^{\top} \tilde{G}^{-1}\left[ \mathrm{d} \bY_t -  \tilde{G}^{\frac{1}{2}} \mathrm{d}\widehat{W}_t
        -
        g(Z_t)\widehat{\theta}_t\mathrm{d}t
        \right] - \frac{1}{2}\operatorname{Var}(\widehat{\theta}_t)\operatorname{Trace} Dg(Z_t) \mathrm{d}t.
        \label{eq: mckean in mle R = 0 second}
    \end{align}
\end{subequations}
We interpret equation \eqref{eq: mckean in mle R = 0} in the sense of Definition \ref{def: solution}.
Taking expectation in \eqref{eq: mckean in mle R = 0 second}, and writing this equation in integral form, we obtain the rough path maximum likelihood estimator from \cite{diehl2016pathwise}, up to the rescaling $\text{Var}(\widehat{\theta}_t)$, see also \cite{nusken2019state}.

We would like to obtain an equation for the variance of $\widehat{\theta}$. We start by computing the following equation for the centered random variable $\bar{\theta} := \widehat{\theta} - \pi[\theta]$,
\begin{equation*}
\mathrm{d}\bar{\theta}_t 
= -\operatorname{Var}(\widehat{\theta}_t) g(Z_t)^{\top} \tilde{G}^{-1}\left[ \tilde{G}^{\frac{1}{2}} \mathrm{d}\widehat{W}_t
        +
        g(Z_t)\bar{\theta}_t \mathrm{d}t
        \right].
\end{equation*}
By taking the square and using It\^o formula we obtain the following equation for $\operatorname{Var}(\widehat{\theta})$
\begin{equation*}
    \mathrm{d}\operatorname{Var}(\widehat{\theta}_t) = - \operatorname{Var}(\widehat{\theta}_t)^2 g(\bY_t)^{\top} \tilde{G}^{-1}g(\bY_t)\mathrm{d}t.
\end{equation*}
The solution is
\begin{equation*}
    \operatorname{Var}(\widehat{\theta}_t) = \left(\int_0^t g(\bY_s)^{\top} \tilde{G}^{-1}g(\bY_s) ds + \frac{1}{\operatorname{Var}(\widehat{\theta}_0)}\right)^{-1}.
\end{equation*}

Assume now that there exists $f: \R^d \to \R$ such that $g = \tilde{G}^{\top}\nabla f$. Since the variance is a path of bounded variation, we have that the integrand in the rough integral in equation \eqref{eq: mckean in mle R = 0 second} is controlled by $\bY$ in the following way
\begin{align*}
\operatorname{Var}(\widehat{\theta}_t) g(Y_t)^{\top} \tilde{G}^{-1} 
-
\operatorname{Var}(\widehat{\theta}_s) g(Y_s)^{\top} \tilde{G}^{-1} 
= &\operatorname{Var}(\widehat{\theta}_t) \nabla f(Y_t)^{\top}
-
\operatorname{Var}(\widehat{\theta}_s) \nabla f(Y_s)^{\top}\\
= & \operatorname{Var}(\widehat{\theta}_t) D^2f(Y_t) \delta Y_{s,t} + R_{s,t}.
\end{align*}
The Gubinelli derivative $\operatorname{Var}(\widehat{\theta}_t) D^2f(Z_t)$ is symmetric, which means that the rough integral in \eqref{eq: mckean in mle R = 0 second} does not depend on the area of the geometric rough path $\bY$. Hence, the expansion of the integral is completely determined by the path itself.

\section{Ito-Stratonovich correction for the Ensemble Kalman filter}
\label{app:Strato EnKF}

\textcolor{red}{
In Section \ref{sec:McKean formulation}, we derived the  system \eqref{eq: main system stochastic version} by approximately solving the PDEs \eqref{eq:Poisson K intro} and \eqref{eq:Poisson Gamma intro}. An alternative approach is to convert the Ensemble Kalman filter formulation 
\begin{equation}
\label{eq:Kalman Ito}
\left\{ \begin{array}{rl}
\mathrm{d}\widehat{X}_t & = f(\widehat{X}_t) \, \mathrm{d}t + G^{1/2} \, \mathrm{d}\widehat{W}_t + P(
\widehat{\pi}_t)\, \mathrm{d}I_t \\
\mathrm{d}I_t & = \mathrm{d} Y_t - \left( h(\widehat{X}_t)\, \mathrm{d}t + U \, \mathrm{d}\widehat{W}_t +  R^{1/2} \, \mathrm{d}\widehat{V}_t\right).
\end{array}
\right.
\end{equation}
from \cite{nusken2019state} to its Stratonovich form. Here we show that this leads to the same result.
\begin{lemma}
For the solution to \eqref{eq:Kalman Ito}, we have that
\begin{equation}
\int_0^t P(\widehat{\pi}_s) \,\mathrm{d}I_t = \int_0^t P(\widehat{\pi}_s) \circ \mathrm{d}I_t + \int_0^t \Gamma(\widehat{\pi}_s) \, \mathrm{d}s, 
\end{equation}
thus recovering \eqref{eq: main system stochastic version} from \eqref{eq:Kalman Ito}.
\end{lemma}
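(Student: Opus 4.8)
The plan is to compute the Itô--Stratonovich correction term for the stochastic integral $\int_0^t P(\widehat\pi_s)\,\mathrm{d}I_s$ directly, using the definition of $P$ and the dynamics of $\widehat X$. Since $P(\widehat\pi_s) = \mathrm{Cov}_{\widehat\pi_s}(x,h)C^{-1} + B$ and $B$ is a constant matrix, only the covariance part contributes to the correction; thus the core of the argument is to identify the quadratic covariation $\langle P(\widehat\pi_\cdot), I\rangle_t$, or rather the appropriate matrix-valued bracket entering the conversion $\int P\,\mathrm{d}I = \int P \circ \mathrm{d}I - \tfrac12\langle P, I\rangle$. First I would recall that for $\mathbb{R}^{D\times d}$-valued $P$ and $\mathbb{R}^d$-valued $I$, the correction term has $\gamma$-component $\tfrac12\sum_{q=1}^d \mathrm{d}\langle P(\widehat\pi_\cdot)_{\gamma q}, I^q\rangle_t$, so everything reduces to computing $\mathrm{d}\langle P(\widehat\pi_\cdot)_{\gamma q}, I^q\rangle_t$.

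Next I would compute the semimartingale decomposition of $t\mapsto P(\widehat\pi_t)_{\gamma q} = \mathbb{E}[\overline X^\gamma_t\, (h(\widehat X_t)^\top C^{-1})_{\cdot q}] + B_{\gamma q}$, where $\overline X_t = \widehat X_t - \mathbb{E}[\widehat X_t\mid\mathcal Y_t]$. Applying Itô's formula to $\overline X^\gamma_t h^j(\widehat X_t)$ and taking conditional expectations given $\mathcal Y_t$, the martingale parts in $\mathrm{d}\widehat W, \mathrm{d}\widehat V$ drop out under $\mathbb{E}[\cdot\mid\mathcal Y_t]$ (as in the proof of Proposition \ref{prop:McKean} in Appendix \ref{app:proof McKean}), leaving a $\mathrm{d}s$-drift plus a genuine $\mathrm{d}I_t$-martingale part whose integrand is $\mathbb{E}[\overline X^\gamma_t (Dh(\widehat X_t))^\top]$ times the relevant covariance factors — this is exactly the Gubinelli-derivative expression \eqref{eq: Gubinelli derivative of P intro}/\eqref{eq: Gubinelli derivative of P}. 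Concretely, the $\mathrm{d}I$-part of $\mathrm{d}P(\widehat\pi_t)_{\gamma q}$ comes from the term $\mathbb{E}[\overline X^\gamma_t\, \partial_k h^j(\widehat X_t)]\,(C^{-1})_{j q'}\,(P(\widehat\pi_t))_{k q}\,\mathrm{d}I^{q'}_t$ after contracting; I would then use $\mathrm{d}\langle I^{q'}, I^q\rangle_t = C_{q'q}\,\mathrm{d}t$ (which holds because, by \eqref{eq: true McKean Vlasov}, $I_t$ has the same quadratic variation as $Y_t$, namely $C\,\mathrm{d}t$, the $\mathrm{d}W$-cross terms in the definition of $\mathrm{d}I$ being precisely compensated).

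Carrying out the contraction, the $\gamma$-component of $\tfrac12\langle P(\widehat\pi_\cdot), I\rangle_t$ becomes $\tfrac12\,\mathrm{Trace}\big(P(\widehat\pi_t)\,\widehat\pi_t[x^\gamma(Dh^\top - \widehat\pi_t[Dh^\top])]\big)\mathrm{d}t$, which is exactly $-\Gamma^\gamma(\widehat\pi_t)\,\mathrm{d}t$ by definition \eqref{eq: approx Gamma}. Hence $\int_0^t P(\widehat\pi_s)\,\mathrm{d}I_s = \int_0^t P(\widehat\pi_s)\circ\mathrm{d}I_s + \int_0^t\Gamma(\widehat\pi_s)\,\mathrm{d}s$, and substituting this identity into \eqref{eq:Kalman Ito} produces \eqref{eq: main system stochastic version}. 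The argument that remains — and what I expect to be the main obstacle — is the bookkeeping needed to justify that $P(\widehat\pi_\cdot)$ is genuinely a continuous semimartingale with the claimed $\mathrm{d}I$-part: this requires differentiating under the conditional expectation $\mathbb{E}[\cdot\mid\mathcal Y_t]$, which in turn leans on the smoothness of $x\mapsto x h(x)^\top$ (Assumption \ref{assumpion: h}), the moment bounds from Lemma \ref{lem: bounds X centered}, and the conditional-independence structure (the Brownian motions $\widehat W, \widehat V$ being independent of $\mathcal Y$). The computation of the bracket and the final algebraic matching with \eqref{eq: approx Gamma} are then routine index manipulations, essentially a specialisation of the It\^o--Stratonovich conversions already performed in Appendix \ref{app:proof McKean}.
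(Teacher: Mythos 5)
Your proposal follows the same route as the paper: express $P(\widehat\pi_t)$ as a continuous semimartingale via It\^o's formula applied under the conditional expectation $\mathbb{E}[\cdot\mid\mathcal Y_t]$, identify its local-martingale part, convert $\int P\,\mathrm{d}I$ to Stratonovich form via the quadratic covariation $\langle P(\widehat\pi_\cdot),I\rangle$, and match the resulting drift with $\Gamma$ as defined in \eqref{eq: approx Gamma}.

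However, the intermediate bracket computation as you state it is wrong, and it only reaches the correct answer because two errors cancel. You describe the martingale part of $\mathrm{d}P(\widehat\pi_t)$ as a ``$\mathrm{d}I_t$-martingale part'' and then invoke $\mathrm{d}\langle I^{q'},I^{q}\rangle_t = C_{q'q}\,\mathrm{d}t$, justified by ``the $\mathrm{d}W$-cross terms in the definition of $\mathrm{d}I$ being precisely compensated.'' Both claims fail. Since $P(\widehat\pi_t)$ is $\mathcal Y_t$-measurable while $I$ contains the contributions $U\,\mathrm{d}\widehat W_t + R^{1/2}\mathrm{d}\widehat V_t$ from noises independent of $\mathcal Y$, the natural semimartingale decomposition of $P(\widehat\pi_\cdot)$ is against $\mathrm{d}Y$, not $\mathrm{d}I$; that is exactly the displayed equation in the paper's proof. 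And the quadratic variation of $I$ is not $Ct$: from \eqref{eq: true McKean Vlasov}--\eqref{eq:obs} one has
$$\mathrm{d}I_t = \big(h(X_t)-h(\widehat X_t)\big)\mathrm{d}t + U\big(\mathrm{d}W_t-\mathrm{d}\widehat W_t\big) + R^{1/2}\big(\mathrm{d}V_t-\mathrm{d}\widehat V_t\big),$$
and since $W,\widehat W,V,\widehat V$ are mutually independent there are no cross terms to ``compensate'' -- instead the two independent noise pairs both contribute positively and $\langle I\rangle_t = 2Ct$. The bracket that actually governs the correction is $\langle P(\widehat\pi_\cdot),I\rangle$: writing the martingale part of $\mathrm{d}P(\widehat\pi_\cdot)$ as an integral against $\mathrm{d}Y$ with $\mathcal Y$-adapted integrand, the $\widehat W$-- and $\widehat V$--parts of $I$ drop out of the bracket, giving $\langle P(\widehat\pi_\cdot),I\rangle = \langle P(\widehat\pi_\cdot),Y\rangle$ and then $\langle Y,Y\rangle_t = Ct$, which is what the paper uses. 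Treating the martingale integrand as though it pairs with $\mathrm{d}I$ while simultaneously undercounting $\langle I,I\rangle$ by a factor of two happens to produce the same number, but the reasoning you give does not hold; you should replace it with the observation $\langle P(\widehat\pi_\cdot),I\rangle = \langle P(\widehat\pi_\cdot),Y\rangle$ and $\langle Y\rangle_t=Ct$.
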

\begin{proof}
Direct calculation (using It{\^o}'s formula for the expression \eqref{eq: approx P} and applying the conditional expectation) shows that
\begin{equation}
\mathrm{d}P^{ij}(\widehat{\pi}_t) = \left( \widehat{\pi}_t \left[ x^i \nabla_l h^k P^{lm}(\widehat{\pi}_t) \right] - \widehat{\pi}_t [x^i] \widehat{\pi}_t[\nabla_l h^k P^{lm}(\widehat{\pi}_t) ]\right) (C^{-1})^{kj} \, \mathrm{d}Y^m + FV,  
\end{equation}
where $FV$ stands for a process of finite variation. The claim therefore follows from the conversion formula
\begin{equation}
    \int_0^t P(\widehat{\pi}_s) \,\mathrm{d}I_t = \int_0^t P(\widehat{\pi}_s) \circ \mathrm{d}I_t - \frac{1}{2} \left\langle P(\widehat{\pi}_{\cdot}), I_{\cdot} \right\rangle = \int_0^t P(\widehat{\pi}_s) \circ \mathrm{d}I_t - \frac{1}{2} \left\langle P(\widehat{\pi}_{\cdot}), Y_{\cdot} \right\rangle, 
\end{equation}
together with the fact that $\langle Y, Y \rangle_t = Ct$.
\end{proof}}

\bibliographystyle{abbrv}
\bibliography{refs_final}

\end{document}